\documentclass[10pt]{amsart}

\usepackage{graphicx, color}
\usepackage{epic}
\usepackage{eepic}

\newtheorem{thm}{Theorem}[section]
\newtheorem{ass}[thm]{Assumption}
\newtheorem{coro}[thm]{Corollary}
\newtheorem{lem}[thm]{Lemma}
\newtheorem{prop}[thm]{Proposition}

\theoremstyle{definition}

\newtheorem{defn}[thm]{Definition}

\newtheorem{nota}[thm]{Notation}

\theoremstyle{remark}

\newtheorem{remk}[thm]{Remark}


\newcommand{\underq}{\underline{q}}

\newcommand{\lk}{{\rm Lk}}

\newcommand{\Gal}{{\rm Gal}}

\newcommand{\Card}{{\rm Card}}

\newcommand{\R}{{\mathbb{R}}}
\newcommand{\Z}{{\mathbb{Z}}}
\newcommand{\N}{{\mathbb{N}}}

\newcommand{\Scal}{ {\mathcal S}}

\newcommand{\Ecal}{ {\mathcal E}}

\newcommand{\dev}{{\bf dev}}


\begin{document}

\title{Ellipses in translation surfaces}

\author[S. A. Broughton]{S. Allen Broughton}

\address{Department of Mathematics \\  
Rose-Hulman Institute of Technology  \\
5500 Wabash Ave.  \\
Terre Haute, IN 47803 USA}  

\email{allen.broughton@rose-hulman.edu}

\author[C. M. Judge]{Chris Judge}

\address{Department of Mathematics \\
Indiana University \\
Bloomington, IN  47405 USA}

\email{cjudge@indiana.edu}

\thanks{The first author thanks Indiana University for its hospitality. 
       The second author thanks the Max-Planck Institut f\"ur Mathematik (Bonn),
       the Institut Fourier, and the \'Ecole Polytechnique F\'ed\'eral de Lausanne
       for their hospitality and support.}

\begin{abstract}
We study the topology and geometry of the 
moduli space of immersions of ellipses into a translation surface. 
The frontier of this space is naturally stratified by the number of 
`cone points' that an ellipse meets. The stratum consisting of ellipses
that meet three cone points is naturally a two dimensional (non-manifold) polygonal 
cell complex.  We show that the topology of this cell-complex 
together with the eccentricity and direction of each of its vertices
determines the translation surface up to homothety. As a corollary we characterize 
the Veech group of the translation surface in terms of automorphisms 
of this polygonal cell complex.   
\end{abstract}

\maketitle

\section{Introduction}

A translation structure $\mu$ on a (connected) topological surface $X$ is an equivalence
class of atlases whose transition functions are translations. 
Translation surfaces are fundamental objects in Teichm\"uller theory, the study of
polygonal billiards,  and the study of interval exchange maps.

Cylinders that are isometrically embedded in a translation surface 
play a central role in the theory. 
In Teichm\"uller theory, they appear as solutions to moduli problems \cite{Strebel}.
In rational billiards and interval exchange maps, cylinders correspond to periodic orbits
\cite{HMSZ}  \cite{MasurTabachnikov} \cite{Smillie}.

Each periodic geodesic $\gamma$ on a translation surface belongs
to a unique `maximal' cylinder that is foliated by 
the geodesics that are both parallel and homotopic to $\gamma$. 
One method for producing such periodic geodesics
implicitly uses ellipses. Indeed, if $X$ admits an isometric 
immersion of an ellipse with area greater than that of $X$,  
then the image of the immersion contains a cylinder, 
and hence a periodic geodesic \cite{MasurSmillie} \cite{Smillie}. 

Ellipses interiors also serve to interpolate between maximal cylinders.
The set, $\Ecal(X, \mu)$, of ellipse interiors isometrically immersed in $X$ 
has a natural geometry coming from the space of quadratic forms
on $\R^3$ (see \S \ref{SectionSubconicsTranslation}).  The set of maximal cylinders is a 
discrete set lying in the frontier of the path connected space $\Ecal(X, \mu)$.  

If the frontier of a translation surface $X$ is finite, then each point in the 
frontier may be naturally regarded as a {\em cone point} with angle 
equal to an integral multiple of $2 \pi$.
If the frontier of an immersed ellipse interior $U$ contains a cone point $c$,
then we will say that $U$ {\em meets} $x$.  
If an ellipse interior meets a cone point, then the ellipse interior 
belongs to the frontier of $\Ecal(X, \mu)$. 
The remainder of the frontier consists of cylinders. 

The number of cone points met by an ellipse interior determines 
a natural stratification of $\Ecal(X,\mu)$.\footnote{To be 
precise one must lift to the universal cover before counting. See \S
\ref{SectionSubconicsTranslation}.}
In  \S \ref{HomotopySection} we demonstrate that $\Ecal(X, \mu)$  
is homotopy equivalent to the 
stratum consisting of ellipse interiors that meet at least three cone points. In 
\S \ref{SectionCellComplex}, we prove that the completion of this stratum is naturally 
a (non-manifold) 2-dimensional cell complex whose 2-cells are convex polygons. 

We show that the topology of this polygonal complex and the geometry 
of the immersed ellipses and cylinders that serve as its vertices  
together encode the geometry of $(X, \mu)$ up to homothety.  

\begin{thm} \label{Rebuild}
Suppose that there is a homeomorphism $\Phi$ that maps the 
polygonal complex associated to $(X, \mu)$ onto the polygonal
complex associated to $(X', \mu')$.
If for each vertex $U$, the ellipses (or strips) 
$U$ and $\Phi(U)$ differ by a homothety, 
then $(X,\mu)$ and $(X', \mu')$ are equivalent up to homothety. 
\end{thm}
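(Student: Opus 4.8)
The plan is to recover the developing maps of the two surfaces and show they agree up to a single homothety of $\R^2$. Since a translation surface with finite frontier is determined up to homothety by the developed positions of its cone points together with the combinatorial pattern in which punctured neighborhoods of cone points are glued along saddle connections, it suffices to produce one homothety $h\colon \R^2 \to \R^2$ carrying the developed cone-point configuration of $(X,\mu)$ to that of $(X',\mu')$ compatibly with $\Phi$. Throughout I would use that \emph{direction} is a globally well-defined feature of a translation surface (the holonomy being trivial), so that the shape of an immersed ellipse is a positive-definite quadratic form on the common developed plane $\R^2$, well defined up to a positive scalar; the hypothesis on $\Phi$ then says exactly that for each vertex $U$ the forms associated to $U$ and $\Phi(U)$ agree up to such a scalar $\lambda_U$.

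First I would carry out a local reconstruction at a single vertex. A vertex $U$ is an immersed ellipse (or strip) of the most degenerate type, meeting the maximal number of cone points; lifting to the universal cover it becomes an honest ellipse in $\R^2$ through a configuration of lifted cone points. Its shape determines, up to scale, the linear map $A_U$ that normalizes the ellipse to a round circle, and the relative holonomy vectors $c_i - c_j$ of the cone points are obtained by applying $A_U^{-1}$ to the normalized positions on that circle. The combinatorial data of the link of $U$ --- which edges and $2$-cells are incident, equivalently which cone point is released in each degeneration direction --- identifies and cyclically orders the cone points met by $U$ and should thereby fix their normalized positions. Hence the shape of $U$ together with the local combinatorics determines the relative configuration of the cone points on $U$ up to the single unknown scale of $U$; degenerate strip vertices are handled the same way, with the strip's direction and width class replacing the normalized circle. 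Because $\Phi$ preserves both the combinatorics and, up to scalar, the shape, $U$ and $\Phi(U)$ yield the same normalized configuration, so the two local reconstructions agree up to their respective scales.

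Next I would propagate a single global scale. Two vertices that cobound an edge share a one-parameter family of ellipses through the cone points common to both, so their local reconstructions must assign the same relative positions to those shared cone points; this forces a relation between the two scales. Running this comparison along paths and using that the complex is connected --- it is homotopy equivalent to the path-connected space $\Ecal(X,\mu)$ --- I would show the per-vertex scales can be chosen consistently, producing a developed configuration for $(X,\mu)$, one for $(X',\mu')$, and a homothety $h$ matching them while respecting $\Phi$. Feeding this back into the reconstruction of the translation structure from developed cone points and their gluing completes the argument.

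The hard part will be the local rigidity in the second step: the shape of one inscribed conic by itself leaves the cone points free to slide along the ellipse, so one must show that the incidence data recorded by the link of $U$ genuinely pins the normalized positions down rather than merely constraining them. The crux is that the edges emanating from $U$ are pencils of conics through all but one of its cone points, with their far endpoints being adjacent vertices of \emph{prescribed} shape; I expect the resolution to exploit that each such pencil is a projective line in the space of conics pinned at both ends by known shapes, so that matching the shape along every incident edge over-determines and hence uniquely fixes the configuration. Establishing this over-determination, and verifying that the degenerate strip vertices introduce no ambiguity, is where the genuine work lies.
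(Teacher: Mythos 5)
Your global architecture coincides with the paper's: extract from the combinatorics of the complex a bijection $\beta\colon \partial X \to \partial X'$ compatible with $\Phi$ and with the cyclic (successor) orderings at each vertex, prove a local rigidity statement at each rigid vertex $U$ using the homothety classes of $U$ and of its neighbours in the link, and propagate along the connected $1$-skeleton to a single global affine map. The combinatorial step is the paper's Theorem \ref{Bijection} and the propagation step is Lemma \ref{PsiLemma} together with the connectivity of $\Scal_4(X,\mu)$; your sketches of both are consistent with what is actually done.

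The genuine gap is precisely the step you flag at the end and defer: you assert that the prescribed shapes of the vertices adjacent to $U$ ``over-determine and hence uniquely fix'' the normalized positions of $\partial U\cap\partial X$, but you supply no mechanism, and this is the entire analytic content of the theorem (Lemma \ref{MainGeometricLemma} and all of \S\ref{SectionBisectors}). Moreover the paper does not prove, and does not need, the statement you aim for: its Geometric Lemma concludes only that the \emph{directions} of the successive chords $\ell(x,s_U(x))$ are preserved, not the positions of the points, and this weaker conclusion suffices because a realizable quadruple in general position with all pairwise saddle-connection directions prescribed is determined up to homothety and translation, which is exactly the input Lemma \ref{PsiLemma} needs. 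The actual mechanism is also not a dimension count on pencils: after normalizing $U$ to a circle $q^{-1}(r^{2})$, the homothety class of the neighbour $U_{x,y}$ through $Z_U(x,y)=\{x,s(x),y,s(y)\}$ determines an eigenline of a degenerate form in the pencil, and by Lemma \ref{Bisector} and Proposition \ref{PositiveEigenvector} that eigenline is spanned by $e\bigl((\theta_x+\theta_{s(x)}+\theta_y+\theta_{s(y)})/4\bigr)\cdot x$; an alternating sum of five such four-angle averages (Proposition \ref{Intermediate}) isolates $(\theta_x+\theta_{s(x)})/2$, i.e.\ the normal direction of the chord from $x$ to $s(x)$. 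Without this computation, or a worked-out substitute, your argument does not close; and if you insist on recovering positions rather than chord directions you would need an additional argument beyond anything in the paper, since the link's cyclic order alone leaves the points free to slide along the circle.
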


Affine mappings naturally act on planar ellipses,
and hence the group of (orientation preserving) 
affine homeomorphisms of $(X,\mu)$ 
acts on $\Ecal(X,\mu)$. Because $\mu$ is a translation structure, 
the differential of an affine homeomorphism is a well-defined
$2 \times 2$ matrix of determinant 1  \cite{Veech89}. The set of
all differentials is a discrete subgroup of $SL_2(\R)$
that is sometimes called the {\em Veech group} and is denoted $\Gamma(X, \mu)$.
Using Theorem \ref{Rebuild}, one can characterize $\Gamma(X, \mu)$.

\begin{thm} \label{Auto}
The group $\Gamma(X, \mu)$ consists of the $g \in SL_2(\R)$ for which
there exists an orientation preserving
self homeomorphism of the polygonal complex associated to $(X, \mu)$
such that for each vertex $U$ 
there exist a homothety $h_U$ such that $U$ differs
from $\Phi(U)$ by  $h_U \circ g$. 
\end{thm}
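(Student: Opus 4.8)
The plan is to derive Theorem~\ref{Auto} as a consequence of Theorem~\ref{Rebuild} by viewing affine self-maps as homothety-rescaled rebuildings of $(X,\mu)$ onto itself. The key conceptual point is that each $g \in SL_2(\R)$ acts on the space of planar ellipses (equivalently, on quadratic forms on $\R^3$ via the induced action), and this action descends to a map on $\Ecal(X,\mu)$ precisely when $g$ is the differential of an affine homeomorphism. So the strategy has two directions to prove, an easy one and a harder one.

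First I would establish the ``only if'' direction: if $g \in \Gamma(X,\mu)$, then $g$ is realized by some affine homeomorphism $\psi$ of $(X,\mu)$, and because $\psi$ is affine it carries immersed ellipse interiors to immersed ellipse interiors while carrying cone points to cone points and preserving the number of cone points met. Hence $\psi$ induces an orientation-preserving self-homeomorphism $\Phi$ of the polygonal complex. The content here is that for each vertex $U$, the ellipse $\psi(U)$ differs from $g \cdot U$ (the image of $U$ under the differential) only by a \emph{translation} in the local chart, and a translation of an ellipse interior is a homothety in the sense used in Theorem~\ref{Rebuild} (indeed a trivial one); absorbing this into $h_U$ gives $U$ differing from $\Phi(U)$ by $h_U \circ g$. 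One must check that the induced $\Phi$ respects the cell structure (maps vertices to vertices, edges to edges, polygons to polygons) and is orientation preserving, which follows from $\psi$ being an orientation-preserving homeomorphism.

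The harder direction is the ``if'': suppose $g \in SL_2(\R)$ and a self-homeomorphism $\Phi$ of the polygonal complex exist with $U$ differing from $\Phi(U)$ by $h_U \circ g$ at each vertex. I would apply the $SL_2(\R)$-action first, replacing $(X,\mu)$ by $g \cdot (X,\mu)$, the translation surface obtained by post-composing all charts with $g$. Because $g$ acts on the ellipse space, the polygonal complex of $g \cdot (X,\mu)$ is canonically identified with that of $(X,\mu)$, and under this identification the hypothesis says that each vertex of the complex of $(X,\mu)$ differs from the corresponding vertex of the complex of $g\cdot(X,\mu)$ (via $\Phi$) by a homothety $h_U$. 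Now Theorem~\ref{Rebuild} applies directly with $(X',\mu') = g\cdot(X,\mu)$: it yields that $(X,\mu)$ and $g\cdot(X,\mu)$ are equivalent up to homothety, i.e. there is an affine homeomorphism with differential a scalar multiple of $g$. Since determinants must match (both are $1$ after normalizing the homothety, as $g \in SL_2(\R)$), the scalar is $\pm 1$, and one concludes $g$ itself, or $-g$ which acts identically on ellipses and on the surface up to the elliptic involution, lies in $\Gamma(X,\mu)$.

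The main obstacle I anticipate is the bookkeeping in the second direction: making precise the canonical identification of the polygonal complex of $g\cdot(X,\mu)$ with that of $(X,\mu)$ and verifying that the $g$-twisted homothety hypothesis is exactly the untwisted homothety hypothesis of Theorem~\ref{Rebuild} after transporting along this identification. One must be careful that the homotheties $h_U$ in the statement are the \emph{same} homotheties that Theorem~\ref{Rebuild} requires, and that the orientation-preservation of $\Phi$ is compatible with $g$ preserving orientation (as $\det g = 1 > 0$). A secondary point requiring care is the factor of $\pm 1$ coming from the elliptic involution $-I$, which fixes every ellipse and acts trivially on the complex; I would note that $-I \in \Gamma(X,\mu)$ is automatic (realized by $z \mapsto -z$ up to relabeling cone points) so that the conclusion holds for $g$ regardless of this sign ambiguity.
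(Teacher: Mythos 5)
Your proposal follows essentially the same route as the paper: the easy direction by letting an affine homeomorphism act on subconics, and the hard direction by twisting the structure to $\mu'=g\circ\mu$ and invoking Theorem~\ref{Rebuild} (in the form of Theorem~\ref{MainTheorem}), with the residual homothety killed by the finite-area/determinant argument. The only blemish is your closing remark that $-I\in\Gamma(X,\mu)$ is ``automatic,'' which is false for general translation surfaces; but the sign ambiguity you invoke it for does not actually arise, since a homothety has positive ratio and area preservation then forces that ratio to be exactly $1$, as in the paper.
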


The group $\Gamma(X, \mu)$ is closely related to the subgroup of the 
mapping class group of $X$ that stabilizes the 
Teichm\"uller disc associated to $(X,\mu)$ \cite{Veech89}. To be precise, 
each mapping class in the stabilizer has a unique representative
that is affine with respect to $\mu$. The Veech group is the 
set of differentials of these affine maps, and is isomorphic 
to the stabilizer modulo automorphisms. In particular, 
if there are no nontrivial automorphisms in the stabilizer, 
then the quotient of the hyperbolic plane by a lattice 
Veech group is isometric to a Teichm\"uller curve. 

There is a natural map that sends each ellipse interior 
$U \subset \R^2$ to the coset of $SO(2) \setminus SL_2(\R)$ consisting of $g$
such that $g(U)$ is a disc. This map naturally determines a map 
from $\Ecal(X, \mu)$ onto the Poincar\'e disc. The image of the 1-skeleton of the 
polygonal cell complex determines a tessellation of the upper half-plane 
that coincides with a tessellation defined by William Veech \cite{Veech99} \cite{Veech08} 
and independently Joshua Bowman \cite{Bowman08}.  Indeed, our work 
began with a reading of a preprint of \cite{Veech08}.
In a companion paper \cite{BrtJdg09}, we will discuss these
connections in more detail.

To develop the deformation theory of immersed ellipses, 
we use quadratic forms on $\R^3$. Each ellipse interior in an affine plane $P \subset \R^3$  
is a sublevel set, $\left\{\vec{x} \in P~ |~ q(\vec{x})< 0 \right\}$,
of a quadratic form $q$. The space of quadratic forms on $\R^3$ is a
six dimensional real vector space, and two quadratic forms determine 
the same ellipse interior if and only if they differ by a 
positive scalar.  

A cylinder is the image of an isometric immersion of a {\em strip}, 
the interior of the convex hull of two parallel lines in $\R^2$.
A strip is also a sublevel set of a 
quadratic form $q$ restricted to an affine plane $P$.

The language of quadratic forms unifies the treatment of immersed ellipses 
and cylinders. We define a planar {\em subconic} to be a sublevel set  
$\{(x,y)~ |~ q(x,y,1)<0\}$ where $q$ is a quadratic form on $\R^3$.
The set of planar subconics includes ellipse interiors and strips, 
but also includes ellipse exteriors, parabola interiors, etc. 

We will let $\Scal(X, \mu)$ denote the space of immersions 
of subconics into a translation surface $(X,\mu)$.
We let $\Scal_n(X, \mu)$ denote the stratum consisting of 
subconics that meet at least $n$ cone points. 
  
The translation structure $\mu$ determines a canonical Euclidean metric
on the surface $X$. The completion of $X$ with respect to this metric will be denoted
by $\overline{X}$.  If $A$ is a subset of $X$, then $\partial A$ will denote the 
complement of the interior of $A$ in its closure in $\overline{X}$.  In particular, 
$\partial X$ we will denote the set of cone points. 

Other notation can be found in the table that is located 
at the end of this introduction.

\vspace{.5cm}

\paragraph{{\bf Outline of paper}} In sections \S \ref{PreliminariesSection}
 through  \S \ref{SectionZ} we establish notation and introduce basic tools.
In \S \ref{PreliminariesSection}
we recall the basic theory of translation surfaces including the 
developing map, and in \S \ref{SectionQuadratic} we recall some basic
facts about quadratic forms. In \S \ref{SectionPlanarSubconics}, we 
collect elementary facts about the subconics in the plane.
In \S \ref{SectionZ} we consider subconics in the plane determined by finite
sets of points.

In \S \ref{SectionSubconicsTranslation} we define the geometry of the 
space of immersed subconics in a translation surface. We show, for example, that 
$\Scal(X, \mu)$ is naturally a 5-dimensional real-projective manifold. 

Beginning in \S \ref{SectionCovering} we restrict attention to 
the translation surfaces that cover a precompact translation surface. 
For such surfaces, the only possible
subconics are ellipse interiors and (immersed) strips.  
We show that $\Scal_5(X, \mu)$ is discrete and that only finitely
many members of $\Scal_5(X, \mu)$ contain a given nonempty open subset of $X$.

In \S \ref{HomotopySection} we show that the subspace $\Ecal_3(X) \subset \Scal_3(X)$
consisting of ellipse interiors in homotopy equivalent to $X$. 
As a consequence, $\Ecal_3(\tilde{X}, \tilde{\mu})$ is the universal cover of 
$\Ecal_3(X, \mu)$.  This fact is used crucially in our proof of 
Theorem \ref{Rebuild}.

In \S \ref{SectionCellComplex} we define the cell
structure on $\Scal_3(X, \mu)$. In particular, each 2-cell $\Scal_Z$
corresponds to a triple $Z \subset \partial X$ that defines a triangle in 
$\overline{X}$.  We show that $\overline{\Scal}_Z$ may be regarded 
as the convex hull of all subconics that lie in 
$\overline{\Scal}_Z  \cap \Scal_5(X, \mu)$. In \S \ref{SectionRealizable},
we characterize those triples (resp. quadruples) in $\partial X$ 
that determine a 2-cell (resp. 1-cell) in $\Scal_3(X, \mu)$.
 
The set $\overline{\Scal}_Z  \cap \Scal_5(X, \mu)$ determines
an orientation of each 2-cell. In \S \ref{SectionOrientation},  
we relate this orientation to the ordering of $\partial U \cap \partial X$ 
where $\overline{\Scal}_Z  \cap \Scal_5(X, \mu)$.
In \S  \ref{SectionLink} we study the oriented link of each vertex
in the 2-skeleton. For example, we show that the link of $U$ is determined
up to isomorphism by the cardinality of $\partial U \cap \partial X$. 
In \S \ref{SectionFrontier}, we use the analysis of the 
oriented links to show that a homeomorphism 
$\Phi: \Scal(X, \mu) \rightarrow \Scal(X', \mu')$ determines 
a unique bijection $\beta: \partial X \rightarrow \partial X'$
such that for each $Z \subset \partial U \cap \partial X$,
we have $\Phi(\Scal_Z)=\Scal_{\beta_Z}$.  Note that the results 
of these three sections are purely combinatorial. 

In \S \ref{SectionBisectors} we prove the main geometric lemma.
Roughly speaking, the sides of a cyclic polygon are determined up
to translation by the directions of the  bisectors of successive
diagonals in the polygon. In \S \ref{SectionRebuild}, we combine
this geometric Lemma with the combinatorial results to prove
Theorem \ref{Rebuild}.  In \S \ref{SectionVeech},  we prove Theorem 
\ref{Auto}.

\vspace{.5cm}
 
\paragraph{{\bf Acknowledgments}}  We thank Matthew Bainbridge, Alex Eskin,
Fran\c{c}ois Gu\'eritaud, Martin M\"oller, and especially Saul Schleimer for discussions
and encouragement.  We also thank the referree for very helpful comments.

\vspace{.5cm}
 
\paragraph{{\bf Notation}}
For the convenience of the reader, we include the following list of notation.

\vspace{.4cm}

\begin{tabular}{lll}
  $X$  & \S \ref{PreliminariesSection} & topological surface    \\

  $\tilde{X}$ & \S \ref{PreliminariesSection} &  universal cover of $X$    \\

  $\Gal(\tilde{X}/X)$ & \S \ref{SectionSubconicsTranslation} 
  & group of covering transformations   \\

  $\mu$ &  \S \ref{PreliminariesSection}  &  translation atlas/structure  \\

 $\dev_{\mu}$ & \S \ref{PreliminariesSection} & developing map associated to $\mu$  \\

 $\overline{X}$ & \S \ref{PreliminariesSection} & metric completion of $X$   \\

 $\partial X$ & \S \ref{PreliminariesSection} & frontier of $X$  \\

 $\Card(F)$ & \S \ref{SectionQuadratic} & cardinality of a set $F$ \\

 $Q(\R^n)$ & \S \ref{SectionQuadratic} & vector space of real quadratic forms on $\R^n$  \\

 $Q_F$ & \S \ref{SectionQuadratic} &  quadratic forms that vanish on $F$     \\

 $\underline{q}$ &    \S \ref{SectionPlanarSubconics}
& restriction of quadratic form $q$ to first two coordinates
                        \\

 $\widehat{(x_1,x_2)}$  &  \S \ref{SectionPlanarSubconics} &   $(x_1,x_2,1)$ \\ 

 $U_q$  & \S \ref{SectionPlanarSubconics}  &  subconic, set of $x$ such that $q(\hat{x})<0$  \\

 $\partial U_q$  & \S \ref{SectionPlanarSubconics} & boundary of subconic   \\

 $\Scal_Z$ & \S \ref{SectionZ}
   & set of subconics $U$ such that $Z \subset \partial U \cap \partial X$ \\

 $\Ecal_Z$ & \S \ref{SectionZ}
 & set of ellipse interiors $U$ such that $Z \subset \partial U \cap \partial X$ \\
 
 $\Scal(X, \mu)$  &  \S \ref{SectionSubconicsTranslation}
     & set of subconics in a translation surface \\ 

 $\Scal_n(X, \mu)$   &  \S \ref{SectionSubconicsTranslation}
         &  stratum of  $\Scal(X, \mu)$ \\ 

 $\Ecal(X, \mu)$  &  \S \ref{HomotopySection}
    & set of ellipse interiors in a translation surface \\

 $\Ecal_n(X, \mu)$ & \S \ref{HomotopySection}  & stratum of $\Ecal(X, \mu)$ \\ 

 $s_U$ & \S \ref{SectionOrientation}   & successor function of $\partial U \cap \partial X$ \\

$Z_U(x,x')$ & \S \ref{SectionOrientation}   & $\{x, s_U(x), x', s_{U}(x')\}$ \\

${\rm Lk}(U)$ & \S \ref{SectionLink}  & link of the vertex $U$ in $\Scal_3(X, \mu)$ \\

$\Phi$  & \S \ref{SectionFrontier} &  isomorphism of polygonal cell complexes \\

$\beta_U$  & \S \ref{SectionFrontier} & map on $\partial U \cap \partial X$ \\

$[U]$ & \S \ref{SectionBisectors} 
   & orbit of $U$ under the group generated  \\

& & by homotheties and translations. 

\end{tabular}

\vspace{.4cm}


\section{Preliminaries on translation surfaces} \label{PreliminariesSection}

Recall that a {\em translation atlas} on a surface $X$ is an
atlas $\{\mu_{\alpha}:U_{\alpha} \rightarrow \R^2 \}$ such that 
each transition map $\mu_{\beta} \circ \mu_{\alpha}^{-1}$ is a translation. 
A translation structure $\mu$ on $X$ is an equivalence class of translation
atlases where two atlases are equivalent if the they agree on their
common refinement up to translations. 

Let $(X, \mu)$ and $(Y, \nu)$ be translation surfaces. 
A mapping $f: X \rightarrow Y$ is called a translation map with respect to 
$\mu$ and $\nu$---or simply a {\em translation}---iff for each chart 
$\mu_{\alpha}$ in $\mu$ and  $\nu_{\beta}$
in $\nu$, the map $\nu_{\beta} \circ f \circ \mu_{\alpha}^{-1}$ is a translation.

One may define a metric structure on $(X, \mu)$ in several
equivalent ways.  One approach is to simply pull back 
the Riemannian metric tensor $dx^2+ dy^2$ on 
the Euclidean plane via  a translation atlas. 
Since translations preserve $dx^2+ dy^2$, the resulting metric
tensor on $X$ is well-defined. Thus, $\mu$ determines  
a (Riemannian) distance function $d_{\mu}:X \times X \rightarrow \R$.

Let $\overline{X}$ denote the metric completion of $X$ with 
respect to $d_{\mu}$. 

\begin{defn} \label{DefnFrontier}
The {\em frontier} of $X$ with respect to $\mu$ is the set 
$\partial X:=\overline{X} \setminus X$.
\end{defn}

A translation structure pulls-back under a covering map.
In particular, one associates to each translation structure $\mu$ on $X$
a translation structure $\tilde{\mu}$ on the universal 
cover $\tilde{X}$. 

The following assumption will be implicit throughout this paper.

\begin{ass} \label{DiscreteAss}
The frontier $\partial X$ 
is a discrete subset of  $\overline{X}$.
Moreover, the frontier $\partial \tilde{X}$
of the universal cover $\tilde{X}$ 
is a discrete subset of the completion of 
$\tilde{X}$. 
\end{ass}

Assumption \ref{DiscreteAss} implies that the frontier 
of a precompact\footnote{Here we use `precompact' 
as a synonym for totally bounded so that completing the space gives
a compact space.} translation surface 
is finite. In addition, a precompact translation surface 
has finite genus.

A translation structure is an example of a geometric structure in the sense
of \cite{ThrBook}. If $X$ is simply connected, for each
translation structure $\mu$ on $X$, there exists a map
$\dev_{\mu}: X \rightarrow \R^2$ such that the set of restrictions 
to convex sets is an atlas 
that represents $\mu$. This {\em developing map} is unique up to
post-composition by a translation. 
Note that  $\dev_{\mu}$ extends uniquely
to a continuous function on $\overline{X}$. 
We will abusively use $\dev_{\mu}$ to denote this extension.

In general, the developing map is not injective. 
But the restriction of $\dev_{\mu}$ to a `star convex' subset 
is injective. To be precise,  a set $Y \subset X$ 
will be said to be {\em star convex} with respect 
to $x$ if and only if for each $x' \in Y$, there exists a geodesic
segment in $Y$ joining $x'$ to $x$. A subset $Y \subset X$ will be called convex 
if and only if it is star convex with respect to each
of its points. 

It will be convenient to extend the notions
of convexity and star convexity  
to subsets of the completion $\overline{X}$. 
We will say that $Y \subset \overline{X}$ 
is star convex with respect to $y$ if and only if
for each $y' \in Y$, there exists a convex subset $C \subset X$
such that $y, y' \in \overline{C}$.  A set $Y \subset \overline{X}$ 
will be called  convex if and only if it is star convex 
with respect to each $y \in Y$.

\begin{remk}  \label{ConvexityRemark}
Our notions of convexity for subset of $\overline{X}$
are defined in a way that 
assures that the restriction of the developing map is injective.  
Note that the definition differs significantly from the
one that is natural if $\overline{X}$ is regarded
as a length space.  If  $\partial X$ is discrete,
then the geodesics in the length space $\overline{X}$
are finite concatenations of completions of geodesic segments in $X$.
From the perspective of length spaces, each geodesic is (star) convex,
but from our viewpoint many geodesics are not (star) convex. 
\end{remk}

Let $(X, \mu)$ be a simply connected translation surface.
Given $A  \subset X$, define the {\em inradius of $A$} with respect to $\mu$
to be the supremum of the radii of disks that can be embedded into $A$.

\begin{prop} \label{Inradius}
If $(X,\mu)$ is the universal cover of a precompact translation surface 
with nonempty discrete frontier, then the inradius of $X$ is finite. 
\end{prop}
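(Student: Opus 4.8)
The plan is to bound the inradius of $X$ by the \emph{covering radius} of the base surface by its cone points. Let $(X_0, \mu_0)$ be the precompact surface of which $(X, \mu)$ is the universal cover, so that $X = \tilde{X_0}$ and the covering map $\pi : X \to X_0$ is a local isometry that extends to a $1$-Lipschitz map $\overline{X} \to \overline{X_0}$ carrying $\partial X$ into $\partial X_0$. Since $\overline{X_0}$ is compact by precompactness, and since $\partial X_0$ is finite and nonempty (as noted after Assumption \ref{DiscreteAss}), the distance-to-frontier function $y \mapsto d_{\mu_0}(y, \partial X_0)$ is continuous and hence attains a finite maximum $D$ on $\overline{X_0}$. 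I claim the inradius of $X$ is at most $D$.

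First I would show that every $x \in X$ satisfies $d_\mu(x, \partial X) \le D$. Put $y = \pi(x)$ and choose a cone point $c_0 \in \partial X_0$ with $d_{\mu_0}(y, c_0) \le D$. Because $\mu_0$ is locally Euclidean, the distance to the nearest cone point is realized by a straight geodesic $\gamma : [0, \ell] \to \overline{X_0}$ with $\ell \le D$ whose interior $\gamma((0,\ell))$ avoids $\partial X_0$ (otherwise a nearer cone point would exist). Lifting $\gamma|_{[0,\ell)}$ through $\pi$ with initial point $x$ yields a unit-speed geodesic $\tilde\gamma : [0,\ell) \to X$; since $\pi$ is a local isometry, $\tilde\gamma$ has length $\ell$, so $\{\tilde\gamma(t)\}_{t \to \ell}$ is Cauchy in the complete space $\overline{X}$ and converges to a point $\tilde c$ with $\pi(\tilde c) = c_0$. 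As $c_0 \notin X_0$ forces $\tilde c \notin X$, we obtain $\tilde c \in \partial X$ and hence $d_\mu(x, \partial X) \le \ell \le D$.

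Next I would show that any disk embedded in $X$ has radius at most $d_\mu(x, \partial X)$, where $x$ is the image of its center. The image $W$ of an isometric embedding of a Euclidean disk of radius $r$ is an open subset of $X$, isometric to $B(0,r) \subset \R^2$ and star convex about $x$; in particular each straight geodesic issuing from $x$ remains in $W$ for all lengths less than $r$. If some frontier point $c$ satisfied $d_\mu(x,c) = \rho < r$, then the minimizing straight geodesic from $x$ to $c$ would coincide with such a ray and would therefore terminate at an interior point of $W \subset X$ at parameter $\rho$, contradicting $c \in \partial X$. Hence $r \le d_\mu(x, \partial X)$, and combining this with the previous step gives $r \le D$ for every embedded disk, so the inradius is at most $D < \infty$.

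The main obstacle is the first step, namely transferring the bound on distance-to-cone-points from $X_0$ up to its universal cover. The delicate point is that one cannot simply project an embedded ball of $X$ down to an embedded ball of $X_0$, since the injectivity radius of $X_0$ may be small; the argument must instead run in the lifting direction, using path lifting of a minimizing geodesic together with the completeness of $\overline{X}$ to produce an actual point of $\partial X$ near $x$. Here the discreteness hypothesis in Assumption \ref{DiscreteAss} is what guarantees that the limit $\tilde c$ is a genuine frontier point rather than a degenerate end.
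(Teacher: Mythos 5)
Your proposal is correct and follows essentially the same route as the paper's proof: both bound $d_\mu(x,\partial X)$ by the maximum of the distance-to-frontier function on the compact completion of the base surface, obtained by lifting a minimizing geodesic to a cone point through the covering map. You merely supply extra detail on the convergence of the lifted geodesic in $\overline{X}$ and on why an embedded disk's radius is bounded by the distance to the frontier, steps the paper leaves implicit.
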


\begin{proof}
By assumption we have a precompact translation surface $(Y, \nu)$
and a covering $p: X \rightarrow Y$ such that $p^*(\nu)=\mu$. 
Let $d_X: \overline{X} \times \overline{X} \rightarrow \R$ and 
$d_Y: \overline{Y} \times \overline{Y} \rightarrow Y$ denote the respective distance
functions on the respective completions of $X$ and $Y$. 
Note that $p$ is a local isometry with respect to these distance functions. 

The function $y \mapsto d_Y(y,\partial Y)$ is continuous.
Thus since $\overline{Y}$ is compact,  there exists $K_Y$ such 
that for all $y \in Y$, we have $d_Y(y,\partial Y)<K_Y$. 

For each $x \in X$, we have $d_X(x, \partial X) = d_Y(p(x), \partial Y)$.
Indeed, since $\partial Y$ is finite, there exists a geodesic
segment $\gamma$  joining $p(x)$ to $\partial Y$ whose length equals
$d_Y(p(x), \partial Y)$.  This geodesic segment lifts under $p$ to a unique 
path $\tilde{\gamma}$ that joins $x$ to $\partial X$. Since one endpoint
of $\tilde{\gamma}$ lies in $\partial X$ and $(X, \mu)$ is a translation
surface, the restriction of $p$ to ${\tilde{\gamma}}$ is an isometry
onto $\gamma$. In particular, the length of $\tilde{\gamma}$ equals
the length of $\gamma$. Thus,  $d_X(x, \partial X) = d_Y(p(x), \partial Y)$. 

Therefore, for each $x \in X$, we have $d_X(x, \partial X) < K_Y$. 
It follows that if $D$ is a Euclidean disk embedded in $X$, then 
the radius of $D$ is less than $K_Y$. 
\end{proof}


\section{Quadratic forms on $\R^3$}   \label{SectionQuadratic}

We recall some basic facts about quadratic forms 
defined over the real numbers. See chapter 14 in \cite{Berger}.

Let $Q(V)$ denote the vector space of all real-valued quadratic forms on 
a real vector space $V$.
Given $q \in Q(V)$ we let $q(\cdot, \cdot)$ denote its polarization.
There is a basis for $V$ such that 
the matrix associated to the polarization 
is diagonal with entries belonging to $\{+1,-1,0\}$ (Sylvester's law).
Let $n_+(q), n_-(q)$, and $n_0(q)$ denote, respectively, the number of 
diagonal entries that are $+1$, $-1$, and $0$.  
We say that $q$ has {\em signature} $(n_+(q), n_-(q))$.
If $n_0(q) \neq 0$, then we say that $q$ is {\em degenerate}.

The {\em radical}, ${\rm rad}(q)$, of (the polarization of) the quadratic form $q$ is the 
set of $v$ such that $q(v,w)=0$ for all $w \in \R^3$.  The form $q$ is nondegenerate 
iff only ${\rm rad} (q)=\{0\}$.  If ${\rm rad}(q) \neq \{0\}$, then $0$ is an eigenvalue
and ${\rm rad}(q)$ is the associated eigenspace.

We will be interested in quadratic forms that vanish on a prescribed
subset $F \subset \R^3$.  Define 
\[  Q_F~ =~ \left\{  q \in Q(\R^3)~ |~  q(F)~ =~ \{0\} \right\}. \]
Note that $Q_F$ is a vector subspace.

To each point $v \in \R^3 \setminus \{0\}$ we may associate the unique 
line $\ell(v)$ containing $v$ and the origin.  
We say that a set of points $F \subset \R^3$ is {\em in general position} if 
no three of the lines in $\ell(F)$ are coplanar.

To construct quadratic forms that vanish on subsets of $F \subset \R^3$, 
one can use the collection, $(\R^3)^*$ of linear forms on $\R^3$. 
Given linearly independent vectors $v, w \in \R^3$,
let $L_{vw} \subset (\R^3)^*$ be the collection of linear forms
that vanish on the plane spanned by $v$ and $w$. Note that a product $\eta \cdot \eta'$ 
of linear forms $\eta \in L_{xy}$ and $\eta' \in L_{zw}$ is a quadratic form 
in $Q_{\{x,y,z,w\}}$. 
 
Let $\Card(F)$ denote the cardinality of a set $F$.

\begin{prop} \label{Dimension}
If $\Card(F) \leq 5$ and $F$ is general position,  then $\dim(Q_F)=6- \Card(F)$.   
\end{prop}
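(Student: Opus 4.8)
The plan is to show that the linear map $\mathrm{ev}_F \colon Q(\R^3) \to \R^{\Card(F)}$ given by $q \mapsto (q(\hat{v}))_{v \in F}$ (where $\hat{v}$ denotes the evaluation point associated to each $v \in F$) is surjective whenever $\Card(F) \le 5$ and $F$ is in general position. Since $Q_F = \ker(\mathrm{ev}_F)$ and $\dim Q(\R^3) = 6$, the rank-nullity theorem then gives $\dim Q_F = 6 - \Card(F)$ immediately. So the entire content of the proposition is the surjectivity of $\mathrm{ev}_F$, equivalently the linear independence of the $\Card(F)$ evaluation functionals $q \mapsto q(v)$ for $v \in F$.

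To prove surjectivity, I would argue that for each point $v_0 \in F$ there exists a quadratic form $q \in Q(\R^3)$ that vanishes on $F \setminus \{v_0\}$ but does not vanish at $v_0$; the existence of such a form for every $v_0$ is precisely equivalent to surjectivity of $\mathrm{ev}_F$. Here is where the product-of-linear-forms construction recalled just before the statement does the work. Since $\Card(F \setminus \{v_0\}) \le 4$, I would partition the at-most-four remaining points into two pairs (or a pair and a singleton, or fewer), and for each pair $\{v, w\}$ choose a linear form $\eta \in L_{vw}$ vanishing on the plane spanned by $v$ and $w$. The product of the (at most two) resulting linear forms is a quadratic form vanishing on all of $F \setminus \{v_0\}$. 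The general position hypothesis guarantees that none of the chosen planes contains $v_0$: if a plane through two of the other lines also contained $\ell(v_0)$, then three lines of $\ell(F)$ would be coplanar, contradicting general position. Hence the product does not vanish at $v_0$, as required.

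The main obstacle, and the place requiring genuine care, is the bookkeeping of small cases and the verification that general position really forbids all the coplanarities invoked. When $\Card(F) = 5$, removing $v_0$ leaves four points, which split cleanly into two pairs and produce a product of two linear forms; I must check that neither plane contains $\ell(v_0)$, which is exactly a ``no three coplanar lines'' statement. When $\Card(F) = 4$, removing $v_0$ leaves three points: I would pair two of them (giving one linear form $\eta$) and handle the third point $v_1$ with a second linear form $\eta'$ vanishing on $\ell(v_1)$ and on one of the paired lines, again using general position to ensure $v_0$ avoids both planes. For $\Card(F) \le 3$ the construction is even simpler, using at most one nontrivial factor together with an arbitrary linear form not vanishing at $v_0$. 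Throughout, the hypothesis $\Card(F) \le 5$ is what keeps the number of constraints below the six available dimensions, ensuring the forms we build genuinely exist.

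An alternative, slightly cleaner framing avoids case analysis: I would instead show directly that the functionals $\{q \mapsto q(v)\}_{v \in F}$ are linearly independent in $Q(\R^3)^*$. A nontrivial linear dependence $\sum_{v \in F} c_v\, q(v) = 0$ holding for all $q$ would mean every quadratic form vanishing on all but one point is forced to vanish at that point too; constructing, for each $v_0$, an explicit form separating $v_0$ from the rest (as above) contradicts this and yields independence. Either way, the geometric heart is the single implication ``general position $\Rightarrow$ the separating product of linear forms exists,'' and once that is in hand the dimension count is purely formal via rank-nullity.
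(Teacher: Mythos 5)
Your proposal is correct and follows essentially the same route as the paper: both reduce the statement to full rank of the evaluation map $q \mapsto (q(v_1),\ldots,q(v_n))$ via rank--nullity, and both establish this by constructing, for each point of $F$, a separating quadratic form as a product of linear forms from the $L_{vw}$, with general position ruling out the offending coplanarities. Your write-up merely spells out the small-cardinality cases in more detail than the paper does.
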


\begin{proof}
Let $F=\{x_1, \ldots, x_n\}$ where $n= \Card(F)$. 
Note that for each $i$,  the map $q \mapsto q(v_i)$ is a linear 
functional on $Q(\R^3)$.  The set $Q_F$ is the kernel of the homomorphism
$\phi: Q(\R^3) \rightarrow \R^n$ defined by
\[ \phi(q)~ =~  \left( q(v_1), \ldots, q(v_n) \right). \]
Since $\dim(Q(\R^3))=6$, it suffices to show that $\phi$ 
has full rank. If $n=1$, then this is true.  If $1<n\leq 5$, then
suppose that there exist $(a_1, \ldots, a_n) \in \R^n$ such that 
for each $q \in Q(\R^3)$ we have $\sum_i a_i \cdot q(v_i)=0$. 

Since $F$ is in general position, for each $j$ there exists 
$q_j \in Q(\R^3)$ such that $q_j(v_j)\neq 0$ and $q_j(v_i)= 0$ if $i\neq j$.
For example, if $n=5$ and $j=j_1$, then choose $\eta \in L_{v_{j_2},v_{j_3}}$,
$\eta' \in L_{v_{j_{4}}v_{j_5}}$, and set $q= \eta \cdot \eta'$.

Hence $a_j \cdot q_j(v_j) =0$ and therefore $a_j=0$ for each $j$.
Thus, $\phi$ has full rank.
\end{proof}

We also have the following variant of the preceding proposition
that will be used in \S \ref{SectionRealizable}.
Let $dq_x: \R^3 \rightarrow \R$ denote the differential of $q$ at $x$.

\begin{prop} \label{Dimension2}
Let $n =3$ or $4$, and let $\{v_1, \ldots, v_n\} \subset \R^3 \setminus \{0\}$ 
be in general position. Let $\{w_{1} \ldots w_{5-n}\}$ be a subset 
of $\R^3 \setminus \{0\}$ such that for each $i,k$, the vector
$w_k$ does not belong to the plane spanned by $v_k$ and $v_i$.
Then the set of quadratic forms $q$ with $q(v_i)= 0$ and 
$dq_{v_i}(w_i)=0$ is a 1-dimensional vector space.   
\end{prop}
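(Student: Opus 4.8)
The plan is to mimic the proof of Proposition \ref{Dimension}. Since $\dim Q(\R^3)=6$ and each of the two types of condition is linear in $q$, the space in question is the kernel of the evaluation map
\[ \psi \colon Q(\R^3) \to \R^5, \qquad \psi(q)= \bigl( q(v_1),\ldots,q(v_n),\, dq_{v_1}(w_1),\ldots,dq_{v_{5-n}}(w_{5-n}) \bigr). \]
These are $n+(5-n)=5$ functionals, so it suffices to show that $\psi$ has full rank, i.e. that the five functionals $q\mapsto q(v_i)$ and $q\mapsto dq_{v_i}(w_i)$ are linearly independent; then $\ker\psi$ has dimension $6-5=1$. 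Throughout I would use that $dq_v(w)=2\,q(v,w)$ for the polarization $q(\cdot,\cdot)$, so the differential conditions read $q(v_i,w_i)=0$.

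The tool for isolating functionals is again the product construction, together with the identity $(\eta\eta')(x,y)=\frac12\bigl(\eta(x)\eta'(y)+\eta(y)\eta'(x)\bigr)$ for the polarization of a product of linear forms. To show that a relation $\sum_i a_i\,q(v_i)+\sum_k b_k\,q(v_k,w_k)=0$ holding for all $q$ must be trivial, I would exhibit, for each index, a quadratic form $q=\eta\eta'$ detected by exactly that functional. A point condition $q(v_j)=0$ is killed as soon as one factor vanishes on a relevant spanning plane; a tangency condition $q(v_k,w_k)=0$ is killed when both factors vanish at $v_k$, or when one factor vanishes on the plane $\mathrm{span}(v_k,w_k)$ (this last option needs $w_k\not\parallel v_k$). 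General position then guarantees that the surviving value $q(v_j)=\eta(v_j)\eta'(v_j)$ is nonzero, because $v_j$ avoids the spanning planes of the remaining vectors.

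The delicate step, and the only place the transversality hypothesis on the $w_k$ is truly used, is isolating a point functional $q\mapsto q(v_j)$ while keeping all tangency conditions satisfied, and likewise isolating a tangency functional. After fixing one factor $\eta$ (vanishing on a suitable plane), the conditions on the second factor $\eta'$ reduce to requiring $\eta'$ to vanish on a subspace $W$ — spanned by some of the $v_i$ together with a vector $u=\eta(v_j)\,w_k+\eta(w_k)\,v_j$ arising from a tangency condition — while demanding $\eta'(v_j)\neq 0$. Such an $\eta'$ exists precisely when $v_j\notin W$, and since $\eta(v_j)\neq 0$ the membership $v_j\in W$ would force $w_k\in\mathrm{span}(v_k,v_j)$, which is exactly what the hypothesis forbids.

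I expect this transversality bookkeeping to be the main obstacle, and I would carry it out in the two cases separately. For $n=3$ there are two tangency functionals $\kappa_1,\kappa_2$; isolating $q(v_3)$ forces both factors into $L_{v_1v_2}$, so one must use $q=\eta^2$, and isolating $q(v_1)$ (resp. $q(v_2)$) uses a factor in $L_{v_2w_2}$ (resp. $L_{v_1w_1}$). For $n=4$ there is a single tangency functional, and isolating each $q(v_i)$ uses two factors vanishing at a common $v$, which automatically kills the tangency term. In both cases the remaining coefficients $b_k$ vanish once the $a_i$ do, since each $\kappa_k$ is itself nonzero and separable from the others by the same transversality. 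The remaining verifications are routine applications of general position.
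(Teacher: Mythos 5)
Your proposal is correct and follows essentially the same route as the paper: both reduce the statement to showing that the five evaluation functionals $q\mapsto q(v_i)$ and $q\mapsto dq_{v_k}(w_k)$ on the six-dimensional space $Q(\R^3)$ are linearly independent, by exhibiting for each functional a product of linear forms detected by that functional alone. In fact the paper merely asserts the existence of these separating forms, so your explicit bookkeeping with the vectors $u=\eta(v_k)w_k+\eta(w_k)v_k$ (note the index should be $k$, not $j$, in that expression) supplies detail the paper's own proof leaves implicit.
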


\begin{proof}
The set in question is the kernel of the homomorphism
$\phi: Q(\R^3) \rightarrow \R^5$ defined by
\[ \phi(q)~ =~  \left( q(v_1), \ldots, q(v_n), dq_{v_1}(w_1), \cdots, dq_{v_{5-n}}(w_{5-n}) 
   \right). \]
It suffices to show that $\phi$ has full rank.

Suppose that there exist $(a_1, \ldots, a_n) \in \R^n$ and $(b_1,\ldots, b_{5-n})$
such that for each $q \in Q(\R^3)$ we have 
\[  \sum_i a_i \cdot q(v_i)~ +~ \sum_k b_k \cdot dq_{v_k}(w_k)~ =~ 0.  \] 
Since $F$ is in general position and each $w_k$ does not belong to 
the plane spanned by $v_k$ and $v_i$, for each $i=1,\ldots, n$, 
there exists $q_i \in Q(\R^3)$ such that $q_i(v_i) \neq 0$ but 
$q_i(v_j)=0$ for each $j \neq i$ and $d(q_i)_{v_k}(w_k)=0$ for each $k$.
Similarly, for each $k=1, \ldots, 5-n$, there exists $q_k'$ so that 
$d(q_k')_{v_k}(w_k) \neq 0$ but 
$q_i(v_j)=0$ for each $j$ and $dq_{v_j}(w_j)=0$ for each $j \neq k$.

It follows that $a_i=0$ for each $i$ and $b_k=0$ for each $k$. 
Thus, $\phi$ has full rank.
\end{proof}

Let $F \subset \R^3$ be a triple. For each $x \in F$, define $D_{x} \subset Q_F$ 
to be the set of quadratic forms $d$ such that $d(x,v)=0$ 
for all $v \in \R^3$. Note that the set $D_x$ is a 1-dimensional subspace. 
If $d \in D_x$ and $d \neq 0$, then  
\[ d^{-1}\{0\}= \bigcup_{y \in F \setminus \{x\}} \langle x, y \rangle. \]
where $\langle x, y \rangle$ is the plane spanned by $x$ and $y$. 

\begin{lem} \label{Triple}
Let $F=\{v_1,v_2,v_3 \}$ and let $d_i \in D_{v_i}$.
If the triple $F$ is in general position, 
then $\{d_1,d_2,d_3\}$ is a basis for $Q_F$.  
\end{lem}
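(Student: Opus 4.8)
The plan is to show first that the three distinguished lines $D_{v_1}, D_{v_2}, D_{v_3}$ are linearly independent in $Q_F$, and then to invoke the dimension count from Proposition~\ref{Dimension} to conclude that they span. Since each $D_{v_i}$ is a $1$-dimensional subspace of $Q_F$, picking a nonzero $d_i \in D_{v_i}$ for each $i$ gives three vectors, and $\dim(Q_F) = 6 - 3 = 3$ by Proposition~\ref{Dimension} (using that $F$ has cardinality $3 \le 5$ and is in general position). So it suffices to prove that $\{d_1, d_2, d_3\}$ is a linearly independent set; linear independence of three vectors in a $3$-dimensional space forces them to be a basis.

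To prove linear independence, the key structural fact is the description of the zero set of a nonzero $d \in D_{v_i}$ given just before the statement: $d^{-1}\{0\} = \langle v_i, v_j \rangle \cup \langle v_i, v_k \rangle$, where $\{i,j,k\} = \{1,2,3\}$. Geometrically, the vanishing locus of $d_i$ is the union of the two planes through $v_i$ and each of the other two points. The strategy is to exploit the asymmetry in this description: I would evaluate a hypothetical linear relation on cleverly chosen test vectors to peel off the coefficients one at a time. Concretely, suppose $a_1 d_1 + a_2 d_2 + a_3 d_3 = 0$ in $Q_F$. I would choose, for each index $i$, a vector $u_i$ that lies on neither of the two planes in $d_i^{-1}\{0\}$ but does lie on a vanishing plane of each of the other two forms $d_j$ ($j \ne i$). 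Evaluating the relation as a quadratic form at such a $u_i$ then kills the two terms $a_j d_j(u_i)$ and leaves $a_i d_i(u_i) = 0$; since $d_i(u_i) \ne 0$ by construction, we get $a_i = 0$. Doing this for $i = 1, 2, 3$ yields linear independence.

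The main obstacle is verifying that such test vectors $u_i$ actually exist, and this is exactly where the general position hypothesis is used. I need a vector $u_i$ satisfying $u_i \in \langle v_j, v_k \rangle$ (so that $d_j$ and $d_k$, whose zero loci both contain the plane $\langle v_j, v_k \rangle$... ) — here I must be careful, since $d_j^{-1}\{0\} = \langle v_j, v_i\rangle \cup \langle v_j, v_k\rangle$ does contain $\langle v_j, v_k\rangle$, and likewise for $d_k$. So a vector $u_i$ lying on the plane $\langle v_j, v_k \rangle$ automatically satisfies $d_j(u_i) = d_k(u_i) = 0$, and I only need in addition that $u_i \notin \langle v_i, v_j \rangle \cup \langle v_i, v_k \rangle$ so that $d_i(u_i) \ne 0$. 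The existence of such a $u_i$ in the two-dimensional plane $\langle v_j, v_k\rangle$, avoiding two other planes, reduces to checking that the plane $\langle v_j, v_k\rangle$ is not contained in either $\langle v_i, v_j\rangle$ or $\langle v_i, v_k\rangle$ — equivalently, that no three of the lines $\ell(v_1), \ell(v_2), \ell(v_3)$ are coplanar, which is precisely the definition of general position. Given this, the plane $\langle v_j, v_k\rangle$ meets each of the two forbidden planes in a line, and since a plane is not covered by two lines, a suitable $u_i$ exists. This completes the argument once the dimension count is combined with the independence.
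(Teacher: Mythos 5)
Your proof is correct and follows essentially the same route as the paper: both arguments establish linear independence of $\{d_1,d_2,d_3\}$ by restricting a hypothetical relation to a plane $\langle v_j, v_k\rangle$ on which two of the forms vanish identically while the third does not (by general position), and then conclude via the dimension count $\dim(Q_F)=3$ from Proposition~\ref{Dimension}. Your extra detail—exhibiting an explicit test vector $u_i$ in that plane avoiding the two lines where $d_i$ vanishes—is just a slightly more explicit version of the paper's observation that $d_k$ does not vanish identically on the span of $\{v_i,v_j\}$.
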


We will call  $\{d_1, d_2, d_3\}$ a {\em natural basis} for $Q_F$.

\begin{proof}
Suppose that there exist $a_i \in \R$ such that 
$\sum a_i \cdot d_i(v)=0$ for all $v \in \R^3$. Let $\{i,j,k\}$ be
distinct indices. Note that both $d_i$ and $d_j$ vanish identically on the
span of $\{v_{i}, v_j\}$, but since $\ell(F)$ is not coplanar,
the form $d_k$ does not. It follows that $a_{k}=0$.  
Hence the forms  $\{d_1,d_2,d_3\}$ are independent, 
and the claim follows from Proposition \ref{Dimension}.
\end{proof}

\begin{coro} \label{Type11}
If the triple $F$ is in general position and $q \in Q_F \setminus \{0\}$
is degenerate, then $q$ is of type $(1,1)$.
\end{coro}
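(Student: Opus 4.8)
The plan is to enumerate the signatures that a nonzero degenerate quadratic form on $\R^3$ can carry and to eliminate every possibility except $(1,1)$ by exploiting the fact that $q$ annihilates a basis of $\R^3$.

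First I would record what general position gives for a triple. Since $F = \{v_1, v_2, v_3\}$ consists of exactly three points, the condition that no three of the lines in $\ell(F)$ are coplanar says precisely that $\ell(v_1), \ell(v_2), \ell(v_3)$ do not lie in a common plane, i.e. that $v_1, v_2, v_3$ are linearly independent and hence form a basis of $\R^3$. By definition of $Q_F$ we have $q(v_i) = 0$ for $i = 1,2,3$, so $q$ vanishes on a spanning set of $\R^3$. Next I would list the candidates: for $q \neq 0$ we have $n_+(q) + n_-(q) \geq 1$, and degeneracy means $n_0(q) \geq 1$; since $n_+(q) + n_-(q) + n_0(q) = 3$, the admissible signatures are exactly $(2,0)$, $(0,2)$, $(1,1)$, $(1,0)$, and $(0,1)$. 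The task is to rule out the four semi-definite cases, leaving only $(1,1)$.

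The crux is the following observation, which I would deduce from Sylvester's law: if $q$ is semi-definite, then its zero locus $q^{-1}\{0\}$ coincides with its radical ${\rm rad}(q)$. Indeed, choosing a basis that diagonalizes the polarization, we may write $q(x) = \sum_{i=1}^{r} \ep_i\, x_i^2$ with $r = n_+(q) + n_-(q)$ and all $\ep_i$ of the same sign; then $q(x) = 0$ forces $x_1 = \cdots = x_r = 0$, which is exactly the radical. (By contrast, for the indefinite signature $(1,1)$ the zero locus is a cone strictly larger than ${\rm rad}(q)$, which is why that case is not excluded.)

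Finally I would combine these facts. If $q$ were semi-definite, then the basis vectors $v_1, v_2, v_3$ would all lie in $q^{-1}\{0\} = {\rm rad}(q)$, forcing ${\rm rad}(q) = \R^3$ and hence $q = 0$, contradicting $q \in Q_F \setminus \{0\}$. Thus $q$ is not semi-definite, and among the degenerate signatures the only survivor is $(1,1)$. The one step requiring care is the bookkeeping lemma identifying the zero set of a semi-definite form with its radical; once that is in hand, the conclusion is immediate from Sylvester's law together with the spanning property supplied by general position.
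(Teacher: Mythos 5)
Your proof is correct and complete: general position of the triple makes $v_1,v_2,v_3$ a basis of $\R^3$, a nonzero semi-definite form vanishes only on its radical, and so a degenerate $q\in Q_F\setminus\{0\}$ vanishing on all of $F$ must be indefinite, i.e.\ of type $(1,1)$. The paper states this as a corollary without supplying any argument, and the reasoning you give (together with the enumeration of the possible degenerate signatures on $\R^3$) is exactly the standard justification it implicitly relies on.
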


If $F$ is a triple in general position, then the vector space $Q_F$ has a canonical 
orientation. Indeed, note that since $F$ is in general position, 
then $F$ has a canonical cyclic ordering: If $F= \{x,y,z\}$, then we say that 
$y$ follows $x$ if and only if  $(y-x, z-x, x)$ is an oriented basis for $\R^3$.
We say that a natural basis $\{q_x~ \in Q_x~ |~ x \in F\}$ is {\em negative}
if and only if for each $p$ in the interior of the convex hull of $F$
and each $x \in F$ we have $q_x(p)<0$. We say that an ordered negative natural basis  
$(d_{1},d_{2},d_{3})$ is {\em oriented} if and only if the corresponding 
ordered triple $(x_1,x_2,x_3) \in F^3$ is cyclically ordered. 
In general, a basis of $Q_F$ is oriented if and only if it differs from 
an oriented negative basis by a linear transformation having positive determinant.

We now turn to quadruples $F \subset \R^3$.

\begin{lem} \label{Quadruple}
If the quadruple $F$ is in general position, then 
the set of degenerate quadratic forms in $Q_F$ is a union
of three distinct lines. 
\end{lem}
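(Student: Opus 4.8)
The plan is to realize the degenerate forms in $Q_F$ as the zero set of a single cubic on the plane $Q_F$ and then to exhibit three explicit degenerate forms that account for all of it. By Proposition \ref{Dimension}, $\dim(Q_F) = 6 - 4 = 2$. Degeneracy of $q$ is the vanishing of the determinant of the matrix of its polarization, a homogeneous cubic on the six-dimensional space $Q(\R^3)$; restricting this determinant to $Q_F$ and choosing a basis $\{q_1, q_2\}$ identifies the degenerate members of $Q_F$ with the zeros of the homogeneous cubic $p(s,t) = \det(s q_1 + t q_2)$ in the two variables $s,t$.

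To produce the three lines I would use the product construction of \S\ref{SectionQuadratic}. Write $F = \{v_1, v_2, v_3, v_4\}$. There are exactly three ways to split $F$ into two pairs, and for each I set $P_1 = \eta_{12}\,\eta_{34}$, $P_2 = \eta_{13}\,\eta_{24}$, $P_3 = \eta_{14}\,\eta_{23}$, where $\eta_{ij} \in L_{v_i v_j}$ is a nonzero linear form vanishing on $\langle v_i, v_j \rangle$. Each $P_k$ is a product of two linear forms, hence has rank at most two and is therefore degenerate, and each vanishes on all four points, so $P_k \in Q_F$. These three lines are distinct: if, say, $P_1$ and $P_2$ were proportional then their zero sets $\langle v_1, v_2\rangle \cup \langle v_3, v_4\rangle$ and $\langle v_1, v_3\rangle \cup \langle v_2, v_4\rangle$ would coincide, but general position (no three lines of $\ell(F)$ coplanar) makes these four planes pairwise distinct.

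Next I would show that these three lines exhaust the degenerate locus. Since $P_1$ and $P_2$ are non-proportional they form a basis of $Q_F$, and because $\det(P_1) = \det(P_2) = 0$ the cubic $p(s,t) = \det(sP_1 + tP_2)$ has vanishing $s^3$- and $t^3$-coefficients, so it factors as $p(s,t) = s\,t\,(c_1 s + c_2 t)$. Writing $P_3 = \alpha P_1 + \beta P_2$ with $\alpha, \beta \neq 0$ (forced by non-proportionality), degeneracy of $P_3$ gives $\alpha\beta(c_1\alpha + c_2 \beta) = 0$, hence $c_1 \alpha + c_2\beta = 0$, so that $\R P_3$ is exactly the third line $\{c_1 s + c_2 t = 0\}$. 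Provided $p \not\equiv 0$, its zero set is precisely the union of the three distinct lines $\R P_1$, $\R P_2$, $\R P_3$, which is the claim.

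The remaining, and main, obstacle is to verify that $p$ does not vanish identically, equivalently that $Q_F$ contains a nondegenerate form (so that $(c_1,c_2)\neq(0,0)$). Because a general-position quadruple of lines can be carried to $\{\ell(e_1),\ell(e_2),\ell(e_3),\ell(e_1+e_2+e_3)\}$ by an element of $GL_3(\R)$, and since degeneracy—the vanishing of the determinant—transforms by $\det(g)^2$ under $q \mapsto q\circ g$ and is thus $GL_3(\R)$-invariant, I may reduce to this normal form. A direct computation then shows that $Q_F$ consists of the off-diagonal symmetric matrices $(M_{12}, M_{13}, M_{23})$ with $M_{12} + M_{13} + M_{23} = 0$, whose determinant is $2\,M_{12}M_{13}M_{23}$, which is nonzero for a suitable choice. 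This settles $p \not\equiv 0$ and completes the argument; the only point requiring care is the legitimacy of the reduction to normal form and the invariance of degeneracy just noted.
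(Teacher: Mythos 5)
Your proof is correct, but it follows a genuinely different route from the paper's. The paper's argument is classification-first: it invokes Corollary \ref{Type11} (itself resting on the natural-basis Lemma \ref{Triple}) to conclude that any nonzero degenerate $q \in Q_F$ has signature $(1,1)$, hence factors as $\eta \cdot \eta'$; general position then forces each linear factor to vanish on exactly two of the four points, so $q$ lies on one of the three lines $Q_{ij,k\ell}$ determined by the three pairings. You instead run a pencil-of-conics argument: restrict the determinant cubic to the two-plane $Q_F$, observe that the three explicit split forms $P_1, P_2, P_3$ kill the $s^3$ and $t^3$ coefficients and pin down the third root, and then close the argument by checking via the normal form $\{e_1, e_2, e_3, e_1+e_2+e_3\}$ that $Q_F$ contains a nondegenerate form (determinant $2M_{12}M_{13}M_{23}$ on the constraint plane $M_{12}+M_{13}+M_{23}=0$), so the cubic is not identically zero. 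The paper's proof is shorter given its earlier structural lemmas and needs no coordinates; yours is independent of the signature classification of Corollary \ref{Type11}, is self-contained modulo Proposition \ref{Dimension}, and has the additional virtue of making the "exactly three" count visibly a statement about the roots of a binary cubic. All the delicate points in your version --- distinctness of the three lines from general position, nonvanishing of $\alpha, \beta$, the legitimacy of the $GL_3(\R)$ reduction and the $\det(g)^2$ transformation rule --- are correctly handled.
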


\begin{proof}
Let $F=\{v_{1},v_2,v_3,v_4\}$. 
For $\{i,j,k, \ell\}=\{1,2,3,4\}$, define $Q_{ij,k\ell}$ to be the 
set of pairwise products of linear forms from $L_{v_iv_j}$ and $L_{v_jv_k}$. 
Each $Q_{ij, k\ell}$ is a line in $Q_F$, and there are three distinct
such lines. 

Let $q \in Q_F$ be degenerate. It follows from Corollary 
\ref{Type11} that $q$ is of type $(1,1)$. Thus, there exist 
linear forms $\eta, \eta' \in (\R^3)^*$ such that $q=\eta \cdot \eta'$.
Since $q \in Q_F$, for each $i=1,2,3$, either  $\eta(v_i)=0$ or  $\eta'(v_i)=0$. 
Since $F$ is in general position, $\eta$ and $\eta'$ each vanish on 
exactly two points in $F$ and since $q \in Q_F$ these pairs are distinct. 
Thus, $q$ belongs to one of the distinct lines.
\end{proof}


\section{Subconics in the plane} 
                 \label{SectionPlanarSubconics}

Recall that a conic section is the intersection of a hyperplane in $\R^3$  
and  the zero level set of a quadratic form $q \in Q(\R^3)$.  
In the context of translation surfaces, 
we are mainly interested in the intersection of 
the sublevel set $q^{-1}\{(-\infty, 0)\}$ and a 
hyperplane. For simplicity, we choose this hyperplane to be 
$\{(x_1,x_2,1):~ (x_1,x_2) \in \R^2\}$. The following notation will 
be convenient.

\begin{nota}
Given a point $x=(x_1,x_2) \in \R^2$, let $\hat{x}=(x_1,x_2,1)$.
Given a subset $Z \subset \R^2$, let $\hat{Z}= \{ \hat{x}~ |~ x\in Z\}$. 
\end{nota}

\begin{defn}
The {\em subconic associated to $q$} is the set 
\[ U_{q}~ =~ \left\{ x \in \R^2~ |~ q \left(\hat{x} \right)~ <~ 0 \right\}. \] 
The subconic $U_q$ said to be {\em nontrivial} if and only if $U_q$ is
a proper subset of $\R^2$. The {\em boundary} of $U_q$ is the set 
\[ \partial U_q~ =~ \left\{ x \in \R^2~ |~ q \left( \hat{x} \right)~ =~ 0 \right\}. \] 
\end{defn} 

Given a quadratic form $q \in Q(\R^3)$, let $\underq$ denote the 
restriction of $q$ to the the plane  $\{ (x,y,0)~ |~ (x,y) \in \R^2\}$.
Subconics can be classified according to the signatures
of $q$ and $\underline{q}$.  

\begin{defn}
A subconic $U_q$ is called an {\em ellipse interior} if and only if $q$ has 
signature $(2,1)$ and $\underline{q}$ has signature $(2,0)$.
\end{defn}

\begin{defn}
A subconic $U_q$ is called a {\em strip} if and only if $q$ has 
signature $(1,1)$ and $\underline{q}$ has signature $(0,0)$.
\end{defn}

\begin{defn}
A subconic $U_q$ is called a {\em half-plane} if and only if $q$ has 
signature $(1,1)$ and $\underline{q}$ has signature $(0,0)$.
\end{defn}

\begin{defn}
A subconic $U_q$ is called a {\em parabola interior} if and only if $q$ has 
signature $(2,1)$ and $\underline{q}$ has signature $(1,0)$.
\end{defn}

\begin{remk}
In this paper, we will be almost exclusively 
concerned with ellipse interiors and strips.
See Proposition \ref{EllipseStrip}.
\end{remk}

The following propositions are elementary.

\begin{prop} \label{ConvexClassification}
A non-trivial subconic is convex if and only if 
it is an ellipse interior, a parabola interior, a strip, or a half-plane. 
\end{prop}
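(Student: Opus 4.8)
The plan is to classify a non-trivial subconic $U_q$ by examining the signatures of $q$ and $\underline{q}$, and to determine which signature combinations yield convex sets. A natural first step is to put $q$ into a normal form. After a linear change of coordinates in the $(x_1,x_2)$-plane (which preserves convexity and preserves the property of being a non-trivial subconic), I can diagonalize $\underline{q}$, and then, by completing the square, absorb the cross-terms involving the third coordinate. This reduces $q(\hat{x}) = q(x_1,x_2,1)$ to a short list of canonical quadratic expressions in $(x_1,x_2)$, indexed by the signatures of $q$ and $\underline{q}$.

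The core of the argument is then a case analysis over the possible signatures. Since $q$ is a form on $\R^3$, its signature $(n_+,n_-)$ satisfies $n_+ + n_- \leq 3$, and the signature of $\underline{q}$ constrains that of $q$ by interlacing (restricting a form to a subspace cannot increase either index). First I would rule out the degenerate and trivial cases: if $q$ is positive definite or negative definite then $U_q$ is empty or all of $\R^2$, hence trivial; and certain signatures force $U_q$ to be empty, non-convex, or improper. For the remaining cases I would exhibit, in the normal form, exactly which sublevel set $\{q(\hat{x}) < 0\}$ arises. Concretely, signature $(2,1)$ for $q$ with $\underline{q}$ of signature $(2,0)$ gives an inequality of the form $x_1^2 + x_2^2 < c$ with $c > 0$, an ellipse interior; signature $(2,1)$ for $q$ with $\underline{q}$ of signature $(1,0)$ gives $x_1^2 < a x_2 + b$, a parabola interior; and signature $(1,1)$ with $\underline{q}$ degenerate gives either a slab $|x_1| < c$ (strip) or a half-plane $x_1 < c$, depending on whether the linear part is present. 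In each of these cases the set is a sublevel set of a convex function, hence convex, so convexity holds.

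For the converse direction I must show that every \emph{other} signature combination producing a non-trivial subconic fails to be convex. The key culprits are the indefinite cases where $\underline{q}$ itself is indefinite or where $q$ has signature $(1,2)$ or $(2,1)$ with $\underline{q}$ of signature $(0,1)$ or $(1,1)$: these give hyperbola interiors or exteriors (e.g. $x_1^2 - x_2^2 < c$, or $x_1^2 + x_2^2 > c$), and I would demonstrate non-convexity by displaying two points in $U_q$ whose connecting segment exits $U_q$ — for instance, the two branches of a hyperbola's interior, or two antipodal points of an ellipse exterior. This is the only place where real geometric content beyond bookkeeping enters, and I expect the main obstacle to be organizing the case analysis cleanly rather than any individual case being hard: one must be careful that the classification by the \emph{pair} of signatures $(q,\underline{q})$ is genuinely exhaustive and that no case is silently omitted. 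Since the proposition is asserted to be elementary, I would keep the write-up to the normal-form reduction followed by a compact table of the finitely many signature pairs, checking convexity or producing a non-convexity witness in each.
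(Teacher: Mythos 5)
The paper gives no argument for this proposition --- it is dismissed as ``elementary'' --- so there is no proof to compare against; your plan (affine normal form followed by a case analysis on signatures) is the standard way to make it precise, and it is correct. Two points deserve explicit care when you write out the table. First, the completion of the square only works in the directions where $\underline{q}$ is nondegenerate; the linear terms that survive in the null directions of $\underline{q}$ are exactly what separates the cases (parabola interior versus strip when $\underline{q}$ has signature $(1,0)$, half-plane versus trivial when $\underline{q}\equiv 0$), so the classification is really by the signature of $\underline{q}$ together with the residual linear and constant data, not by the pair of signatures alone. Second, your list of non-convexity witnesses emphasizes the indefinite cases, but the negative semidefinite ones must appear too: when $q$ has signature $(0,1)$ or $(0,2)$ the set $U_q$ can be the plane minus a line or the plane minus a point, which is nontrivial and non-convex; your exhaustive table would catch these, but they are easy to omit if one only hunts for hyperbolas and ellipse exteriors. (Also note that some signature pairs you list, e.g.\ $q$ of signature $(2,1)$ with $\underline{q}$ of signature $(0,1)$, are ruled out by interlacing and so are vacuous rows of the table.)
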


\begin{prop} \label{EllipseBounded}
A nontrivial subconic is bounded if and only if it is an ellipse interior. 
\end{prop}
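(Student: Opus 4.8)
The plan is to analyze the growth of $q$ along the affine plane $x_3 = 1$ by splitting it into its pieces of degree $2$, $1$, and $0$ in $x = (x_1,x_2)$. Writing $e_3 = (0,0,1)$ and $\tilde{x} = (x_1,x_2,0)$, polarization gives
\[ q(\hat{x}) ~=~ \underline{q}(x) + 2\, q(\tilde{x}, e_3) + q(e_3), \]
so $q(\hat{x}) = \underline{q}(x) + 2 b(x) + c$ for a fixed linear form $b(x) = q(e_3,\tilde{x})$ and constant $c = q(e_3)$. The whole statement reduces to the claim that, \emph{for a nonempty subconic}, $U_q$ is bounded if and only if $\underline{q}$ is positive definite, which I will then match with the signature conditions defining an ellipse interior. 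I note that nonemptiness must be assumed: $q(\hat{x}) = x_1^2 + x_2^2 + 1$ yields an empty, bounded subconic that is not an ellipse interior, so the literal biconditional holds only once the empty case is excluded.

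For the forward implication, suppose $U_q$ is an ellipse interior, so $\underline{q}$ has signature $(2,0)$ and hence $\underline{q}(x) \geq \lambda |x|^2$ for some $\lambda > 0$. Since $b$ is linear and $c$ constant, $q(\hat{x}) \to +\infty$ as $|x| \to \infty$, so $U_q = \{x : q(\hat{x}) < 0\}$ is contained in a ball and is bounded.

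For the converse, suppose $U_q$ is bounded and nonempty, and fix $x_0 \in U_q$. I will show $\underline{q}$ is positive definite by ruling out the remaining possibilities. If $\underline{q}(v) < 0$ for some $v$, then $q(\widehat{x_0 + t v})$ is a downward-opening quadratic in $t$, so the ray $\{x_0 + t v : t \geq 0\}$ eventually enters $U_q$, contradicting boundedness. If $\underline{q}$ is positive semidefinite but degenerate, pick $v \neq 0$ in its kernel; by Cauchy--Schwarz for semidefinite forms $\underline{q}(x_0, v) = 0$, and the expansion above collapses to $q(\widehat{x_0 + t v}) = q(\hat{x_0}) + 2 t\, b(v)$. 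If $b(v) \neq 0$ this is nonconstant affine, so one ray from $x_0$ in direction $v$ lies in $U_q$; if $b(v) = 0$ the whole line $\{x_0 + t v\}$ lies in $U_q$. Either way $U_q$ is unbounded, a contradiction. Hence $\underline{q}$ has signature $(2,0)$. Restricting $q$ to $\{x_3 = 0\}$ then gives $n_+(q) \geq 2$, while $q(\hat{x_0}) < 0$ gives $n_-(q) \geq 1$; since $n_+(q) + n_-(q) + n_0(q) = 3$, the signature of $q$ is $(2,1)$, so $U_q$ is an ellipse interior.

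I expect the main obstacle to be the degenerate positive-semidefinite case. The key point there is that a kernel direction $v$ of $\underline{q}$ is automatically $\underline{q}$-orthogonal to $x_0$, which is what kills the would-be quadratic growth of $q$ along $x_0 + t v$ and forces an unbounded ray or line inside $U_q$. The indefinite case and the closing signature count are routine by comparison.
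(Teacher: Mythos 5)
Your proof is correct. Note, though, that the paper offers no argument for this proposition at all --- it is listed among ``the following propositions are elementary'' in \S\ref{SectionPlanarSubconics} --- so there is no official proof to compare against; yours is the natural one. The decomposition $q(\hat{x}) = \underline{q}(x) + 2b(x) + c$, the case analysis on the signature of $\underline{q}$ (indefinite gives a downward parabola along a ray; positive semidefinite degenerate gives, via Cauchy--Schwarz killing the cross term, an affine or constant restriction and hence an unbounded ray or line in $U_q$), and the closing signature count $n_+(q)\geq 2$, $n_-(q)\geq 1$ are all sound. Your observation that the literal biconditional fails for the empty subconic (e.g.\ $q(\hat{x}) = x_1^2 + x_2^2 + 1$, which is bounded, nontrivial in the paper's sense of being a proper subset of $\R^2$, yet of signature $(3,0)$) is a genuine, if minor, gap in the paper's statement that you were right to flag and to repair by assuming nonemptiness in the converse direction.
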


Let $\Scal(\R^2)$ denote the set of all nontrivial subconics. 
The natural topology on $Q(\R^3)$ induces a topology on $\Scal(\R^2)$.
Namely, a collection  of subconics is said to be open
if and only if the set of corresponding quadratic forms is open.

The following propositions are elementary.

\begin{prop} \label{EllipseOpenFrontier}
The set of ellipse interiors is open in $\Scal(\R^2)$, and its
frontier equals the set of parabola interiors, strips, and half-planes.
\end{prop}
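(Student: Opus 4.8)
The plan is to transfer the entire statement to the level of quadratic forms, where the topology is the usual one on $Q(\R^3)$, and to exploit continuity of eigenvalues together with Cauchy interlacing between $q$ and $\underline{q}$. By definition a set of subconics is open exactly when the corresponding set of forms is open, and the assignment $q \mapsto U_q$ is invariant under $q \mapsto \lambda q$ for $\lambda>0$, since the signatures of $q$ and $\underline{q}$ are unchanged. Because the topology on $\Scal(\R^2)$ is thus the quotient by positive scaling, the image of a saturated (i.e. scaling-invariant) open set of forms is open, and the closure of the set of ellipse-interior forms corresponds to the closure of the set $E := \{q : \text{sig}(q)=(2,1),\ \underline{q}\ \text{positive definite}\}$. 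Hence it suffices to work with $E$ inside $Q(\R^3)$ and compute its frontier there.

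For openness, I would write $E = \{q : \text{sig}(q)=(2,1)\} \cap \{q : \underline{q}\ \text{positive definite}\}$. The first set is open because the signature is locally constant on the set of nondegenerate forms: the eigenvalues of the associated symmetric matrix depend continuously on $q$, so no eigenvalue can change sign without passing through $0$. The second set is open because positive-definiteness is an open condition (Sylvester's criterion on the principal minors) and $q \mapsto \underline{q}$ is linear, hence continuous. An intersection of two open sets is open, and as $E$ is saturated the corresponding set of ellipse interiors is open in $\Scal(\R^2)$.

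For the frontier I would first check that each of the three listed types lies in $\overline{E}\setminus E$. None is an ellipse interior, since the signature pair differs from $(2,1)/(2,0)$, so none lies in $E$; and each is a limit of $E$-forms. Given a form $q$ of parabola, strip, or half-plane type, add $t\cdot r$ with $r$ positive definite. For small $t>0$ the form $q+t\,r$ has signature $(2,1)$: in the parabola case this is the open condition $\text{sig}=(2,1)$, and in the degenerate $(1,1)$ case adding a positive definite form pushes the zero eigenvalue positive while the negative one survives. Simultaneously $\underline{q}+t\,\underline{r}$ becomes positive definite, since $\underline{q}$ is positive semidefinite and $\underline{r}$ is positive definite. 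Thus $q+t\,r\in E$ and $q+t\,r\to q$, so by continuity of the quotient map each type lies in $\overline{E}$. For the converse, let $U_q$ lie in the frontier, so $q=\lim q_n$ with $q_n\in E$ and $U_q\notin E$. Continuity of eigenvalues gives $n_+(q)\le 2$ and $n_-(q)\le 1$; since $U_q$ is nontrivial, $q$ takes a negative value, forcing $n_-(q)=1$. Likewise $\underline{q}$ is a limit of positive definite forms, hence positive semidefinite, so $n_-(\underline{q})=0$. If $q$ is nondegenerate then $\text{sig}(q)=(2,1)$, and since $U_q\notin E$ the form $\underline{q}$ is positive semidefinite but not positive definite; a direct matrix computation shows $\underline{q}=0$ would force $q$ degenerate, so $\text{sig}(\underline{q})=(1,0)$ and $U_q$ is a parabola interior. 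If $q$ is degenerate then $\text{sig}(q)=(1,1)$ with $n_0(q)=1$, and Cauchy interlacing between $q$ and its $2\times 2$ block $\underline{q}$ forces the two eigenvalues of $\underline{q}$ to be $\ge 0$ and $\le 0$; positive semidefiniteness then makes both $\ge 0$, so $\text{sig}(\underline{q})$ is $(1,0)$ or $(0,0)$, i.e. $U_q$ is a half-plane or a strip. This exhausts the frontier, the empty cases being excluded by nontriviality (Proposition~\ref{EllipseBounded}, Proposition~\ref{ConvexClassification}).

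The step I expect to be the main obstacle is this converse enumeration: one must combine the semicontinuity of signature under limits with Cauchy interlacing to pin down the signature of $\underline{q}$ exactly, while carefully invoking nontriviality to discard the positive-semidefinite and empty cases. The openness half and the explicit perturbations realizing each boundary type are routine by comparison.
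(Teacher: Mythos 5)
The paper itself offers no argument here: this proposition sits under the sentence ``The following propositions are elementary,'' so there is no proof to compare yours against, only the implicit expectation that the reader argue exactly where you do, namely at the level of quadratic forms, since that is how the topology on $\Scal(\R^2)$ is defined. Your proof is correct in substance: the openness argument (local constancy of signature on nondegenerate forms, openness of positive definiteness of $\underline{q}$, saturation of $E$ under $\R^+$) and the perturbation $q\mapsto q+t\,r$ with $r$ positive definite realizing each boundary type are precisely the intended ``elementary'' computations, and the signature bookkeeping in the converse direction is sound. Three points to tighten. First, the passage from ``$U_q$ lies in the frontier'' to ``$q=\lim q_n$ with $q_n\in E$'' deserves a sentence: it holds because over an ellipse interior, parabola interior, strip, or half-plane the fiber of $q\mapsto U_q$ is a single ray $\R^+q$ (five boundary points in general position, or the two boundary lines, pin the form up to positive scale), so the quotient by scaling introduces no new limits; the paper is no more careful, but you are writing the proof. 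Second, when $q$ is degenerate you assert signature $(1,1)$ with $n_0(q)=1$ outright; the remaining degenerate case $n_+(q)=0$, $n_0(q)=2$ must be ruled out, which your own constraints do in one line (positive semidefiniteness of $\underline{q}$ forces $q=-c\,x_3^2$, whence $U_q=\R^2$ is trivial), but the line should appear. Third, note that the paper's printed definitions of ``strip'' and ``half-plane'' are verbatim identical (both demand $\underline{q}$ of signature $(0,0)$, plainly a typo); your interlacing step correctly produces the two distinct signatures $(1,0)$ and $(0,0)$ for $\underline{q}$, so you are implicitly using the corrected classification and should say so.
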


\begin{prop} \label{EllipseConvex}
The set of quadratic forms asociated to ellipse interiors 
is convex and is preserved by the $\R^+$ action.
\end{prop}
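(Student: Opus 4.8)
The plan is to treat the two assertions separately, disposing of the $\R^+$-invariance first and then reducing convexity to the behaviour of the two signature conditions. For the $\R^+$-action, suppose $U_q$ is an ellipse interior and $\lambda > 0$. The Gram matrices of $\lambda q$ and of $\underline{\lambda q} = \lambda \underline q$ are positive multiples of those of $q$ and $\underline q$, so all eigenvalues are scaled by the positive factor $\lambda$ and both signatures are unchanged. Hence $\lambda q$ again has signature $(2,1)$ with $\underline{\lambda q}$ of signature $(2,0)$, so $U_{\lambda q}$ is an ellipse interior.

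For convexity I would first isolate the ingredient that is convex on its own. For each fixed $w \in \R^2 \setminus \{0\}$ the evaluation $q \mapsto \underline q(w) = q(w_1, w_2, 0)$ is a linear functional on $Q(\R^3)$, so
\[ \left\{\, q \mid \underline q \text{ has signature } (2,0) \,\right\} = \bigcap_{w \neq 0} \left\{\, q \mid \underline q(w) > 0 \,\right\} \]
is an intersection of open half-spaces, hence convex (and manifestly an $\R^+$-cone). Next I would record the signature dichotomy that is available once $\underline q$ is positive definite: by Cauchy interlacing the Gram matrix of $q$ inherits at least two positive eigenvalues from the positive definite block $\underline q$, so its only possible signatures are $(3,0)$, $(2,1)$, or the degenerate case with a single zero eigenvalue. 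Among these, the signature $(2,1)$ of an ellipse interior is exactly the case in which $q$ assumes a strictly negative value, equivalently $U_q \neq \emptyset$ (the bounded convex region of Propositions \ref{ConvexClassification} and \ref{EllipseBounded}). Thus $U_q$ is an ellipse interior if and only if $\underline q$ is positive definite \emph{and} $U_q \neq \emptyset$.

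Given two ellipse interior forms $q_0, q_1$ and $t \in [0,1]$, the segment $q_t = (1-t) q_0 + t q_1$ automatically has $\underline{q_t}$ positive definite by the convexity of the cone above, so the whole problem reduces to showing $U_{q_t} \neq \emptyset$. This is where I expect the main obstacle. The bare nonemptiness condition $\left\{\, q \mid \exists x,\ q(\hat x) < 0 \,\right\} = \bigcup_x \left\{\, q \mid q(\hat x) < 0 \,\right\}$ is a union of half-spaces and is \emph{not} convex in isolation, so along the segment $q_t$ the form could a priori degenerate to the empty, positive definite case; the content of the statement is precisely that this degeneration does not occur. The clean way to exclude it is to exhibit a single point $p$ lying in both $U_{q_0}$ and $U_{q_1}$: then $q_0(\hat p) < 0$ and $q_1(\hat p) < 0$, and the linearity of $q \mapsto q(\hat p)$ forces $q_t(\hat p) < 0$ for every $t$, whence $p \in U_{q_t}$ and $q_t$ is an ellipse interior by the reduction of the previous paragraph. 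The crux of a complete argument is therefore the production of such a common interior point, which is furnished by the configuration in which the proposition is applied, where the relevant ellipse interiors are constrained to contain common data.
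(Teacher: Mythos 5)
The paper offers no proof of this proposition at all --- it is one of the statements dismissed with ``the following propositions are elementary'' --- so there is nothing to compare your argument against step by step. Your treatment of the $\R^+$-action is correct, as is the convexity of the condition that $\underline{q}$ be positive definite (an intersection of half-spaces in the linear functionals $q \mapsto q(w_1,w_2,0)$), and the interlacing reduction of ``$U_q$ is an ellipse interior'' to ``$\underline{q}$ positive definite and $U_q \neq \emptyset$'' is also sound.

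Your suspicion about the remaining step should be pushed to its conclusion: the proposition as literally stated is false, and the obstruction is exactly the one you isolated. Take $q_0(x_1,x_2,x_3) = x_1^2 + x_2^2 - x_3^2$ and $q_1(x_1,x_2,x_3) = (x_1 - c\,x_3)^2 + x_2^2 - x_3^2$ with $c>2$; both have signature $(2,1)$ with positive definite restriction (they define the unit disks centered at the origin and at $(c,0)$), yet
\[
\tfrac{1}{2}(q_0+q_1) ~=~ \left(x_1 - \tfrac{c}{2}x_3\right)^2 + x_2^2 + \left(\tfrac{c^2}{4}-1\right)x_3^2
\]
is positive definite, so the midpoint is not an ellipse-interior form. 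No common interior point exists for these two disks, which is precisely the hypothesis your argument needs and cannot manufacture in general. That said, every invocation of this proposition in the paper (Propositions \ref{hInverse}, \ref{fConvex}, and \ref{QuadrupleRealizable}) concerns forms vanishing on a common noncollinear set $Z$, so that all the ellipse interiors involved contain the interior of the triangle spanned by $Z$; there your common-point argument closes the gap and the convexity conclusion is correct. In short, your proof is the right proof of the statement the paper actually uses, and the step you flagged as missing is a defect of the statement as written rather than of your argument; the proposition should carry the additional hypothesis that the forms share a common point where both are negative (or vanish on a common noncollinear set).
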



\section{Subconics determined by a finite set of points}

\label{SectionZ}

Given a finite set $Z \subset \R^2$, let 
$\Scal_Z$ denote the set of nontrivial subconics whose boundary contains $Z$,
and let  $Q_Z$ denote the set of quadratic forms $q$ such that 
$q(\hat{Z})= \{0\}$.

Note that $\Scal_Z  \subset \{ U_q~ |~ q \in Q_Z \}$.
The opposite inclusion does not hold in general. For example,
if $Z$ lies on a line, then let  $\eta$ be a nontrivial linear 
form that vanishes on a plane containing $\hat{Z}$.  
The quadratic form $\eta^2$ belongs to $Q_Z$ but
 $U_{\eta^2}$ is the empty set.  

\begin{lem} \label{MissingNothing}
Suppose that $Z \subset \R^2$ is not collinear.
If $U_q$ is empty for some $q \in Q_Z$, then $q \equiv 0$.
In particular, $\Scal_Z = \{ U_q~ |~ q \in Q_Z\setminus \{0\} \}$.  
\end{lem}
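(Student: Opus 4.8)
The plan is to reduce everything to an elementary fact about non-negative quadratic polynomials in two variables. First I would restrict $q$ to the affine plane at height one: set $f(x_1,x_2) = q(\hat{x})$, a polynomial of degree at most two on $\R^2$. The hypothesis that $U_q$ is empty says exactly that $f \geq 0$ everywhere, while $q \in Q_Z$ says that $f$ vanishes on $Z$. Because $Z$ is not collinear, it contains three affinely independent points $z_1, z_2, z_3$: pick two points of $Z$, spanning a line $\ell$, and then a third point of $Z$ lying off $\ell$, which exists since $Z \not\subset \ell$.

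The key step is to deduce $f \equiv 0$. Writing $f(v) = v^{\top} H v + b^{\top} v + c$ with $H$ the symmetric matrix of the quadratic part, the inequality $f \geq 0$ together with $f(z_i)=0$ makes each $z_i$ a global minimum of the smooth function $f$, so the affine gradient $\nabla f(v) = 2Hv + b$ vanishes at each $z_i$. Subtracting these relations gives $H(z_i - z_j) = 0$; since $z_1 - z_3$ and $z_2 - z_3$ span $\R^2$ we obtain $H = 0$, then $b = 0$, and finally $c = f(z_1) = 0$. Hence $f$ is identically zero. It then remains to promote $f \equiv 0$ to $q \equiv 0$: the identity $q(\hat{x}) = 0$ for all $(x_1,x_2) \in \R^2$ is a polynomial identity whose six coefficients, namely those of $x_1^2, x_2^2, x_1 x_2, x_1, x_2$ and the constant term, are precisely the six entries of the symmetric matrix of $q$; so all of them vanish and $q = 0$. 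This establishes the first assertion.

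For the \emph{in particular} statement I would combine this with the inclusion $\Scal_Z \subseteq \{U_q : q \in Q_Z\}$ noted before the lemma. On the one hand, for $q \in Q_Z \setminus \{0\}$ each $z \in Z$ satisfies $q(\hat{z}) = 0$, so $Z \subset \partial U_q$ and $z \notin U_q$; thus $U_q$ is a proper subset of $\R^2$, and it is nonempty by the first part, so $U_q \in \Scal_Z$. On the other hand, the first part shows that the unique $q \in Q_Z$ with $U_q = \emptyset$ is $q = 0$; hence deleting $q = 0$ from $Q_Z$ removes only the empty set from $\{U_q : q \in Q_Z\}$, and since the empty set is not a (nonempty) member of $\Scal_Z$, the equality $\Scal_Z = \{U_q : q \in Q_Z \setminus \{0\}\}$ follows.

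The only real content is the vanishing argument of the second paragraph, and the point to handle with care is twofold: that ``not collinear'' genuinely supplies three affinely independent zeros $z_1,z_2,z_3$, and that the gradient-vanishing step uses non-negativity of $f$, so that each $z_i$ is a \emph{minimum}, rather than merely the vanishing $f(z_i) = 0$. Everything after that is a routine reading-off of coefficients.
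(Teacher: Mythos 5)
Your proof is correct. It follows the same overall strategy as the paper's---reduce to the statement that the nonnegative quadratic polynomial $f(x_1,x_2)=q(\hat{x})$ vanishing at three noncollinear points of $Z$ must vanish identically---but the mechanism differs. The paper restricts $q$ to the three lines through pairs of the noncollinear triple: on each such line the restriction is a nonnegative univariate quadratic with two distinct roots, hence identically zero, so $q$ vanishes on three non-concurrent lines and therefore on the whole affine plane $x_3=1$. You instead observe that each zero of the nonnegative $f$ is a global minimum, so the gradient $2Hv+b$ vanishes there; subtracting these relations at the three affinely independent points kills $H$, then $b$, then $c$. The two arguments are of comparable length and difficulty; yours has the small advantage of making explicit the final passage from $f\equiv 0$ on the affine plane to $q\equiv 0$ on $\R^3$ (by reading off the six coefficients of the symmetric matrix), a step the paper leaves implicit, and your handling of the ``in particular'' clause correctly isolates $q=0$ as the only form in $Q_Z$ producing the empty set, which is the intended reading of $\Scal_Z$.
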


\begin{proof}
Suppose that $U_q = \emptyset$ for some $q \in Q_Z$.
Given a line $\ell \subset \R^2$, the restriction of $q$ to $\hat{\ell}$ 
is a nonnegative quadratic function. Thus, if $x$ and $y$ are distinct 
points in $\ell$ such that $q(\hat{x})=0=q(\hat{y})$, then 
$q$ vanishes identically on $\hat{\ell}$.  

Let $z_1$, $z_2$, and $z_3$ be three noncolinear points in $Z$. 
Since $q \in Q_Z$, the form $q$ vanishes on the lines determined
by the pairs $z_i, z_j$. It follows that $q$ vanishes on the plane
$\{ (x_1,x_2, 1)~ |~ x_1, x_2 \in \R\}$. Hence $q\equiv 0$.
\end{proof}

Let $Z \subset \R^2$ be a noncolinear triple. Then $\hat{Z}$ 
is in general position. Let $\vec{d}=(d_1, d_2, d_3)$ be an 
oriented negative natural basis for $Q_Z$. 
(See \S \ref{SectionQuadratic}). Let $T_{\vec{d}} \subset Q_Z$ denote the plane  
\[   T_{\vec{d}}~ =~ \left\{ \sum_i t_i \cdot d_i~ \left|~  \sum_i t_i~ =~ 1 \right\}. \right. \]
Let $r_{\vec{d}}: T_{\vec{d}} \rightarrow \Scal_{Z}$ denote the restriction of
the map $q \mapsto  U_q$ to $T_{\vec{d}}$.

\begin{prop} \label{Restriction}
$r_{\vec{d}}$ is a homeomorphism from $T_{\vec{d}}$ onto $\Scal_Z$.
\end{prop}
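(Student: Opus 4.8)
The plan is to exhibit $r_{\vec d}$ as the restriction, to a global affine transversal, of the quotient map that sends a quadratic form to the subconic it defines. First I would record the linear data. Since $Z$ is a noncollinear triple, $\hat Z$ is in general position, so Proposition \ref{Dimension} gives $\dim Q_Z = 3$ and Lemma \ref{Triple} says $\vec d = (d_1,d_2,d_3)$ is a basis. Let $\sigma\colon Q_Z \to \R$ be the linear functional $\sigma(\sum_i t_i d_i) = \sum_i t_i$, so that $T_{\vec d} = \sigma^{-1}(1)$ is an affine $2$-plane. The map $q \mapsto U_q$ is continuous for the topology on $\Scal(\R^2)$ (which is by definition pulled back from $Q(\R^3)$), and hence $r_{\vec d}$ is continuous.

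Next I would establish the two structural facts that make the transversal picture work. The first is \emph{positive-scalar rigidity}: for $q, q' \in Q_Z \setminus\{0\}$ one has $U_q = U_{q'}$ if and only if $q' = \lambda q$ for some $\lambda > 0$. The forward direction is immediate, and for the converse I would use that, by Lemma \ref{MissingNothing}, noncollinearity of $Z$ forces every such $U_q$ to be nonempty with $\partial U_q$ equal to the full zero locus $\{q(\hat x)=0\}$; a nonempty proper subconic therefore determines its underlying conic together with the distinguished sign, and so determines $q$ up to a positive multiple. Granting this, the topology that $\Scal_Z$ inherits is exactly that of the quotient of $Q_Z\setminus\{0\}$ by the $\R_{>0}$-scaling action. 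The second fact is that $T_{\vec d}$ is a global section of this action over the half-space $\{\sigma>0\}$: the map $\R_{>0}\times T_{\vec d}\to\{q\in Q_Z: \sigma(q)>0\}$, $(s,q)\mapsto sq$, is a homeomorphism with inverse $q \mapsto (\sigma(q),\, q/\sigma(q))$. Combining the two facts shows at once that $r_{\vec d}$ is a homeomorphism onto the set $\{U_q : \sigma(q)>0\}$ of subconics admitting a representative on the positive side of $\sigma$; injectivity is in particular immediate, since $U_q=U_{q'}$ with $q,q'\in T_{\vec d}$ forces $q'=\lambda q$, whence applying $\sigma$ gives $\lambda=1$.

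It remains to identify this image with all of $\Scal_Z$, and this is the step I expect to be the main obstacle. Equivalently, I must show that the unique positive representative of every subconic in $\Scal_Z$ satisfies $\sigma(q)>0$, i.e. that the scaling ray of each such subconic crosses the transversal $T_{\vec d}$ rather than running parallel to $\ker\sigma$ or on the wrong side of it. This is precisely where the hypothesis that $\vec d$ is a \emph{negative oriented} natural basis must enter: the defining conditions $d_i(\hat c)<0$ for $\hat c$ interior to the convex hull of $\hat Z$ pin down which of the two half-spaces bounded by $\ker\sigma$ contains the representatives of the members of $\Scal_Z$. I would make this explicit by writing $q=\sum_i t_i d_i$ and translating the sign conditions that membership in $\Scal_Z$ imposes on the subconic into inequalities on the coefficients $t_i$, from which $\sum_i t_i>0$ is read off; the orientation condition is what upgrades this from a local to a uniform global sign. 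Once the image has been shown to exhaust $\Scal_Z$, the homeomorphism assertion follows from the section argument of the previous paragraph.
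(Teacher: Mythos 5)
Your argument is, in outline, the same as the paper's: continuity of $r_{\vec{d}}$ and of its inverse from linearity and the definition of the topology on $\Scal(\R^2)$; injectivity from the fact that $U_q=U_{q'}$ forces $q'=\lambda\cdot q$ with $\lambda>0$; surjectivity from Lemma \ref{MissingNothing}. The paper's proof consists of essentially those three sentences and nothing more. The one place you go beyond it is that you correctly isolate the point the paper passes over in silence: Lemma \ref{MissingNothing} only gives that $q\mapsto U_q$ is onto $\Scal_Z$ from $Q_Z\setminus\{0\}$, and for the \emph{restriction} to $T_{\vec{d}}$ to remain onto one must check that the positive ray $\R^+\cdot q$ of every representing form actually meets the affine plane $\sum_i t_i=1$, i.e.\ that $\sum_i t_i>0$. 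You call this the main obstacle, and it is; but you then leave it as a plan (``the inequality is read off from the sign conditions'') rather than carrying it out.

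That plan, taken literally for $\Scal_Z$ as defined in \S\ref{SectionZ}, cannot be carried out, because the inequality fails for degenerate members of $\Scal_Z$. Take $q=-d_1$, with coordinates $(t_1,t_2,t_3)=(-1,0,0)$, so that every positive multiple of $q$ has coordinate sum $-\lambda<0$. Nevertheless $U_{-d_1}=\{x~|~d_1(\hat{x})>0\}$ is a nonempty proper open set (a double wedge), and its boundary in the sense of \S\ref{SectionPlanarSubconics} is the zero locus of $d_1$, namely the union of the lines through $z_1,z_2$ and through $z_1,z_3$, which contains $Z$. Hence $U_{-d_1}\in\Scal_Z$ but $U_{-d_1}\notin r_{\vec{d}}(T_{\vec{d}})$, by the same positive-scalar rigidity you invoke for injectivity. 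What is true, and what the rest of the paper actually uses, is precisely your ``global section'' statement: $r_{\vec{d}}$ is a homeomorphism of $T_{\vec{d}}$ onto $\left\{U_q~|~q\in Q_Z,\ \sigma(q)>0\right\}$, and this set contains every convex element of $\Scal_Z$ --- for ellipse interiors Proposition \ref{hInverse} proves the stronger statement that each $t_i>0$. So the defect is shared by the paper's own proof; your write-up has the merit of making visible where the missing verification sits, but as submitted it neither supplies that verification nor notices that the claimed inequality fails in the stated generality.
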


\begin{proof}
Since $\vec{d}=\{d_1, d_2, d_3\}$ is a basis for $Q_Z$, 
Lemma \ref{MissingNothing} implies that map $q \mapsto U_q$  is onto $\Scal_Z$. 
Note that $U_q=U_{q'}$ if and only if there exists $\lambda \in \R^+$
such that $\lambda \cdot q = q'$.  It follows that $r_{\vec{q}}$ is injective.
The continuity of $q \mapsto U_q$ and the inverse of its restriction
follow from linearity and the definitions of the various topologies.
\end{proof}

The plane $T_{\vec{d}} \subset \R^3$ has a canonical affine structure 
and a canonical outward normal orientation. 
In particular, the ordered set $(d_2-d_1, d_3-d_1)$ is an oriented basis
for the tangent space to $T_{\vec{d}}$ at $d_1$. 

\begin{prop} \label{RestrictionNatural}
Let $\vec{d}$ and $\vec{d}'$ be oriented negative natural bases for 
$Q_Z$. The map 
$r_{\vec{d}}^{-1} \circ r_{\vec{d}'}: T_{\vec{d}'} \rightarrow  T_{\vec{d}}$ 
is an orientation preserving affine map. 
\end{prop}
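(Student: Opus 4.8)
### Proof proposal

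The plan is to reduce the statement to understanding how oriented negative natural bases for $Q_Z$ differ from one another, and then to track how the map $q \mapsto U_q$ interacts with such a change of basis. First I would observe that both $\vec{d}$ and $\vec{d}'$ are oriented negative natural bases for the \emph{same} three-dimensional space $Q_Z$. By Lemma \ref{Triple}, each is a basis of $Q_Z$ consisting of one nonzero vector from each line $D_{v_i}$. Since the lines $D_{v_i}$ are fixed (they depend only on $Z$, not on the choice of basis), we have $d_i' = \lambda_i \cdot d_i$ for some nonzero scalars $\lambda_i$, after matching up the indices so that $d_i$ and $d_i'$ lie on the same line $D_{v_i}$. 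The negativity condition forces each $\lambda_i > 0$: since $d_i$ is negative on the interior of the convex hull of $Z$, and so is $d_i'$, they must be positive multiples of one another.

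Next I would compute the transition map $r_{\vec{d}}^{-1} \circ r_{\vec{d}'}$ explicitly in affine coordinates. A point of $T_{\vec{d}'}$ has the form $\sum_i s_i d_i'$ with $\sum_i s_i = 1$; substituting $d_i' = \lambda_i d_i$ gives $\sum_i (s_i \lambda_i) d_i$. Under the map $q \mapsto U_q$, this subconic equals the one determined by the renormalized form $\sum_i t_i d_i$ where $t_i = s_i \lambda_i / (\sum_j s_j \lambda_j)$, since $U_q = U_{q'}$ precisely when $q$ and $q'$ are positive scalar multiples (Proposition \ref{Restriction} and the remark preceding it). Therefore $r_{\vec{d}}^{-1} \circ r_{\vec{d}'}$ sends the barycentric-type coordinates $(s_1,s_2,s_3)$ to $(t_1,t_2,t_3)$ via this projective normalization. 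This is a projective transformation of the affine plane, diagonal in the $d_i$-coordinates with positive entries $\lambda_i$.

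The remaining work is to check that this map is in fact \emph{affine} and orientation preserving. Affineness I expect to be the main obstacle, since a priori the normalization $t_i = s_i\lambda_i/\sum_j s_j\lambda_j$ is only projective, not affine. The key point is that $T_{\vec{d}}$ and $T_{\vec{d}'}$ are the affine slices $\{\sum t_i = 1\}$ and $\{\sum s_i = 1\}$ of the \emph{linear} space $Q_Z$, and the map realizing $r_{\vec{d}}^{-1}\circ r_{\vec{d}'}$ on the nose is the \emph{linear} diagonal map $\mathrm{diag}(\lambda_1,\lambda_2,\lambda_3)$ on $Q_Z$ followed by radial projection back onto the slice $T_{\vec{d}}$. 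I would argue that because a subconic is determined by its ray, the two slices $T_{\vec{d}}$ and $T_{\vec{d}'}$ intersect each ray in $Q_Z \setminus \{0\}$ at most once on the relevant cone, and the composite map is precisely the restriction of the linear map $\mathrm{diag}(\lambda_i)$ reinterpreted through the two affine charts; a linear map restricted between two affine hyperplane sections, when both sections meet each ray exactly once, induces an affine isomorphism. Concretely, one writes $t_i$ as $s_i\lambda_i$ divided by a quantity affine in $s$, and verifies directly that the induced map on the two-dimensional affine coordinates is affine.

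Finally, for orientation: the oriented basis $(d_2 - d_1, d_3 - d_1)$ of the tangent space matches the cyclic ordering of $Z$, and this cyclic ordering is intrinsic to $Z$ by the discussion following Corollary \ref{Type11}. Since $\vec{d}$ and $\vec{d}'$ are \emph{both} oriented, they induce the same cyclic ordering $(x_1,x_2,x_3)$, so the index matching $d_i' = \lambda_i d_i$ respects this ordering. As the $\lambda_i$ are all positive, the diagonal map $\mathrm{diag}(\lambda_1,\lambda_2,\lambda_3)$ has positive determinant and preserves the cyclic ordering of the vertices, hence the induced affine map on tangent spaces has positive determinant. This yields that $r_{\vec{d}}^{-1} \circ r_{\vec{d}'}$ is orientation preserving, completing the proof.
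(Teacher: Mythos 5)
Your opening reduction is correct and is exactly how the paper begins: after matching indices (up to a cyclic permutation $\sigma$) one has $d_i'=\lambda_i d_{\sigma(i)}$ with all $\lambda_i>0$, and your explicit formula $t_i=s_i\lambda_i/\sum_j s_j\lambda_j$ for the transition map is the right one. The gap is precisely at the step you flag as ``the main obstacle'': that map is \emph{not} affine unless $\lambda_1=\lambda_2=\lambda_3$. The principle you invoke --- that a linear map between two affine hyperplane sections, each meeting every ray once, induces an affine isomorphism --- is false; radial projection between two affine planes in $Q_Z$ is a projectivity, and it is affine only when the planes are parallel, i.e.\ only when all $\lambda_i$ coincide. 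Concretely, take $\vec{d}'=(2d_1,d_2,d_3)$, which is again an oriented negative natural basis. The vertices $2d_1,\ d_2,\ d_3$ of $T_{\vec{d}'}$ are sent to $d_1, d_2, d_3$, but the midpoint $d_1+\tfrac{1}{2}d_2$ of $2d_1$ and $d_2$ is sent to the point of $T_{\vec{d}}$ on its ray, namely $\tfrac{2}{3}d_1+\tfrac{1}{3}d_2$, not to the midpoint $\tfrac{1}{2}d_1+\tfrac{1}{2}d_2$. So the ``direct verification'' you defer would fail: the denominator $\sum_j s_j\lambda_j$ is not constant on $\{\sum_j s_j=1\}$.

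For what it is worth, the paper's own proof has the same soft spot: it asserts that the transition map is the restriction of a linear map $A$ of $Q_Z$, but a linear map whose value at each point of an open set is a positive multiple of that point must be a positive scalar, and a scalar carries $T_{\vec{d}'}$ onto $T_{\vec{d}}$ only when the two planes are parallel. So you have faithfully reproduced the paper's argument, gap included, and in fact made the gap more visible by writing the formula down. The statement is rescued either by normalizing the natural basis so that the $d_i$ are pinned down up to one common positive scalar (e.g.\ $d_i=-\eta_{i,k}\cdot\eta_{i,j}$ with $\eta_{i,j}(\hat v_k)=1$, as in the proof of Proposition \ref{1-CellsFollow}), which makes $T_{\vec{d}}$ and $T_{\vec{d}'}$ parallel and the radial projection an honest affine homothety; or by weakening the conclusion to ``orientation-preserving projective homeomorphism,'' which is all that the canonical orientation of $\Scal_Z$ in \S\ref{SectionOrientation} actually requires. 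Your orientation argument (evenness of the cyclic re-indexing plus positivity of the $\lambda_i$) is fine and survives in either repair.
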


\begin{proof} 
Since $\vec{d}=(d_1, d_2,d_3)$ and   $\vec{d}'= (d_1', d_2', d_3')$
are both oriented negative degenerate bases, there exists 
there exists a $3$-cycle $\sigma \in S(\{1,2,3\})$ and 
$(\lambda_1, \lambda_2, \lambda_3)  \in (\R^3)^+$ such that for $i=1,2,3$
\[    d_i~ =~ \lambda_i \cdot d_{\sigma(i)}. \]
Define a linear map  $A: Q_Z \rightarrow Q_Z$ by setting 
$A(d_i)= \lambda_i \cdot d'_{\sigma(i)}$.
Since $\lambda_i>0$ and $\sigma$ is a 3-cycle, 
the map $A$ is orientation preserving.
The map $r_{\vec{d}} \circ r_{\vec{d}'}^{-1}$ is the restriction of the 
$A$ to the plane $T_{\vec{d}'}$. The claim follows.
\end{proof}

Let $\Ecal_Z$ denote the collection of ellipse interiors 
whose boundaries contain $Z$.

\begin{prop} \label{hInverse}
Let $Z \subset \R^2$ be a noncollinear triple, let $\vec{d}$ be a
natural basis for $Q_Z$, and let $T=T_{\vec{d}}$. 
The set $\{q \in T~ |~ U_q \in \Ecal_Z\}$ 
is a bounded, convex  subset of the plane $T$. 
In particular, $\overline{\Ecal_Z}$ is  compact, 
and $\left\{q \in T~ |~ U_q \in \overline{\Ecal_Z} \right\}$ 
is a compact convex subset of $T$.  
\end{prop}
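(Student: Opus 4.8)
The plan is to parametrize everything through the affine plane $T = T_{\vec d}$ and show that the condition ``$U_q$ is an ellipse interior'' cuts out a convex set there, with compactness following from boundedness together with closedness. By Proposition \ref{Restriction}, the map $r_{\vec d}: T \to \Scal_Z$ is a homeomorphism, so it suffices to work entirely inside $T$. Recall that $U_q$ is an ellipse interior exactly when $q$ has signature $(2,1)$ and $\underline q$ has signature $(2,0)$. The key observation is that each of these signature conditions can be reformulated as a convexity statement in terms of the quadratic forms themselves. Specifically, I would first record that the set of $q \in Q(\R^3)$ with $\underline q$ positive definite is convex (it is the preimage, under the linear restriction map $q \mapsto \underline q$, of the open convex cone of positive-definite forms on $\R^2$), and intersect this with $T$.

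\medskip

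The main point to establish is convexity, and for this I would use the natural basis structure rather than fight with signatures directly. Writing $q = \sum_i t_i d_i$ with $\sum_i t_i = 1$ (the defining equation of $T$), I would exploit that $\{d_1,d_2,d_3\}$ is an \emph{oriented negative natural basis}, so that by the definition in \S\ref{SectionQuadratic} each $d_i$ is negative on the interior of the convex hull of $Z$. The cleanest route to convexity is to show that $\{q \in T : U_q \in \Ecal_Z\}$ equals the intersection of two half-planes-in-$T$ (or a half-plane with a convex region): the condition $\underline q > 0$ picks out a convex subset by the linear-preimage argument above, and I would argue that on the affine line $\{\sum t_i = 1\}$ the positive-definiteness of $\underline q$ already forces $q$ to have signature $(2,1)$ provided $q$ also vanishes on the non-collinear triple $\hat Z$ and is not identically positive on the hyperplane $\{x_3 = 1\}$ --- i.e., provided $U_q$ is nonempty, which by Lemma \ref{MissingNothing} holds for every nonzero $q \in Q_Z$. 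Thus the ellipse-interior locus in $T$ is precisely the set where $\underline q$ is positive definite, a convex set.

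\medskip

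For boundedness I would use Proposition \ref{EllipseBounded} together with the geometry of $Z$: an ellipse interior whose boundary passes through the three fixed non-collinear points $z_1, z_2, z_3$ must contain the triangle they span, and among forms normalized by $\sum t_i = 1$ the eccentricity cannot degenerate without $\underline q$ failing to be definite. Concretely, as a sequence $q_n \in T$ with $U_{q_n}$ elliptic runs off to infinity in $T$, the forms $\underline{q_n}$ must approach the boundary of the positive-definite cone (where the least eigenvalue tends to $0$), which by Proposition \ref{EllipseOpenFrontier} corresponds to the frontier consisting of parabola interiors, strips, and half-planes --- none of which lie in $\Ecal_Z$. This shows the elliptic locus is bounded and that its closure in $T$ is exactly $\{q \in T : U_q \in \overline{\Ecal_Z}\}$, which is then compact and convex; the final two assertions of the proposition follow by transporting through the homeomorphism $r_{\vec d}$ and noting that the frontier forms contribute the parabola/strip/half-plane limits.

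\medskip

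The hard part will be the boundedness argument: convexity reduces to a soft linear-preimage statement, but controlling the geometry as one approaches the boundary of $T$ requires tracking how the three fixed passage points $\hat Z$ constrain the quadratic form, and ruling out that the elliptic region is unbounded in the remaining direction of $T$. I expect the cleanest formalization to pin down a compact ``window'' of admissible eccentricities and centers using that the inscribed triangle on $Z$ is fixed, after which Propositions \ref{EllipseBounded} and \ref{EllipseOpenFrontier} close the argument.
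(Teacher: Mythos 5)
Your convexity argument is sound and is essentially a re-derivation, inside $Q_Z$, of what the paper gets by citing Proposition \ref{EllipseConvex}: the identification of the ellipse-interior locus in $T$ with $\{q \in T \mid \underline{q} \mbox{ positive definite}\}$ is correct (positive semidefiniteness of $\underline{q}$ together with vanishing at the three points of $\hat{Z}$ in general position rules out signatures other than $(2,1)$), and the preimage of the positive-definite cone under the linear map $q \mapsto \underline{q}$ is convex. That half of the proposal is fine.

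The boundedness argument, however, has a genuine gap — and it is exactly the part you flag as ``the hard part'' without supplying. Your claim that a sequence $q_n \in T$ with $U_{q_n}$ elliptic running off to infinity forces $\underline{q_n}$ to ``approach the boundary of the positive-definite cone'' is asserted, not proved, and it is not a soft fact: the norms of the $\underline{q_n}$ also blow up, so the statement only makes sense after renormalizing, and showing that the renormalized limit cannot remain positive definite requires precisely the quantitative control over the barycentric coordinates that is missing. (An open convex cone intersected with an affine plane off the origin can perfectly well be unbounded; one must rule out a recession direction, which is not done.) The paper's argument supplies the missing ingredient directly: write $q = \sum_i t_i d_i$ with $\sum_i t_i = 1$ for the oriented \emph{negative} natural basis $\vec{d}$. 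On the open edge $\sigma_i^{\circ}$ of the triangle opposite $z_i$, the forms $d_j$ and $d_k$ vanish identically (each vanishes on the plane spanned by $\hat{z}_j$ and $\hat{z}_k$), so $q = t_i d_i$ there, with $d_i < 0$ on $\sigma_i^{\circ}$. Since $\overline{U_q}$ is convex and contains $z_j$ and $z_k$, it contains $\sigma_i$, and a line meets $\partial U_q$ in at most two points, so $\sigma_i^{\circ} \subset U_q$ and hence $q < 0$ on $\widehat{\sigma_i^{\circ}}$. Therefore $t_i > 0$ for each $i$, so $(t_1,t_2,t_3)$ lies in the open standard simplex and the set is bounded. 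Without this (or an equivalent) computation, your sketch does not close; with it, the appeal to eigenvalue degeneration becomes unnecessary.
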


\begin{proof}
The set $Q_Z$ is a vector subspace, and hence it follows from 
Proposition \ref{EllipseConvex} that the set of all $q$ such that
$U_q \in \Ecal_Z$ is a convex subset of $Q(\R^3)$. Thus, 
$\{q \in T~ |~ U_q \in \Ecal_Z\}$ is convex. 

Let $(\hat{z}_1,\hat{z}_2,\hat{z}_3)$ be the cyclic ordering of the elements 
of $\hat{Z}$ with respect to the orientation of $\R^3$. Let 
$\vec{d}=(d_1,d_2,d_3)$ be a corresponding oriented negative natural basis
for $Q_Z$. To show that $\left\{q \in T~ |~ U_q \in \overline{\Ecal_Z} \right\}$ 
is bounded, it suffices to show that if $U_q \in \Ecal_Z$ and $q \in T_{\vec{d}}$,
then each coordinate of $q$ with respect to $\vec{d}$ is positive.

For $\{i,j,k\}=\{1,2,3\}$, let $\sigma_{i}$ denote
the segment joining $z_j$ and $z_k$.  
Since $U_q \in \Ecal_Z$ is strictly convex and $Z \subset \partial U_q$, 
we have $\sigma_i \subset \overline{U_q}$ and $\sigma_i \cap \partial U_q= \{x_j, x_k\}$.
In particular, for each $y \in \sigma_i \setminus \hat{Z}$ we have $q(\hat{y})<0$.

Let $(t_1, t_2,t_3)$ be the coordinates of $q$ with respect to $\vec{d}$.
Since $\vec{d}$ is a negative natural base, for each $y \in \sigma_i \setminus Z$
we have $q(\hat{y})<0$. It follows that $t_i >0$ for each $i=1,2,3$. 
\end{proof}


\section{Subconics in a translation surface} 

            \label{SectionSubconicsTranslation}

In this section, we make precise the notion of subconic in a translations
surface $(X, \mu)$, we define the space of subconics in $(X, \mu)$
and its stratification, and we derive some basic properties. 

Let $\Gal(\tilde{X}/X) $ denote the 
group of covering transformations associated to the universal covering 
$p: \tilde{X} \rightarrow X$. Each covering transformation is a translation mapping 
of $(\tilde{X}, \tilde{\mu})$ and hence 
$\Gal(\tilde{X}/X)$ acts on the set of convex subsets  $\tilde{U} \subset \tilde{X}$
such that $\dev_{\mu}(\tilde{U})$ is a subconic in $\R^2$. 

\begin{defn}
A {\em subconic} in $X$ with respect to $\mu$ is an orbit of the action of $\Gal(\tilde{X}/X)$ 
on the collection of convex subsets $\tilde{U}$ of $\tilde{X}$ such
that $\dev_{\mu}(\tilde{U})$ is a subconic in $\R^2$. 
\end{defn}

There is a natural one-to-one correspondence between the 
subconics $U$ in $X$ and equivalence\footnote{Two immersions 
$f: K \rightarrow X$, $f': K' \rightarrow X$ are  equivalent iff their exists 
isometry $g: K \rightarrow K'$ so that $f' \circ g=f$.} classes  of isometric immersions 
from a subconic $U' \subset \R^2$ into $X$. Indeed, each  
isometric immersion $f: U' \rightarrow X$ lifts to an immersion 
$\tilde{f}: U' \rightarrow \tilde{X}$. The lift is unique up to the action of
$\Gal(\tilde{X}/X)$.
On the other hand, given a subconic $\tilde{U} \subset \tilde{X}$,
the map $p \circ \dev_{\mu}^{-1}$ restricted to $\dev_{\mu}(\tilde{U})$ gives
an immersion.

Each representative $\tilde{U}$ of a subconic $U$ will be called a {\em lift} of $U$.
In most cases, a subconic in $X$ is determined by the image a lift under $p$.  
For example, if $X$ is simply connected, then each subconic has a unique 
lift. In this case, we will identify the singleton set with the element
that it contains.

On the other hand, there are situations in which one must keep track of 
the orbit in the universal cover (or, equivalently, the immersion). 
For example, if $X$ is the once-punctured torus, $(\R^2 \setminus \Z^2)/ \Z^2$, 
then the set $X$ is itself the image of two ellipses 
that do not differ
by an element of $\Gal(\tilde{X}/X)$.\footnote{We thank the referree for pointing
out a similar example.} 

In practice, we will be working with a particular lift $\tilde{U}$ in the universal
cover, and we will regard the image $p(\tilde{U})$ as a subconic in $X$. 

We will use the terminology used in the classification of subconics in the plane 
to describe subconics in $X$ with respect to $\mu$.

\begin{remk}  \label{Cylinder}
An isometric embedding of a cylinder, $[a,b] \times \left( \R / c \cdot \Z \right)$,
corresponds to a strip in  a translation surface $(X, \mu)$.
\end{remk}

If $X$ is simply connected, the developing map furnishes a natural topology
on $\Scal(\tilde{X}, \mu)$.  
A set $\tilde{{\mathcal U}}\subset   \Scal(\tilde{X}, \tilde{\mu})$
is said to be open if and only if $\dev_{\mu}({\mathcal U})$ is open
in $\Scal(\R^2)$.  In other words,  $\tilde{{\mathcal U}}$ is open if and only
if the associated classes of quadratic forms constitute an open set 
in $Q(\R^3)/ \R^+$. 

More generally, we endow $\Scal(X, \mu)= \Scal(\tilde{X}, \tilde{\mu})/ \Gal(\tilde{X}/X)$ 
with the quotient topology.  
The action of the deck group $\Gal(\tilde{X}/X)$ on $\Scal(\tilde{X}, \tilde{\mu})$ 
is discontinuous, and hence the quotient map 
$p_*:  \Scal(\tilde{X}, \tilde{\mu}) \rightarrow \Scal(X, \mu)$
is a covering map with deck group $\Gal(\tilde{X}/X)$.

Roughly speaking, the space $\Scal(X, \mu)$ has a natural stratification determined
by the number of points in the boundary, $\partial \tilde{X}$,
that are met by a lift $\tilde{U}$. To be more precise we 
make the following definition.

\begin{defn}
 We say that a subset  $A \subset \R^d$  has {\em maximal span} iff there exists a 
subset $B \subset A$ such that the 
span of $\{b-b'~ |~ ~ b, b' \in B\}$ has dimension at least
as large as either $d$ or ${\rm Card}(A)-1$.
\end{defn}

If ${\rm Card}(A) \leq d$, then $A$ has maximal span iff $A$ is in general 
position.  If ${\rm Card}(A) > d$, then $A$ has maximal span iff the 
set of differences $\{a-a'~ |~ a,a' \in \R^d\}$ spans $\R^d$. 

\begin{defn}
For $n\in \Z^+$, define the {\em $n$-stratum}, $\Scal_n(x, \mu)$,  to be
the collection of all subconics $U \subset X$ 
such that ${\rm Card}(\partial \tilde{U} \cap \partial \tilde{X}) \geq n$
and  $\dev_{\mu}(\partial \tilde{U} \cap \partial \tilde{X})$ has maximal span in $\R^2$.
\end{defn}

If one component of the boundary of a strip $U$ contains at least three points in 
$\partial X$ while the other other component contains none, then 
the strip does not belong to $\Scal_n(X, \mu)$ for any $n$. Indeed, 
in this case, the span of $\dev_{\mu}(\partial \tilde{U} \cap \partial \tilde{X})$ 
is 1-dimensional.

\begin{prop} \label{TopStratumProjective}
The space $\Scal(X, \mu) \setminus \Scal_1(X,\mu)$ 
is a five dimensional real projective manifold. 
\end{prop}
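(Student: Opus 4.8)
The plan is to realize $\Scal(X,\mu)\setminus\Scal_1(X,\mu)$ as a quotient of an open piece of the space of quadratic forms, and to transport the projective structure of $\R P^5 = P(Q(\R^3))$ along the developing map.

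First I would record the planar model. Since $U_q = U_{q'}$ precisely when $q'=\lambda q$ for some $\lambda\in\R^+$ (as in the proof of Proposition \ref{Restriction}), the assignment $q\mapsto U_q$ identifies $\Scal(\R^2)$ with an open subset of $\Sigma := (Q(\R^3)\setminus\{0\})/\R^+$. The quotient map $\Sigma\to\R P^5$ is a two-sheeted covering, and its deck involution is induced by the linear map $q\mapsto -q$, hence is projective; thus $\Sigma\to\R P^5$ is a local isomorphism of projective structures and $\Sigma$, together with every open subset such as $\Scal(\R^2)$, is a real projective $5$-manifold. Note that $\Scal(\R^2)$ contains antipodal pairs $[q],[-q]$, so only the covering, not a global section, is used.

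Next I would transport this structure to the universal cover. Composing with $\dev_\mu$ gives a canonical map $\Dev\colon\Scal(\tilde X,\tilde\mu)\to\Scal(\R^2)\subset\Sigma$ sending a convex lift $\tilde U$ to the class of the quadratic form defining $\dev_\mu(\tilde U)$; this is exactly the map used in \S\ref{SectionSubconicsTranslation} to define the topology on $\Scal(\tilde X,\tilde\mu)$. The crux is the following. \textbf{Key Lemma:} the restriction of $\Dev$ to $\Scal(\tilde X,\tilde\mu)\setminus\Scal_1(\tilde X,\tilde\mu)$ (the preimage of $\Scal(X,\mu)\setminus\Scal_1(X,\mu)$ under $p_*$) is a local homeomorphism. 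Granting this, pulling back the projective charts of $\Sigma$ along $\Dev$ equips this set with a projective atlas whose transition maps, inherited from those of $\Sigma$, lie in $PGL_6(\R)$.

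I would prove the Key Lemma in two parts. Local injectivity is the easy half: a convex lift develops injectively (\S\ref{PreliminariesSection}), so two nearby lifts with the same class of quadratic form must coincide. The substantive part is local surjectivity: given $\tilde U\notin\Scal_1$ and $q$ close to the form $q_0$ of $\dev_\mu(\tilde U)$, one must produce a nearby convex lift developing to $U_q$, obtained by lifting $U_q$ through the local inverses of $\dev_\mu$ anchored on $\tilde U$. The only possible obstruction is that the moving boundary $\partial U_q$ be driven across a cone point of $\partial\tilde X$, which would destroy either convexity or injectivity of the development. The hypothesis $\tilde U\notin\Scal_1$ — that $\dev_\mu(\partial\tilde U\cap\partial\tilde X)$ fails to have maximal span — is precisely what should guarantee that the cone points met by $\tilde U$ impose no independent constraint, so that a full neighborhood of $[q_0]$ in $\Sigma$ is realized; when maximal span does hold, the met cone points pin the subconic and the image of $\Dev$ drops dimension, which is exactly why $\Scal_1$ must be removed. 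Here I expect the dimension counts of \S\ref{SectionQuadratic} (Propositions \ref{Dimension} and \ref{Dimension2}) and the nondegeneracy of Lemma \ref{MissingNothing} to govern precisely when the met cone points cut down the deformation space, while the finiteness of the inradius (Proposition \ref{Inradius}) confines the relevant lifts to a region where $\dev_\mu$ is controlled. Disentangling the genuinely obstructing (maximal-span) configurations from the harmless degenerate ones is the main obstacle.

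Finally I would pass to the quotient. Each covering transformation $\gamma\in\Gal(\tilde X/X)$ is a translation of $(\tilde X,\tilde\mu)$, so $\dev_\mu\circ\gamma = T_\gamma\circ\dev_\mu$ for a translation $T_\gamma$ of $\R^2$; writing $S_\gamma\in GL_3(\R)$ for the homogenization of $T_\gamma$ (the linear shear restricting to $T_\gamma$ on $\{x_3=1\}$), one computes $\dev_\mu(\gamma\tilde U)=U_{q\circ S_\gamma^{-1}}$. Thus $\Dev$ intertwines the $\Gal(\tilde X/X)$-action with the linear action $q\mapsto q\circ S_\gamma^{-1}$ on $Q(\R^3)$, that is, with projective transformations of $\Sigma$; in particular $\Gal(\tilde X/X)$ acts by projective automorphisms of the structure just constructed. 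Since this action is free and properly discontinuous and $p_*$ is a covering map (\S\ref{SectionSubconicsTranslation}), the quotient $\Scal(X,\mu)\setminus\Scal_1(X,\mu)=(\Scal(\tilde X,\tilde\mu)\setminus\Scal_1(\tilde X,\tilde\mu))/\Gal(\tilde X/X)$ inherits a real projective structure. As the model space is $\R P^5$, the dimension is $5$, proving the proposition.
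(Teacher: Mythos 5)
Your architecture is the same as the paper's: push $\Scal(\tilde X,\tilde\mu)$ into the space of quadratic forms modulo scaling via the developing map, observe that the deck group acts through the homogenized translations and hence through $PGL(Q(\R^3))$, and pass to the quotient. The step you isolate as the Key Lemma --- that $\Dev$ is a local homeomorphism off $\Scal_1$ --- is exactly the step the paper disposes of in one unproved sentence (``The set $\Scal(\tilde X,\mu)\setminus\Scal_1(\tilde X,\mu)$ is mapped to an open subset of $\Scal(\R^2)$''), and on the remaining parts (the planar model on the sphere of forms, equivariance, the covering-space descent) your write-up agrees with the paper and is more careful.

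The gap is in your proposed mechanism for the Key Lemma when $\tilde U$ does meet cone points. First, unpack the hypothesis: by the definition of maximal span, any nonempty set of at most two points, and any non-collinear set, automatically has maximal span, so the clause ``fails to have maximal span'' is triggered only when $\dev_\mu(\partial\tilde U\cap\partial\tilde X)$ consists of at least three collinear points --- concretely, a strip with three or more cone points on one boundary line and none on the other, the configuration the paper flags just after defining $\Scal_n$. Your claim that these cone points ``impose no independent constraint'' is false: for three distinct collinear points $c_1,c_2,c_3$ the evaluation functionals $q\mapsto q(\hat c_i)$ are still linearly independent on the six-dimensional space $Q(\R^3)$, since writing $\hat c_i=\hat c_1+t_i v$ gives $q(\hat c_i)=q(\hat c_1)+2t_i\,q(\hat c_1,v)+t_i^2\,q(v)$ and the relevant Vandermonde determinant is nonzero (a dependency appears only with four or more collinear points). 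The admissible deformations are therefore cut out by three independent inequalities $q'(\hat c_i)\ge 0$, so the image of $\Dev$ near such a strip is a corner rather than a neighborhood, and local surjectivity fails at precisely the points you expected the maximal-span hypothesis to rescue. A second, related omission: ``driven across a cone point'' is not the only obstruction to local surjectivity, because perturbing a strip's form so that $\underline q$ becomes indefinite produces the non-convex region between two hyperbola branches, which is not a subconic of $X$ under the paper's convexity requirement. The paper's proof silently skips both issues; but since your proposal asserts a specific (and incorrect) reason why the degenerate configurations are harmless, you would need to repair this, most plausibly by proving openness only at subconics meeting no cone points and handling the collinear strips and the strip/ellipse transition separately.
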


\begin{proof}
Since $\dev_{\mu}$ is a local embedding, it follows that the map 
$U \mapsto \dev_{\mu}$ is a local embedding from  $\Scal(\tilde{X}, \mu)$ to
$\Scal(\R^2)$.  The set $\Scal(\tilde{X}, \mu) \setminus \Scal_1(\tilde{X},\mu)$ is mapped
to an open subset of $\Scal(\R^2)$.  
Define $f: \Scal(\R^2) \rightarrow  PQ(\R^3)$ by
$f(U) =  \{q~ |~ U_{\pm q}= \dev_{\mu}(U)\}$. Note that $f$ is
a local embedding.  

The action of the translations on $X$ is $\dev_{\mu}$-equivariant 
action of a subgroup of translations on $\R^2$. In turn this action 
is $f$-equivariant to an action of a subgroup of $PGL(Q(\R^3))$. 
\end{proof}


\section{Subconics in coverings of precompact surfaces with finite frontier}
 \label{SectionCovering}

In this section and the ones that follow, we make the following assumption. 
\begin{ass} \label{FiniteFrontierAssumption}
We consider only translation surfaces $(X, \mu)$ 
that cover a precompact translation surface with 
nonempty frontier.
\end{ass}
 
This assumption limits the types of subconics that can be immersed in $X$. 
Recall that an embedded cylinder is a strip. See Remark \ref{Cylinder}.

\begin{prop} \label{EllipseStrip}
Each subconic $U \subset X$ is either a strip or an ellipse interior.
\end{prop}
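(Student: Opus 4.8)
The plan is to rule out every type of planar subconic except ellipse interiors and strips by combining the convexity requirement built into the definition of a subconic in $X$ with the finiteness of the inradius established in Proposition \ref{Inradius}. Let $U \subset X$ be a subconic, and let $\tilde{U} \subset \tilde{X}$ be a lift. By definition $\tilde{U}$ is a \emph{convex} subset of $\tilde{X}$ whose image $\dev_{\mu}(\tilde{U})$ is a nontrivial subconic in $\R^2$. Since $\tilde{U}$ is convex, the restriction of $\dev_{\mu}$ to $\tilde{U}$ is injective (by the convention fixed in Remark \ref{ConvexityRemark}), so $\tilde{U}$ is isometric to the planar subconic $\dev_{\mu}(\tilde{U})$. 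Moreover, because a subconic in $X$ is required to be convex, the planar subconic $\dev_{\mu}(\tilde{U})$ must itself be convex.

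First I would invoke Proposition \ref{ConvexClassification}: a convex nontrivial planar subconic is an ellipse interior, a parabola interior, a strip, or a half-plane. Thus it suffices to eliminate the parabola interior and the half-plane. The key geometric obstruction is that each of these two types contains arbitrarily large embedded Euclidean disks: a half-plane and a parabola interior are both unbounded convex regions whose inradius is infinite. Since $\dev_{\mu}$ restricted to the convex set $\tilde{U}$ is an isometry onto its image, an embedded disk of radius $R$ in $\dev_{\mu}(\tilde{U})$ pulls back to an embedded disk of radius $R$ in $\tilde{U} \subset \tilde{X}$. Hence if $\dev_{\mu}(\tilde{U})$ were a half-plane or a parabola interior, the inradius of $\tilde{X}$ would be infinite, contradicting Proposition \ref{Inradius}, which applies precisely because Assumption \ref{FiniteFrontierAssumption} guarantees $X$ covers a precompact translation surface with nonempty discrete frontier.

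It remains only to observe that the two surviving cases are exactly the desired ones: by Proposition \ref{EllipseBounded} the bounded convex subconics are precisely the ellipse interiors, and the remaining unbounded-but-finite-inradius convex subconic is the strip (the interior of the convex hull of two parallel lines, whose inradius is half the distance between them). Therefore $\dev_{\mu}(\tilde{U})$, and hence $U$, is either an ellipse interior or a strip.

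I expect the only delicate point to be the infinite-inradius claim for the parabola interior: unlike the half-plane, which manifestly contains half-planes of disks, one must note that a parabola interior $\{(x,y) \mid y > x^2\}$ contains disks of arbitrarily large radius (centered high up the axis of symmetry), so that its inradius is indeed infinite. This is an elementary computation but is the crux of the argument, since it is what forces the exclusion of the one unbounded case that is not a strip.
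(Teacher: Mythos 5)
Your proof is correct and follows essentially the same route as the paper's, which simply invokes Proposition \ref{Inradius} together with the observation that strips and ellipse interiors are the only nontrivial planar subconics of finite inradius. Your version merely fleshes out the case analysis (the preliminary reduction to convex subconics via Proposition \ref{ConvexClassification} is harmless but not needed, since the infinite-inradius argument also disposes of the non-convex types such as ellipse exteriors and hyperbolic regions).
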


\begin{proof}
Let $\tilde{U}$ be a lift of $U$ to the universal cover $\tilde{X}$. 
It follows from Proposition \ref{Inradius} and Assumption \ref{FiniteFrontierAssumption}
that the inradius of $\tilde{X}$ is finite. 
The only nontrivial subconics that have finite inradius are strips 
and ellipse interiors.
\end{proof}

In order to deal with a strip $U \subset X$, it will prove convenient to 
construct a canonical `maximal' neighborhood $M$ of $U$ such that the 
restriction $\dev_{\mu}|_M$ to $M$ is an injective map.

\begin{defn}  \label{CanonicalNeighborhood}
Let $M$ be the collection of all $x \in X$ 
such that there exists $y \in U$ and a geodesic segment $\alpha$
joining $x$ to $y$ such that the vector $\dev_{\mu}(x)- \dev_{\mu}(y)$
is orthogonal to the radical of $q$ where $U_q = \dev_{\mu}(U)$.
We will call $M$ the {\em canonical neighborhood} of $U$. 
\end{defn}

\begin{figure}[h]   
\begin{center}

\includegraphics[totalheight=2in]{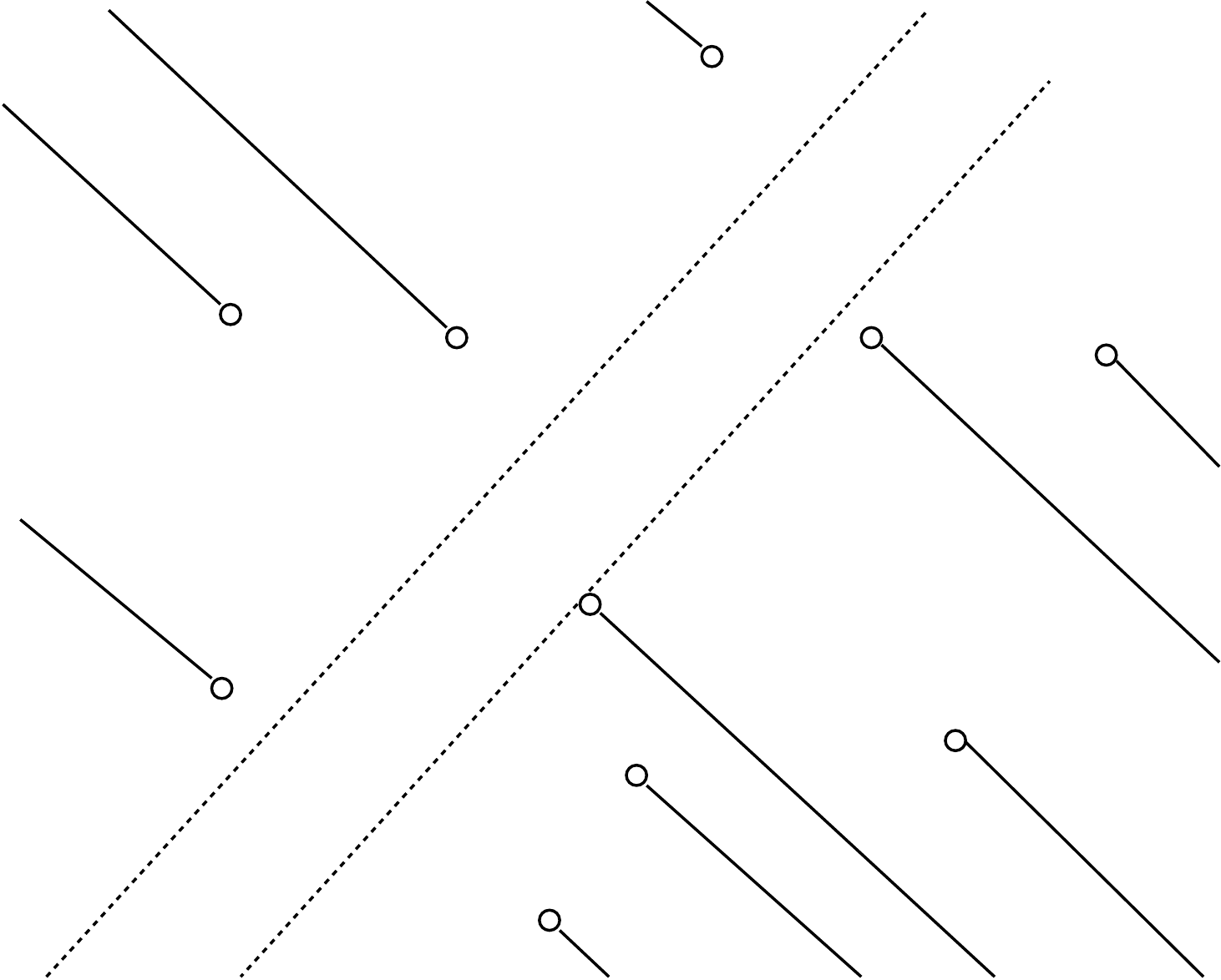}

\end{center}
\caption{\label{CanonicalFigure} The $\dev_{\mu}$ image of a 
canonical neighborhood of a strip. The circles 
indicate points that belong to $\dev_{\mu}(\partial X \cap \overline{M})$.}
\end{figure}

The following is elementary.

\begin{prop} \label{MaximalStrip}
If $U \subset X$ is a strip, then there exists a unique strip 
that contains every strip that contains $U$.  If $\tau:X \rightarrow X$ 
is a nontrivial translation and $\tau(U)=U$, then for each connected 
component $C$ of $\partial U$, the set $C \cap \partial X$ is infinite and  
$\tau(C \cap \partial X)= C \cap \partial X$.
\end{prop}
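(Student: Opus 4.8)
The plan is to treat the two assertions separately, deriving both from the observation that all strips containing $U$ are mutually parallel together with the finiteness of the inradius. Throughout I work with a fixed lift $\tilde U \subset \tilde X$ whose developing image $\dev_\mu(\tilde U) = U_q$ is a planar strip, and I write $v$ for a nonzero vector spanning the radical of $q$, i.e. the common direction of the two bounding lines of $U_q$.

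First I address existence of a maximal strip. If $V$ is any strip containing $U$, then a compatible lift $\tilde V \supseteq \tilde U$ develops to a planar strip $U_{q'} \supseteq U_q$; since a planar strip is unbounded precisely along its own direction, the containment forces the radical of $q'$ to be spanned by $v$ as well. Thus every strip containing $U$ is parallel to $U$, and after choosing coordinates in which $v$ is horizontal, each such $V$ develops to a horizontal planar strip $\{a_V < y < b_V\}$ containing $\{a_U < y < b_U\} = \dev_\mu(\tilde U)$. I would then pass to the canonical neighborhood $M$ of $U$ (Definition \ref{CanonicalNeighborhood}), on which $\dev_\mu$ is injective. Each $V$ containing $U$ lies in $M$, because its points are joined to $U$ by the radical-orthogonal geodesics inside $V$; hence $\dev_\mu(M)$ contains every such horizontal strip and has the form $\{g_-(x) < y < g_+(x)\}$. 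Setting $c = \sup_x g_-(x)$ and $d = \inf_x g_+(x)$, the region $\{c < y < d\}$ is the smallest horizontal strip containing every $\dev_\mu(\tilde V)$, and by Proposition \ref{Inradius} together with Assumption \ref{FiniteFrontierAssumption} the inradius of $\tilde X$ is finite, so $d - c$ is finite and the strip is nondegenerate, as it contains $\{a_U<y<b_U\}$. Pulling this horizontal strip back through the injective map $\dev_\mu|_M$ produces a strip $\hat U$ in $X$ containing $U$ and every strip containing $U$. Uniqueness is then immediate, since any two strips each containing all strips containing $U$ contain one another, hence coincide.

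For the second assertion I may assume $U = \hat U$ is maximal, since $\tau(U) = U$ forces $\tau(\hat U) = \hat U$ by uniqueness of the maximal strip. Lifting $\tau$ to $\tilde X$ and correcting by a deck transformation, I obtain a translation $\sigma$ of $\tilde X$ with $\sigma(\tilde U) = \tilde U$ and $\dev_\mu \circ \sigma = t_w \circ \dev_\mu$ for a translation $t_w$ of $\R^2$ preserving $U_q$; such a $t_w$ must translate parallel to the strip, so $w$ is a multiple of $v$, and $w \neq 0$, since otherwise $\sigma$ would be a deck transformation that is a lift of $\tau$, forcing $\tau$ to be trivial. Consequently $\tau$ fixes each boundary component $C$ of $\partial U$, translating it along its own direction, and as an isometry of $\overline{X}$ it preserves $\partial X$; therefore $\tau(C \cap \partial X) = C \cap \partial X$. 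Since a nontrivial translation of a connected translation surface has no fixed point (a translation fixing a point is the identity near it, hence everywhere), every power of $\tau$ is fixed-point free, so every $\tau$-orbit in $C \cap \partial X$ is infinite. It remains only to show $C \cap \partial X \neq \emptyset$: if some component $C$ avoided $\partial X$, then, passing to the quotient $X/\langle\tau\rangle$ in which $\hat U$ becomes a cylinder with compact boundary circle and finite frontier, the compact circle $C/\langle\tau\rangle$ would have positive distance to the frontier, and the strip could be widened across it, contradicting the maximality of $\hat U$. Hence each component meets $\partial X$, and by the orbit argument $C \cap \partial X$ is infinite.

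The main obstacle is the existence half of the first assertion: one must verify that the naively defined union of all strips containing $U$ is itself an honest immersed strip, that is, a convex subconic on which the developing map is injective. This is exactly what the canonical neighborhood $M$ is designed to supply, for injectivity of $\dev_\mu|_M$ reduces the gluing problem to the elementary planar statement about nested horizontal strips, while finiteness of the inradius (Proposition \ref{Inradius}) guarantees that the supremal width $d-c$ is finite so that $\hat U$ does not degenerate into a half-plane or the whole plane. In the second assertion the only genuinely geometric point is the nonemptiness of $C \cap \partial X$, which is where maximality is essential and which I handle by compactifying the boundary via the translation $\tau$.
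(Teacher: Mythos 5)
The paper offers no argument for this proposition at all --- it is introduced with ``The following is elementary'' --- so there is nothing to compare your proof against; I can only assess it on its own terms. Your treatment of the first assertion is sound: reducing to the observation that all strips containing $U$ are mutually parallel, funnelling everything through the canonical neighborhood $M$ (whose injectivity under $\dev_\mu$ the paper itself only asserts), and using Proposition \ref{Inradius} to keep the width finite is exactly the right skeleton. One small slip: $\{c<y<d\}$ with $c=\sup_x g_-(x)$, $d=\inf_x g_+(x)$ is the \emph{largest} horizontal strip inscribed in $\dev_\mu(M)$, not the smallest one containing the $\dev_\mu(\tilde V)$; what your argument actually needs, and does deliver, is that it contains every $\dev_\mu(\tilde V)$ and is itself contained in $\dev_\mu(M)$, hence pulls back to a strip.

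Two points in the second half deserve attention. First, ``I may assume $U=\hat U$ is maximal'' is not a reduction: the conclusion concerns the components of $\partial U$, not of $\partial \hat U$, and for a non-maximal $\tau$-invariant strip the statement is simply false (in $\R^2\setminus\Z^2$ the strip $\{1/4<y<3/4\}$ is invariant under $(x,y)\mapsto(x+1,y)$ and its boundary misses $\partial X$ entirely). So the proposition must be read with $U$ maximal --- consistent with how the paper uses it in Proposition \ref{TranslationExists} and Corollary \ref{StripInfinite} --- and your proof covers exactly that case; you should say so rather than present it as a harmless normalization. Second, your infiniteness argument invokes the fact that a nontrivial translation of $X$ is fixed-point free, but the points of $C\cap\partial X$ lie in $\partial X=\overline X\setminus X$, where the continuous extension of a fixed-point-free translation can in principle fix a cone point (locally, a ``rotation by $2\pi$'' of a cone of angle $2\pi m$, $m\ge 2$, has trivial holonomy and fixes the apex). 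The correct justification is the one your own setup provides: you have already shown that $\tau$ acts on each boundary component $C$ as the Euclidean translation $t_w$ with $w\neq 0$ along the developed line $\dev_\mu(\tilde C)$, so $\tau$ and all of its nonzero powers move every point of $C$, including the cone points; combined with $C\cap\partial X\neq\emptyset$ this gives infinitude. (In the non--simply-connected cylinder case one must also rule out $w$ being a multiple of the holonomy of the stabilizer of $\tilde U$, which follows from your own $w\neq 0$ argument applied to the adjusted lift.) With these repairs the proof is complete.
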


We will call the strip described in Proposition \ref{MaximalStrip}
the {\em maximal strip} containing $U$.

\begin{prop}  \label{TranslationExists}
If $(X, \mu)$ is simply connected and $U \subset X$ is a maximal strip, 
then there exists a nontrivial translation $\tau: X \rightarrow X$ such that
$\tau(U)=U$. 
\end{prop}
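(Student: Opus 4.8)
The plan is to exhibit a nontrivial element of the deck group that preserves $U$. Since $(X,\mu)$ is simply connected and covers a precompact surface $(Y,\nu)$ (Assumption \ref{FiniteFrontierAssumption}), it is the universal cover of $(Y,\nu)$; write $G=\Gal(X/Y)$. Each $g\in G$ is a translation of $(X,\mu)$, so $\dev_\mu\circ g=\dev_\mu+b_g$ for a fixed vector $b_g\in\R^2$, and consequently $g$ acts on strips by translating their developed images, preserving their direction. Fix the isometry $\dev_\mu|_U$ onto a planar strip of width $w>0$ and direction $\vec v$. It then suffices to produce $g\in G\setminus\{1\}$ with $g(U)=U$.

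First I would prove the reduction that it is enough to find $g\in G\setminus\{1\}$ with $g(U)\cap U\neq\emptyset$. Given such a $g$, the image $g(U)$ is again a maximal strip of direction $\vec v$ developing to a parallel strip; since $U$ and $g(U)$ share a point, their developed images overlap, so the $\vec v^{\perp}$-offset $\delta$ of $\dev_\mu(g(U))$ from $\dev_\mu(U)$ satisfies $|\delta|<w$. The crux is then that $U\cup g(U)$ is itself a strip, which I would deduce by showing $\dev_\mu$ is injective on $U\cup g(U)$: if two distinct points shared an image $\pi$, one would lie in $U$ and the other in $g(U)$, and $\pi$ would lie in the overlap of the two developed strips; lifting the straight segment from $\pi$ to $\dev_\mu(y)$ (where $y\in U\cap g(U)$) into $U$ and into $g(U)$ produces two lifts of the same segment terminating at $y$, which must coincide by uniqueness of path-lifting for the local homeomorphism $\dev_\mu$, forcing the two points to agree. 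Thus $U\cup g(U)$ develops injectively onto a single strip and is therefore a strip containing $U$. By maximality (Proposition \ref{MaximalStrip}) we get $U\cup g(U)=U$, so $g(U)\subseteq U$; applying the same argument to $g^{-1}$ gives $g(U)=U$.

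It remains to find $g\in G\setminus\{1\}$ with $g(U)\cap U\neq\emptyset$, and here I would use precompactness. Let $\gamma:\R\to U$ be the central geodesic of the strip, parametrized by arclength in the direction $\vec v$, so that $\dev_\mu\circ\gamma$ traverses the center line and $\gamma$ stays at distance $w/2$ from $\partial U$. Projecting to the compact space $\overline Y$ and covering it by finitely many balls of radius $w/4$, the pigeonhole principle yields parameters $s_1<s_2$ with $s_2-s_1\geq w$ and $d_Y(p(\gamma(s_1)),p(\gamma(s_2)))<w/2$. Lifting a path of length $<w/2$ from $p(\gamma(s_2))$ to $p(\gamma(s_1))$ that begins at $\gamma(s_2)$, I obtain $g\in G$ with $d_X(\gamma(s_2),g(\gamma(s_1)))<w/2$. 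Since $\dev_\mu$ is $1$-Lipschitz and $\dev_\mu\circ\gamma$ has unit speed, $d_X(\gamma(s_1),\gamma(s_2))\geq s_2-s_1\geq w$, so $g\neq 1$. Finally, because $g(\gamma(s_1))\in g(U)$ lies within $w/2$ of $y:=\gamma(s_2)$, the geodesic from $g(\gamma(s_1))$ to $y$ develops into the open strip $\dev_\mu(g(U))$ (its $\vec v^{\perp}$-coordinate moves by less than $w/2$ from the center line), hence lifts into $g(U)$, giving $y\in U\cap g(U)$. By the reduction, $g(U)=U$.

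The main obstacle is the passage from the approximate recurrence furnished by compactness to an exact invariance of $U$: a single geodesic may project injectively (for instance in an irrational direction), so one cannot hope for an exact return. The reduction in the second paragraph is what resolves this, since maximality upgrades any nonempty overlap $g(U)\cap U$ to full coincidence $g(U)=U$; the injectivity of $\dev_\mu$ on the union, via uniqueness of lifts, is the technical heart of the argument. I would also verify the routine points that the short connecting path can be chosen to lie in $Y$ so that it lifts, and that the developed segments remain in the relevant open strips; these are straightforward since $\partial X$ is discrete.
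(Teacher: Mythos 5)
Your proof is correct and follows essentially the same route as the paper: use precompactness of the base surface against the non-precompactness of the strip to produce a nontrivial deck transformation $\tau$ with $\tau(U)\cap U\neq\emptyset$, then invoke maximality (Proposition \ref{MaximalStrip}) to upgrade the overlap to $\tau(U)=U$. The only differences are cosmetic: the paper performs the upgrade via the intersection $U\cap\tau(U)$ being a strip contained in two maximal strips rather than via the union, and your pigeonhole recurrence argument spells out the existence of $\tau$, which the paper asserts in one line.
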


\begin{proof}
Let $p: X \rightarrow Y$ be as in Assumption \ref{FiniteFrontierAssumption}.
Since $X$ is simply connected, $p$ is the universal covering.
Since $\partial Y$ is finite, $Y$ is precompact, and hence $p(U)$ is precompact. 
Since $U$ is a strip, it is not precompact  (Proposition \ref{EllipseBounded}). 
Therefore, there exists $x, x' \in U$ and a nontrivial deck transformation 
$\tau:  X \rightarrow X$ such that $\tau(x)=x'$. Because $U$ is maximal 
and $\tau(\partial X) = \partial X$,  we find that $\tau(U)$ is also maximal. 

The intersection $\tau(U) \cap U$ is a strip. 
Since $U \cap \tau(U)$ belongs to both $U$ and $\tau(U)$, if follows from
the uniqueness of maximal strips---Proposition \ref{MaximalStrip}---that 
$U =\tau(U)$.      
\end{proof}

By combining Proposition \ref{MaximalStrip} and \ref{TranslationExists} 
 we obtain the following.

\begin{coro} \label{StripInfinite}
Each maximal strip (resp. cylinder) belongs to $\bigcap_n S_n(X, \mu)$.
\end{coro}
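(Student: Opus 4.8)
The plan is to deduce the statement directly from Propositions \ref{MaximalStrip} and \ref{TranslationExists}, after passing to the universal cover. First I would reduce to the case of strips: by Remark \ref{Cylinder} an (embedded) maximal cylinder corresponds to a maximal strip, so it suffices to treat a maximal strip $U \subset X$. Fix a lift $\tilde U \subset \tilde X$ of $U$. Since maximality of a strip is detected by the containment relation among strips, and $\dev_\mu$ identifies $\tilde U$ with a planar strip whose enlargements project to enlargements of $U$, the lift $\tilde U$ is a maximal strip in the simply connected surface $(\tilde X, \tilde\mu)$.

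Next I would invoke Proposition \ref{TranslationExists}, applied to $(\tilde X, \tilde\mu)$: since $\tilde X$ is simply connected and $\tilde U$ is a maximal strip, there is a nontrivial translation $\tau : \tilde X \to \tilde X$ with $\tau(\tilde U) = \tilde U$. With such a $\tau$ in hand, the second assertion of Proposition \ref{MaximalStrip} (again applied in $\tilde X$) shows that for each of the two connected components $C_1, C_2$ of $\partial \tilde U$ the set $C_i \cap \partial \tilde X$ is infinite. In particular $\partial \tilde U \cap \partial \tilde X \supseteq C_1 \cap \partial \tilde X$ is infinite, so $\Card(\partial \tilde U \cap \partial \tilde X) \geq n$ for every $n$.

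It then remains to verify the maximal span condition. Since $\tilde U$ is a strip, $\dev_\mu(C_1)$ and $\dev_\mu(C_2)$ are two distinct parallel lines in $\R^2$, and because $\dev_\mu$ is injective on $\overline{\tilde U}$, each of them contains infinitely many points of $\dev_\mu(\partial \tilde U \cap \partial \tilde X)$ by the previous paragraph. Choosing two distinct points $a, a'$ on $\dev_\mu(C_1)$ and one point $b$ on $\dev_\mu(C_2)$, the differences $a - a'$ and $b - a$ are linearly independent, so $\{p - p' \mid p, p' \in \dev_\mu(\partial \tilde U \cap \partial \tilde X)\}$ spans $\R^2$. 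As $\Card(\partial \tilde U \cap \partial \tilde X) > 2$, this is precisely the maximal span condition in $\R^2$. Hence $U \in \Scal_n(X,\mu)$ for every $n$, that is, $U \in \bigcap_n \Scal_n(X,\mu)$.

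The step I expect to require the most care is the reduction to the universal cover, and in particular the verification that a lift $\tilde U$ of a maximal strip $U$ is itself maximal in $\tilde X$, so that Proposition \ref{TranslationExists} genuinely applies. To a lesser extent one must also confirm that the two boundary components develop to \emph{distinct} parallel lines; this is exactly what prevents the developed frontier points from being collinear and therefore upgrades the span from deficient to maximal.
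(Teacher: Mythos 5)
Your argument is correct and is essentially the paper's own proof, which simply combines Propositions \ref{MaximalStrip} and \ref{TranslationExists} in the universal cover. You spell out two points the paper leaves implicit---that a lift of a maximal strip is maximal, and that the two parallel boundary lines give the maximal span condition---but these are exactly the intended details.
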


\begin{prop} \label{RigidSubconicsDiscrete}
 $\Scal_5(X, \mu)$ is a discrete and countable subset of $\Scal(X, \mu)$.
\end{prop}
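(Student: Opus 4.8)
The plan is to exploit the fact that five cone points in general position determine a subconic, so that the members of $\Scal_5$ are rigid. Since $p_*: \Scal(\tilde X, \tilde\mu) \to \Scal(X, \mu)$ is a covering map and the strata are $\Gal(\tilde X/X)$-invariant (being defined on lifts), it suffices to prove that $\Scal_5(\tilde X, \tilde\mu)$ is discrete and countable; the statement for $X$ then follows by pushing forward along $p_*$. I therefore assume $X$ is simply connected, so that each subconic is identified with its unique lift and with its developing image $U_q \subset \R^2$. By Proposition \ref{EllipseStrip} every member of $\Scal_5$ is either an ellipse interior or a strip, and I treat the two cases in parallel.

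First I record the rigidity principle. Let $U \in \Scal_5$ and set $F = \dev_{\mu}(\partial \tilde U \cap \partial \tilde X) \subset \R^2$, a set of at least five points with maximal span. If $U$ is an ellipse interior, then $\hat F$ lies on the strictly convex conic $\partial U_q$; since a line meets $\partial U_q$ in at most two points, no three points of $F$ are collinear, so $\hat F$ is in general position. By Proposition \ref{Dimension}, $Q_{F'}$ is one-dimensional for any five-element subset $F' \subset F$, and hence $q$, and therefore $U$, is determined up to positive scalar by $F'$ --- that is, by any five of its cone points. If instead $U$ is a strip, then $\partial U_q$ is a pair of parallel lines; maximal span forces cone points on both lines, and since $\Card F \geq 5$ one line carries at least two points of $F$. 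Two such points determine that line and the common direction, and a single point on the other line then determines the second line; thus $U$ is determined by three suitably chosen cone points. In either case, a member of $\Scal_5$ is uniquely determined by a finite subset of $\partial \tilde X$.

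Countability is now immediate. By Assumption \ref{DiscreteAss}, $\partial \tilde X$ is a discrete subset of the separable metric space $\overline{\tilde X}$, hence is countable. The collection of finite subsets of a countable set is countable, and by the rigidity principle each such subset determines at most one member of $\Scal_5$. Therefore $\Scal_5(\tilde X, \tilde\mu)$, and hence its image $\Scal_5(X,\mu)$, is countable. For discreteness I first dispose of ellipses. Suppose $U_n \to U$ in $\Scal_5$ with $U$ an ellipse interior. By Proposition \ref{EllipseBounded} the closure $\overline U$ is compact, so for large $n$ each $U_n$ is an ellipse whose closure lies in a fixed compact set $K$. The cone points carried by $\partial U_n$ then lie in the finite set $\partial \tilde X \cap K$; since each $U_n$ carries at least five of them, a pigeonhole argument produces a subsequence along which the five chosen cone points form a single fixed five-element subset $F' \subset \partial \tilde X \cap K$. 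By rigidity this subset determines a unique ellipse, so the subsequence is constant and equals its limit $U$; as this holds for every subsequence, $U_n = U$ for all large $n$, and $U$ is isolated.

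The main obstacle is discreteness at a strip $U$, where $\overline U$ is no longer compact and the preceding compactness argument breaks down in the unbounded direction of the strip: a priori the determining cone points of nearby strips could run off to infinity along $U$. Here I will use the structure of the maximal strip. By Propositions \ref{MaximalStrip} and \ref{TranslationExists}, the maximal strip containing $U$ is invariant under a nontrivial translation $\tau$, and its boundary cone points form a discrete, $\tau$-periodic subset of each boundary line. After fixing a compact window crossing both boundary lines of $U$, the cone points of any nearby strip $U_n$ lying in that window form a subset of a fixed finite set, while the requirement that $U_n$ carry two cone points on one of its lines pins the directions and offsets of its boundary lines to a locally finite family of candidates. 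Discreteness of $\partial \tilde X$ then forces the determining cone points of $U_n$ to stabilize, and the rigidity principle again yields $U_n = U$ for all large $n$. Combining the two cases shows that $\Scal_5(X,\mu)$ is discrete and countable.
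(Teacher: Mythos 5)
There is a genuine gap in the strip case. Your ellipse case is essentially sound and close to the paper's own argument (rigidity via Proposition \ref{Dimension} plus local finiteness of cone points), though even there you should not write ``$\partial \tilde{X} \cap K$'' for a compact $K \subset \R^2$: the developing map is not injective on $\tilde{X}$, so you must first pass to the star-convex union $W = U \cup \bigcup_n U_n$ (all these sets share a point once $q_n \to q$), on which $\dev_{\mu}$ is injective, and then use compactness of $\overline{W}$ together with discreteness of $\partial \tilde{X}$ to get finitely many candidate cone points. That is exactly how the paper handles it, and the fix is routine.

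The strip case is the real problem. A strip $U$ lies in the frontier of the open set of ellipse interiors (Proposition \ref{EllipseOpenFrontier}), so the sequences you must exclude are not ``nearby strips'' but sequences of \emph{rigid ellipses} $U_n$ converging to $U$ --- long, thin ellipses that agree with $U$ to high accuracy inside any fixed compact window while carrying all of their five-plus cone points far away along the unbounded direction. Your ``compact window crossing both boundary lines'' sees no cone points of such a $U_n$ at all, and the pigeonhole/stabilization step never gets started; the sentence beginning ``the requirement that $U_n$ carry two cone points on one of its lines'' silently assumes $U_n$ is a strip. The paper closes exactly this gap quantitatively: by Propositions \ref{MaximalStrip} and \ref{TranslationExists} the cone points on $\partial U$ are $\tau$-periodic, hence consecutive ones are at most $|\tau|$ apart; a similar-triangles argument then shows that any rigid $U' \neq U$ whose closure reaches distance $\epsilon$ from $U$ can contain a chord of the center-line of length at most $\frac{\epsilon + w}{\epsilon}|\tau|$, so $q'$ is nonnegative somewhere on a fixed compact segment $\sigma$ of the center-line, placing $q'$ in a closed set avoiding $q$. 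Some estimate of this kind --- turning the periodicity of $\partial U \cap \partial X$ into a bound on how far an intruding ellipse can track the strip --- is indispensable, and your proposal does not contain it.
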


\begin{proof}
By assumption there exists a precompact translation surface $(X, \nu)$ with 
finite frontier and a covering $p:X \rightarrow Y$ such that 
$\mu = p \circ \nu$. It suffices to assume that $p$ is a universal covering.
Note that $p$ extends uniquely to a continuous map from $\overline{X}$
to $\overline{Y}$ that we will abusively call $p$. 

Let $U$ be a subconic such that $\partial U \cap \partial X$ 
has at least five points.  Given $\delta$, define the $\delta$-neighborhood 
of $U$ by 
\[  N_{\delta}~ =~ 
   \left\{ x \in \overline{X}~ |~ {\rm dist}\left(y,  U \right) < \delta \right\}.
\] 
Since  $\partial Y$ is finite, 
there exists $\epsilon>0$ such that 
$p(N_{\epsilon})~ \cap \partial Y=~  p(\overline{U}) \cap \partial Y$.
Thus, we find that
\begin{equation} \label{Epsilon}
 N_{\epsilon} \cap \partial Y~ =~  \partial U \cap \partial X. 
\end{equation}

Let $q  \in Q(\R^3)$ be such that $\dev_{\mu}(U)=U_q$.
Let $M \subset \R^2$ denote the image of $N_{\epsilon}$ under the developing map.
Since $\dev_{\mu}$ is an open mapping, the set $M$ is open. 
By Proposition \ref{EllipseStrip}, $U$ is either an ellipse interior or a strip.
In each of these cases, we will show that $U$ is isolated in $\Scal_5(X, \mu)$.

Suppose that $U$, and hence $U_q$, is an ellipse interior. Let
$x_0 \in U_q$. Let $A$ be the set of all $r \in Q(\R^3)$
such that either $r(\widehat{x_0})\geq 0$ or 
there exists $x \in \partial M$ such that $r(\widehat{x}) \leq 0$. 
Since $\partial M$ is compact, the set $A$ is closed. 
Since $q(x_0)<0$ and $U_q \subset M$, the form $q$ does not belong to $A$.

Let $U'$ be a subconic in $X$ such that $\partial U \cap \partial X$ 
has at least five elements. Let $q' \in Q(\R^3)$ so that $U_{q'}= \dev_{\mu}(U')$.
We claim that if $U' \neq U$, then $q' \in A$. Note that if $q'(x_0)\geq 0$
held, then we would have $q' \in A$. Thus, we assume, without loss of generality,
that $x_0 \in U'$. 

It follows from (\ref{Epsilon}) that 
each element of $\dev_{\mu}(\partial U' \cap \partial X)$ 
either belongs to $\dev_{\mu}(\partial U \cap \partial X)$ or to the 
complement of $M$.  If $\dev_{\mu}(\partial U' \cap \partial X)$ were a subset 
of $\dev_{\mu}(\partial U \cap \partial X)$, then since  $\dev_{\mu}(\partial U' \cap \partial X)$ contains at least five points, 
Proposition \ref{Dimension} would imply that $U'= U$. 
In other words, if $U' \neq U$, then there exists 
$x \in \partial U' \cap \partial X$
that does not belong to $M$. In particular,  $x \in \overline{U}_{q'}$.

Since $x$ and $x_0$ both belong to the convex set $\overline{U}_{q'}$,
there exists a path $\alpha: [0,1] \rightarrow \R^2$  with $\alpha(0)=x_0$
and $\alpha(1)=x$.  Let $t_0= \sup \{ t~ |~ \alpha(t) \in M\}$. 
Because $x_0 \in M$ and $x \notin M$, we have that $\alpha(t_0) \in \partial M$.
But $q(\alpha(t_0)) \leq 0$ and hence $q \in A'$ as desired.  

The image of $A$ under the map $q \rightarrow U_q$ is a closed subset of 
$\Scal(\R^2)$ that contains $\dev_{\mu}(\Scal_{5}(X,\mu) \setminus \{U\})$
but does not contain $\dev_{\mu}(U)$.  Hence, if $U \in \Scal_5(X, \mu)$ 
is an ellipse interior, then it is an isolated point of $\Scal_5(X, \mu)$. 

If $U \in \Scal_5(X, \mu)$ is a strip, 
then it follows from Propositions  \ref{MaximalStrip}
and \ref{TranslationExists}
that there exists a nontrivial planar translation $\tau$ such that 
$\tau(Z)=Z$ where $Z=\dev_{\mu}(\partial U \cap \partial X)$.   
Since $\tau(Z \cap \partial U)= Z \cap \partial U$,
the length of each component  of  $\partial U \setminus Z$ is at most the translation 
distance $|\tau|$ of $\tau$. 

Let $w$ denote the distance between the boundary components of the strip $U_q$.
Let $\ell$ be the `center-line' of $U_q$, namely  
$\ell:=\{x \in U_q~ |~ {\rm dist}(x, \partial U_q)= w/2\}$.  
Let $\sigma$ be a compact connected subset of $\ell$ with
length 
\[  |\sigma|~ >~  \frac{\epsilon+ w}{\epsilon} \cdot |\tau|. \]

Let $A \subset Q(\R^3)$ be the set consisting of all $r$ such that 
for some  $x \in \sigma$, we $q(\hat{x}) \geq 0$.  
Note that $A$ is a closed subset of $Q(\R^3)$. Indeed, given a sequence 
$\{r_n\} \subset A$, let $x_n \in \sigma$ such that $r_n(x_n) \geq 0$.
Since $\sigma$ is compact, a subsequence of $x_n$ converges 
to some $x \in \sigma$. Thus, if $r_n$ converges to $r$, then $r(x) \geq 0$.   
 
For each $x \in \sigma$, we have $q(x)<0$, and hence  $q \notin A$.
Let $U' \neq U$ be a subconic such that $\partial U \cap \partial X$
contains at least five elements. Let $q' \in Q(\R^3)$ be such that 
$U_{q'}= \dev_{\mu}(U')$. It suffices to show that $q' \in A$.  
Indeed, $A$ is closed and $q \notin A$.

An argument similar to the one given in the case of ellipse interiors
shows that since $q \neq q'$ there exists $x'$ such that ${\rm dist}(x', U)=\epsilon$ 
and such that $q(x)=0$.  Since $U_{q'}$ is convex,
the set $\sigma'= \ell \cap \overline{U_{q'}}$ is a 
compact connected set.  If $\sigma'=\emptyset$, then $q'(x)=0$ for all $x \in \ell$,
and so $q' \in A$. Therefore, it suffices to assume that $\sigma'=\emptyset$. 

Let $T$ be the convex hull of $x'$ and the endpoints of $\sigma'$.  
Observe that $T$ is a (perhaps degenerate) triangle.
Since $\overline{U_{q'}}$ is convex, the triangle $T$ is a subset of $\overline{U_{q'}}$. 
Since $\sigma'$ is parallel to $\partial U$, we may use a similar triangles 
argument to show that the segment $\alpha= T \cap \partial U$ has
length 
\[  |\alpha|~ =~ |\sigma'| \cdot \frac{\epsilon}{\epsilon+ w}. \]
Since $U \cap Z =\emptyset$, the interior of $T$ does not intersect $Z$. 
It follows that $|\alpha| < |\tau|$, and hence $|\sigma'|< |\sigma|$.
Thus, there exists $x \in \sigma$ such that $q'(x)\geq 0$.
Therefore  $q' \in A$ as desired.  
\end{proof}

Proposition \ref{RigidSubconicsDiscrete} leads us to  make the following definition. 

\begin{defn}
The  elements of $\Scal_5(X, \mu)$ will be called  {\em rigid} subconics.
\end{defn}

The following proposition ensures that each polygonal $2$-cell in the cellulation
of $\Scal_3(X, \mu)$ has finitely many vertices. (See \S \ref{SectionCellComplex}).

\begin{prop}  \label{FiniteEllipses} 
Let $V \subset X$ be open and nonempty. 
The set of all rigid subconics that contain $V$ is finite. 
\end{prop}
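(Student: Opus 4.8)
The plan is to show that any rigid subconic $U$ containing the fixed open set $V$ has its developed boundary passing through at least five points of $\partial X$ that are confined to a fixed bounded region, and then to use the finiteness results already established. First I would fix a point $x_0 \in V$ and let $\tilde{x}_0$ be a lift to $\tilde{X}$. Any rigid subconic $U \supset V$ has a lift $\tilde{U}$ containing $\tilde{x}_0$ (after applying a deck transformation), so it suffices to bound the number of lifts $\tilde{U} \subset \tilde{X}$ that contain $\tilde{x}_0$ and satisfy $\mathrm{Card}(\partial \tilde{U} \cap \partial \tilde{X}) \geq 5$. By Proposition \ref{EllipseStrip}, each such $\tilde{U}$ is either an ellipse interior or a strip. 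Since $\tilde{x}_0 \in \tilde{U}$ and $U$ is rigid, the developed image $\dev_{\mu}(\tilde{U})$ is a subconic in $\R^2$ containing $\dev_{\mu}(\tilde{x}_0)$ and passing through at least five points of $\dev_{\mu}(\partial \tilde{X})$.

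The key geometric step is to produce a uniform bound on the size of such a subconic. Because $V$ is open, it contains an embedded disk of some radius $\rho > 0$ about $x_0$; since each rigid subconic containing $V$ is convex and contains this disk, its inradius is at least $\rho$. Combined with Proposition \ref{Inradius} and Assumption \ref{FiniteFrontierAssumption}, which give a finite inradius bound $K$ for $\tilde{X}$, I would argue that $U$ cannot be an unbounded strip whose width exceeds $2K$; more importantly, for ellipse interiors the eccentricity is controlled. The essential point is that a subconic with inradius bounded below by $\rho$ and meeting $\partial \tilde{X}$ in at least five points, with $\tilde{x}_0$ in its interior, must have all five of those boundary points lying within a bounded distance of $\tilde{x}_0$: otherwise the subconic would contain an embedded disk of radius exceeding $K$, contradicting the finite inradius. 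Thus the relevant boundary points all lie in a fixed compact neighborhood $N$ of $\tilde{x}_0$ in $\overline{\tilde{X}}$.

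By the discreteness of $\partial \tilde{X}$ (Assumption \ref{DiscreteAss}), the set $N \cap \partial \tilde{X}$ is finite, so there are only finitely many choices of a five-element (or larger, but five-determining) subset $Z \subset \partial \tilde{X}$ that such a $\tilde{U}$ can meet. For each fixed such five-element subset in general position, Proposition \ref{Dimension} shows that $Q_{\hat{Z}}$ is one-dimensional, so up to positive scaling there is a unique subconic with boundary containing $Z$; equivalently, $Z$ determines $\tilde{U}$ uniquely. Hence there are only finitely many admissible lifts $\tilde{U}$, and projecting down, only finitely many rigid subconics $U$ containing $V$.

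The main obstacle I anticipate is making the bounded-region argument fully rigorous in the strip case, where the subconic is unbounded and one cannot directly invoke boundedness. Here I would lean on Proposition \ref{MaximalStrip} and Proposition \ref{TranslationExists}: a rigid strip is invariant under a nontrivial translation $\tau$, and the five boundary points it meets, together with the translation length $|\tau|$ and the finite inradius bound, constrain the strip's width and the spacing of its boundary marked points. The subtlety is that a strip meets infinitely many cone points along its boundary (by Proposition \ref{MaximalStrip}), so ``the five points lie in a compact region'' must be replaced by the statement that the \emph{strip itself}, modulo its translation symmetry, is determined by finitely many data near $\tilde{x}_0$; I would argue that only finitely many maximal strips through $\tilde{x}_0$ have width at most $2K$, since each determines a direction and width, and the direction is pinned down by the requirement that the strip be the developed image of an immersion near the fixed disk about $x_0$.
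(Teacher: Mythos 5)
There is a genuine gap at the heart of your argument, in the step where you claim that the five cone points on $\partial\tilde{U}$ must lie within a bounded distance of $\tilde{x}_0$, ``otherwise the subconic would contain an embedded disk of radius exceeding $K$.'' This implication is false. The inradius of $\dev_\mu(\tilde{U})$ is indeed trapped between $\rho$ and the bound $K$ of Proposition \ref{Inradius}, but this does \emph{not} control the diameter of an ellipse: an ellipse with semi-minor axis between $\rho$ and $K$ can have arbitrarily large semi-major axis, hence boundary points arbitrarily far from $\tilde{x}_0$, while its largest inscribed disk stays of radius at most $K$. (More elementarily: the convex hull of a disk of radius $\rho$ and a single very distant point still has inradius $\rho$.) So the eccentricity is \emph{not} controlled, the relevant cone points need not lie in a fixed compact neighborhood $N$, and the finiteness of $N\cap\partial\tilde{X}$ cannot be invoked. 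The degenerating family of long thin ellipses is exactly the hard case, and your proposal assumes it away. Your treatment of the strip case is also only a sketch (you essentially assert the conclusion for strips), but the ellipse gap is the fatal one.

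The paper's proof takes a different and genuinely necessary route: it reduces finiteness to compactness via Proposition \ref{RigidSubconicsDiscrete} (a compact discrete set is finite), and proves compactness by normalizing the quadratic forms $q_n$ of a sequence of rigid subconics containing $V$, extracting a convergent subsequence $q_n\to q$, and showing the limit $U_q$ is again a rigid subconic containing $V$. The long-thin-ellipse degeneration you cannot handle shows up there as the case where the limit form $q$ defines a strip; that case is resolved using Propositions \ref{MaximalStrip} and \ref{TranslationExists} and Corollary \ref{StripInfinite}, while in the non-degenerate case a pigeonhole argument on the finite set $\dev_\mu(\overline{W}\cap\partial X)$ produces five cone points on $\partial U_q$. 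The correct fix for your write-up is not to bound the location of the cone points but to adopt this sequential compactness argument (or otherwise rule out an infinite sequence of rigid ellipses containing $V$ with diameters tending to infinity, which your current reasoning does not do).
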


\begin{proof}
By Proposition \ref{RigidSubconicsDiscrete}, it suffices to show that 
the set of all rigid subconics that contain $V$ is compact.
 Without loss of generality, we may assume that $(X,\mu)$
is a universal covering of a precompact translation surface that has finite frontier.

Let $U_n$ be a sequence of rigid subconics containing $V$.
Since $V$ is nonempty and each $U_n$ is convex, the union 
$W = \bigcup_n U_n$ is star convex. Thus, the restriction of $\dev_{\mu}$ 
to $W$ is injective. 

Let $\|\cdot \|$ be a norm on the finite dimensional vector space $Q(\R^3)$.
For each $n$, let $q_n$ be such that $U_{q_n}=\dev_{\mu}(U_n)$ and $\|q\|=1$.
Since the unit sphere is compact, the sequence $q_n$ has a limit point $q$.
Without loss of generality, the sequence  $q_n$
converges to $q$ for otherwise take a subsequence.    
We have $U_q \subset \dev_{\mu}(W)$, and hence $\dev_{\mu}|_W^{-1}(U_q)$
is a subconic in $X$. It suffices to show that $U$ is rigid.

We first claim that $U_q$ is either an ellipse interior or a strip. 
To prove this, we will eliminate the other possibilities:  
$U_q$ is empty, is the complement of a line, is a parabola, or is the plane.
For each $n$, we have $\dev_{\mu}(V) \subset U_{q_n}$. Thus, since $V$ is open
and $q_n \rightarrow q$, we have $V \subset U_q$ and hence $U_q$ is nonempty.   
By Proposition \ref{Inradius}, $X$ has finite inradius $R$. Thus for each $n$, the 
inradius of  $U_{q_n}$  is at most $R$, and hence the inradius of $U_q$
is at most $R$. This eliminates the remaining cases.

Define
\[  M_{\epsilon}~ =~ \left\{ x~ |~ q(\hat{x}) < \epsilon \right\}. \]
Since $q_n \rightarrow q$, for each $\epsilon>0$ there exists $N_{\epsilon}$ such that if 
$n >N_{\epsilon}$, then $\overline{U}_{q_n} \subset M_{\epsilon}$. 
In particular, if $n> N_{\epsilon}$, then the set
$Z_n = \dev_{\mu}(\partial U_n \cap \partial X)$ is a subset of $M_{\epsilon}$.

Suppose that $U_q$ is an ellipse interior.  Since $q_n \rightarrow q$,
Proposition \ref{EllipseOpenFrontier} implies that---by passing to a tail
if necessary---we may assume that for each $n$, 
the subconic $U_{q_n}$ is an ellipse that lies in $M_1$. In particular, 
$\dev_{\mu}(W) \subset M_1$. Since $\overline{M_{1}}$ is compact, $\partial X$ is discrete,
and $\dev_{\mu}|_W$ is a homeomorphism onto its image, 
the set $F=\dev_{\mu}(\overline{W} \cap \partial X)$
is finite. 

For each $n$, let $x_n^1 \in Z_n$. Since $Z_n \subset Z$,
there exists an infinite set  $A^1 \subset \N$ and $c^1 \in F$
such that if $n \in A^1$, then $x_n^1=c^1$. For each $\epsilon>0$, 
we have $c^1 \in M_{\epsilon}$ and hence $c^1 \in \dev_{\mu}(\partial U \cap \partial X)$.
Since ${\rm Card}(Z_n)>1$, for each $n \in A^1$, there exists 
$x_n^2 \in Z_n$ such that $x_n^2 \neq c^1$.  Since $F$ is finite, there exists an infinite set 
$A^2 \subset A^1$ and  $c^2 \in F$ such that if $n \in A^2$, then $x_n^2=c^2$.
We have $c^2 \in \dev_{\mu}(\partial U \cap \partial X)$. 
By continuing in this way, we find $\{c^1,c^2, c^3, c^4, c^5\} \subset 
 \dev_{\mu}(\partial U \cap \partial X)$.  Hence $U \in \Scal_5(X, \mu)$.

If $U$ is a strip, then by Proposition \ref{MaximalStrip},
$U$ belongs to a maximal strip $U'$.
For each $\epsilon>0$, we have $Z_n \subset M_{\epsilon}$. Thus for each $\epsilon>0$, the set $\dev_{\mu}(U') \subset M_{\epsilon}$.
Hence $\dev_{\mu}(U') \subset \dev_{\mu}(U)$ and thus $U'=U$. 
Thus, by Corollary \ref{StripInfinite}, $U$ is a rigid conic.     
\end{proof}


\section{The homotopy type of the space of ellipse interiors} \label{HomotopySection}

Let $\Ecal(X, \mu)$ denote the space of ellipse interiors in a translation surface $(X, \mu)$.

\begin{prop}
The space $\Ecal(X, \mu)$ is homotopy equivalent to $X$. 
\end{prop}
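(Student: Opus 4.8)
The plan is to exhibit a deformation retraction of $\Ecal(X,\mu)$ onto a copy of $X$ sitting inside it, using the fact that a small ellipse (in fact a small round disc) centered at a point is an ellipse interior. First I would define a map $c \colon \Ecal(X,\mu) \to X$ sending each ellipse interior to (the point projecting to) its center: a lift $\tilde U \subset \tilde X$ develops to an honest planar ellipse $U_q \subset \R^2$, which has a well-defined Euclidean center $\dev_\mu$-pulled back to a point of $\tilde X$ via the injectivity of $\dev_\mu$ on the convex set $\tilde U$ (Proposition~\ref{EllipseBounded} guarantees the ellipse is bounded, so the center exists). Since the center is equivariant under $\Gal(\tilde X/X)$, this descends to a continuous map $c \colon \Ecal(X,\mu) \to X$. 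Conversely I would build a section $s \colon X \to \Ecal(X,\mu)$ by sending $x \in X$ to a round disc of some fixed small radius $\rho(x)$ about $x$; by Proposition~\ref{Inradius} the inradius is bounded, but the point is that one can always fit \emph{some} disc, so $s$ is well-defined provided $\rho(x)$ is chosen below the (positive, continuous) injectivity radius, giving $c \circ s = \mathrm{id}_X$.

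The heart of the argument is the homotopy $s \circ c \simeq \mathrm{id}_{\Ecal(X,\mu)}$. Here I would work in the space of quadratic forms, exploiting Proposition~\ref{EllipseConvex}: the set of forms $q$ with $U_q$ an ellipse interior is convex and $\R^+$-invariant. Given an ellipse interior $U$ with developed form $q$, I would linearly interpolate between $q$ (suitably normalized) and the form $q_0$ defining the round disc of radius $\rho(c(U))$ centered at $c(U)$. Convexity of the ellipse locus ensures every form along the straight-line path is again an ellipse interior, and the whole family shares the same center $c(U)$, so the homotopy covers the identity on $X$ and restricts correctly at the endpoints. The key subtlety is that this deformation must be carried out equivariantly and locally-injectively on $\tilde X$: for each ellipse the entire one-parameter family lives inside the star-convex region $\dev_\mu^{-1}(\text{convex hull of the two ellipses})$, on which $\dev_\mu$ is injective by Remark~\ref{ConvexityRemark}, so each intermediate planar ellipse pulls back to a genuine subconic in $X$ and the construction is $\Gal(\tilde X/X)$-equivariant, descending to $X$.

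The main obstacle I expect is continuity of the retraction at the level of the surface rather than the plane, specifically ensuring the interpolated ellipses remain embeddable (not merely immersed) throughout the homotopy. The danger is that as a large, eccentric ellipse $U$ shrinks toward the round disc $s(c(U))$, the intermediate convex regions must be checked to lie in a common star-convex set so that $\dev_\mu$ stays injective; this is where I would use that each intermediate ellipse is contained in $U$ together with a neighborhood of the center, and that $U$ itself already lifts injectively. A second, more technical point is the continuous choice of the radius function $\rho$: I would take $\rho(x)$ to be a fixed fraction of the injectivity radius, which is continuous and positive, so that $s$ is continuous and the homotopy parameters vary continuously with $U$. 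Once injectivity is established uniformly along the homotopy, continuity of $s\circ c \simeq \mathrm{id}$ follows from the continuity of the linear interpolation in $Q(\R^3)$ together with the quotient topology on $\Scal(X,\mu)$, and the proof concludes that $c$ and $s$ are mutually inverse homotopy equivalences.
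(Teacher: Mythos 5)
Your proposal follows the same strategy as the paper: the center map $c:\Ecal(X,\mu)\to X$ together with a disc-valued section (the paper uses the largest embedded disk centered at a lift of $x$, you use a small disk below the injectivity radius), with the homotopy $s\circ c\simeq\mathrm{id}$ supplied by linear interpolation in $Q(\R^3)$ inside a star-convex region where $\dev_\mu$ is injective. The paper dismisses this last step as ``straightforward,'' so your write-up is essentially the paper's proof with the omitted details filled in correctly.
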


\begin{proof}
Given an ellipse interior $U$ in $X$, let $\tilde{c}(U)$ be the center of a 
lift of $U$ to $\tilde{X}$. Define a map $c: \Ecal(X, \mu) \rightarrow X$
by setting $c(U)= p(\tilde{c}(u))$.  Given a point $x \in X$, let $\tilde{D}(x)$ be 
largest disk contained in $\tilde{X}$ with center at some $\tilde{x} \in p^{-1}(x)$. 
(Since $X$ is an open manifold, a disk exists, and by Assumption \ref{FiniteFrontierAssumption}
and Proposition \ref{Inradius} there is a largest such disk).  
Let $D(x)$ be the orbit of $\tilde{D}(x)$ under $\Gal(\tilde{X}/X)$.
It is straightforward to show that the maps $D: X \rightarrow \Ecal(X, \mu)$
and $c$ determine a homotopy equivalence. 
\end{proof}

\begin{prop}
There is a deformation retraction from $\Ecal(X, \mu)$ onto $\Ecal_3(\tilde{X}, \tilde{\mu})$. 
\end{prop}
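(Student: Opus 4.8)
The plan is to build the retraction one stratum transition at a time, working first on the universal cover $(\tilde X,\tilde\mu)$, where every ellipse interior has a unique lift and $\dev_{\mu}$ restricts to an injection on it, and then to push the construction down to $X$. Concretely, I would construct three successive flows $\Ecal(\tilde X,\tilde\mu)\to\Ecal_1(\tilde X,\tilde\mu)\to\Ecal_2(\tilde X,\tilde\mu)\to\Ecal_3(\tilde X,\tilde\mu)$, each of which canonically enlarges an ellipse meeting fewer than the required number of cone points until it meets one more, while leaving fixed any ellipse that already meets enough cone points. Since an ellipse interior is convex and $\dev_{\mu}$ is injective on convex sets (Remark \ref{ConvexityRemark}), every enlarged ellipse along the way is again a genuine immersed ellipse interior; and by finite inradius (Proposition \ref{Inradius}) together with discreteness of $\partial\tilde X$ (Assumption \ref{DiscreteAss}) each flow must halt after finite enlargement at a new point of $\partial\tilde X$. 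Because every step is defined purely from intrinsic data -- centers, the cone points met, and the developed image -- it commutes with the deck group $\Gal(\tilde X/X)$, so the composite is equivariant and descends to $\Ecal(X,\mu)$.

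For the individual flows I would enlarge as follows. An ellipse meeting no cone point is dilated homothetically about its own center until it first touches $\partial\tilde X$; this preserves convexity, hence keeps it an ellipse, and produces an element of $\Ecal_1$. An ellipse meeting exactly one cone point $c_1$ is dilated by the homothety $H_\lambda$ of ratio $\lambda>1$ centered at $c_1$: since $c_1\in\partial U$, it remains a boundary point under $H_\lambda$, the ellipse strictly grows (one has $U\subset H_\lambda(U)$ because $U$ is convex with $c_1$ on its boundary), and we stop at first contact with a second cone point, landing in $\Ecal_2$. An ellipse $U_q$ meeting exactly two cone points $c_1,c_2$ is inflated by the form flow $q_t=q-t\,\eta^2$, where $\eta$ is the nonzero linear form vanishing on the plane spanned by $\hat c_1$ and $\hat c_2$ (unique up to scale). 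Since $\eta(\hat c_i)=0$ the points $c_1,c_2$ stay on the boundary, while $q_t(\hat x)<q(\hat x)$ wherever $\eta(\hat x)\ne 0$, so the ellipse grows monotonically; we run until first contact with a third cone point $c_3$. Three points on the boundary of a bounded ellipse are automatically noncollinear, so $\{c_1,c_2,c_3\}$ is in general position, its developed image has maximal span, and the terminal ellipse lies in $\Ecal_3$.

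The main obstacle is continuity -- both of each stopping time and of the way the three flows fit together across stratum boundaries. For a fixed flow direction the first-contact parameter is finite by Proposition \ref{Inradius}, and I would prove it is continuous in the ellipse using discreteness of $\partial\tilde X$, which confines the relevant cone points to a finite set on any compact family and prevents the contact parameter from jumping. The delicate point is the gluing: as an ellipse in one stratum degenerates toward the next (say an $\Ecal_1$-ellipse approaches one that already meets two cone points), the enlargement demanded by the corresponding flow tends to zero, so the partial retractions match continuously, and each is the identity on the deeper strata into which it maps. These facts make the composite a strong deformation retraction fixing $\Ecal_3$ pointwise. Finally, the equivariance noted above yields descent, giving the deformation retraction onto $\Ecal_3(\tilde X,\tilde\mu)$ and, after passing to the quotient by $\Gal(\tilde X/X)$, onto $\Ecal_3(X,\mu)$.
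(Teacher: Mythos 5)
Your proposal is correct and follows essentially the same route as the paper: a three-stage retraction $\Ecal \to \Ecal_1 \to \Ecal_2 \to \Ecal_3$ built from canonical enlargement flows that stop at first contact with a new cone point (finiteness of the stopping time coming from Proposition \ref{Inradius} and discreteness of $\partial \tilde{X}$), carried out equivariantly on the universal cover and then descended to the quotient. The only divergence is in the middle stage, where the paper enlarges via $q_t = q - t\cdot\eta^2$ with $\eta$ the tangent form at the single contact point rather than your homothety centered at that point, but both deformations keep the contact point on the boundary and accomplish the same thing.
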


\begin{proof}
We first define the retraction under the asumption that $X$ is simply connected.

Let $X$ be simply connected.
Let $\Ecal_n(X, \mu)$ denote the collection of ellipses $U \subset X$
such that $\partial X \cap \partial U$ contains at least $n$ points.
The desired retraction is a concatenation of three retractions:
from $\Ecal(X, \mu)$ to $\Ecal_1(X, \mu)$, from $\Ecal_1(X, \mu)$ to $\Ecal_2(X, \mu)$,
and from $\Ecal_2(X, \mu)$ to $\Ecal_3(X, \mu)$. 

The first retraction consists of dilating the ellipse interior 
until the boundary meets $\partial X$.
To be precise, let $\tau$ be the planar translation that sends the center of 
$\dev(U)$ to the origin. Let $q$ be the quadratic form such that 
$U_q= \tau(\dev(U))$.  Since the center of $U_q$ is the origin,
we have 
\[    q(v)~ =~ a \cdot v_1^2 + \underline{q}(v_2,v_3). \]
where $a <0$ and $\underline{q}$ is a positive definite form on $\R^2$.
Define 
\[    q_t(v)~ =~  a \cdot v_1^2~ +~ (1-t) \cdot  \underline{q}(v_2,v_3). \]
Note that the area of $U_{q_t}$ increases to infinity at $t$ tends to 1.
By Assumption \ref{FiniteFrontierAssumption}
and Proposition \ref{Inradius}, the supremum, $t_0$, of the set of
$t$ such that $\tau_U^{-1}(U_{q_t})$ is the $\dev$-image of a lift of a subconic
is less than one. By setting 
\begin{equation} \label{f1}
   f_1(s,U)~ =~  \dev_{\mu}^{-1} \left( \tau^{-1} \left( U_{q_{(s/t_0)}} \right) \right) 
\end{equation}
we obtain a retraction  $f_1: [0,1] \times \Ecal(X, \mu) \rightarrow  \Ecal_1(X, \mu)$.

The deformation retract from $\Ecal_1(X, \mu)$ onto the 
$\Ecal_2(X, \mu)$ is defined by dilating and translating the center while
maintaining contact with $\partial X$.  To be precise, given an ellipse 
$U$ and $x \in \partial U \cap \partial X$, let $\tau$ and 
$q$ be as before. Let $\eta$ be a linear form on $\R^3$
defined as the differential of $q$ at $\widehat{\dev_{\mu}(\tau(x))}$ 
In particular, $\eta\left(\widehat{\dev_{\mu}(\tau(x))}\right)=0$.
Define 
\begin{equation} \label{qt}    q_t~ =~  q~ -~  t \cdot \eta^2. 
\end{equation}
It follows from Assumption \ref{FiniteFrontierAssumption}
and Proposition \ref{Inradius} that the supremum, $t_0$, of the set of
$t$ such that $\tau_U^{-1}(U_{q_t})$ is the $\dev$-image of a lift of a subconic
is finite.   By defining $f_3$ as in (\ref{f1}),
we obtain a retraction $f_2: [0,1] \times \Ecal_1(X, \mu) \rightarrow  \Ecal_2(X, \mu)$.

The deformation retract from $\Ecal_2(X, \mu)$ onto the 
$\Ecal_3(X, \mu)$ is defined by dilating and translating the center while
maintaining contact with two points in $\partial X$.  To be precise, given an ellipse 
$U$ and $x_1, x_2 \in \partial U \cap \partial X$, let $\tau$ and 
$q$ be as before. Let $\eta$ be a linear form on $\R^3$ whose kernel
is spanned by $\widehat{\dev_{\mu}(\tau(x_1))}$ and $\widehat{\dev_{\mu}(\tau(x_2))}$ 
and so that the value at the midpoint of these two vectors equals 1. 
Define $q_t$ as in (\ref{qt}). 
It follows from Assumption \ref{FiniteFrontierAssumption}
and Proposition \ref{Inradius} that the supremum, $t_0$, of the set of
$t$ such that $\tau_U^{-1}(U_{q_t})$ is the $\dev$-image of a lift of a subconic
is finite. By defining $f_3$ as in (\ref{f1}),
we obtain a retraction $f_3: [0,1] \times \Ecal_1(X, \mu) \rightarrow  \Ecal_2(X, \mu)$.

By concatenating $f_1$, $f_2$ and $f_3$, we obtain a retract 
$f: \Ecal(X, \mu) \rightarrow \Ecal_3(X, \mu)$.

Suppose that $\tilde{X}$ is the universal cover of a translation surface $(X, \mu)$.
Let $\sigma: \tilde{X} \rightarrow \tilde{X}$ be a translation mapping. 
Since the translation $\tau$ associated to $U$ differs from the translation
associated to $\sigma(U)$ by $\sigma$, we find that 
$f_i(s, \sigma(U)) =  f_i(s, U)$ for each $U \in \Ecal_{i-1}$.
Since each covering transformation is a translation, the 
retraction $f$ descends to a homotopy retraction from 
$\Ecal(X, \mu)$ onto $\Ecal_3(X, \mu)$.   
\end{proof}

\begin{coro}  \label{Connected}
If  $X$ is connected, then $\Scal_3(X, \mu)$ is connected.
\end{coro}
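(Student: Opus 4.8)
The plan is to reduce the connectedness of $\Scal_3(X,\mu)$ to that of the subspace $\Ecal_3(X,\mu)$ of ellipse interiors, and then to invoke the elementary topological principle that if $A$ is connected and $A \subseteq B \subseteq \overline{A}$, then $B$ is connected. First I would record connectedness of $\Ecal_3(X,\mu)$: the two propositions immediately preceding this corollary show that $\Ecal(X,\mu)$ is homotopy equivalent to $X$ and deformation retracts onto $\Ecal_3(X,\mu)$, so $\Ecal_3(X,\mu)$ is homotopy equivalent to $X$. Since a homotopy equivalence induces a bijection on path components and $X$ is connected, the space $\Ecal_3(X,\mu)$ is path connected.

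Next I would decompose the target. By Proposition \ref{EllipseStrip} every subconic in $X$ is either an ellipse interior or a strip, so $\Scal_3(X,\mu) = \Ecal_3(X,\mu) \cup S$, where $S$ denotes the strips lying in $\Scal_3$. For such a strip the frontier cone points lie on two parallel lines, and the maximal-span condition in the definition of $\Scal_3$ forbids them from being collinear; hence each strip in $S$ carries at least three boundary cone points $z_1,z_2,z_3$ in general position (with at least one on each of the two lines, so the strip is in fact maximal by Propositions \ref{MaximalStrip} and \ref{Inradius}). The goal of the main step is then to show that every strip in $S$ lies in $\overline{\Ecal_3(X,\mu)}$, which together with $\Ecal_3 \subseteq \Scal_3$ yields the sandwich $\Ecal_3 \subseteq \Scal_3 \subseteq \overline{\Ecal_3}$.

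To prove $U \in \overline{\Ecal_3}$ for a strip $U \in S$, fix a lift $\tilde U \subseteq \tilde X$ and set $Z' = \dev_{\mu}(\{\tilde z_1,\tilde z_2,\tilde z_3\})$, a noncollinear triple. In the plane $T = T_{\vec{d}}$ parametrizing $\Scal_{Z'}$ (see \S\ref{SectionZ}), Proposition \ref{hInverse} gives that $\{q \in T \mid U_q \in \overline{\Ecal_{Z'}}\}$ is compact and convex, and Proposition \ref{EllipseOpenFrontier} places the developed strip $U_q = \dev_{\mu}(\tilde U)$ on the frontier of $\Ecal_{Z'}$. Hence there is a sequence of ellipse interiors $U_{q_n} \in \Ecal_{Z'}$ with $q_n \to q$. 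Using the canonical neighborhood $M$ of $U$ (Definition \ref{CanonicalNeighborhood}), on which $\dev_{\mu}$ is injective, these elongating ellipses are eventually contained in $\dev_{\mu}(M)$ and therefore pull back to immersed ellipse interiors $U_n \subseteq X$; since each $U_n$ meets the three noncollinear cone points $z_1,z_2,z_3$, it lies in $\Ecal_3(X,\mu)$, and $U_n \to U$ in $\Scal(X,\mu)$. This exhibits $U$ as a limit of points of $\Ecal_3$, so $U \in \overline{\Ecal_3(X,\mu)}$.

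Finally I would close the argument: given the sandwich $\Ecal_3 \subseteq \Scal_3 \subseteq \overline{\Ecal_3}$, any separation of $\Scal_3$ into two disjoint nonempty open sets must, by connectedness of $\Ecal_3$, place all of $\Ecal_3$ on one side; the other open set, being nonempty and contained in $\overline{\Ecal_3}$, would then have to meet $\Ecal_3$, a contradiction. Hence $\Scal_3(X,\mu)$ is connected. The hard part will be the containment in the third paragraph, namely verifying that the planar ellipses $U_{q_n}$ approximating the strip are genuinely realized as immersed subconics in $X$ and eventually sit inside the developed canonical neighborhood so that they pull back through the injective branch of $\dev_{\mu}$; this is the only place where the geometry of the covering (finite inradius and the periodic structure of a maximal strip) really enters, and the rest is formal.
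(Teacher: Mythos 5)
Your proposal is correct, and its core is the same as the paper's (implicit) argument: the two propositions preceding the corollary give that $\Ecal_3(X,\mu)$ is homotopy equivalent to $X$, hence connected. The paper states the corollary with no further proof, silently passing from $\Ecal_3(X,\mu)$ to $\Scal_3(X,\mu)$; you correctly identify that these differ by the strips and supply the missing step, namely that every strip in $\Scal_3(X,\mu)$ lies in $\overline{\Ecal_3(X,\mu)}$, after which the sandwich $\Ecal_3 \subseteq \Scal_3 \subseteq \overline{\Ecal_3}$ finishes the argument. That extra step is genuinely needed for the statement as written, and your outline of it is sound: the maximal-span condition forces a noncollinear triple $Z'$ on the two boundary lines of the strip, and the strip is then a limit of ellipse interiors through $Z'$. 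Two small refinements: Proposition \ref{EllipseOpenFrontier} by itself only places the strip in the frontier of the set of \emph{all} planar ellipse interiors, not of $\Ecal_{Z'}$; the clean way to get the approximating family is Proposition \ref{EllipseConvex} (convexity of the ellipse cone), taking $q_t=(1-t)q+t\,r$ with $U_r$ any ellipse interior through $Z'$ (e.g.\ the circumellipse), since then $\underline{q_t}$ is positive definite and $q_t$ is negative inside the triangle, so $U_{q_t}$ is an ellipse interior for $t>0$. For the pullback to $X$, the containment $U_{q_t}\subseteq U_q\cup U_r$ together with $U_{q_t}\subseteq\{q<Ct\}$ confines the approximating ellipses to an arbitrarily thin neighborhood of the strip inside a fixed bounded set, which (using finiteness of $\dev_{\mu}(\partial X)$ in that bounded region, as in the proof of Proposition \ref{FiniteEllipses}) places them in $\dev_{\mu}(M)$ for small $t$, exactly as you anticipate in your closing caveat.
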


\begin{coro} \label{SimplyConnected}
$\Ecal(X,\mu)$ is simply connected if and only if $X$ is simply connected.
\end{coro}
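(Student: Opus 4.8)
The plan is to read this off directly from the homotopy equivalence $\Ecal(X, \mu) \simeq X$ furnished by the first proposition of this section. The key observation is that being simply connected---that is, being path-connected and having trivial fundamental group---is a homotopy invariant, and so it is shared by any two homotopy equivalent spaces.

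First I would recall the maps $c: \Ecal(X, \mu) \to X$ and $D: X \to \Ecal(X, \mu)$ constructed above, which were shown there to be mutually homotopy-inverse. A homotopy equivalence induces a bijection on path components, so $\Ecal(X, \mu)$ is path-connected if and only if $X$ is. Moreover, after fixing a basepoint $U_0 \in \Ecal(X, \mu)$ and its image $c(U_0) \in X$, the maps $c$ and $D$ induce mutually inverse isomorphisms $c_*: \pi_1(\Ecal(X, \mu), U_0) \to \pi_1(X, c(U_0))$ and $D_*$ in the opposite direction. Consequently $\pi_1(\Ecal(X, \mu))$ is trivial if and only if $\pi_1(X)$ is.

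Combining these two facts yields the corollary: if $X$ is simply connected, then $X$ is path-connected with trivial $\pi_1$, hence so is $\Ecal(X, \mu)$; the reverse implication is identical with the roles of the two spaces interchanged. I do not expect a genuine obstacle here, since the statement is a formal consequence of homotopy invariance. The only point meriting any care is that the earlier proposition really does supply a bona fide homotopy equivalence rather than merely a weak one, but this is already ensured by the fact that $c$ and $D$ are exhibited there as explicit homotopy inverses of each other.
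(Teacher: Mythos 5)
Your proposal is correct and matches the paper's intent: the corollary is stated without proof precisely because it follows immediately from the proposition that $\Ecal(X,\mu)$ is homotopy equivalent to $X$, simple connectivity being a homotopy invariant. The explicit maps $c$ and $D$ you cite are exactly the homotopy inverses the paper constructs, so your argument is the same one the paper leaves implicit.
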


The deck group ${\rm Gal}(\tilde{X}/X)$ acts on $\tilde{X}$ as translations. 
In particular, ${\rm Gal}(\tilde{X}/X)$ acts on each stratum $\Scal_n(\tilde{X}, \tilde{\mu})$.
The action is free and discontinuous, and hence the natural projection map 
$p_*: \Scal_n(\tilde{X}, \tilde{\mu}) \rightarrow \Scal_n(X,\mu)$
is a covering. 

\begin{coro} \label{Covering}
The map $p_*: \Scal_n(\tilde{X}, \tilde{\mu}) \rightarrow \Scal_n(X,\mu)$
is a universal covering.
\end{coro}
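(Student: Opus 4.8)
The plan is to promote the map $p_*$—already shown in the lead-in to be a covering via the free and properly discontinuous action of $\Gal(\tilde X/X)$—to the \emph{universal} covering by verifying that its total space $\Scal_n(\tilde X, \tilde\mu)$ is connected and simply connected. Since $\Gal(\tilde X/X)$ acts freely and transitively on the fibers, the covering is regular and $\Scal_n(X,\mu) = \Scal_n(\tilde X, \tilde\mu)/\Gal(\tilde X/X)$; consequently, once the total space is known to be connected and to have trivial fundamental group, the covering is automatically the universal one, with deck group $\Gal(\tilde X/X) \cong \pi_1(\Scal_n(X,\mu))$.

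I would concentrate first on the case $n=3$, the one invoked in the sequel. Because $\tilde X$ is simply connected, the homotopy equivalence $\Ecal(\tilde X, \tilde\mu) \simeq \tilde X$ established at the start of this section shows that $\Ecal(\tilde X, \tilde\mu)$ is simply connected, and the deformation retraction of $\Ecal(\tilde X, \tilde\mu)$ onto $\Ecal_3(\tilde X, \tilde\mu)$ then transports both connectivity and simple connectivity to $\Ecal_3(\tilde X, \tilde\mu)$ (consistent with Corollary \ref{Connected}). It remains to pass from the ellipse locus $\Ecal_3$ to the full stratum $\Scal_3$, whose only additional members are the immersed strips. By Proposition \ref{EllipseOpenFrontier} these strips lie in the frontier of the ellipse locus and arise as limits of dilating families of ellipse interiors, so I would argue that the inclusion $\Ecal_3(\tilde X, \tilde\mu) \hookrightarrow \Scal_3(\tilde X, \tilde\mu)$ is a homotopy equivalence—either by extending the retraction formula (\ref{qt}) continuously across the strip frontier, or by building a collar that pushes each strip to a nearby inscribed ellipse interior. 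Granting this, $\Scal_3(\tilde X, \tilde\mu) \simeq \tilde X$ is connected and simply connected, and $p_*$ is the universal covering.

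The main obstacle is precisely this last step. The center-based retractions of this section are written for bounded ellipse interiors and degenerate on strips (whose centers escape to infinity), so one must check that they, or a substitute homotopy, extend over the strip frontier without creating nontrivial loops. Here the rigidity of strips in the universal cover—each being preserved by a nontrivial translation by Proposition \ref{TranslationExists}—suggests that a direct limiting argument, rather than a transversality argument, will close the gap most cleanly. For $n \geq 4$ the stratum is no longer homotopy equivalent to $\tilde X$; indeed $\Scal_5(\tilde X, \tilde\mu)$ is discrete by Proposition \ref{RigidSubconicsDiscrete}, so the statement must there be read componentwise, each component being contractible and hence simply connected, with the intervening strata handled through the cell structure of \S \ref{SectionCellComplex}.
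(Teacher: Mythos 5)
Your proposal is correct and follows essentially the same route the paper intends: the preceding propositions give that $p_*$ is a covering with deck group $\Gal(\tilde{X}/X)$ and that $\Ecal_3(\tilde{X},\tilde{\mu})\simeq \tilde{X}$ is connected and simply connected, so the covering is universal. The two points you flag as needing care --- extending simple connectivity from $\Ecal_3(\tilde{X},\tilde{\mu})$ to $\Scal_3(\tilde{X},\tilde{\mu})$ across the strip vertices, and the fact that the statement cannot be read literally for $n\geq 4$ (the $1$-skeleton has cycles and $\Scal_5$ is discrete) --- are real, but they are gaps the paper itself leaves unaddressed rather than defects of your argument relative to it.
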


From henceforth we assume that $X$ is connected.


\section{A two dimensional cell complex} 
\label{SectionCellComplex}

In this section we will assume that 
$(X,\mu)$ is the universal cover of a precompact translation surface
with finite frontier.  Here we show that the set, $\Scal_3(X, \mu)$ 
is naturally a $2$-dimensional polyhedral  complex.
Moreover, $\Scal_4(X, \mu)$ is the 1-skeleton and 
${\mathcal S}_{5}(X, \mu)$ is the 0-skeleton. 
To simplify the exposition, we will sometimes abbreviate $\Scal_n(X,\mu)$
by $\Scal_n$. 

Given $Z \subset \partial X$, let $\Scal_Z(X, \mu)$ denote the 
set of all subconics $U$ such that $Z= \partial U \cap \partial X$
and such that $Z$ intersects each component of $\partial U$.
Note that $\overline{\Scal}_Z(X, \mu)$ is the set of 
all subconics such that $Z \subset \partial U \cap \partial X$.
We will abbreviate  $\Scal_Z(X, \mu)$ by $\Scal_Z$
if the context makes the choice of $(X, \mu)$ clear.

Abusing notation slightly, we will let $\dev_{\mu}$ denote the
map from $\Scal(X, \mu)$ to $\Scal(\R^2)$ defined by 
$U \mapsto \dev_{\mu}(U)$.

\begin{lem} \label{RestrictionInjective}
If $\Card(Z) \geq 2$, then the restriction of $\dev_{\mu}$ to 
$\overline{\Scal}_Z$ is a homeomorphism onto its image in $\Scal(\R^2)$. 
\end{lem} 

\begin{proof}
Suppose that $\Scal_Z \neq \emptyset$ and let $\{z, z' \} \subset Z$.
Each $U \in \Scal_Z$ contains the segment $\sigma$ joining $z$ and $z'$.
It follows that the union, $W$, of all $U \in \Scal_Z$ is star convex
with respect to, for example, the midpoint of $\sigma$. In particular,
the restriction of the developing map to $W$ is injective. Thus, $\dev_{\mu}$
determines an injection from $\Scal_Z$ into  $\overline{\Ecal(\R^2)}$.
Hence since $\dev_{\mu}$ defines the topology of $\Scal(X, \mu)$,
the restriction is a homeomorphism onto its image. 
\end{proof}

Let $Z \subset \partial X$. If there exists a convex subset 
$P \subset \overline{X}$ with nonempty interior 
such that $\dev_{\mu}(P)$ equals the convex
hull of $\dev_{\mu}(Z)$, then we will say that 
{\em $Z$ defines the polygon $P$}.\footnote{One may na\"ively 
regard the set $P$ as the convex hull of $Z$. See Remark \ref{ConvexityRemark}.}

We will say that a set of points $A \subset \overline{X}$ is {\em noncollinear}
if and only if $\dev_{\mu}(A)$ is not a subset of some line. 

\begin{prop} \label{ZPolygon}
If $Z \subset \partial X$ contains three noncollinear points
and $\Scal_Z(X, \mu) \neq \emptyset$, then $Z$ defines a polygon. 
\end{prop}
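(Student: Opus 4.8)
The plan is to build the polygon $P$ as the preimage of a planar convex hull under the developing map restricted to a single witnessing subconic. Since $\Scal_Z(X,\mu) \neq \emptyset$, I would fix $U \in \Scal_Z(X,\mu)$ together with a quadratic form $q$ satisfying $\dev_{\mu}(U) = U_q$, so that $Z \subset \partial U \cap \partial X \subset \overline{U}$. The candidate is
\[
P~ :=~ \left( \dev_{\mu}|_{\overline{U}} \right)^{-1} \left( \Hull(\dev_{\mu}(Z)) \right),
\]
and the remaining work is to verify that $P$ is convex in $\overline{X}$, has nonempty interior, and satisfies $\dev_{\mu}(P) = \Hull(\dev_{\mu}(Z))$.

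First I would record that $\overline{U}$ is convex as a subset of $\overline{X}$: for any $y,y' \in \overline{U}$ the convex set $C = U \subset X$ has $y,y' \in \overline{C}$, so $\overline{U}$ is star convex with respect to each of its points. By Remark \ref{ConvexityRemark}, the restriction $\dev_{\mu}|_{\overline{U}}$ is therefore injective, and since $\dev_{\mu}(U) = U_q$, continuity gives $\dev_{\mu}(\overline{U}) = \overline{U_q}$. Because $U$ is a subconic it is convex (Proposition \ref{ConvexClassification}), so $\overline{U_q}$ is convex; as $\dev_{\mu}(Z) \subset \overline{U_q}$, the hull $H := \Hull(\dev_{\mu}(Z))$ is contained in $\overline{U_q}$. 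Consequently $P = (\dev_{\mu}|_{\overline{U}})^{-1}(H)$ is a well-defined subset of $\overline{U}$ with $\dev_{\mu}(P) = H$, and the same witness $C = U$ shows that $P$ is convex in $\overline{X}$.

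It then remains to produce the nonempty interior, and this is exactly where the noncollinearity hypothesis enters. Since $Z$ contains three noncollinear points, $\dev_{\mu}(Z)$ contains three noncollinear points of $\R^2$, so $H$ is genuinely two dimensional and its planar interior is nonempty. I would argue that this interior lies in $U_q$: the interior of the convex set $\overline{U_q}$ equals $U_q$, and any open subset of $\overline{U_q}$ is contained in that interior. Pulling $\mathrm{int}(H) \subset U_q = \dev_{\mu}(U)$ back through the homeomorphism $\dev_{\mu}|_U$ onto its image yields a nonempty open subset of $X$ contained in $P$, and since $X$ is open in $\overline{X}$ this exhibits interior points of $P$. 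The only delicate points are the injectivity of $\dev_{\mu}$ on the (possibly noncompact) closure $\overline{U}$ in the strip case and the identification $\mathrm{int}(\overline{U_q}) = U_q$; both follow from convexity, so I expect the main obstacle to be the bookkeeping that $P$ genuinely has interior rather than any deeper geometric difficulty.
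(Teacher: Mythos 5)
Your proposal is correct and follows essentially the same route as the paper: fix a witnessing subconic $U \in \Scal_Z(X,\mu)$, use the convexity of $\overline{U}$ to get injectivity of $\dev_{\mu}|_{\overline{U}}$, and take $P$ to be the preimage of the convex hull of $\dev_{\mu}(Z)$, with noncollinearity supplying the nonempty interior. You simply spell out the verifications (convexity of $P$ in $\overline{X}$, the identification of the interior of $H$ with a subset of $U_q$) that the paper's four-sentence proof leaves implicit.
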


\begin{proof}
Let $U$ be a subconic such that $Z= \partial U \cap \partial X$.
By Proposition \ref{EllipseStrip}, the set $\overline{U}$ is convex.
Thus, $\dev_{\mu}|_{\overline{U}}$  is injective. Since $Z$ 
contains three noncollinear points, the convex hull $H$ 
of $\dev_{\mu}(Z)$ has nonempty interior. The set 
$P= \dev_{\mu}|_{\overline{U}}^{-1}(H)$ is the desired set. 
\end{proof}

\begin{remk}
Note that $\Scal_Z \neq \emptyset$ does not imply that $Z$ defines a polygon. 
For example, let $Z$ be the intersection of $\partial X$ and a single boundary
component of a strip. On the other hand, the assumption that $Z$ defines 
a polygon does not imply that $\Scal_Z \neq \emptyset$.  For example, 
suppose that $U$ is a subconic such that $\dev_{\mu}(Z)$ is the unit disc
and $\dev_{\mu}(\partial U \cap \partial X)$ is the set of roots of $z^6-1$.
(Here we have made the usual identification of $\R^2$ with the complex plane). 
If $Z$ is the set of roots of $z^3-1$, then  $\Scal_Z= \emptyset$. 
\end{remk}

\begin{prop}  \label{ScalCompactness}
If $Z$ defines a polygon, then the closure $\overline{\Scal_Z(X, \mu)}$ is compact. 
\end{prop}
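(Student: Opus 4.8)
The plan is to show that $\overline{\Scal_Z(X, \mu)}$ is closed and bounded in a space where these conditions force compactness. Since $Z$ defines a polygon $P$, by Proposition \ref{ZPolygon}'s construction the set $Z$ contains three noncollinear points, so $\dev_{\mu}(Z)$ is not contained in a line. Fix a natural basis $\vec{d}$ for $Q_{\hat{Z'}}$ where $Z'$ is a noncollinear triple in $\dev_{\mu}(Z)$, and work on the plane $T_{\vec{d}} \subset Q(\R^3)$ from \S \ref{SectionZ}. By Lemma \ref{RestrictionInjective}, $\dev_{\mu}$ restricts to a homeomorphism from $\overline{\Scal}_Z$ onto its image in $\Scal(\R^2)$, so it suffices to prove that the image $\dev_{\mu}(\overline{\Scal_Z})$ is compact in $\Scal(\R^2)$, equivalently that the corresponding set of normalized quadratic forms is compact.

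The first key step is to identify $\dev_{\mu}(\overline{\Scal_Z})$ as a subset of $\overline{\Ecal_{Z'}}$, the closure of the ellipse interiors whose boundary contains $Z'$, where $Z' = \dev_{\mu}(Z)$. By Proposition \ref{EllipseStrip}, every subconic in $X$ is an ellipse interior or a strip, and every member of $\overline{\Scal_Z}$ contains the polygon $P$ in its closure, hence its $\dev_{\mu}$-image contains the convex hull of $Z'$. I would then invoke Proposition \ref{hInverse}, which states precisely that $\{q \in T~ |~ U_q \in \overline{\Ecal_{Z'}}\}$ is a compact convex subset of $T$ and that $\overline{\Ecal_{Z'}}$ is compact. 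The main task is thus to show that $\dev_{\mu}(\overline{\Scal_Z})$ is a closed subset of this known compact set $\overline{\Ecal_{Z'}}$ (allowing strips as degenerate limits of ellipses, which already lie in the frontier described in Proposition \ref{EllipseOpenFrontier}).

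The second key step is the closedness argument. Take a sequence $U_n \in \Scal_Z$ and suppose $\dev_{\mu}(U_n) \to U_\infty$ in $\Scal(\R^2)$; I must show $U_\infty$ lifts back to an element of $\overline{\Scal_Z}$. The condition $Z \subset \partial U_n \cap \partial X$ is a closed condition: each $z \in Z$ satisfies $q_n(\hat{z}) = 0$, and passing to normalized limits $q_n \to q_\infty$ gives $q_\infty(\hat{z}) = 0$, so $Z \subset \partial U_\infty \cap \partial X$. Since $U_\infty$ contains the polygon $P$ (being a limit of convex sets each containing $P$), its $\dev_{\mu}$-preimage under the injective map $\dev_{\mu}|_W$ from Lemma \ref{RestrictionInjective} is a genuine subconic in $X$ belonging to $\overline{\Scal_Z}$. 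The boundedness of the normalized forms follows from Proposition \ref{hInverse}, because every such $U_\infty$ has $\dev_{\mu}$-image an ellipse interior or strip containing the convex hull of $Z'$.

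The step I expect to be the main obstacle is handling the strip case carefully at the boundary: a sequence of ellipse interiors in $\Scal_Z$ can degenerate to a strip, and I must ensure that the limiting strip's $\dev_{\mu}$-preimage still meets $\partial X$ exactly along (a set containing) $Z$ and genuinely defines a subconic in $X$ rather than escaping to the empty set or the whole plane. Here Lemma \ref{MissingNothing} is essential, since $Z$ is noncollinear it rules out the degenerate form $q_\infty \equiv 0$ giving an empty subconic, and Proposition \ref{Inradius} together with Assumption \ref{FiniteFrontierAssumption} bounds the inradius to rule out the limit becoming a half-plane or the full plane. Combining these, the limit is forced to be an ellipse interior or strip in $\overline{\Scal_Z}$, establishing closedness; together with the boundedness from Proposition \ref{hInverse}, this yields compactness.
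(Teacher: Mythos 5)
Your proposal is correct and follows essentially the same route as the paper: reduce via Lemma \ref{RestrictionInjective} to the image in $\Scal(\R^2)$, bound it inside the compact set $\overline{\Ecal_{\widehat{\dev_{\mu}(Z)}}}$ from Proposition \ref{hInverse}, and prove closedness by taking a convergent sequence of forms $q_n \to q_\infty$ and using star convexity of the union $W=\bigcup_n U_n$ (all $U_n$ contain the polygon $P$) to pull the limit back to a subconic in $X$. The only point where the paper is slightly more explicit than you is in verifying $U_{q_\infty}\subset \dev_{\mu}(W)$ pointwise (if $q_\infty(\hat{x})<0$ then $q_n(\hat{x})<0$ eventually), which is the step that actually guarantees the limit lifts; your extra appeals to Lemma \ref{MissingNothing} and Proposition \ref{Inradius} are harmless but not needed once that containment is in hand.
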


\begin{proof}
By Proposition \ref{EllipseStrip},  we have 
\[ \dev_{\mu}\left(\overline{\Scal_Z(X, \mu)} \right)~ \subset~
   \overline{\Ecal_{\widehat{\dev_{\mu}(Z)}}(\R^2)}.  \]
By Proposition \ref{hInverse}, the set $\overline{\Ecal_{\widehat{\dev_{\mu}(Z)}}(\R^2)}$ 
is compact. 
Thus, by Proposition  \ref{RestrictionInjective} it suffices to show that 
$\dev_{\mu}\left(\overline{\Scal_Z(X, \mu)} \right)$ is closed in 
$\Scal_{\dev_{\mu}(Z)}(\R^2)$.

Let $\{U_n\} \subset \overline{\Scal}_Z(X, \mu)$ be a sequence 
such that $\{\dev_{\mu}(U_n)\}$ converges to $V \in \Scal_{\dev_{\mu}(Z)}(\R^2)$. 
Each $U_n$ contains the interior of the polygon $P$ defined by $Z$. 
In particular, the intersection  $\bigcap_n U_n$
is nonempty, and therefore the union $W = \bigcup_n U_n$ is star convex. 
Therefore, the restriction $\dev_{\mu}|_W$ is an injection
and the image is the union $\dev_{\mu}(W)=\bigcup_n \dev_{\mu}(U_n)$.

Let $q \in Q(\R^3)$ such that $U_q= V$. Since $\dev_{\mu}(U_n)$
converges to $V$, there exists sequence $q_n$ converging to $q$ such that
$U_{q_n}= \dev_{\mu}(U_n)$.  If $x \in \R^2$ and $q(\hat{x})<0$, 
then there exists $N$ such that 
for all $n>N$, we have $q_n(\hat{x})<0$.  Thus, $V \subset  \dev_{\mu}(W)$ and 
$U=\dev_{\mu}|^{-1}(V)$ is a subconic belonging to $\Scal_Z(X,\mu)$.  
\end{proof}

If $Z$ defines a triangle, then $W=\dev_{\mu}(Z)$ is a noncollinear triple.
Thus we can apply the results of \S \ref{SectionPlanarSubconics}.
In particular, let $(d_1,d_2,d_3)$ be an oriented negative natural basis 
for $Q_{\widehat{\dev_{\mu}(Z)}}$ and let $T$ be the plane  consisting
of linear combinations $\sum_i t_i \cdot d_i$ with $\sum t_i=1$. 
By Lemma \ref{Restriction} the restriction of $q \mapsto U_q$ to $T$ 
is a homeomorphism onto $\Scal_{\dev_{\mu}(Z)}$. Let $h: \Scal_{\dev_{\mu}(Z)} \rightarrow T$ 
denote the inverse of this homeomorphism.

Define $f: \overline{\Scal_Z(X,\mu)} \rightarrow T$ by 
\[    f~ =~  h \circ  \dev_{\mu}. \] 
Since $\dev_{\mu}$ and $h$ are homeomorphisms onto their respective 
images, the map $f$ is a homeomorphism onto its image. 

In general, if $Z$ defines a polygon, then 
there exists a triple $Z' \subset Z$ that defines a triangle.  
Note that $\Scal_Z \subset \Scal_{Z'}$
and hence one may restrict the map $f$ associated to $Z'$ 
to the set $\Scal_Z$.

\begin{prop}  \label{Open}
If $Z$ defines a triangle, then $f(\Scal_Z(X, \mu))$ is  open in  $T$.
\end{prop}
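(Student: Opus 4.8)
The plan is to show that $f(\Scal_Z)$ is open in $T$ by an argument based on the local-homeomorphism (open-mapping) property of the developing map together with the compactness results already established. Recall that $f = h \circ \dev_\mu$, where $h$ is the homeomorphism from $\Scal_{\dev_\mu(Z)}(\R^2)$ onto the plane $T$ furnished by Proposition \ref{Restriction}. Since $h$ is a homeomorphism, it suffices to prove that $\dev_\mu(\Scal_Z(X,\mu))$ is open in $\Scal_{\dev_\mu(Z)}(\R^2)$.

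First I would fix a subconic $U \in \Scal_Z$ and a nearby planar subconic $V \in \Scal_{\dev_\mu(Z)}(\R^2)$; the goal is to produce, for all $V$ sufficiently close to $\dev_\mu(U)$, a subconic $U' \in \Scal_Z(X,\mu)$ with $\dev_\mu(U') = V$. The key geometric input is that each member of $\Scal_Z$ contains the triangle $P$ defined by $Z$ (by Proposition \ref{EllipseStrip}, $\overline{U}$ is convex and contains the convex hull of $Z$), so the restriction of $\dev_\mu$ to a star-convex neighborhood $W$ containing $P$ is injective. If $V$ is close enough to $\dev_\mu(U)$, then $V$ also contains the interior of $\dev_\mu(P)$ and $V \subset \dev_\mu(W)$; hence $\dev_\mu|_W^{-1}(V)$ is a well-defined convex subset of $X$ whose boundary contains $Z$, i.e. a subconic in $\Scal_{\dev_\mu(Z)}$ lifting $V$. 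This is essentially the same mechanism used in the proof of Proposition \ref{ScalCompactness}.

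The main obstacle is ensuring that the lifted subconic $U'$ actually lies in $\Scal_Z$, meaning $\partial U' \cap \partial X$ equals $Z$ exactly and $Z$ meets each component of $\partial U'$, rather than merely containing $Z$. Openness of $\Scal_Z$ as opposed to $\overline{\Scal}_Z$ is the delicate point: I would argue that the conditions ``$\partial U' \cap \partial X \supsetneq Z$'' define a lower-dimensional (indeed discrete, by Proposition \ref{RigidSubconicsDiscrete}) locus, so they cannot hold on a full neighborhood. More precisely, if $U \in \Scal_Z$ meets no cone point beyond $Z$, then by discreteness of $\partial X$ in $\overline{X}$ there is a positive distance from $\partial U$ to the finitely many nearby cone points not in $Z$; perturbing $U$ slightly cannot suddenly create a new intersection with $\partial X$, so the nearby $U'$ still satisfy $\partial U' \cap \partial X = Z$. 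Combining these, for $V$ in a small neighborhood of $\dev_\mu(U)$ in $\Scal_{\dev_\mu(Z)}(\R^2)$ the lift $U'$ lies in $\Scal_Z$, which shows $\dev_\mu(\Scal_Z)$ is open and hence $f(\Scal_Z)$ is open in $T$.

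Finally I would note that one may equally phrase the openness directly on the plane $T$: the condition $U_q \in \Ecal_{\dev_\mu(Z)}$ is open in $T$ by Proposition \ref{EllipseOpenFrontier}, and the further constraint that $U_q$ lift to a subconic in $X$ whose frontier-intersection is exactly $Z$ is, by the preceding discreteness argument, also an open constraint. Thus the image $f(\Scal_Z)$ is an open subset of the two-dimensional plane $T$, which is precisely what exhibits each $\overline{\Scal}_Z$ as a convex polygonal $2$-cell with $\Scal_Z$ as its interior.
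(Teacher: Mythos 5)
Your overall shape is right, and the two open conditions you isolate at the end (being an ellipse interior, and avoiding the cone points outside $Z$) are exactly the two open sets $W'$ and $W$ whose intersection with $T$ the paper shows equals $f(\Scal_Z(X,\mu))$. But there is a gap at the lifting step, and it is precisely the step that carries the real content of the paper's proof. You assert that for $V$ sufficiently close to $\dev_\mu(U)$ one has $V \subset \dev_{\mu}(W)$ for ``a star-convex neighborhood $W$ containing $P$,'' and then lift by $\dev_\mu|_W^{-1}$. Which $W$? If $W$ is the union of the members of $\Scal_Z$ (the set used in Lemma \ref{RestrictionInjective}), then the claim that nearby planar subconics land inside $\dev_\mu(W)$ is essentially the statement you are trying to prove, so the argument is circular; and a small metric neighborhood of $\overline{U}$ is not star convex in general because of the cone points in $Z$. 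The mechanism of Proposition \ref{ScalCompactness} that you invoke does not transfer: there one starts with a sequence already in $\overline{\Scal}_Z$ and its images exhaust a star-convex union, whereas here you start with a bare planar subconic $V$ that is not yet known to be the image of anything, and you must produce the domain.

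The paper resolves this by taking $M$ to be the \emph{maximal} star-convex neighborhood of a point $x$ in the interior of the triangle $P$, so that $\R^2 \setminus \dev_\mu(M)$ is exactly the union of the closed rays $\gamma_z([1,\infty))$ issuing from $\dev_\mu(x)$ through the points $z$ of $D = \dev_\mu(\overline{M}\cap\partial X)$. It then shows that if $q$ vanishes on $\widehat{\dev_\mu(Z)}$, is nonnegative on $\hat D$, and $U_q$ is an ellipse interior, then convexity of $t\mapsto q(\hat\gamma_z(t))$ forces $U_q$ to miss every such ray, whence $U_q \subset \dev_\mu(M)$ and the lift exists. Your perturbation argument (``no new cone point appears on $\partial U'$'') establishes membership in the analogue of $W$, but without the ray argument it does not by itself show that the perturbed planar ellipse is realized by an immersion into $X$; a cone point entering the \emph{interior} of $V$, or $V$ escaping the injectivity domain, is ruled out only by that computation. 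Supplying the choice $W=M$ and the convexity-of-$q$-along-rays argument would close the gap and essentially reproduce the paper's proof.
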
 

\begin{proof}
Let $x$ belong to the interior of the triangle $P$ defined by $Z$.
Let $M$ be the maximal star convex neighborhood of $x$, and 
let $D \subset \R^2$ be the set $\dev_{\mu}(\overline{M}_x \cap \partial X)$. 
Since $\partial X$ is discrete, the set $D$ is discrete. It follows that the set 
\[  W~ =~ \left\{ q \in Q_{\widehat{\dev_{\mu}(Z)}}~ 
\left|~ q \left( \hat{D} \setminus \widehat{\dev_{\mu}(Z)} \right) 
\subset (0, \infty) \right. \right\} \]
is open in $Q_{\widehat{\dev_{\mu}(Z)}}$. By Proposition \ref{EllipseOpenFrontier} 
and Lemma \ref{MissingNothing}, 
\[  W'~ =~ \left\{ q \in Q_{\widehat{\dev_{\mu}(Z)}}~ |~ U_q 
           \mbox{ is an ellipse interior } \right\} \]
is open in $Q_{\widehat{\dev_{\mu}(Z)}}$. Thus, it suffices to show that
$f(\Scal_Z(X, \mu)) =   W \cap W' \cap T$.

Suppose that $U \in \Scal_Z(X, \mu)$.
Since $\overline{U} \cap \partial X=Z$, the set $\dev_{\mu}(\overline{U})$ 
does not intersect $D \setminus \dev_{\mu}(Z)$. It follows that 
$f(U) \in W$. Since $Z$ contains only three points, it follows from 
Proposition \ref{StripInfinite} that $f(U) \in W'$.
In sum,  $f(\Scal_Z(X, \mu)) \subset W \cap W' \cap T$. 

Let $q \in W \cap W' \cap T$. Since  $q \in W'$, the set $\overline{U_q}$ is convex,
and therefore contains $P$. It follows that $U_q$  contains $\dev_{\mu}(x)$. 
In other words,  $q\left(\widehat{\dev_{\mu}(x)} \right)<0$.  
Since $q \in W$ and $W \subset Q_{\widehat{\dev_{\mu}(Z)}}$, 
we have $q(\widehat{z}) \geq 0$ for each $z \in D$. 

Define $\gamma_z: [0, \infty) \rightarrow \R$ by 
\[   \gamma_z(t)~ =~ 
  q \left( (1-t) \cdot \widehat{\dev_{\mu}(x)}~ +~  t \cdot \widehat{z} \right).  \] 
Since the signature of $\underline{q}$ is $(2,0)$, the function $\gamma_z$   
has a unique global minimum $t_0$ and is strictly increasing for $t>t_0$. 
Thus, since $q\left(\widehat{\dev_{\mu}(x)} \right)<0$ and $q(\hat{z})\geq 0$, 
we have $\gamma_z(t) \geq  0$ for each $t \geq  1$. 

We have 
\[   \dev_{\mu}(M)~ =~ \R^2 \setminus  \bigcup_{z \in D} \gamma_z([1, \infty)).
\] 
Hence $U_q \subset \dev_{\mu}(M)$. Thus, $q=f(U)$ for $U=\dev_{\mu}|_{M}^{-1}(U_q)$. 
Therefore $ W \cap W' \cap T \subset f(\Scal_Z(X, \mu))$.
\end{proof}

\begin{prop} \label{fConvex}
If $Z \subset \partial X$ 
defines a polygon, then $f\left(\Scal_Z(X, \mu)\right)$ is convex. 
\end{prop}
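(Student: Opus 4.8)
The plan is to reduce the general polygon case to the triangle case and then exploit the convexity of the planar ellipse-interior forms via the linear structure of $Q_{\widehat{\dev_{\mu}(Z)}}$. Recall that $f = h \circ \dev_{\mu}$, where $\dev_{\mu}$ is an injection on $\overline{\Scal_Z}$ (Lemma \ref{RestrictionInjective}) and $h$ is the inverse of the affine homeomorphism $r_{\vec{d}}$ from the plane $T$ onto $\Scal_{\dev_{\mu}(Z')}$, for a chosen triple $Z' \subseteq Z$ defining a triangle. Thus $f(\Scal_Z(X,\mu))$ is the image under the affine map $h$ of $\dev_{\mu}(\Scal_Z(X,\mu))$, and affine maps preserve convexity, so it suffices to prove that $\dev_{\mu}(\Scal_Z(X,\mu))$ is a convex subset of the plane $T_{\vec{d}} \subset Q_{\widehat{\dev_{\mu}(Z')}}$.

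First I would identify $\dev_{\mu}(\Scal_Z(X,\mu))$ with an intersection of convex sets in the vector space $Q_{\widehat{\dev_{\mu}(Z)}}$. By Proposition \ref{EllipseStrip} every subconic in $X$ is an ellipse interior or a strip, so by Proposition \ref{EllipseConvex} the set of forms $q$ (in the relevant subspace vanishing on $\widehat{\dev_{\mu}(Z)}$) with $U_q$ an ellipse interior is convex and $\R^+$-invariant; its closure in $\overline{\Ecal}$ adds the limiting strips. The key additional constraint defining $\Scal_Z$ rather than $\Scal_{Z'}$ is that $\partial U \cap \partial X$ equals $Z$ and meets each component of $\partial U$; equivalently, following the localization idea in the proof of Proposition \ref{Open}, for each cone point $z \in D \setminus \dev_{\mu}(Z)$ in a suitable star-convex neighborhood one requires $q(\hat{z}) \geq 0$. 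Each such inequality $\{q : q(\hat{z}) \geq 0\}$ is a half-space in the vector space, hence convex, and the positive-definiteness condition on $\underline{q}$ cuts out a convex cone as well. The set $\dev_{\mu}(\Scal_Z)$ is thus the intersection of $T_{\vec{d}}$ with a family of such convex sets, and I would verify that this intersection is exactly the image under $\dev_{\mu}$ of $\Scal_Z$, reusing the argument of Proposition \ref{Open} that identifies the admissible forms with those whose sublevel set lies in the maximal star-convex neighborhood.

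The technical heart is handling the polygon case where $\Card(Z) > 3$ and, in particular, the degenerate boundary forms (strips) that lie on the frontier. For a genuine polygon, $Z$ may lie on an ellipse with extra cone points forced onto the boundary, and one must confirm that the conditions $q(\hat{z}) = 0$ for all $z \in Z$ together with $q(\hat{z}') \geq 0$ for neighboring cone points $z'$ still carve out a convex region, not merely a convex region in each triangulating chart. Since all these are linear (equality) or half-space (inequality) constraints on the single vector space $Q_{\widehat{\dev_{\mu}(Z)}}$, their intersection with the affine plane $T$ is automatically convex; the real work is checking that no nonconvex \emph{combinatorial} constraint (such as ``$Z$ meets each component of $\partial U$'') sneaks in. I expect this last point to be the main obstacle: I would argue that on the relevant portion of forms the component condition is either automatic (ellipses have connected boundary) or, for strips, follows from Proposition \ref{MaximalStrip}, so it imposes no extra nonconvex cut.

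Finally, I would assemble these pieces: $\dev_{\mu}(\Scal_Z(X,\mu))$ is realized as $T \cap (\text{convex cone of admissible forms})$, hence convex, and $f = h \circ \dev_{\mu}$ carries it to a convex subset of $T$ because $h$ restricted to $\Scal_{\dev_{\mu}(Z')}$ is (the inverse of) an affine map by Proposition \ref{Restriction} and Proposition \ref{RestrictionNatural}. This yields convexity of $f(\Scal_Z(X,\mu))$ as required. The only genuinely delicate bookkeeping is ensuring the neighborhood $M$ and the finite discrete set $D$ of nearby cone points can be chosen uniformly over all of $\Scal_Z$, which should follow as in Proposition \ref{Open} from discreteness of $\partial X$ and the fact that every member of $\Scal_Z$ contains the fixed polygon defined by $Z$.
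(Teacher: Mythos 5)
Your strategy is sound but it is genuinely different from the paper's argument. You propose to exhibit $f(\Scal_Z)$ as an intersection of convex sets in the affine plane $T$: the linear constraints $q(\hat{z})=0$ for $z\in Z$, open half-spaces $q(\hat{d})>0$ for the nearby cone points $d$, and the convex family of ellipse-interior forms --- in effect a polygon version of the identity $f(\Scal_Z)=W\cap W'\cap T$ from Proposition \ref{Open}. The paper instead runs a direct two-point argument: given $U,U'\in\Scal_Z$, both contain the interior of the polygon defined by $Z$, so $U\cup U'$ is star convex and $\dev_{\mu}$ is injective on it; for the convex combination $q_t=t\cdot f(U')+(1-t)\cdot f(U)$ one has $U_{q_t}\subset \dev_{\mu}(U)\cup\dev_{\mu}(U')$ (if $q_t(\hat{x})<0$ then one of $q(\hat{x})$, $q'(\hat{x})$ is negative), so $U_{q_t}$ pulls back to a subconic, and any cone point in its closure already lies in $\overline{U}\cup\overline{U'}$, hence in $Z$. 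That single containment trick simultaneously disposes of the two issues your plan must handle separately --- immersibility of the interpolating subconic and the absence of extra cone points on its boundary --- and it avoids having to re-establish the exact characterization of the image for polygons (the ``line of sight'' step of Proposition \ref{Open}, which is the real content of your deferred verification and is not entirely free). What your route buys, if completed, is a sharper explicit half-space description of each cell, which anticipates the Krein--Milman analysis in Theorem \ref{CellTheorem}. Two small cautions: the set $D$ of nearby cone points is discrete but need not be finite (harmless for convexity), and Proposition \ref{EllipseConvex} is only usable here because all forms in play are simultaneously negative at a common interior point of the polygon --- the set of \emph{all} ellipse-interior forms is not convex (two disjoint discs can average to a positive-definite form); the paper's own use of that proposition in Proposition \ref{hInverse} carries the same implicit caveat.
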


\begin{proof}
Let $U, U' \in \Scal_Z(X, \mu)$.  
Since $U$ (resp. $U'$) is convex, $U$ (resp. $U'$) contains the 
interior of the polygon 
defined by $Z$. In particular,  $U\cap U' \neq \emptyset$.
Therefore, the union $U\cup U'$ is star 
convex, and hence the restriction of $\dev_{\mu}$ to $U\cup U'$ 
is injective. 

For $t \in [0, 1]$, consider $q_t= t \cdot f(U') + (1-t) \cdot f(U)$.
If $q_t(x,y,1)<0$, then either $q(x,y,1)< 0$ or $q'(x,y,1)< 0$.
Thus, for each $t \in [0,1]$, we have 
$U_{q_t} \subset \dev_{\mu}(U)\cup \dev_{\mu}(U')$,
and hence  $\dev_{\mu}|^{-1}_{U \cup U'}(U_{q_t})$ is a subconic 
in $\Scal_Z(X, \mu)$.
\end{proof}

\begin{prop} \label{fExtreme}
Let $Z$ define a polygon. The point $f(U)$ is an extreme point of  
$f\left(\overline{\Scal_Z(X, \mu)}\right)$ if and only if 
$U$ is a rigid subconic that belongs to $\overline{\Scal_Z(X, \mu)}$.
\end{prop}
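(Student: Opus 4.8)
The plan is to use that, by Propositions \ref{ScalCompactness} and \ref{fConvex}, the set $K := f\!\left(\overline{\Scal_Z(X, \mu)}\right)$ is a compact convex subset of the $2$-dimensional affine plane $T$, so that ``extreme point'' carries its usual planar meaning, and to read off the number $\Card(\partial U \cap \partial X)$ from the position of $f(U)$ relative to $\partial K$. I first reduce to the case that $Z$ is a noncollinear triple defining a triangle, replacing $Z$ by a sub-triple $Z' \subset Z$ as in the discussion preceding Proposition \ref{Open}; every $U \in \overline{\Scal_Z}$ meets at least $\Card(Z)$ cone points, so when $\Card(Z) \geq 4$ the set $K$ is the trace of $f(\overline{\Scal_{Z'}})$ on a lower-dimensional flat and the vertex/edge dichotomy simply shifts by one, but the mechanism is identical. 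Throughout I use that by Proposition \ref{EllipseStrip} and Corollary \ref{StripInfinite} a non-rigid subconic must be an ellipse interior.

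For the implication that rigidity forces extremality, suppose $U \in \overline{\Scal_Z}$ is rigid but $f(U) = \half\,\bigl(f(U_1) + f(U_2)\bigr)$ with $U_1, U_2 \in \overline{\Scal_Z}$ and $f(U_1) \neq f(U_2)$; I derive a contradiction. Since $U$, $U_1$, and $U_2$ all contain the interior of the triangle defined by $Z$, their union is star convex and $\dev_{\mu}$ is injective on it; in this common chart each cone point $x \in \partial U \cap \partial X$ has a well-defined image $z = \widehat{\dev_{\mu}(x)}$, and because a subconic contains no cone point in its interior, the corresponding forms satisfy $q_1(z) \geq 0$ and $q_2(z) \geq 0$. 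As $\half\bigl(q_1(z) + q_2(z)\bigr) = q(z) = 0$, both vanish, so along the entire segment $q_t := (1-t)\,q_1 + t\,q_2 \subset K$ we have $q_t(z) = 0$ for every such $z$. Since $U$ is rigid, $\partial U \cap \partial X$ has at least five points whose developed images span $\R^2$; hence each $U_t := f^{-1}(q_t)$ lies in $\Scal_5(X,\mu)$. Then $f^{-1}$ of the segment is a connected subset of the discrete set $\Scal_5(X,\mu)$ (Proposition \ref{RigidSubconicsDiscrete}), hence a single point, contradicting $f(U_1) \neq f(U_2)$.

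For the converse I argue contrapositively: if $U \in \overline{\Scal_Z}$ is an ellipse interior with $\Card(\partial U \cap \partial X) \in \{3,4\}$, then $f(U)$ is not extreme. If the cardinality is $3$, then $U \in \Scal_Z$, and by Proposition \ref{Open} the point $f(U)$ lies in the open set $f(\Scal_Z) \subset T$, hence in the interior of $K$, which contains no extreme points. If the cardinality is $4$, write $\partial U \cap \partial X = Z \cup \{x'\}$ and $z' = \widehat{\dev_{\mu}(x')}$; by Proposition \ref{Dimension} the space $Q_{\widehat{\dev_{\mu}(Z \cup \{x'\})}}$ is $2$-dimensional, so its trace on $T$ is a line $\lambda$ through $f(U)$. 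Because being an ellipse interior is an open condition (Proposition \ref{EllipseOpenFrontier}) and, by Assumption \ref{DiscreteAss} together with Proposition \ref{FiniteEllipses}, sufficiently small displacements acquire no fifth cone point, both directions along $\lambda$ remain in $\Scal_{Z \cup \{x'\}} \subset \overline{\Scal}_Z$. Thus $f(U)$ is an interior point of a nondegenerate segment in $K$ and is not extreme.

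The main obstacle is the four-point case of the converse: one must verify that moving along $\lambda$ neither leaves the ellipse regime nor meets a fifth cone point, which is exactly where the discreteness of $\partial X$ and the finiteness in Proposition \ref{FiniteEllipses} enter. A secondary point requiring care is the simultaneous developability used in the first implication—checking that the three forms may be evaluated at the same developed cone points in a single star-convex chart—and the bookkeeping in the reduction from a general polygon $Z$ to a defining sub-triangle.
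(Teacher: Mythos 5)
Your proof is correct, and your converse direction (non-rigid $\Rightarrow$ not extreme) is essentially the paper's: produce a nontrivial line through $f(U)$ inside $Q_{\widehat{\dev_{\mu}(\partial U \cap \partial X)}} \cap T$ via Proposition \ref{Dimension}, and check that small motions in both directions stay in $\overline{\Scal_Z(X,\mu)}$. (Your citation of Proposition \ref{FiniteEllipses} for the four-point case is not quite the right tool --- the relevant fact is the discreteness of the developed cone points near $\overline{U}$, exactly the mechanism in the proof of Proposition \ref{Open}, which is what the paper cites --- but you are working at the same level of detail as the paper.) The forward direction is where you genuinely diverge. The paper writes $f(U)=\frac{1}{2}(f(V)+f(V'))$, uses discreteness of $\Scal_5(X,\mu)$ to arrange that $V$ and $V'$ are non-rigid, asserts that some cone point of $U$ is missed by both, and evaluates there to contradict $q_U=0$. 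You instead evaluate at \emph{every} cone point of $U$: nonnegativity of $q_1,q_2$ at developed cone points plus the vanishing of their average forces both to vanish, so the entire segment from $f(U_1)$ to $f(U_2)$ consists of subconics meeting all of $\partial U \cap \partial X$; its $f$-preimage is then a connected subset of the discrete set $\Scal_5(X,\mu)$, hence a point. Both routes rest on Proposition \ref{RigidSubconicsDiscrete}, but yours buys something concrete: it avoids the paper's unjustified claim that a cone point of $U$ lies outside $\overline{V}\cup\overline{V'}$, which is not forced by counting when $\Card(\partial U\cap\partial X)=5$ and the two non-$Z$ points are distributed one to each endpoint. (Indeed, once every cone point of $U$ lies on $\partial U_1$, Proposition \ref{Dimension} makes the relevant $Q_F$ one-dimensional and gives $U_1=U=U_2$ outright, so you could even skip the connectedness step.) The one caveat, which you correctly flag, applies equally to the paper: identifying ``the form vanishes at the developed image of a cone point'' with ``the subconic meets that cone point'' requires the star-convexity of the union and the injectivity of $\dev_{\mu}$ there.
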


\begin{proof}
Let $U$ be a rigid subconic in $\overline{\Scal}_Z$.
Suppose to the contrary that $f(U)$ is not an extreme point. That is, assume that
there exists $V$ and $V' \in \overline{\Scal}_Z$ such that 
$f(U)= \frac{1}{2} \left(f(V)+ f(V')\right)$.  Since $\overline{\Scal}_Z$
is convex and the set, $\Scal_5$, 
of rigid subconics is discrete, we may assume that neither 
$f(V)$ nor $f(V')$ is a rigid subconic. It follows that there 
exists $x \in \partial U \cap \partial X$ such that 
$x \notin \overline{V} \cup \overline{V'}$.
Thus, $f(V)(x) > 0$ and $f(V')(x) > 0$, and therefore 
$f(U)(x) >0$.  This contradicts the fact that $x \in \partial U \cap \partial X$.

Suppose that $U \in \overline{\Scal_Z(X, \mu)}$ is not a rigid subconic.
In particular, letting $Z'=\partial U \cap \partial X$, we have $\Card(Z')=3$ or $4$.
By Proposition \ref{ZPolygon}, the set $\hat{Z}'$ is in general position. 
Thus, by Proposition \ref{Dimension}, the vector space 
$Q_{\widehat{\dev_{\mu}(Z)}}$ has dimension $2$ or $3$.  
Thus, by Lemma \ref{MissingNothing}, there exists a nontrivial linear family 
$t \mapsto q_t$ of quadratic forms in $T$
such that $U_{q_0}= \dev_{\mu}(U)$ and $\dev_{\mu}(Z) \subset \partial U_{q_t}$. 
By Lemma \ref{Open}, there exists $\delta>0$ such that if 
$|t|< \delta$, then $U_{q_t} \in \dev_{\mu}(\Scal_Z(X, \mu))$. 
Therefore, $f(U)$ is not an extreme point. 
\end{proof}

The following theorem summarizes the preceding material and provides
the basis for the cell complex. In particular, if $Z$ 
defines a triangle and $\Scal_Z \neq \emptyset$,
then $\Scal_Z$ is homeomorphic to a cell.

\begin{thm} \label{CellTheorem}
Let $Z \subset \partial X$ define a triangle. 
The map $f$ is a homeomorphism from $\overline{\Scal_Z(X, \mu)}$ 
onto a compact convex planar polygon with finitely many sides.
Each side of $f(\overline{\Scal_Z(X, \mu)})$ equals 
$f(\overline{\Scal_{Z'}(X, \mu)})$ where 
$Z' \subset \partial X$ defines a quadrilateral and $Z \subset Z' \subset \partial X$. 
Each vertex of $f(\overline{\Scal_Z(X, \mu)})$ equals $f(U)$
where $U$ is a rigid conic with $Z \subset \partial U \cap \partial X$. 
A vertex $f(U)$ belongs to a side $f(\overline{\Scal_{Z'}(X, \mu)})$ 
if and only if $Z' \subset \partial U \cap \partial X$.
\end{thm}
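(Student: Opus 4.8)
The plan is to assemble Theorem \ref{CellTheorem} from the propositions already established, treating it as a synthesis rather than a fresh argument. The backbone is the map $f = h \circ \dev_{\mu}$ on $\overline{\Scal_Z(X,\mu)}$. First I would record that $f$ is a homeomorphism onto its image: $\dev_{\mu}$ restricted to $\overline{\Scal}_Z$ is a homeomorphism onto its image by Lemma \ref{RestrictionInjective} (here $\Card(Z)=3 \geq 2$), and $h$ is a homeomorphism by Proposition \ref{Restriction}. Compactness of the image comes from Proposition \ref{ScalCompactness}, and convexity from Proposition \ref{fConvex}. So $f(\overline{\Scal_Z(X,\mu)})$ is a compact convex subset of the plane $T$.

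Next I would identify the vertices. By Proposition \ref{fExtreme}, the extreme points of $f(\overline{\Scal_Z(X,\mu)})$ are exactly the points $f(U)$ where $U$ is a rigid subconic in $\overline{\Scal}_Z$, i.e.\ $U \in \Scal_5(X,\mu)$ with $Z \subset \partial U \cap \partial X$. A compact convex planar set is the convex hull of its extreme points, so these $f(U)$ are the vertices of the polygon. Finiteness of the vertex set is where I would invoke Proposition \ref{FiniteEllipses}: every $U \in \overline{\Scal}_Z$ contains the interior of the triangle $P$ defined by $Z$ (a fixed nonempty open set), and the set of rigid subconics containing a given nonempty open set is finite. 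Hence the polygon has finitely many extreme points, so it is genuinely a polygon with finitely many sides, establishing the first two sentences of the statement.

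The remaining work is the combinatorial description of the sides and the incidence relation. For a side, I would take an edge of the convex polygon; it joins two adjacent vertices $f(U_0), f(U_1)$ with $U_0, U_1$ rigid. The relative interior of this edge consists of points $f(U)$ that are non-extreme, so by Proposition \ref{fExtreme} each such $U$ is non-rigid, forcing $\Card(\partial U \cap \partial X) \in \{3,4\}$. Since $U$ varies in a $1$-parameter family along the edge while containing the fixed points of $Z$, the cardinality cannot drop to $3$ generically along the edge (that would give a $2$-dimensional family, by the dimension count in Proposition \ref{Dimension}), so $\partial U \cap \partial X$ is a common quadruple $Z'$ with $Z \subset Z'$; this $Z'$ defines a quadrilateral and $f(\overline{\Scal_{Z'}})$ is precisely the edge. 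For the incidence relation, a vertex $f(U)$ lies on the side $f(\overline{\Scal_{Z'}})$ exactly when $U \in \overline{\Scal_{Z'}}$, which by definition of $\overline{\Scal_{Z'}}$ means $Z' \subset \partial U \cap \partial X$; both directions follow from the injectivity of $f$ and the description of $\overline{\Scal_{Z'}}$ as the subconics whose boundary contains $Z'$.

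\textbf{The main obstacle} I anticipate is the third paragraph: pinning down that each side is exactly $f(\overline{\Scal_{Z'}})$ for a \emph{single} quadruple $Z'$, rather than a union of such images or a piece of one. The delicate point is showing that as $U$ traverses the open edge, the set $\partial U \cap \partial X$ is a constant quadruple. This should follow from the fact that along a $1$-dimensional face the forms $f(U)$ lie in an affine line inside $T$, together with the dimension count of Proposition \ref{Dimension} (which ties $\Card(Z')=4$ to a $2$-dimensional $Q_{Z'}$ and hence a $1$-dimensional family of subconics in $\Scal_{Z'}$), but making this rigorous requires care that no fifth boundary point appears or disappears in the interior of the edge and that the quadruple does not jump between distinct sets. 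I would handle this by using Proposition \ref{fExtreme} to exclude rigid $U$ from the open edge and a continuity/discreteness argument (via discreteness of $\partial X$ and the openness statement in Proposition \ref{Open}) to show the fourth boundary point is locally constant, hence constant on the connected open edge.
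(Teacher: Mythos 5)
Your proposal follows essentially the same route as the paper: convexity from Proposition \ref{fConvex}, compactness from Proposition \ref{ScalCompactness}, identification of extreme points with rigid subconics via Proposition \ref{fExtreme} together with Krein--Milman, finiteness of the vertex set from Proposition \ref{FiniteEllipses} (since every such subconic contains the interior of the triangle defined by $Z$), and the identification of sides with realizable quadruples $Z' \supset Z$ via the dimension count of Proposition \ref{Dimension} and the openness statement of Proposition \ref{Open}. Your extra care about the constancy of the quadruple along an open edge is a point the paper's own proof passes over tersely, but the argument is the same in substance.
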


\begin{proof}
By Proposition \ref{fConvex}, the 
image $f(\overline{\Scal}_Z)$ is convex.
By Proposition \ref{ScalCompactness} the set 
$f(\overline{\Scal}_Z)$ is compact and hence closed.
Therefore, by the Krein-Milman theorem and Proposition
\ref{fExtreme}, the set $f(\overline{\Scal}_Z)$
is the convex hull of the $f$-images of the rigid subconics
that belong to $\overline{\Scal}_Z$.
Each such rigid subconic contains the nonempty interior of the 
polygon $P_Z$. Therefore, by Corollary \ref{FiniteEllipses},
the set of such subconics is finite. In sum,  
$f(\overline{\Scal}_Z)$ is the convex hull of finitely 
many points in the 2-dimensional plane $T$. 

Since $f$ is a homeomorphism onto its image, 
Proposition \ref{Open} implies that $f(\Scal_Z)$
is the interior of the polygon $f(\overline{\Scal}_Z)$.
In particular, the boundary $\partial \Scal_Z$ consist of those 
subconics $U$ with $\Card(\partial U \cap \partial X) \geq 4$.
Since the rigid subconics correspond to extreme points, 
each side corresponds to a subset $Z' \subset X$ 
with $\Card(\partial U \cap \partial X)=4$. 
\end{proof}

\begin{coro} \label{CellComplex}
The space $\Scal_3(X, \mu)$ and the collection of cells
\[ \{\Scal_Z(X, \mu)~ |~ \Scal_Z(X, \mu) \neq \emptyset \mbox{ and } \Card(Z) \geq 3 \} \]
constitute a $2$-dimensional  cell complex.  
\end{coro}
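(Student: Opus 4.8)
The plan is to verify that the data $(\Scal_3(X,\mu), \{\Scal_Z\})$ satisfies the axioms of a CW (or regular cell) complex, drawing on Theorem \ref{CellTheorem} for the combinatorial structure of each top-dimensional cell. The argument is essentially bookkeeping: I must check that the cells partition $\Scal_3$, that each cell is homeomorphic to an open ball of the appropriate dimension, and that the attaching maps behave correctly on boundaries. The cells come in three types indexed by $\Card(Z)$: when $\Card(Z)=3$ and $Z$ defines a triangle, Proposition \ref{Open} and Theorem \ref{CellTheorem} show $\Scal_Z=f^{-1}(\mathrm{int})$ is an open $2$-cell; when $\Card(Z)=4$, I expect $\Scal_Z$ to be an open $1$-cell (an edge); and when $\Card(Z)\ge 5$, the rigid subconics of Proposition \ref{RigidSubconicsDiscrete} are the isolated $0$-cells.

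First I would establish that the three families of cells \emph{partition} $\Scal_3(X,\mu)$. Each $U \in \Scal_3$ has $Z_U := \partial U \cap \partial X$ with $\dev_\mu(Z_U)$ of maximal span, so $\Card(Z_U) \ge 3$; setting $Z=Z_U$ places $U$ in exactly one $\Scal_Z$, since by definition $\Scal_Z$ requires $Z = \partial U \cap \partial X$ (equality, not containment). Disjointness is then immediate because the assignment $U \mapsto Z_U$ is single-valued. Next I would confirm the dimension and cell structure of each stratum. For $\Card(Z)=3$ this is exactly Theorem \ref{CellTheorem}: $f$ maps $\overline{\Scal}_Z$ homeomorphically onto a compact convex polygon and $\Scal_Z$ onto its interior, giving an open $2$-cell with finitely many vertices and edges. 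For $\Card(Z)=4$, I would invoke Lemma \ref{Quadruple} together with Proposition \ref{Dimension}: when $\hat Z$ is in general position, $Q_{\hat Z}$ is $2$-dimensional, so $T \cap Q_{\hat Z}$ is one-dimensional and $\Scal_Z$ is homeomorphic via $f$ to an open interval, whose two endpoints are the rigid subconics obtained by degenerating $Q_Z$ along the three lines of Lemma \ref{Quadruple}. For $\Card(Z)\ge 5$, Proposition \ref{Dimension} forces $\dim Q_{\hat Z}=0$ (so $U$ is determined) and Proposition \ref{RigidSubconicsDiscrete} gives discreteness, confirming the $0$-cells.

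The crucial gluing condition is the \emph{closure-finiteness and boundary-compatibility}: the boundary of each $2$-cell must be a union of lower-dimensional cells in the complex. This is precisely the content of the last three sentences of Theorem \ref{CellTheorem}, which identify each side of $f(\overline{\Scal}_Z)$ with $f(\overline{\Scal}_{Z'})$ for a quadruple $Z' \supset Z$, and each vertex with $f(U)$ for a rigid $U \supset Z$. Thus the attaching map of a $2$-cell factors through a finite union of edges and vertices already present in $\Scal_3$, and the complex is closure-finite by Corollary \ref{FiniteEllipses}. I would also note that the topology of $\Scal_3(X,\mu)$ agrees with the weak topology determined by the cells, which follows because $\Scal_3$ is a subspace of the quotient $\Scal(X,\mu)$ whose topology is induced by $\dev_\mu$, and $f$ is a local homeomorphism on each $\overline{\Scal}_Z$.

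The main obstacle I anticipate is not any single step but ensuring \emph{global consistency of the cell attachments across different triangles}: a single quadrilateral $Z'$ with $\Card(Z')=4$ may arise as an edge of $\overline{\Scal}_Z$ for more than one triple $Z \subset Z'$, and I must verify that the edge $\Scal_{Z'}$ is identified identically (not just homeomorphically) from each side, so that the quotient is a genuine cell complex rather than merely a cell-like decomposition. Concretely, I would check that the homeomorphism $f$ associated to different choices of triangle $Z \subset Z'$ restricts to the \emph{same} parametrization of $\Scal_{Z'}$, which follows from Proposition \ref{RestrictionNatural} (the change-of-basis maps $r_{\vec d}^{-1} \circ r_{\vec d'}$ are orientation-preserving affine) together with Lemma \ref{RestrictionInjective} guaranteeing $\dev_\mu$ is injective on $\overline{\Scal}_{Z'}$. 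Once this compatibility is in hand, the three families of cells with their attaching data satisfy the axioms of a $2$-dimensional cell complex, completing the corollary.
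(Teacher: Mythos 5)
Your overall approach matches the paper's: the corollary is stated without a separate proof precisely because it is meant to follow directly from Theorem \ref{CellTheorem} (open $2$-cells, sides indexed by quadruples, vertices the rigid subconics) together with Proposition \ref{FiniteEllipses} for closure-finiteness, and your bookkeeping of the partition, the dimensions of the strata, and the boundary compatibility is essentially the intended reading.

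However, one step in your third paragraph is genuinely false: you assert that the topology of $\Scal_3(X,\mu)$ agrees with the weak topology determined by the cells. The paper explicitly denies this in the remark immediately following the corollary --- if $U$ is a rigid strip, there is a sequence of points $U_n$ lying in distinct $1$-cells $\Scal_{Z_n}$ converging to $U$; the set $\{U_n\}$ meets each closed cell in a finite (hence closed) set, so it is closed in the weak topology, yet it is not closed in the natural topology. This is why the paper is careful to claim only a ``cell complex'' and not a CW-complex. Your justification (that $f$ is a local homeomorphism on each $\overline{\Scal}_Z$) does not address the weak topology condition at all, since that condition is a statement about sets meeting infinitely many cells. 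Fortunately the corollary as stated does not require the weak topology, so the fix is simply to delete this claim. A smaller quibble: the endpoints of a $1$-cell $\Scal_{Z'}$ are the rigid subconics $U$ with $Z' \subset \partial U \cap \partial X$, i.e.\ they arise when the moving subconic in the one-parameter family $T \cap Q_{\widehat{Z'}}$ picks up a fifth frontier point; they are not obtained by ``degenerating $Q_{Z'}$ along the three lines of Lemma \ref{Quadruple}'' --- those degenerate forms correspond to pairs of planes through the four points, not to the vertices of the complex.
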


\begin{remk}
The cell complex  $\Scal_3(X, \mu)$ is not a $CW$-complex.
Indeed, the topology on $\Scal_3(X, \mu)$ does not
give the weak topology with respect to the cells. 
If $U$ is a rigid strip,
then there exists a sequence of $1$-cells $\Scal_{Z_n}$
and points $U_n \in \Scal_{Z_n}$ such that $U_n$ 
converges $U$. The set $\{U_n\}$ is closed in the weak topology
but not in the natural topology chosen here.
On the other hand,  
Theorem \ref{FiniteEllipses} implies that 
the cell complex $\Scal_3(X, \mu)$ 
has the closure finiteness property.
\end{remk}

\begin{prop} \label{FourConnected}
The set $\Scal_4(X, \mu)$ is path connected.
\end{prop}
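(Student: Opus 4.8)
The plan is to show that the inclusion of the $1$-skeleton $\Scal_4(X,\mu)$ into the full complex $\Scal_3(X,\mu)$ induces a bijection on path components; since $\Scal_3(X,\mu)$ is connected by Corollary \ref{Connected}---and in fact path connected, being homotopy equivalent to the connected surface $X$---this forces $\Scal_4(X,\mu)$ to be path connected. Recall that $\Scal_3(X,\mu)\setminus\Scal_4(X,\mu)$ is the disjoint union of the open $2$-cells $\Scal_Z$, each of which is, via the homeomorphism $f$ of Theorem \ref{CellTheorem}, the interior of a compact convex polygon whose boundary is a topological circle contained in $\Scal_4(X,\mu)$. Surjectivity on path components is immediate: a point $U$ of an open cell $\Scal_Z$ is joined to $\partial\Scal_Z\subset\Scal_4(X,\mu)$ by the $f$-preimage of a straight segment from $f(U)$ to the boundary of the convex polygon $f(\overline{\Scal_Z})$ (Propositions \ref{Open} and \ref{fConvex}). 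The content is therefore injectivity: two points of $\Scal_4(X,\mu)$ joined by a path in $\Scal_3(X,\mu)$ must be joined by a path that stays inside $\Scal_4(X,\mu)$.

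For injectivity I would push a path off the open $2$-cells. Given a path $\gamma\colon[0,1]\to\Scal_3(X,\mu)$ with $\gamma(0),\gamma(1)\in\Scal_4(X,\mu)$, the set $\gamma^{-1}(\Scal_4(X,\mu))$ is closed and contains $\{0,1\}$, so its complement is a countable disjoint union of open intervals $(a_i,b_i)$. Because each such interval is connected and $\Scal_3(X,\mu)\setminus\Scal_4(X,\mu)$ is a disjoint union of the open cells, $\gamma$ carries $(a_i,b_i)$ into a single open cell $\Scal_{Z_i}$, while $\gamma(a_i)$ and $\gamma(b_i)$ lie on the polygon boundary $\partial\Scal_{Z_i}\subset\Scal_4(X,\mu)$. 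On $[a_i,b_i]$ I would replace $\gamma$ by an arc running from $\gamma(a_i)$ to $\gamma(b_i)$ along the connected boundary circle $\partial\Scal_{Z_i}$; every such arc lies in $\Scal_4(X,\mu)$. If only finitely many intervals occur, the reparametrized path is visibly continuous and lies in $\Scal_4(X,\mu)$, completing the argument.

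The hard part will be continuity of the substituted path when infinitely many of the intervals $(a_i,b_i)$ accumulate at a parameter $t_0$. This is a genuine issue precisely because, as the Remark following Corollary \ref{CellComplex} shows, $\Scal_3(X,\mu)$ is not a $CW$-complex and its cells need not be locally finite: infinitely many $1$-cells can accumulate at a rigid strip. I would address this by controlling the diameters of the substituted arcs. At such a $t_0$ one has $\gamma(t_0)=U_0\in\Scal_4(X,\mu)$ and $\gamma(a_i),\gamma(b_i)\to U_0$, and each cell $\Scal_{Z_i}$ met near $t_0$ has $U_0$ in its closure; choosing on $\partial\Scal_{Z_i}$ the boundary arc that runs from $\gamma(a_i)$ to $\gamma(b_i)$ through the vertex or edge of $\overline{\Scal_{Z_i}}$ nearest $U_0$ should keep the arc inside an arbitrarily small neighborhood of $U_0$, so that the arc diameters tend to $0$ and continuity at $t_0$ follows. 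Establishing that these short arcs exist and shrink---that is, that the local geometry of $\Scal_4(X,\mu)$ near each of its points is tame enough to route the path through a vanishing neighborhood---is the step that requires the detailed structure of the polygonal boundaries from Theorem \ref{CellTheorem} together with the closure-finiteness furnished by Proposition \ref{FiniteEllipses}, and it is where essentially all of the work lies.
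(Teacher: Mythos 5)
Your argument follows the same route as the paper's; in fact the paper's entire proof consists of your first two paragraphs compressed into three lines: connect $U$ to $V$ by a path in $\Scal_3(X,\mu)$ via Corollary \ref{Connected}, then assert that since each $2$-cell is bounded by finitely many $1$-cells the path can be homotoped into $\Scal_4(X,\mu)$. The accumulation issue you isolate in your third paragraph is therefore not something the paper resolves differently --- the paper simply does not engage with it, and your instinct that this is the only delicate point is sound. Two structural facts from the paper localize the problem further than you state: a $1$-cell $\Scal_{Z'}$ lies in the boundary of at most four $2$-cells (one for each triple contained in the quadruple $Z'$), and a rigid ellipse has finite link by Proposition \ref{CardLink}, so infinitely many \emph{distinct} $2$-cells can accumulate only at a rigid strip vertex, exactly as in the Remark following Corollary \ref{CellComplex}. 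For infinitely many excursions into a single fixed cell $\Scal_Z$, continuity of the substituted path is automatic: $\gamma(a_i)$ and $\gamma(b_i)$ converge to $\gamma(t_0)$ on the fixed boundary circle $\partial\Scal_Z$, so they eventually lie in an arbitrarily small arc about $\gamma(t_0)$ and can be joined by boundary arcs of diameter tending to zero. What remains --- showing that at a strip vertex the boundary arcs of the infinitely many incident polygons can be chosen with shrinking diameter --- is genuine residual work that neither your sketch nor the paper carries out, but it is a refinement of, not a departure from, the published argument.
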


\begin{proof}
Let $U, V \in \Scal_4(X, \mu)$.  By Corollary 
there exists a path $\gamma:[0,1] \rightarrow \Scal_3(X, \mu)$ 
with $\gamma(0)=U$ and $\gamma(1)=V$.  Since each $2$-cell 
is bounded by finitely many $1$-cells in $\Scal_{\mu}(X, \mu)$
the path $\gamma$ can be homotoped to lie entirely in $\Scal_4(X, \mu)$.
\end{proof}


\section{Realizability and  adjacency} \label{SectionRealizable}

In this section $(X, \mu)$ is the universal cover of a precompact
translation surface with finite and nonempty frontier.

The cells of $\Scal_3(X,\mu)$ are indexed by the subsets $Z \subset \partial X$
such that $\Scal_Z(X, \mu) \neq \emptyset$ and $\Card(Z) \geq 3$. We will call 
such a set $Z$ a {\em realizable} set. 
By Theorem \ref{CellTheorem}, the inclusion of 
closed cells corresponds to the inclusion of realizable 
subsets of $\partial X$.   
In this section, we define the notion of `adjacency' 
and use it to characterize 
the subsets of a realizable set that are realizable.

\begin{defn}
Let $U$ be a subconic.  
We say that $x, y \in \partial U \cap \partial X$ are {\em adjacent} in 
$\partial U \cap \partial X$ if and only if there exists a connected component 
of $\partial U \setminus \{x,y\}$ that does not intersect $\partial X$. 
\end{defn}

\begin{prop}
Let $U$ be a subconic with ${\rm Card}(\partial U \cap \partial X) \geq 3$.
Each $x \in \partial U \cap \partial X$ is adjacent to exactly two points
in  $\partial U \cap \partial X$.
\end{prop}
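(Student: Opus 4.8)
The plan is to reduce the statement to a question about a conic in the plane and then to analyze the cyclic order (for an ellipse) or the linear order (for a strip) of the frontier points along the planar boundary. By Proposition \ref{EllipseStrip} the subconic $U$ is either an ellipse interior or a strip; in either case $\overline{U}$ is convex by Proposition \ref{ConvexClassification}, so the restriction of the developing map to $\overline{U}$ is injective. Thus $\dev_{\mu}$ carries $\partial U$ homeomorphically onto the planar boundary $\partial U_q$, where $U_q = \dev_{\mu}(U)$, and carries $\partial U \cap \partial X$ onto a set of points of $\partial U_q$. Since $\dev_{\mu}|_{\overline{U}}$ is a homeomorphism onto its image, it takes connected components of $\partial U \setminus \{x,y\}$ to connected components of $\partial U_q \setminus \{\dev_{\mu}(x), \dev_{\mu}(y)\}$ and preserves the property of meeting $\partial X$. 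Hence it suffices to count adjacencies in the planar picture.

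First I would treat the ellipse interior. Here $\partial U_q$ is a topological circle, and $\overline{U}$ is compact by Proposition \ref{EllipseBounded}. Since $\partial X$ is discrete in $\overline{X}$ by Assumption \ref{DiscreteAss} and $\partial U$ is compact, the set $\partial U \cap \partial X$ is finite; label its images cyclically along $\partial U_q$ as $p_1, \ldots, p_n$ with $n \geq 3$. Removing two points from a circle leaves exactly two arcs, and a pair $\{x,y\}$ is adjacent precisely when one of these arcs contains no other point of the frontier set, that is, precisely when $y$ is the immediate cyclic predecessor or successor of $x$. Since $n \geq 3$, the predecessor and successor of $x$ are distinct, so $x$ is adjacent to exactly two points.

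For the strip, $\partial U_q$ is a pair of parallel lines $L$ and $L'$, and the essential point is that the frontier set is bi-infinite along each component. I would invoke Propositions \ref{MaximalStrip} and \ref{TranslationExists}: in the present setting $U$ lies in a maximal strip preserved by a nontrivial translation $\tau$, so $C \cap \partial X$ is infinite and $\tau$-invariant, hence unbounded in both directions, for each boundary component $C$. Consequently, along the line $L$ containing $x$ the frontier points are linearly ordered with no least and no greatest element, so $x$ has a well-defined immediate predecessor and successor on $L$; for $y$ equal to the successor (or predecessor), the open arc of $L$ between $x$ and $y$ is a component of $\partial U_q \setminus \{x,y\}$ containing no frontier point, and these two choices of $y$ give exactly the two adjacencies. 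Finally I would rule out adjacency across the two components: if $y$ lies on $L'$, then each component of $\partial U_q \setminus \{x,y\}$ is a ray of $L$ or of $L'$, and bi-infiniteness forces every such ray to contain a frontier point, so no cross-component pair is adjacent. The main obstacle is precisely this strip analysis---securing the bi-infiniteness of the frontier on each boundary line and excluding cross-component adjacencies---whereas the ellipse case is routine cyclic bookkeeping.
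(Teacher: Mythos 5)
Your proof is correct and follows essentially the same route as the paper's: reduce to the planar boundary (a circle for an ellipse interior, two parallel lines for a strip), use discreteness to get immediate neighbors in the ellipse case, and use Propositions \ref{MaximalStrip} and \ref{TranslationExists} (equivalently Corollary \ref{StripInfinite}) to get bi-infiniteness of the frontier on each boundary line in the strip case. If anything you are slightly more thorough than the paper, which parametrizes each boundary component by a universal cover $\alpha:\R\rightarrow C$ and leaves the exclusion of cross-component adjacencies for strips implicit, whereas you rule it out explicitly.
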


\begin{proof}
Let $C$ be a connected component of $\partial U$. 
Let $\alpha: \R \rightarrow C$
denote a universal covering with $\alpha(0)=x$.
Since $\partial U \cap \partial X$ is discrete and $\alpha$ is a 
covering, $\alpha^{-1}(\partial X)$ is discrete.
If $U$ is a strip, then $\alpha$ is a homeomorphism.
It follows from Corollary \ref{StripInfinite}, both
$A_-= \{ s<0~ |~ \alpha(s) \in \partial X \}$ 
and $A_+ =\{ s<0~ |~ \alpha(s) \in \partial X \}$ are nonempty.
If $U$ is an ellipse interior, then $\pi_1(\partial U)\cong {\mathbb Z}$,
and it follows that  $A_+$ and $A_-$ are nonempty.
Let $x_{-}=\alpha(\sup(A_-))$ and $x_{+}=\alpha(\inf(A_+))$.
Then $x_{\pm}$ is adjacent to $x$, and since 
${\rm Card}(\partial U \cap \partial X) \geq 3$, we have $x_+ \neq x_-$.
\end{proof}

Suppose that $Z$ is realizable with $\Card(Z) \geq 5$.
If $Z' \subset Z$ is also realizable, then it follows from 
Proposition \ref{Dimension} that $\Card(Z')=3$ or $4$.

\begin{prop} \label{QuadrupleRealizable}
Let $Z \subset \partial X$ be realizable and let $Z' \subset Z$ 
with $\Card(Z')=4$.  The set $Z'$ is realizable 
if and only if there exists $U \in \Scal_Z$ such that
$Z'$ intersects each component of $\partial U$  and there exists a partition 
of $Z'$ into pairs $\{x_-,y_-\}$, $\{x_+,y_+\}$ such that 
each pair $\{x_{\pm}, y_{\pm}\}$ is adjacent in $\partial U \cap \partial X$.  
\end{prop}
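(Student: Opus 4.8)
The plan is to develop everything to the plane and to study the two-dimensional pencil of quadratic forms vanishing on the four developed points. Fixing a lift and using $\Card(Z')\ge 2$, Lemma~\ref{RestrictionInjective} lets me identify $\overline{\Scal}_{Z'}$ with a family of planar subconics; writing $F=\widehat{\dev_{\mu}(Z')}$, Proposition~\ref{Dimension} gives $\dim Q_F=2$ and Lemma~\ref{Quadruple} identifies the three degenerate members of $Q_F$ as line-pairs. If $U=U_q$ with $q\in Q_F$ and $w$ is an extra cone point on $\partial U$ (so $q(\widehat{w})=0$), then deforming along the pencil direction $q_t=q-t\,\eta\cdot\eta'$ toward a degenerate member $\eta\cdot\eta'$ yields the key identity
\[
 q_t(\widehat{w})\;=\;-\,t\,\eta(\widehat{w})\,\eta'(\widehat{w}).
\]
Thus $w$ is pushed to the exterior of $U_{q_t}$ for small $t>0$ exactly when $\widehat{w}$ lies strictly between the two lines $\{\eta=0\}$ and $\{\eta'=0\}$, and this single sign computation drives both directions.

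For the reverse implication, suppose such a $U\in\Scal_Z$ exists, with $Z'$ meeting each component of $\partial U$ and partitioned into adjacent pairs $\{x_-,y_-\}$, $\{x_+,y_+\}$. I would take $\eta\cdot\eta'$ to be the degenerate member of $Q_F$ whose two lines join the ``crossing'' pairs. The adjacency hypothesis says precisely that no point of $Z\setminus Z'$ lies on the two portions of $\partial U$ cut off between the adjacent pairs; reading the cyclic (resp. linear) order of $\partial U\cap\partial X$ off of these lines shows that every extra point lies strictly between $\{\eta=0\}$ and $\{\eta'=0\}$. By the identity above, for small $t>0$ each point of $Z\setminus Z'$ becomes exterior to $U_{q_t}$, while $F\subset\partial U_{q_t}$ by construction. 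Propositions~\ref{EllipseStrip} and~\ref{EllipseOpenFrontier} guarantee that $U_{q_t}$ remains a subconic meeting each component of its boundary for $t$ small, and discreteness of $\partial X$ together with the compactness in Proposition~\ref{ScalCompactness} ensures no new cone point is met. Hence $U_{q_t}$ pulls back to $V$ with $\partial V\cap\partial X=Z'$, so $Z'$ is realizable.

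For the forward implication, assume $Z'$ is realizable. Since $Z'\subset Z$, every $U\in\Scal_Z$ has $Z'\subset\partial U\cap\partial X$, so $\Scal_Z\subset\overline{\Scal}_{Z'}$, and because $\Card(Z)\ge 5$ each such $U$ is rigid, hence by Proposition~\ref{fExtreme} and Theorem~\ref{CellTheorem} an extreme point---equivalently an endpoint---of the $1$-cell $\overline{\Scal}_{Z'}$. Fixing any $U\in\Scal_Z$ (nonempty, as $Z$ is realizable) and deforming along the unique direction of this segment into its relative interior $\Scal_{Z'}$, every point of $Z\setminus Z'$ must leave $\partial U$; since a cone point can never enter the interior of a subconic, each moves to the exterior. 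Running the sign identity in reverse forces all of $Z\setminus Z'$ to lie strictly between the two lines of the associated degenerate member $\eta\cdot\eta'$, whose lines then realize a crossing pairing of $Z'$; the complementary pairing is the desired partition into adjacent pairs.

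The main obstacle is the uniform sign-and-side bookkeeping linking the combinatorial notion of adjacency---the cyclic order on an ellipse, or the two linear orders on the components of a strip---to the side of $\partial U$ to which an extra cone point is displaced. The strip case is the delicate one: $\partial U$ has two components, so one must control the rays toward infinity and verify that $Z'$ still meets each component after deformation. I expect to manage the geometry by reducing, via Proposition~\ref{EllipseOpenFrontier}, to a neighborhood in the pencil on which the subconic type is locally constant, and to rule out newly-met cone points using the discreteness of $\partial X$ together with the finiteness in Corollary~\ref{FiniteEllipses}.
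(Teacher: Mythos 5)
Your overall strategy---working in the two-dimensional pencil of quadratic forms through the four developed points and reading adjacency off the signs of its degenerate members---is essentially the paper's. For ($\Leftarrow$) the paper does what you propose: it sets $q'=-\eta_1\cdot\eta_3$ for the line pair through the two non-adjacent (``crossing'') pairs and deforms $q_t=(1-t)\cdot q+t\cdot q'$, so that $q_t(\hat w)=t\,q'(\hat w)>0$ for every $w\in Z\setminus Z'$. For ($\Rightarrow$) you take a genuinely different route: the paper starts from an actual $U'\in\Scal_{Z'}$ and shows, via Proposition~\ref{Dimension2} (no common critical point), that $q'$ changes sign at each point of $\dev_{\mu}(Z')$ along $\partial U_q$, so the two arcs on which $q'<0$ lie in $U'$ and are therefore free of cone points; you instead invoke the cell structure (Proposition~\ref{fExtreme}, Theorem~\ref{CellTheorem}) to realize $U$ as an endpoint of the segment $\overline{\Scal}_{Z'}$ and push into its relative interior. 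Your route is legitimate---there is no circularity, since \S\ref{SectionCellComplex} does not use this proposition---and the sign bookkeeping does single out one of the two ``opposite-sides'' pairings as adjacent; you do still need to verify separately that $Z'$ meets each boundary component of a strip (three collinear points of $\partial U'\cap\partial X$ would force $U'$ to be a strip, whose frontier intersection is infinite by Corollary~\ref{StripInfinite}).

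The genuine gap is in your ($\Leftarrow$) direction when $U$ is a strip. You propose to use Proposition~\ref{EllipseOpenFrontier} to find ``a neighborhood in the pencil on which the subconic type is locally constant,'' but no such neighborhood exists: a strip lies on the \emph{frontier} of the set of ellipse interiors, and the forms $q_t$ give ellipse interiors only for $t$ on one side of $0$. The paper must prove this one-sided statement directly, by showing that $U_{q_t}\cap U_{q'}$ is bounded for small $t>0$ (a similar-triangles/leading-coefficient estimate along the direction of the strip) and then applying Proposition~\ref{EllipseBounded}. Likewise, ``no new cone point is met'' does not follow from Proposition~\ref{ScalCompactness}: since $U_{q_t}\not\subset U_q$, the deformation sweeps through new territory, and one needs the paper's apparatus---a maximal star-convex neighborhood $M$ of a point in the interior of the quadrilateral, the discrete set $D=\dev_{\mu}(\overline{S}\cap\partial X)$, and the identity $q_0^{-1}((-\infty,0])\cap D=F'$---both to conclude that $\overline{U_{q_t}}$ meets no cone point outside $Z'$ for small $t$ and to see that $U_{q_t}$ actually pulls back to an immersed subconic of $X$. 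These are the substantive points of the argument, and your proposal names them as obstacles without supplying a mechanism that would resolve them.
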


\begin{proof}
We first note that the claim is true if $Z=Z'$. 
Indeed, if $Z=Z'$, then ${\rm Card}(Z) = 4$, and hence each subconic 
$U \in \Scal_Z$ is an ellipse interior. Thus, $\partial U$ is homeomorphic
to the unit circle,  and each point in $Z'= \partial X \cap \partial U$ is 
adjacent to exactly two other points in $Z'$. In particular,
there exists a partition of $Z'$ into adjacent pairs. 
Conversely, if $Z=Z'$, then since $Z$ is realizable,  $Z'$ is realizable.

Thus, we may assume that $Z' \neq Z$.

($\Rightarrow$)
If $Z'$ is realizable, then there exists $U' \in \Scal_{Z'}$. 
Since $Z \neq Z'$, we have  $U \neq U'$. Let $q \in Q(\R^3)$ 
(resp. $q' \in Q(\R^3)$) such that $U_q=\dev_{\mu}(U)$
and $U_{q'}=\dev_{\mu}(U')$. Since $U \neq U'$, the forms
$q$ and $q'$ do not belong to the same line. 
Let $F= \dev_{\mu}(Z)$.
Consider the components of the complement  $\partial U_q \setminus F$.
Since $\partial U$ is a 1-manifold, the intersection $\overline{C} \cap \overline{C'}$
of the closures of two components, $C$, $C'$, is either empty or is a singleton.  
In the latter case, we will say that the components are `adjacent'. 

By Proposition \ref{Dimension}, the zero locus of the restriction 
$q'|_{\partial U_q}$ equals $F$. (Indeed, if it were not the case, then 
$q$ and $q'$ would both vanish at 5 common points.)
Thus, since $q'|_{\partial U_q}$ is continuous, 
the restriction to each component $q'|_{C}$ is either positive or negative. 
We claim that if $C$ and $C'$ are adjacent, then 
the restrictions $q'|_{C}$ and $q'|_{C'}$ have opposite signs.  

Indeed, suppose to the contrary that the signs of $q'|_{C}$ 
and $q'|_{C'}$ are the same. Let $\alpha:(-\epsilon, \epsilon) \rightarrow \partial C_q$
be a differentiable path with $\alpha(0)$ equal to the point in
$\overline{C} \cap \overline{C'}$ and $|\alpha'(0)| \neq 0$. 
Note that $t \mapsto q \circ \hat{\alpha}(t)$ is constant,
and by   assumption $t \mapsto q' \circ \hat{\alpha}(t)$
has a critical point at $t=0$.  (Recall that $\widehat{(v_1,v_2)}=(v_1,v_2,1)$.) 
If we let $v_1= \hat{\alpha}(0)$ and let $w_1= \hat{\alpha}'(0)$,
then $dq_{v_1}(w_1)=0$ and $d(q')_{v_1}(w_1)=0$. Thus, by Proposition 
\ref{Dimension2}, the forms $q$ and $q'$ belong to the same line. 
This is a contradiction.

Since $Z'$ is realizable, $Z'$ intersects each component of $\partial U$.
We now identify the desired points $\{x_{\pm}, y_{\pm}\}$.

If $U$ is a strip, then let $\ell_{+}$, $\ell_-$ denote the two components.
Since $U \in S_{Z'}$ and ${\rm Card}(Z)=4$, we have ${\rm Card}(Z'\cap \ell_{\pm})=2$.
Indeed, otherwise $U'$ would contain three collinear points, and $U'$ 
would be a strip. But if $U'$ were a strip, then by Proposition \ref{StripInfinite},
the set $Z'= \partial U' \cap \partial X$ would be infinite. 
Since $\ell_{\pm}$ is homeomorphic to a line, the complement
$\ell_{\pm}/ \setminus Z$ has exactly two unbounded 
components and one bounded component. The frontier of each unbounded
component $C$ is a singleton and hence $U \cap C = \emptyset$.
Thus, $q$ is positive on the corresponding ray. Therefore, 
the form $q'$ is negative on the segment 
that corresponds to the bounded component, $B_{\pm} \subset \ell_{\pm}$. 
Thus $U' \supset B_{\pm}$, and we define $\{x_{\pm}, y_{\pm}\}$ to be the 
frontier points of $B_{\pm}$.

If $U$ is an ellipse interior, then $\partial U$ is homeomorphic to the
unit circle. Thus, $\partial U \setminus Z$ has four components that
we index with $\Z/ 4 \Z$ so that $C_i$ and $C_{i+1}$ are adjacent
for each $i \in \Z/4\Z$. For some $i \in \Z/4Z$, we have that 
the restriction of $q'$ to arc corresponding to $C_i$ is negative,  
and hence the restriction of $q'$ to the arc corresponding to 
$C_{i+2}$ is negative. In particular, $U' \supset C_{i} \cup C_{i+2}$.
Set  $\{x_{-}, y_{-}\}$ to be the 
frontier points of $C_i$, and set  $\{x_{+}, y_{+}\}$ equal to the 
frontier points of $C_{i+2}$.

($\Leftarrow$) Suppose that there exists $U \in S_Z$ such that 
$Z'$ admits a partition into two pairs of points adjacent in $Z$
and such that $Z'$ intersects each component of $\partial U$. 
Let  $F= \dev_{\mu}(Z)$ and $F'= \dev_{\mu}(Z')$. Let $q \in Q_{F}$
so that $\dev_{\mu}(U)=U_q$.

Since $Z'$ intersects each component of $\partial U$, the set
$Z'$ defines a polygon $P$. Since $F'=\dev_{\mu}(Z')$ is the set of extreme 
points of $K= \dev_{\mu}(P)$, the set $K$ is the intersection of four closed
half planes. Thus $\R^2 \setminus K$ is the union of four open half planes. 
Since $F' \neq F$ and $F \subset \R^2 \setminus K$, one of these
open  half planes, $H_1$, intersects $F$.  Let $\sigma_1$ be the side 
of $K$ that is contained in the closure of $\overline{H}_1$.  
Let $\sigma_3$ be the side of $P$
opposite to $\sigma$, and let $H_3$ be the open half plane whose 
closure $\overline{H}$ contains $\sigma_3$. Let $H_2$ and $H_4$
be the other two half-planes.

Let $p_-$ and $p_+$ be the endpoints of $\sigma_1$. Then $p_-, p_+ \in F'$. 
We claim that the corresponding points $x_-=\dev_{\mu}|^{-1}_{\overline{U}}(p_-)$ and 
$x_+=\dev_{\mu}|^{-1}_{\overline{U}}(p_+)$ are not 
adjacent in $Z=\partial U \cap \partial X$.

If $U_q$ is a strip and $x_-$ and $x_+$ were adjacent, then 
$p_-$ and $p_+$ would belong to the 
same component $\ell$ of $\partial U_q$. The component $\ell$ would 
coincide with the boundary of $H_1$, and $\partial U_q$ would
then intersect both components of $\R^2 \setminus \ell$.  Since 
$U_q$ is a strip, $\ell$ could not be the boundary of $U_q$.
Thus, in this case, $x_-$ and $x_+$ are not adjacent.

If $U_q$ is an ellipse interior, then the intersection $\partial H_i \cap \partial U_q$ 
consists of two points. Since $H_1 \cap \partial U_q \neq \emptyset$, the 
component $C$ of $\partial U_q \setminus F$ that joins 
$p_-, p_+ \in \partial H_1 \cap \partial U_q$
belongs to $H_1$. But $C \cap F \neq \emptyset$ and so $x_-$ and $x_+$ are not 
adjacent in this case.

Since $Z'$ has a partition into two adjacent pairs, $x_+$ (resp. $x_-$) is adjacent 
to an endpoint $y_+$ (resp. $y_-$) of $\sigma_3$. Let $C^+$ and $C^-$
denote the arcs with endpoints $\{x_{\pm}, y_{\pm}\}$ such that
$\partial X \cap C^{\pm}= \emptyset$. 

Let $\eta_i$ be a linear 1-form 
whose kernel contains the span of $\widehat{\partial H_i}$ and such that 
$\eta_i$ is negative on $\widehat{K^{\circ}}$ where $K^{\circ}$ is the interior of $K$.  
Then the quadratic form $q'= -\eta_1 \cdot \eta_3$ belongs to $Q_{F'}$.
Note that $H_1 \cup H_3= (q')^{-1}((0, \infty))$, and $q'$ is negative on $C^{\pm}$.

For $t \in \R$, define 
\[   q_t~ =~ t \cdot q'~ +~ (1-t) \cdot q. \]
If $t \in [0,1]$, then $U_{q_t} \subset U_q \cap U_{q'}$. 
Indeed, if $q_t(x)<0$, then either $q(x)<0$ or $q'(x)<0$.

Let $x \in K^{\circ}$, and let $M\subset X$ be the maximal star convex neighborhood of $x$.
Since $U$ is convex and $U_q= \dev_{\mu}(U)$, we have $U_q \subset \dev_{\mu}(M)$.

Let $S=\dev_{\mu}|_{M}^{-1}(U_{q'})$ and let $D= \dev_{\mu}(\overline{S} \cap \partial X)$. 
Since $\partial X \cap C^{\pm}= \emptyset$, we have $D \cap \dev_{\mu}(C^{\pm})= \emptyset$.
Since $\partial X$ is discrete, the set $D$ is discrete. Since $C^{\pm} \cap D = \emptyset$,
we have $q_{0}^{-1}((-\infty,0]) \cap D = F'$. Therefore since 
$t \mapsto q_t$ is continuous and $D$ is discrete, there exists $\epsilon \in (0,1)$
such that if $|t|< \epsilon$, then $q_{t}^{-1}((-\infty,0]) \cap D = F'$. 

We claim that there exists $\epsilon'>0$ such that if $t \in [0, \epsilon')$,
then $U_{q_t}$ is an ellipse interior. If $U_q$ is an ellipse, then 
this follows from Proposition
\ref{EllipseConvex} and the continuity of $t \mapsto q_t$.

Suppose that $U_q$ is a strip. Then $q|_{U_q}$ is bounded from below.
Indeed, $q= \eta_+ \cdot \eta_-$ where $\eta_-$ are linear 1-forms. Since 
kernel, $\ell_{\pm}$, of $\eta_{\pm}$ is parallel to the kernel, $\ell_{\mp}$,
of $\ell_{\pm}$, it follows that the form $\eta_{\pm}|_{U_q}$ is bounded. 
Hence $q|_{U_q}> -N$ for some $N>0$. 
Let $v$ belong to the kernel of $\eta_{\pm}$. For each $p \in \R^2$ define 
$f_p(s)=   q'( \hat{p}+s \cdot v)$. Note that $f_p$ is a quadratic polynomial
and we claim that $f_p$ is nontrivial with leading coefficient positive.  
Since ${\rm Card}(U_{q'} \cap U_q)=4$, each line in $q^{-1}\{0\}$ 
intersects $(q')^{-1}\{0\}$. It follows that $\eta_i(v) \neq 0$ for $i=1$ or $3$,
and hence $f_p(s)= (\eta_1(p) + s \cdot \eta_1(v)) \cdot (\eta_3(p) + s \cdot \eta_3(v))$
is nonconstant. Since $q'|_{\sigma_i}\equiv 0$ and $q'|_{K^0} \leq 0$, 
the leading coefficient of the quadratic polynomial 
$f_p$ is positive for each $p \in \sigma_1$.  
Since $\sigma_1$ is compact for each $t$, there exists $N'_t$ such that if $|s|>N'_t$
and $p \in \sigma_1$, then $q_t(p+s \cdot v)> (N+1)/t$. 
It follows that $U_{q_t} \cap U_q$ is bounded for each $t>0$. 
Since $U_q \cap U_{q'}$ is bounded, there exists $\epsilon' \in (0,1)$
so that if $|t|< \epsilon$, then  $U_{q_t} \cap U_{q'}$ is bounded.
Thus, for $t \in (0, \epsilon')$, we have that $U_{q_t}$ is bounded,
and hence by Proposition \ref{EllipseBounded}, the subconic 
$U_q$ is an ellipse interior. 

Fix $t \in (0,\min\{\epsilon, \epsilon'\})$. 
We have $U_{q_t} \setminus U_{q'} \subset U_q \subset \dev_{\mu}(M)$.
Thus, it suffices to show that  $U_{q_t} \cap U_{q'} \subset \dev_{\mu}(M)$.
For then $\dev|_{M}^{-1}(U_{q_{t_0}}) \in \Scal_{Z'}$.  

If $y \in U_{q_t} \cap U_q$, then $y \notin D$. 
Since $U_{q_t}$ is convex, the line segment $\sigma$ joining $y$ and $\dev_{\mu}(x)$
belongs to $U_{q_t}$. Therefore, $D \cap \sigma = \emptyset$, and it
follows that $y \in \dev_{\mu}(M)$. 
\end{proof}

\begin{prop}  \label{TripleRealizable}
Let $Z \subset \partial X$ be realizable and $Z' \subset Z$ with $\Card(Z')=3$.
The triple $Z'$ is realizable 
if and only if there exists $U \in \Scal_Z$ and 
a pair $\{x,x'\} \subset Z'$ that is adjacent in $\partial U \cap \partial X$.  
\end{prop}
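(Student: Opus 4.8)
The plan is to follow the template of the proof of Proposition \ref{QuadrupleRealizable}, taking advantage of the fact that a triple can only be realized by an ellipse interior. First I would record the reduction that every subconic under consideration is an ellipse: a subconic $U \in \Scal_Z$ is an ellipse or a strip by Proposition \ref{EllipseStrip}, and if it were a strip meeting both boundary components in a \emph{finite} set $Z = \partial U \cap \partial X$, then Propositions \ref{MaximalStrip} and \ref{TranslationExists} together with Corollary \ref{StripInfinite} would force $Z$ to be infinite, a contradiction. In particular $Z$ contains the noncollinear triple $Z'$, so $Z'$ defines a triangle, and I may work with a natural basis for $Q_{\widehat{\dev_{\mu}(Z')}}$ and the linear deformation $q_t = (1-t) q + t q'$ exactly as in the quadruple case.

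For the reverse implication, suppose $U \in \Scal_Z$ is an ellipse and that $\{x,x'\} \subset Z' = \{x,x',y\}$ is adjacent in $\partial U \cap \partial X$. Writing $U_q = \dev_{\mu}(U)$, $F = \dev_{\mu}(Z)$, and $F' = \dev_{\mu}(Z')$, adjacency places every point of $\dev_{\mu}(Z \setminus Z')$, along with $\dev_{\mu}(y)$, on the single open arc $C'$ of $\partial U_q$ cut off by the chord through $\dev_{\mu}(x)$ and $\dev_{\mu}(x')$ whose complementary arc avoids $\partial X$. I would then take the degenerate form $q' = \eta \cdot \eta'$, where $\eta$ vanishes on that chord and $\eta'$ vanishes on the tangent line to $\partial U_q$ at $\dev_{\mu}(y)$; since the ellipse is convex, this tangent line has every other boundary point strictly to one side, so after fixing orientations one obtains $q'\left(\widehat{\dev_{\mu}(z)}\right) > 0$ for all $z \in Z \setminus Z'$ while $q' \in Q_{F'}$. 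Because the ellipse forms are open (Proposition \ref{EllipseOpenFrontier}), $U_{q_t}$ is an ellipse through $F'$ for small $t > 0$, and $q_t\left(\widehat{\dev_{\mu}(z)}\right) = t\, q'\left(\widehat{\dev_{\mu}(z)}\right) > 0$ pushes the remaining frontier points off its closure. The maximal star-convex neighborhood and discreteness argument of Proposition \ref{Open} then lifts $U_{q_t}$ to a genuine subconic $U''$ with $\partial U'' \cap \partial X = Z'$, so $Z'$ is realizable.

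For the forward implication, suppose $Z'$ is realizable, fix an ellipse $U' \in \Scal_{Z'}$ with $\dev_{\mu}(U') = U_{q'}$, and choose any ellipse $U \in \Scal_Z$ with $\dev_{\mu}(U) = U_q$; these are distinct since $\Card(Z) \geq 4 > \Card(Z')$. As $U$ and $U'$ both contain the triangle on $Z'$, their union is star convex and $\dev_{\mu}$ is injective there, so I may compare $q$ and $q'$ in one plane. The crucial observation is that $\overline{U'} \cap \partial X = Z'$, so each $z \in Z \setminus Z'$ lies outside $\overline{U_{q'}}$, i.e.\ $q'\left(\widehat{\dev_{\mu}(z)}\right) > 0$. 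I would then study $q'$ restricted to the circle $\partial U_q$: it vanishes exactly on $F'$ together with at most one further intersection point of the two ellipses. Counting sign changes of this continuous function (two simple zeros force an even count; a fourth zero produces the alternating pattern, and a tangency produces none), one finds that at least one of the three arcs of $\partial U_q$ between consecutive points of $F'$ carries $q' < 0$. That arc contains no point of $Z \setminus Z'$, and since its two endpoints lie in $Z'$ and $\partial U \cap \partial X = Z$, this exhibits an adjacent pair inside $Z'$.

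The hard part is the sign analysis in the forward direction: I must exclude degenerate intersection patterns of the two ellipses and make precise the statement that two ellipses sharing three boundary points meet along the third arc with a definite sign. This is exactly where Propositions \ref{Dimension} and \ref{Dimension2} do the real work, bounding the number of common zeros and ruling out a shared tangency when $q$ and $q'$ are independent, just as in the proof of Proposition \ref{QuadrupleRealizable}. A secondary technical point, handled as in Proposition \ref{Open}, is checking that the deformation in the reverse direction creates no new frontier points; this follows from the discreteness of $\partial X$ and continuity of $t \mapsto q_t$ once $t$ is taken small.
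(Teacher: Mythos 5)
Your ellipse--ellipse argument is sound and follows the template the paper intends (the paper itself only says the proof is ``similar to Proposition \ref{QuadrupleRealizable}'' and leaves it to the reader): the forward direction via the sign count of $q'|_{\partial U_q}$ and the reverse direction via the degenerate form $\eta\cdot\eta'$ built from the chord through $x,x'$ and the tangent line at $y$ both work. But your opening reduction --- that every $U\in\Scal_Z$ is an ellipse interior --- is false, and this is a genuine gap. The hypothesis only says $Z$ is realizable; $Z$ may be infinite, in which case $U\in\Scal_Z$ is a maximal strip and $\Scal_Z$ contains \emph{no} ellipse at all (an ellipse boundary is compact and $\partial X$ is discrete, so its frontier intersection is finite). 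Your argument that a strip is impossible silently assumes $Z$ is finite. This is not a corner case: the realizable triples inside $\partial U\cap\partial X$ for $U$ a maximal strip are exactly what is used in \S\ref{SectionLink} and \S\ref{SectionFrontier}. In the forward direction you then cannot ``choose any ellipse $U\in\Scal_Z$''; you must run the sign analysis of $q'$ on the two boundary lines of the strip separately (as the proof of Proposition \ref{QuadrupleRealizable} does), where the parity argument on a circle no longer applies. In the reverse direction the given $U$ with the adjacent pair may be a strip, and then showing that $U_{q_t}$ is an ellipse interior for small $t>0$ is precisely the step that costs the quadruple proof its longest paragraph (the boundedness of $U_{q_t}\cap U_q$ via the quadratic polynomial $f_p$); openness of the set of ellipse forms does not help when $U_q$ itself is a strip. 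Handling the strip case also forces you to confront that a triple lying entirely on one boundary line of a strip is collinear, hence never realizable, even though it can contain an adjacent pair --- i.e.\ the analogue of the condition ``$Z'$ intersects each component of $\partial U$'' from Proposition \ref{QuadrupleRealizable} must enter your argument somewhere, and your reduction erases it.

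A secondary imprecision: in the forward direction you say Propositions \ref{Dimension} and \ref{Dimension2} ``rule out a shared tangency.'' For a triple $F'$ they do not: three vanishing conditions plus one tangency condition is only four linear conditions, so two independent conics through $F'$ may well be tangent at one of the three points (this is the Bezout-maximal configuration $2+1+1$). Your own parenthetical sign count already accommodates this --- with one tangency and two transversal crossings the parity still forces at least one of the three $F'$-arcs to carry $q'<0$ --- so the conclusion survives, but the appeal to Proposition \ref{Dimension2} should be rephrased as excluding \emph{two} tangencies (or a tangency plus a fourth intersection point), not one.
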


\begin{proof}
The proof is similar to the proof of Proposition \ref{QuadrupleRealizable}.
We leave it to the reader. 
\end{proof}


\section{Orientation and succession}  \label{SectionOrientation}

In this section, we first observe that 
each $2$-cell of $\Scal(X, \mu)$ has a canonical orientation.
We then reinterpret this orientation in terms of {\em succession},
the natural refinement of adjacency. We will assume 
that $(X, \mu)$ is the universal covering of a precompact 
translation surface with nonempty and finite frontier.

Let $Z \subset X$ be a realizable triple, and thus, 
in particular, $\dev_{\mu}(Z)$ is noncollinear. Let $\vec{d}$ is an 
oriented negative natural basis for $Q_{\widehat{\dev_{\mu}(Z)}}$,
and let $r_{\vec{d}}: T_{\vec{d}} \rightarrow \Scal_{\dev_{\mu}(Z)}$ be 
the homeomorphism defined in \S \ref{SectionZ}. Recall that the plane  
$T_{\vec{d}}$ has a canonical (outward normal) orientation, and hence 
the homeomorphism $r_{\vec{d}}$ induces an orientation on $\Scal_Z$. 
By Proposition \ref{RestrictionNatural},
this orientation does not depend on the choice of oriented
negative natural basis. As a result, the cell $\Scal_Z$ has a 
canonical orientation. 

The orientation of $\Scal_Z$ induces an orientation of its boundary
$\partial \Scal_Z$. In particular, it induces a cyclic ordering of the 
$1$-cells lying in the boundary of $\Scal_Z$. These $1$-cells are in 
one-to-one correspondence with the set, $\partial Z$, of 
realizable quadruples $Z'$ that contain $Z$. 

\begin{defn}
Let $Z', Z'' \in \partial Z$. We will say $Z'$
{\em follows} $Z''$ if and only if $\Scal_{Z'}$ immediately
follows $\Scal_{Z''}$ in the 
canonical ordering of $\partial Z$.  
\end{defn} 

The cyclic ordering has an alternate description that uses the oriented
refinement of adjacency. In particular, if $Z''$ follows $Z'$, then 
$Z''$ and $Z'$ share a vertex and this vertex corresponds to the unique
rigid conic $U$ such that $Z'' \cup Z' \subset \partial U \cap \partial X$.
In Proposition \ref{1-CellsFollow} below, we reinterpret `following'
in terms of an ordering of $\partial U \cap \partial X$.

\begin{defn}
Suppose that ${\rm Card}(\partial U \cap \partial X) \geq 3$.
Given adjacent points $x, y \in \partial U \cap \partial X$,
let $C$ be the unique component of $\partial U \setminus \partial X$
such that $\{x,y\} \subset \overline{C}$. 
If there exists an oriented path $\alpha: [-1,1] \rightarrow \overline{C}$
such that  $\alpha(-1)=x$ and $\alpha(+1)=y$, then we say that
$y$ is the {\em successor} of $x$ in $\partial U \cap \partial X$, 
and we write $s_U(x)=y$.
\end{defn}

\begin{prop}
The map $s_U$ is a permutation of $\partial U \cap \partial X$.
\end{prop}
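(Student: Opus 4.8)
The claim is that $s_U$ is a permutation of $\partial U \cap \partial X$, where $U$ is a subconic with $\mathrm{Card}(\partial U \cap \partial X) \geq 3$. The plan is to show $s_U$ is both well-defined (a genuine function) and a bijection, and the cleanest route is to produce an explicit inverse, namely a predecessor map, and check that the two compose to the identity. First I would verify that $s_U$ is well-defined: given $x \in \partial U \cap \partial X$, I must show there is a unique successor. The preceding proposition already establishes that each $x$ is adjacent to exactly two points in $\partial U \cap \partial X$; call them $x_-$ and $x_+$, separated by the two components of $\partial U$ meeting $x$. The orientation on the cell $\Scal_Z$ (with $Z$ a realizable triple in $\partial U \cap \partial X$) induces, via the boundary orientation discussed just above, a consistent orientation of $\partial U$ itself. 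This orientation singles out exactly one of the two adjacent points as the one reachable by an \emph{oriented} path $\alpha$ with $\alpha(-1)=x$, so $s_U(x)$ is unambiguous.

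The heart of the argument is that $\partial U$ carries a coherent orientation so that \emph{succession} is a globally consistent notion. I would argue this separately in the two cases allowed by Proposition \ref{EllipseStrip}. If $U$ is an ellipse interior, then $\partial U$ is homeomorphic to a circle, and the induced orientation of this circle gives $\partial U \cap \partial X$ a cyclic order; the successor function is then literally ``next point in the cyclic order,'' which is manifestly a permutation (indeed a single cycle). If $U$ is a strip, then $\partial U$ has two components, each homeomorphic to a line, and by Corollary \ref{StripInfinite} and Proposition \ref{MaximalStrip} each component meets $\partial X$ in an infinite, translation-invariant set. On each line the orientation gives a total order with no least or greatest element, so every point has a well-defined immediate successor and immediate predecessor; succession restricted to each component is thus a bijection of that component's points, and $s_U$ is the disjoint union of these two bijections.

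To assemble these observations into the permutation statement, I would define the \emph{predecessor} map $p_U$ by reversing the orientation convention: $p_U(y) = x$ iff there is an oriented path from $x$ to $y$ in the relevant closed component. The adjacency proposition guarantees that each $y$ has exactly one predecessor, just as it has exactly one successor. Then I would check $s_U \circ p_U = \mathrm{id}$ and $p_U \circ s_U = \mathrm{id}$ directly: if $s_U(x)=y$, the same oriented arc $\alpha$ witnesses $p_U(y)=x$, since the component $C$ with $\{x,y\}\subset \overline{C}$ is uniquely determined by the adjacent pair. This mutual inversion shows $s_U$ is a bijection, hence a permutation.

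The main obstacle is the well-definedness and global coherence of the orientation on $\partial U$ in the strip case, where $\partial U$ is disconnected; one must confirm that the cell orientation on $\Scal_Z$ induces compatible orientations on both boundary components simultaneously, rather than merely on the single component carrying the triple $Z$. I expect this to follow from the fact that a strip's two boundary lines are parallel and that the translation $\tau$ fixing $U$ (Proposition \ref{TranslationExists}) acts on both components preserving orientation, but pinning down that the successor is never forced ``off the end'' of a boundary line—i.e.\ that infinitude of $C \cap \partial X$ on each component genuinely guarantees existence of a successor for every point—is the step requiring the most care.
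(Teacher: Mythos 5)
The paper's own ``proof'' here is the single word \emph{Straightforward}, so your write-up is a correct and welcome expansion of what the authors left implicit: exactly two adjacent points per vertex (the preceding proposition), an orientation of $\partial U$ that picks out one of them as the successor, and the predecessor map as an explicit two-sided inverse. One correction, though: the orientation on $\partial U$ does not come from the canonical orientation of the $2$-cell $\Scal_Z$ --- that is an orientation of a cell in the moduli space of subconics, a different object entirely. It is simply the boundary orientation of the convex set $U$ inside the oriented surface $X$ (equivalently, pulled back via $\dev_{\mu}$ from the standard orientation of $\R^2$), which orients each component of $\partial U$ at once and disposes of your worry about coherence across the two lines of a strip. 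Your final concern --- that a successor might not exist on a boundary line of a strip --- is legitimate in general but is already settled by the proof of the preceding proposition, which invokes Corollary \ref{StripInfinite} to show that both $A_+$ and $A_-$ are nonempty, so every point of $\partial U\cap\partial X$ on a line component has a nearest neighbour of $\partial X$ on each side.
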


\begin{proof}
Straightforward.
\end{proof}

In the remainder of this section, we will assume that $U$ is a rigid subconic.
That is, we assume that ${\rm Card}(\partial U \cap \partial X) \geq 5$. 

\begin{nota}
Given $x, x' \in \partial U \cap \partial X$,
set 
\[    Z_U(x,x')~ :=~ \{x, s_U(x), x', s_{U}(x') \}. \]
\end{nota}

If $x \neq x'$ and $\{x, x'\}$ are nonadjacent, then $Z(x,x')$
is a quadruple, and hence by Proposition \ref{QuadrupleRealizable},
this quadruple is realizable. Conversely, if $Z$ is realizable 
and $U \in \overline{\Scal_{Z}}$, then $Z$ can be partitioned
into adjacent pairs $\{x_-,y_-\}$ and $\{x_+,y_+\}$. For each pair 
either $s_U(x_{\pm})=y_{\pm}$ or $s_U(y_{\pm})=x_{\pm}$, but not both. 
Thus, $(x,y) \mapsto Z_U(x,y)$ is a bijection from the 
set of nonadjacent pairs in $\partial U \cap \partial X$
onto the realizable quadruples $Z \subset \partial U \cap \partial X$.

\begin{defn} \label{ConsecutiveDefinition}
We will say that $Z \subset \partial U \cap \partial X$ 
is {\em consecutive} in $\partial U \cap \partial X$ if and only 
if there exists $x \in Z$ 
and $k \in {\mathbb Z}^+$ such that $Z= \{x, s_U(x), \ldots, s_U^k(x)\}$.
Let $x \in Z$.  If for all $y \in Z$, we have $s_U(y) \neq x$ and 
$s_U(x) \neq y$, then we will say that $x$ is 
{\em $Z$-isolated in} $\partial U \cap\partial X$. 
\end{defn}

Let $Z \subset \partial U \cap \partial X$ be a realizable triple.
Then either $Z$ is consecutive, namely, $Z=\{x, s^2_U(x), s^{3}(x)\}$
for a unique $x \in Z$, or  there exists  $y \in Z$  
that is isolated, that is $Z= \{x, s_U(x), y\}$ for a unique $x \in Z$.

\begin{prop} \label{1-CellsFollow}
Let $U$ be a rigid subconic and let $Z= \partial U \cap \partial X$ be a realizable triple.
If $Z= \{x, s_U(x), s^2_U(x)\}$ is consecutive in $\partial U \cap \partial X$, then 
$Z_U(x,s^{2}_U(x))$ follows $Z_U(s^{-1}_U(x),s_U(x))$ in $\partial Z$. 
If $Z= \{x, s_U(x), y\}$ is nonconsecutive, then $Z_U(x,y)$ follows 
$Z_U(x, s^{-1}(y))$ in $\partial Z$.
\end{prop}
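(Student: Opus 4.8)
The plan is to reduce the statement to the computation of two tangent directions in the oriented plane $T_{\vec d}$ and the sign of a single determinant. Throughout write $s=s_U$ and $p_i = s^i(x)$ for the points of $\partial U \cap \partial X$ near $Z$, listed in successor order. First I would pin down, combinatorially, which two $1$-cells of $\partial \Scal_Z$ meet at the vertex $U$. By the bijection $(x,x') \mapsto Z_U(x,x')$ between nonadjacent pairs in $\partial U \cap \partial X$ and realizable quadruples contained in $\partial U \cap \partial X$, a $1$-cell of $\partial Z$ incident to $U$ is a realizable quadruple $Z' = Z_U(x,x')$ with $Z \subset Z' \subset \partial U \cap \partial X$; by Proposition \ref{QuadrupleRealizable} this is a union of two disjoint adjacent pairs containing $Z$. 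A short case check shows that in the consecutive case $Z = \{p_0,p_1,p_2\}$ the only such quadruples are $Z_U(p_0,p_2)=\{p_0,p_1,p_2,p_3\}$ and $Z_U(s^{-1}(p_0),p_1)=\{p_{-1},p_0,p_1,p_2\}$, and in the nonconsecutive case $Z=\{p_0,p_1,y\}$ the only such quadruples are $Z_U(p_0,y)$ and $Z_U(p_0,s^{-1}(y))$. Thus the two edges at $U$ are exactly the two quadruples named in the statement, and it remains to decide which follows the other.

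Next I would describe each incident edge by a tangent vector at $q_U = f(U)\in T_{\vec d}$, where $f=h\circ\dev_{\mu}$ is the homeomorphism of Theorem \ref{CellTheorem} onto the polygon $f(\overline{\Scal_Z})$. Moving from $q_U$ into the $1$-cell $\Scal_{Z'}$ means deforming within $Q_{\widehat{\dev_{\mu}(Z')}}$ while releasing every contact point $w\in(\partial U\cap\partial X)\setminus Z'$; since $q_U(\hat w)=0$, releasing $w$ forces the deforming direction $v$ to satisfy $v(\hat w)<0$. The direction $v$ therefore lies in $Q_{\widehat{\dev_{\mu}(Z')}}$ and is tangent to $T_{\vec d}$, and the sign condition is consistent: as $v$ vanishes at the four points $\hat{Z'}$ lying on the convex curve $\partial U_{q_U}$, its restriction to that curve has at most four zeros, namely $\hat{Z'}$, so $v$ has constant sign on the complementary arc carrying all released points. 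This pins the inward tangent direction $v_{Z'}$ up to a positive scalar. Writing $v_A,v_B$ for the inward directions of the two incident edges $A,B$ named in the statement, and using that $v_A,v_B$ are transverse (hence linearly independent at a polygon vertex), the assertion ``$A$ follows $B$'' is equivalent, after fixing the boundary-orientation convention induced by the outward-normal orientation of $T_{\vec d}$, to a single inequality $\det(v_A,v_B)>0$ in the oriented basis $(d_2-d_1,d_3-d_1)$ of the tangent plane to $T_{\vec d}$.

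Finally I would carry out this sign computation. The key reduction is that $\det(v_A,v_B)$ is nonzero and locally constant under any deformation of the configuration that preserves the cyclic successor structure near $Z$; since the space of admissible configurations (the angular gaps between consecutive contact points) is connected, it suffices to evaluate the determinant at one convenient representative. For the ellipse case I would normalize $U$ to the unit disc, place the contact points at explicit angles, take the natural basis $\vec d$ for $Q_{\widehat{\dev_{\mu}(Z)}}$ and the degenerate generators $\eta\cdot\eta'$ (products of the linear forms vanishing on the relevant secant lines) for $Q_{\widehat{\dev_{\mu}(Z')}}$, solve the two defining conditions for each $v_{Z'}$, and check that the determinant carries the sign dictated by the statement. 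I expect the main obstacle to be precisely this orientation bookkeeping: matching the outward-normal orientation of $T_{\vec d}$ to the correct sense of boundary traversal, and confirming that the released-point sign convention selects the inward rather than the outward tangent. The rigid-strip case I would treat by the same scheme, replacing the circle by the two boundary lines of the strip and the trigonometric-quadratic argument by its affine analogue, or alternatively by realizing the strip as a limit of ellipses and invoking the local constancy of the sign of $\det(v_A,v_B)$.
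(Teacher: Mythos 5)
Your reduction is structurally the same as the paper's: both arguments identify the two $1$-cells of $\partial Z$ incident to the vertex $f(U)$ with two rays emanating from $q=f(U)$ inside the plane $T_{\vec d}$ (the paper's rays $\overrightarrow{qa_0}$ and $\overrightarrow{qa_1}$, where $a_i$ is an explicit degenerate form in $Q_{\hat V_i}\cap T$), and both convert ``$A$ follows $B$'' into the statement that the ordered pair of edge directions is an oriented basis for the tangent plane of $T_{\vec d}$. Your combinatorial identification of the two incident quadruples is correct in both cases, and your characterization of the inward direction of each edge by the sign condition at the released contact points matches the paper's use of $a_0(\hat v_3)>0$ and $a_1(\hat v_{-1})>0$ (justified there, as in your sketch, by the fact that a form in $Q_{\hat V_i}$ not proportional to $q$ meets $\partial U_q$ in exactly the four points of $\hat V_i$, via Proposition \ref{Dimension}).

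The gap is that the decisive step is never executed. The entire content of the proposition is a single sign --- which of the two edges follows the other --- and your proposal ends with ``check that the determinant carries the sign dictated by the statement,'' explicitly flagging the orientation bookkeeping as the expected obstacle rather than resolving it. The paper resolves it by a direct computation valid for an arbitrary configuration: it writes $a_0=t_0 d_0+(1-t_0)d_1$ and $a_1=t_1 d_1+(1-t_1)d_2$ with $t_0,t_1\in(0,1)$, observes that $(d_2-d_1,\,d_0-d_1)$ is oriented by the convention on negative natural bases, hence $(a_1-d_1,\,a_0-d_1)$ is oriented, and then transports this to $(a_1-q,\,a_0-q)$ by showing the segment from $d_1$ to $q$ never crosses the line through $a_0$ and $a_1$. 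Your alternative --- deformation invariance of the sign over the connected space of five cyclically ordered contact points, followed by evaluation at one normalized model --- is a legitimate substitute (the transversality $\ell_0\neq\ell_1$ needed for nonvanishing follows from $\dim Q_{\hat V_0\cup\hat V_1}=1$), but until the model computation is actually done, the argument establishes only that one consistent ordering holds everywhere, not which one. You would also need to verify that the normalization to the unit circle is by an orientation-preserving affine map and that the canonical orientation of $T_{\vec d}$ (independent of the choice of basis by Proposition \ref{RestrictionNatural}) is carried along correctly; this is exactly the bookkeeping you identify but do not perform. The strip case, which you propose to handle by a limit of ellipses, would additionally require checking that the edge directions converge to the correct rays in the limit rather than degenerating.
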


\begin{proof}
Suppose that $Z= \{x, s_U(x), s^2_U(x)\}$ is consecutive in $\partial U \cap \partial X$.
Let $v_i= \dev_{\mu}(s_U^i(x))$ for $i \in \Z$.  We first note that  the ordered triple 
$(v_0,v_1,v_2)$ is cyclically ordered with respect to the standard orientation of $\R^2$.
Indeed, $Z$ belongs to a single component of $\partial U$, and thus, since $Z$
is realizable, $\partial U$ has only one component. In particular, $U$ is an ellipse
interior. It is then straightforward to construct a homotopy of piecewise smooth
embeddings $f_t: \R / \Z \rightarrow \R^2$ such that $f_0$ is an oriented
parametrization of $\dev_{\mu}(\partial U)$, $f_1$ is a parametrization of 
the boundary of the convex hull of $\{v_0, v_1, v_2\}$, 
and $f_t([i/3])= x_i$ for all $t \in [0,1]$ and $i=0,1,2$.  

Let $\vec{d}=(d_0,d_1,d_2)$ be an oriented negative natural basis 
associated to the cyclically ordered noncollinear triple $(v_0,v_1,v_2)$. 
In particular, for $\{i,j,k\}=\{1,2,3\}$, let $\eta_{ij}$ be the linear form with kernel 
$\langle \hat{v}_i, \hat{v}_j \rangle$ such that $\eta_{ij}(\hat{v}_k)=1$,
and set 
\[  d_i~  =~  -\eta_{i,k} \cdot \eta_{i,j}.\] 
Let $T= \{(t_0,t_1,t_2)~ |~ \sum t_i=1\}$,
let $r: T \rightarrow \Scal_{\{v_1,v_2,v_3\}}$ be the
homeomorphism $r(t_0,t_1,t_2)= \sum t_i \cdot d_i$, and 
let $f = r^{-1} \circ \dev_{\mu}$.  

For $i=0,1$, let $V_i= \{v_{i-1}, v_{i},v_{i+1}, v_{i+2}\}$,
and let $\ell_i$ denote the set
\[ r^{-1}(\Scal_{V_i})~  =~ Q_{\hat{V_i}} \cap T. \]
By Proposition \ref{Dimension}, the subspace $\hat{Q}_{V_i}$ is $2$-dimensional, 
and thus it follows that $\ell_i$ is an affine line. 
Since $\dev_{\mu}(Z(s^{-1}(x), s(x)))=V_0$ (resp. $\dev_{\mu}(Z(x, s^2(x)))=V_1$),
we have $f(\Scal_{Z(s^{-1}(x), s(x))}) \subset \ell_0$
(resp. $f(\dev_{\mu}(Z(x, s^2(x))) \subset \ell_1$).    

The $\ell_0 \cap \ell_1$ meet at the unique point $q \in T$ such that 
if $j=-1,\ldots, 3$, then $q(\hat{v}_{j})=0$. 
In particular, $\dev_{\mu}(U)=U_q$. Since, for example, 
$U$ is an extreme point, $\ell_0 \cap \ell_1 = \{q\}$.

Let $\ell'_i \subset T$ be the line containing $d_i$ and $d_{i+1}$.
The lines $\ell_i$ and $\ell_i'$ intersect at one point $a_i$.
It will be convenient to give an explicit construction of this point. 
If $i=0$, the construction runs as follows: Note that 
$\eta_{1,2}(\hat{v}_{-1})<0$ and $\eta_{0,2}(\hat{v}_{-1})>0$,
and hence $-\eta_{1,2}(\hat{v}_{-1})/ \eta_{0,2}(\hat{v}_{-1}) >0$.
Thus, there exists a unique $t_0 \in (0,1)$ such that 
\begin{equation}  \label{vminus1}
 \frac{t_0}{1-t_0}~ =~ - \frac{\eta_{1,2}(\hat{v}_{-1})}{\eta_{0,2}(\hat{v}_{-1})}.  
\end{equation}
Define 
\[   \eta_{-1,2}~ =~  t_0 \cdot \eta_{0,2}~ +~  (1-t_0)\cdot \eta_{1,2} \]
and 
\[ a_0~ =~ - \eta_{-1,2} \cdot \eta_{0,1}.\]
In other words,
\[   a_0~ =~ t_0 \cdot d_0~ +~ (1-t_0) \cdot d_1. \]
It follows from (\ref{vminus1}) that $\eta_{-1,2}(v_{-1})= 0$,
and hence $a_0 \in Q_{\hat{V_0}}$.

Note that $a_0(\hat{v}_3)>0$. Indeed, since $\{v_{-1}, v_0,v_1,v_2\}$ are consecutive
$\hat{v_0}$ and $\hat{v}_3$ lie in distinct components of 
$\R^3 \setminus \langle \hat{v}_{-1}, \hat{v}_{2} \rangle$. Thus, since
$\eta_{-1,2}(\hat{v}_0)=(1-t_0)>0$ we have $\eta_{-1,2}(\hat{v}_3)<0$.
Since $\hat{v}_2$ and $\hat{v}_3$ lie in the same component of
$\R^3 \setminus \langle \hat{v}_{-1}, \hat{v}_{2} \rangle$ and 
$\eta_{0,1}(\hat{v}_2)=1>0$, we have $\eta_{0,1}(\hat{v}_3)>0$.
Thus, 
\[ a_0(\hat{v}_3)~ =~ -\eta_{0,1}(\hat{v}_3) \cdot \eta_{-1,2}(\hat{v}_3)~ >~ 0. \]

A similar construction produces $t_1 \in (0,1)$ such that
\[  a_1~ =~ t_1 \cdot d_1~ +~ (1-t_1) \cdot d_2, \]
$a_1 \in \ell_1$, and $a_1(\hat{v}_{-1})~ >0$.

We claim that $a_i \neq q$ for $i=1,2$.
Since the set $\{x, s(x), s^2(x)\}$ is consecutive, this set is a subset of 
one boundary component of $\partial U$. Thus, since $\{x, s(x), s^2(x)\}$
is realizable, $U$ is an ellipse interior and $q$ is nondegenerate.
Therefore since $a_i$ is degenerate, $a_i \neq q$.

We claim that  $f(\Scal_{Z(x,s^2(x))})$ is contained in the ray $\overrightarrow{qa_0}$
and that $f(\Scal_{Z(s^{-1}(x),s(x))})$ is contains in the ray $\overrightarrow{qa_1}$.
Indeed, for $i=0,1$, let
\[  q_t^i~ =~  (1-t) \cdot q~ +~ t \cdot a_i, \]
Since $a_i \neq q$, the affine function $t \mapsto q_t$ maps $\R$ onto $\ell_i$. 
Since $a_0(\hat{v}_{3})>0$ (resp. $a_1(\hat{v}_{-1})~ >0$) and 
$q(\hat{v}_{3})=0$ (resp. $q(\hat{v}_{-1})=0$), the affine function 
$t \mapsto q^0_t(\hat{v}_{3})$ (resp. $t \mapsto q^1_t(\hat{v}_{-1})$) 
is increasing. Therefore, for $t <0$, we have $q^0( \hat{v}_{3})<0$ 
(resp. $q^1( \hat{v}_{-1})<0$) and hence $q_t \notin f(\Scal_{Z(s^{-1}(x),s(x))})$
(resp. $q_t \notin f(\Scal_{Z(x,s^2(x))})$).

Let $\ell$ be the line containing $a_0$ and $a_1$.
We claim that if $q' \in \ell$, then either $q'(\hat{v}_{-1})>0$ 
or  $q'(\hat{v}_{3})>0$. Let $a_t= (1-t) \cdot a_0 + t \cdot a_1$.
We have $a_0(\hat{v}_3)>0$ and $a_1(\hat{v}_{3})=0$.
Therefore, if $t>0$, then $a_t(v_{3})>0$.  We have 
$a_1(\hat{v}_{-1})>0$ and $a_0(\hat{v}_{-1})=0$.
Therefore, if $t<1$, then $a_t(\hat{v}_{-1})>0$.

Define $q_t= (1-t) \cdot q + t \cdot d_1$.  We claim that 
for all $t\geq0$, the quadratic form $q_t$ does not lie in $\ell$.
Recall that $d_1= - \eta_{0,1}\cdot \eta_{1,2}$. The points 
$\hat{v}_2$, $\hat{v}_3$, and $\hat{v}_{-1}$ lie in the same component 
of $\R^2 \setminus \langle \hat{v}_0, \hat{v}_1\rangle$, and hence 
$\eta_{0,1}(\hat{v}_3)>0$ and $\eta_{0,1}(\hat{v}_{-1})>0$.
The points $\hat{v}_0$, $\hat{v}_3$, and $\hat{v}_{-1}$ lie in the same component 
of $\R^2 \setminus \langle \hat{v}_1, \hat{v}_2\rangle$, and hence 
$\eta_{1,2}(\hat{v}_3)>0$ and $\eta_{1,2}(\hat{v}_{-1})>0$.
It follows that $d_1(\hat{v}_3)<0$ and  $d_1(\hat{v}_{-1})<0$.
Therefore, since $q( \hat{v}_3)=0= q(\hat{v}_{-1})$, if $t \geq 0$,
then  $q_t(\hat{v}_3) \leq 0$ and $q_t(\hat{v}_{-1}) \leq 0$.

By the choice of orientation of $T$, we have that $(d_2-d_1, d_0-d_1)$
is an oriented basis for $\R^2$. Since $t_0, t_1 \in (0,1)$, 
the ordered pair $(a_1-d_1,a_0-d_1)$ is an oriented basis for $\R^2$.
If $t \geq 0$, the quadratic form 
$q_t$ does not lie in the line $\ell$ containing $a_0$ and $a_1$.
Therefore, for $t \geq 0$, the set $\{a_1-q_t,a_0-q_t\}$ is a basis
and by continuity,  $(a_1-q,a_0-q)$ is an oriented basis. 
Thus, $f(\Scal_{Z(x,s^2(x))})$ follows $f(\Scal_{Z(s^{-1}(x),s(x))})$.

The proof of the other claim is similar. 
\end{proof}


\section{The link of a vertex}  \label{SectionLink}

Recall that if $v$ is a vertex in a $2$-dimensional cell complex, 
then the link of $v$, $\lk(v)$, is the abstract graph defined as follows:  
The vertices are the $1$-cells that contain $v$.
Two $1$-cells $C, C' \subset \partial U \cap \partial X$  
are joined by an edge iff there exists a $2$-cell  
$D$ such that $v \in \partial C \cap \partial C'$ and
$C \cup C' \subset \partial D$.

In this section, we study the link $\lk(U)$ of a rigid conic 
$U$ in $\Scal_3(X, \mu)$. 
The vertices of $\lk(U)$ may be regarded as realizable quadruples 
$Z \subset \partial U \cap  \partial X$.  Two quadruples $Z$ and $Z'$  
are joined by an edge if and only $Z \cap Z'$ is a realizable triple.
The orientation of the $2$-cell $\Scal_{Z \cap Z'}$ determines a direction 
of this edge.  Hence, $\lk(U)$ is naturally a directed graph. 

We will assume throughout this section 
that $(X, \mu)$ is the universal covering of a precompact 
translation surface with nonempty and finite frontier. 

By Proposition \ref{QuadrupleRealizable}, the set of $1$-cells that contain $U$
is in bijection with the set of unordered pairs $\{x,y\}$ 
of non-adjacent points in $\partial U \cap \partial X$.  
If $U$ is an ellipse, then the convex hull of $\partial U \cap \partial X$
is an $n$-gon with $n={\rm Card}(\partial U \cap \partial X)< \infty$.
The non-adjacent vertices define `diagonals' in the $n$-gon. 
The classic count of diagonals gives the following.

\begin{prop} \label{CardLink}
If $U$ is a rigid  ellipse and $\Card(\partial U \cap \partial X)=n$, then ${\rm Lk}(U)$
has $n(n-3)/2$ vertices. 
\end{prop}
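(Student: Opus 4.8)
The plan is to reduce the statement to the classical count of the diagonals of a convex polygon. By Proposition \ref{QuadrupleRealizable}, the vertices of $\lk(U)$ are in bijection with the unordered pairs $\{x,y\}$ of non-adjacent points of $\partial U \cap \partial X$, so it suffices to enumerate these non-adjacent pairs.

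First I would record the cyclic structure of the frontier points. Since $U$ is a rigid ellipse interior, $\partial U$ is homeomorphic to a circle, so $\dev_{\mu}$ carries $\partial U \cap \partial X$ to the vertex set of a convex $n$-gon and thereby endows $\partial U \cap \partial X$ with a cyclic order. Invoking the proposition that each point of $\partial U \cap \partial X$ is adjacent to exactly two others, together with the observation that for an ellipse these two adjacent points are precisely the two cyclic neighbours, I would identify the adjacent pairs with the $n$ cyclically consecutive pairs, that is, with the $n$ sides of the $n$-gon.

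The count is then immediate. There are $\binom{n}{2}$ unordered pairs of distinct points in $\partial U \cap \partial X$, of which exactly $n$ are adjacent, so the number of non-adjacent pairs is
\[
\binom{n}{2} - n ~=~ \frac{n(n-1)}{2} - n ~=~ \frac{n(n-3)}{2},
\]
and by the bijection above this is precisely the number of vertices of $\lk(U)$.

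The hard part will be essentially nonexistent: the only point requiring care is verifying that adjacency coincides with cyclic consecutiveness. This rests on the fact that, because $U$ is an ellipse rather than a strip, $\partial U$ has a single component on which the successor permutation $s_U$ acts as one $n$-cycle; this guarantees that the $n$ adjacent pairs are distinct and that no non-adjacent pair is counted twice. Everything else is the standard polygon enumeration.
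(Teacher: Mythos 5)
Your proposal is correct and matches the paper's argument: the paper likewise identifies the vertices of $\lk(U)$ with the unordered non-adjacent pairs in $\partial U \cap \partial X$ (via Proposition \ref{QuadrupleRealizable}), views these as diagonals of the convex $n$-gon spanned by $\dev_\mu(\partial U \cap \partial X)$, and invokes the classic diagonal count $n(n-3)/2$. Your extra care in checking that adjacency coincides with cyclic consecutiveness is a reasonable elaboration of the same route, not a different one.
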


\begin{figure}   
\begin{center}

\includegraphics[height=4in,width=4in]{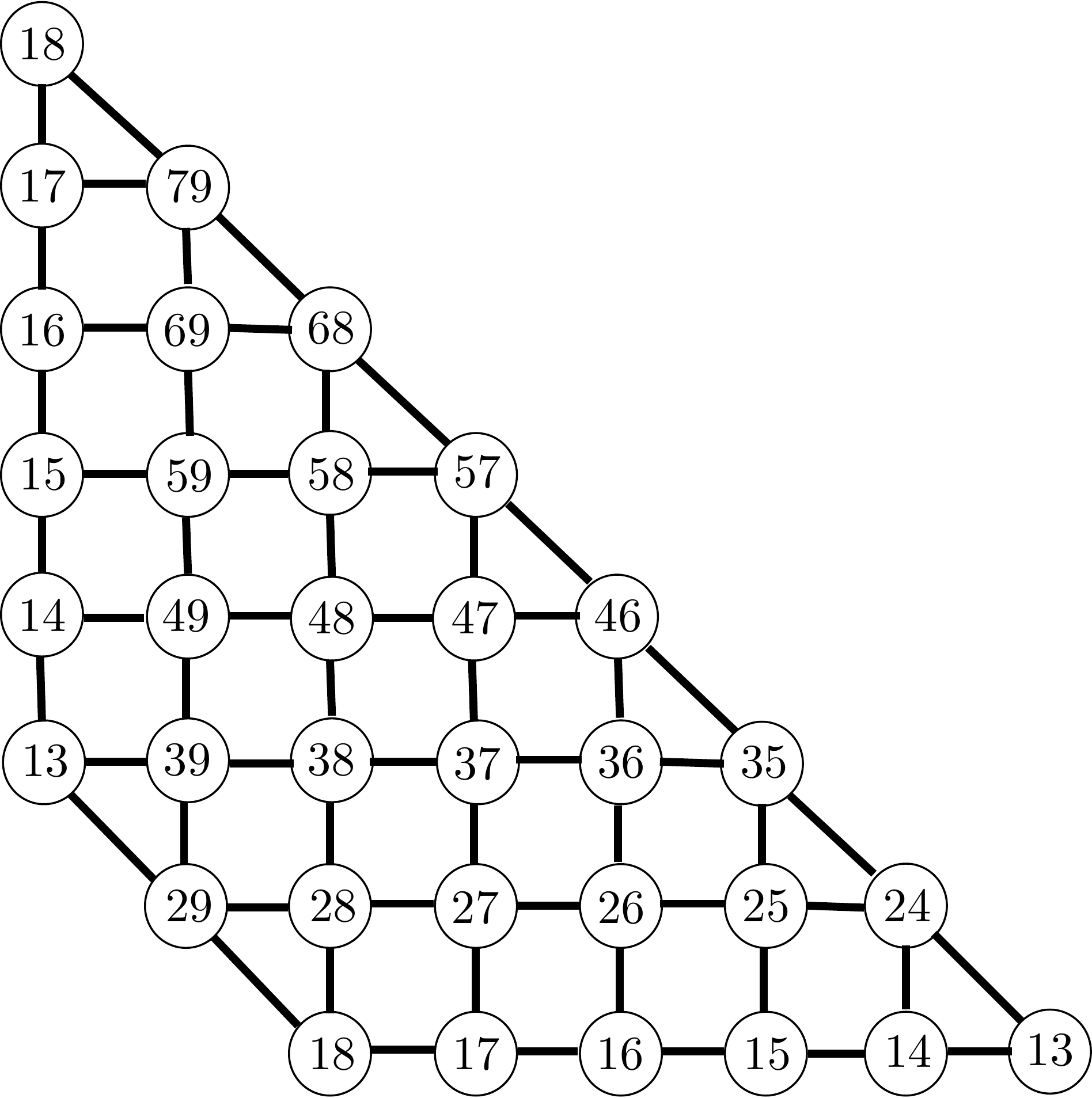}

\end{center}
\caption{The link of $U$ when $\Card(\partial U \cap \partial X)=9$.
The vertex corresponding to $Z_U(x_i,x_j)$ is labeled $ij$.
Note that the vertices labeled $13$, $14$, $15$, $16$, $17$, and $18$  appear twice. 
These pairs should be identified to obtain a graph that can be embedded
in the M\"obius band.}
\end{figure}

To simplify the notation in what follows, we will write $x+k$ in place
of $s_U^k(x)$.

\begin{prop} \label{Degree4}
Each vertex of  ${\rm Lk}(U)$ has degree 4.
If the quadruple $Z(x,y)$ is consecutive, then the neighbours of $Z(x,y)$ are 
\begin{equation} \label{Neighbours1}
Z(x-1,x+2),~  Z(x-1, x+1).
\end{equation}
If the quadruple $Z(x,y)$ is not consecutive, then the neighbours of $Z(x,y)$ 
are 
\begin{equation} \label{Non-consecutive}
 Z(x,y-1),~  Z(x, y+1),~  Z(x+1,y),~  Z(x-1, y).
\end{equation}
\end{prop}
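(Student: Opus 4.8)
The plan is to reduce the statement to the combinatorics of the successor permutation $s_U$ together with the realizability criteria of Propositions \ref{QuadrupleRealizable} and \ref{TripleRealizable}. Set $Z_{\max} = \partial U \cap \partial X$. Since $U$ is rigid, $U \in \Scal_{Z_{\max}}$, so $Z_{\max}$ is realizable and every set considered below is a subset of a realizable set, which is exactly the hypothesis needed to apply those two propositions with the fixed conic $U$. First I would record the resulting simplified criteria: a triple $W \subset Z_{\max}$ is realizable iff it contains a successive pair $\{a, s_U(a)\}$, and a quadruple is realizable iff it meets every component of $\partial U$ and is a union of two successive pairs, i.e. iff it equals some $Z_U(a,b)$ that meets each component of $\partial U$. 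For a strip this forces $a$ and $b$ onto different boundary lines (otherwise the quadruple misses a component), so a strip has no consecutive realizable quadruple; hence the consecutive case of the proposition occurs only when $U$ is an ellipse, where $\partial U$ is connected and the ``meets each component'' clause is vacuous.

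Next I would translate adjacency in $\lk(U)$: vertices $Z_U(x,y)$ and $Z'$ span an edge iff $Z_U(x,y) \cap Z'$ is a realizable triple, and since every quadruple has cardinality $4$, at most one edge joins a given pair of vertices. Consequently the degree of $Z_U(x,y)$ equals the number of realizable triples $W \subset Z_U(x,y)$ that extend to a realizable quadruple other than $Z_U(x,y)$ itself. The counting step is that the realizable triples inside $Z = Z_U(x,y)$ are precisely its four one-point deletions: writing $Z$ as the union of the successive pairs $\{x, x+1\}$ and $\{y, y+1\}$, deleting any single point leaves one of these pairs intact, so each deletion is realizable by Proposition \ref{TripleRealizable}, and these are all of the $3$-subsets of $Z$.

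To finish I would show each such triple $W$ lies in exactly two realizable quadruples and identify the one different from $Z$. If $W = \{a, a+1, z\}$ has an isolated point $z$, then completing $W$ to a union of two successive pairs forces pairing $z$ with $z-1$ or with $z+1$, producing $Z_U(a, z-1)$ and $Z_U(a, z)$; one is $Z$ and the other is the neighbour, obtained by re-pairing the isolated point with its opposite successor. If $W = \{a, a+1, a+2\}$ is consecutive, the only completions are adjoining $a-1$ or $a+3$, giving $Z_U(a-1, a+1)$ and $Z_U(a, a+2)$. Carrying this out for the four deletions of $Z_U(x,y)$ gives, in the non-consecutive case $y \neq x+2$, exactly the four neighbours $Z(x,y-1)$, $Z(x,y+1)$, $Z(x+1,y)$, $Z(x-1,y)$ of (\ref{Non-consecutive}); in the consecutive case $y = x+2$ the same bookkeeping produces four distinct neighbours, among them $Z(x-1,x+2)$ and $Z(x-1,x+1)$. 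In either case there are four distinct triples and hence four distinct neighbours, so $\deg(Z_U(x,y)) = 4$.

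The step I expect to be the main obstacle is the consecutive case, where two of the four deletions are themselves consecutive triples that must be completed by prolonging the successive block rather than by re-pairing an isolated point; here one must check separately that all four completions are realizable and pairwise distinct, using that $U$ is then an ellipse so that connectivity of $\partial U$ removes the component condition. The remaining care is the strip subtlety in the non-consecutive case: since $x$ and $y$ lie on different boundary lines, each re-paired point $z \pm 1$ stays on the same line as $z$, so every claimed neighbour indeed meets both components and is realizable.
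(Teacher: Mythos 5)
Your argument is correct and is essentially the paper's proof, made more careful: both reduce to the fact that realizable quadruples in $\partial U\cap\partial X$ are exactly the unions $Z_U(c,d)$ of two successive pairs, enumerate the four $3$-element subsets of $Z_U(x,y)$, and for each identify the unique other realizable quadruple containing it (the paper cites Proposition \ref{1-CellsFollow} where you invoke Propositions \ref{QuadrupleRealizable} and \ref{TripleRealizable} together with the strip/ellipse component check, but the combinatorics is the same). You are also right that the consecutive case yields four neighbours, namely $Z(x-1,x+1)$, $Z(x+1,x+3)$, $Z(x-1,x+2)$, $Z(x,x+3)$, of which display (\ref{Neighbours1}) lists only two; the paper itself uses the full list of four in the proof of Lemma \ref{3cycles}.
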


\begin{proof}
A quadruple $Z'$ is a neighbour of $Z_U(x,y)$ if and only 
if the intersection $Z' \cap Z_U(x,y)$ consists 
of three points. 

If $|x-y|=2$, then either $Z_U(x,y)= \{x,x+1,x+2,x+3\}$ or $Z_U(x,y)= \{y,y+1,y+2,y+3\}$.
Without loss of generality, the former holds.
In this case, the possibile intersections 
are $\{x,x+1,x+2\}$,  $\{x+1,x+2, x+3\}$, $\{x, x+2, x+3\}$, and $\{x,x+1,x+3\}$.
By Proposition \ref{1-CellsFollow}, the only other quadruple that contain
these triples are listed in (\ref{Neighbours1}). 

If $|x-y|>2$, then  $Z_U(x,y)= \{x,x+1,y,y+1\}$.
The possible intersections are 
$\{x,x+1,y\}$,  $\{x,x+2, y+1\}$, $\{x, y, y+1\}$, and $\{x+1, y,y+1\}$.
By Proposition \ref{1-CellsFollow}, the only other quadruples that 
contains these are triples are listed in (\ref{Non-consecutive}).
\end{proof}

We next consider the $3$-cycles in the undirected graph $\lk(U)$.

\begin{lem} \label{3cycles}
The vertex $Z_U(x,x+2)$ belongs to exactly two  $3$-cycles:
\[ \{Z(x,x+2),Z(x-1,x+2), Z(x-1, x+1) \} \]
and 
\[   \{Z(x,x+2), Z(x,x+2),Z(x+1, x+3)\}.  \]
Suppose that $Z_U(x,y)$ is not consecutive with respect to $U$.
If a 3-cycle contains $Z_U(x,y)$, then either $x = y+3$ and 
the cycle is 
\[    \{Z(x,y),   Z(x,y+1), Z(x-1,y) \} \]
or $y=x+3$ and the cycle is 
\[      \{Z(x,y), Z(x+1,y), Z(x,y-1) \}\]
\end{lem}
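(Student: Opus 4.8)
The plan is to use the elementary observation that a $3$-cycle of the undirected graph $\lk(U)$ through a vertex $v$ is exactly an edge of $\lk(U)$ joining two distinct neighbours of $v$. Since Proposition \ref{Degree4} shows that every vertex has degree $4$, each $v$ has exactly four neighbours, so there are at most $\binom{4}{2}=6$ candidate $3$-cycles through $v$, one per unordered pair of neighbours. For each pair I will test whether the two neighbouring quadruples are joined, i.e.\ whether their intersection is a realizable triple. Here the key tool is Proposition \ref{TripleRealizable}: a triple $\{a,b,c\}\subset\partial U\cap\partial X$ is realizable if and only if it contains an adjacent (consecutive) pair $\{w,s_U(w)\}$. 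Throughout I write $x+k$ for $s_U^k(x)$ and $Z(x,y)=\{x,x+1,y,y+1\}$.

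For the consecutive vertex $Z(x,x+2)=\{x,x+1,x+2,x+3\}$ I will first list its four neighbours. Each is obtained by intersecting $Z(x,x+2)$ in one of its four realizable triples and applying Proposition \ref{1-CellsFollow} (as in the proof of Proposition \ref{Degree4}); they are $Z(x-1,x+1)$, $Z(x+1,x+3)$, $Z(x-1,x+2)$, and $Z(x,x+3)$. I then compute the six pairwise intersections and observe that only two are triples,
\[ Z(x-1,x+1)\cap Z(x-1,x+2)=\{x-1,x,x+2\}, \qquad Z(x+1,x+3)\cap Z(x,x+3)=\{x+1,x+3,x+4\}, \]
each containing a consecutive pair and hence realizable, while the other four pairwise intersections have only two elements. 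This produces exactly the two $3$-cycles $\{Z(x,x+2),Z(x-1,x+2),Z(x-1,x+1)\}$ and $\{Z(x,x+2),Z(x,x+3),Z(x+1,x+3)\}$ (the second being the cycle in the statement, with the repeated vertex corrected to $Z(x,x+3)$).

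For a nonconsecutive vertex $Z(x,y)$, Proposition \ref{Degree4} gives the four neighbours $Z(x,y-1)$, $Z(x,y+1)$, $Z(x+1,y)$, $Z(x-1,y)$. Computing the six pairwise intersections, I will show that whenever the consecutive pairs $\{x,x+1\}$ and $\{y,y+1\}$ are separated by at least two points on \emph{both} sides, every pairwise intersection has exactly two elements (for instance $Z(x,y-1)\cap Z(x+1,y)=\{x+1,y\}$), so no two neighbours are joined and $Z(x,y)$ lies in no $3$-cycle. A pairwise intersection can enlarge to a triple only when a gap closes to a single point, which happens precisely when $y=x+3$ or $x=y+3$. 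If $y=x+3$ the neighbours $Z(x,y-1)=Z(x,x+2)$ and $Z(x+1,y)=Z(x+1,x+3)$ become consecutive quadruples meeting in the realizable triple $\{x+1,x+2,x+3\}$, giving the unique cycle $\{Z(x,y),Z(x+1,y),Z(x,y-1)\}$; the case $x=y+3$ is symmetric under interchanging $x$ and $y$ and yields $\{Z(x,y),Z(x,y+1),Z(x-1,y)\}$.

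The intersection bookkeeping is routine; the one point that needs care is the cyclic wraparound, since for small $n=\Card(\partial U\cap\partial X)$ the four neighbours could a priori coincide or the doubly-separated (``generic'') configuration could fail to occur. I will note that the two consecutive pairs are separated by at least two points on both sides exactly when $n\ge 8$, and that for $5\le n\le 7$ every nonconsecutive quadruple already falls into one of the boundary cases $y=x+3$ or $x=y+3$, so the case analysis is exhaustive. The main conceptual obstacle is simply recognizing that $y=x+3$ (resp.\ $x=y+3$) is precisely the degeneration at which two of the four neighbours of $Z(x,y)$ turn into consecutive quadruples, which is what creates the triple overlap and hence the unique $3$-cycle.
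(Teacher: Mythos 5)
Your proof follows essentially the same route as the paper's: enumerate the four neighbours of the vertex via Proposition \ref{Degree4}, compute the six pairwise intersections, and identify which of them are realizable triples; your correction of the typo in the second listed cycle (it should read $Z(x,x+3)$) agrees with what the paper's own proof produces. The only caveat --- one shared with the paper's proof --- is that the assertion that the remaining four pairwise intersections in the consecutive case have only two elements silently assumes $\Card(\partial U\cap\partial X)\geq 6$; when there are exactly five boundary points the cyclic wraparound makes further intersections into realizable triples, so the "exactly two" count needs that extra hypothesis.
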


\begin{proof}
Suppose that $Z=\{x,x+1,x+2,x+3\}$ is consecutive with respect to $U$. 
Then by Proposition \ref{Degree4}, the neighbours of $Z$ are
$Z(x-1,x+2)$, $Z(x-1, x+1)$,  $Z(x,x+3)$, and  $Z(x+1, x+3)$.
Inspection shows that the first two intersect  
in 3 points and the last two intersect in three points.
No other pairs intersect in three points. 
Thus, we have exactly two $3$-cycles containing $A$.

Suppose that $Z=Z(x,y)$ is not consecutive. Then by Proposition 
\ref{Degree4} the neighbours of $Z(x,y)$ are $Z(x+1,y)$, $Z(x,y+1)$,
$Z(x-1,y)$ and $Z(x, y-1)$. Since $Z(x,y)$ is not consecutive, we 
have $x \neq y-2, y-1,y, y+1, y+2$. Since $\Card(\partial U \cap \partial X)\geq 5$
we have $x \neq x-2, x-1, x, x+1, x+2$.  Inspection shows that
\begin{eqnarray*}
   Z(x+1,y) \cap Z(x, y+1)~ =~ \{x+1,y+1\}  \\
   Z(x+1,y) \cap Z(x-1, y)~ =~ \{y, y+1\}  \\
  Z(x,y+1) \cap Z(x, y-1)~ =~ \{x, x+1\}  \\
  Z(x-1,y) \cap Z(x, y-1)~ =~ \{x, y\}  
\end{eqnarray*}
If $y+2=x-1$, then $Z(x,y+1) \cap Z(x-1,y)= \{x, y+1, y+2\}$
and if $y+2 \neq x-1$, then $Z(x,y+1) \cap Z(x-1,y))= \{x, y+1\}$. 
If $x+2=y-1$, then $Z(x+1,y) \cap Z(x,y-1)= \{x+1, x+2, y\}$
and if $y+2 \neq x-1$, then $Z(x,y+1) \cap Z(x-1,y))= \{x+1,y\}$. 

In sum, if $y+2 \neq x-1$ and $y+2 \neq x-1$, then $Z(x,y)$
does not belong to a 3-cycle. If $y+2 \neq x-1$ or $y+2 \neq x-1$
but not both, then $Z(x,y)$ belongs to exactly one 3-cycle. 
Finally, $y+2 = x-1$ and $y+2 = x-1$ if and only if 
$Z(x,y)$ belongs to exactly two 3-cycles. 

In particular, if $Z(x,y)$ is nonconsecutive and belongs
to two 3-cycles, then $x=x+6$ and hence $\Card(\partial U \cap \partial X)=6$.    
\end{proof}

\begin{coro}
Each $3$-cycle contains at most one non-consecutive vertex.
\end{coro}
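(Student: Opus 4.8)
The plan is to read the statement off the classification of $3$-cycles in Lemma \ref{3cycles}, using the intrinsic notion of consecutiveness from Definition \ref{ConsecutiveDefinition}: a vertex $Z_U(w,w')$ is consecutive exactly when the four points $w, s_U(w), w', s_U(w')$ form a block $\{w, w+1, w+2, w+3\}$ of successive points of $\partial U \cap \partial X$ (throughout I abbreviate $s_U^k(x)$ by $x+k$). The crucial observation is that a vertex of the special form $Z_U(w, w+2) = \{w, w+1, w+2, w+3\}$ is \emph{always} consecutive, no matter how many points lie in $\partial U \cap \partial X$.

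First I would argue that every $3$-cycle of the undirected graph $\lk(U)$ contains a consecutive vertex. A $3$-cycle passes through some vertex; if that vertex is consecutive we are done, and if it is a non-consecutive vertex $Z_U(x,y)$, then by the non-consecutive case of Lemma \ref{3cycles} any $3$-cycle through it has its two remaining vertices equal to $Z_U(x+1,y), Z_U(x,y-1)$ (when $y = x+3$) or to $Z_U(x,y+1), Z_U(x-1,y)$ (when $x = y+3$). In each instance these two vertices are of the form $Z_U(w,w+2)$ and hence consecutive. Consequently every $3$-cycle is one of the two cycles attached to a consecutive vertex $Z_U(x,x+2)$ in Lemma \ref{3cycles}, namely
\[
\{Z_U(x,x+2),\ Z_U(x-1,x+2),\ Z_U(x-1,x+1)\}
\quad\text{or}\quad
\{Z_U(x,x+2),\ Z_U(x,x+3),\ Z_U(x+1,x+3)\}.
\]

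It then suffices to observe that each of these two cycles contains two vertices of the block form $Z_U(w,w+2)$. In the first these are $Z_U(x,x+2)$ and $Z_U(x-1,x+1)$; in the second they are $Z_U(x,x+2)$ and $Z_U(x+1,x+3)$. By the observation above both such vertices are consecutive, so at most one of the three vertices of the cycle can fail to be consecutive. This is exactly the assertion.

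The step I would treat most carefully — and the reason I organize the argument around the block form $Z_U(w,w+2)$ rather than around index differences — is the wrap-around phenomenon when $\Card(\partial U \cap \partial X)$ is small. There the third vertex ($Z_U(x-1,x+2)$, respectively $Z_U(x,x+3)$) can itself become consecutive: for instance when $\Card(\partial U \cap \partial X) = 5$ every quadruple is the complement of a single point and hence a block, so the cycle has \emph{no} non-consecutive vertex. This is why the conclusion must read "at most one" rather than "exactly one," and why I deliberately avoid certifying the third vertex as non-consecutive; relying only on the two manifest block vertices keeps the argument uniform in $\Card(\partial U \cap \partial X)$.
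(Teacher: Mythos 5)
Your proof is correct and follows the same route the paper intends: the corollary is stated without proof as an immediate consequence of Lemma \ref{3cycles}, and your argument is precisely a careful reading-off of that lemma, observing that in every listed $3$-cycle at least two of the vertices have the block form $Z_U(w,w+2)$ and hence are consecutive. Your attention to the wrap-around case (small $\Card(\partial U\cap\partial X)$), explaining why the bound is ``at most one'' rather than ``exactly one,'' is a worthwhile refinement the paper leaves implicit.
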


\begin{coro}
If $U$ is a maximal strip, then ${\rm Lk}(U)$ has no 3-cycles.
\end{coro}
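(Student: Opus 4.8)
The plan is to reduce the corollary to the preceding one together with a rigidity property of strips. Since a maximal strip lies in $\bigcap_n \Scal_n(X, \mu)$ by Corollary \ref{StripInfinite}, it is a rigid subconic, so Lemma \ref{3cycles} and its corollary apply to $U$. That corollary asserts that each 3-cycle contains at most one non-consecutive vertex; as a 3-cycle has three vertices, every 3-cycle must then contain a consecutive vertex. It therefore suffices to prove that $\lk(U)$ has no consecutive vertex at all.

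First I would unwind what a consecutive vertex is: a realizable quadruple of the form $Z = \{x, s_U(x), s_U^2(x), s_U^3(x)\}$. Because $U$ is a strip, $\partial U$ consists of two components, each a straight line. The successor $s_U(w)$ is defined by traversing an oriented arc contained in a single component of $\partial U \setminus \partial X$ joining $w$ to its successor, so such an arc lies in one component of $\partial U$. Hence $s_U$ preserves the two components, and the four points of a consecutive quadruple all lie on one of the two lines. In particular, $Z$ is a set of four collinear points.

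The heart of the argument is then to show that no such $Z$ is realizable. By Proposition \ref{EllipseStrip}, any subconic $U'$ with $\partial U' \cap \partial X = Z$ is either a strip or an ellipse interior. The strip case is impossible, since Corollary \ref{StripInfinite} would force $\partial U' \cap \partial X$ to be infinite while $\Card(Z) = 4$. The ellipse case is impossible because the boundary of an ellipse interior meets any line in at most two points, so it cannot contain the four collinear points of $Z$. Thus $Z$ is not realizable, $\lk(U)$ has no consecutive vertex, and the corollary follows. The only point requiring care is the claim that $s_U$ preserves the boundary components of the strip---equivalently, that a consecutive quadruple is genuinely collinear---after which the realizability obstruction is immediate from Proposition \ref{EllipseStrip} and Corollary \ref{StripInfinite} and requires no computation.
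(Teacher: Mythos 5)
Your proof is correct and follows essentially the same route as the paper: the paper's own argument is precisely that a consecutive quadruple lies on a single boundary component of the strip, hence is collinear and not realizable, which (combined with the preceding corollary that every $3$-cycle has at most one non-consecutive vertex) rules out all $3$-cycles. You simply spell out more explicitly why a collinear quadruple cannot be realized by an ellipse interior or a strip, which the paper leaves implicit.
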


\begin{proof}
Indeed, if a quadruple $Z$ is consecutive, then $Z$ is a subset of 
one component of the boundary of the strip. 
Hence $Z$ is collinear and is not realizable. 
\end{proof}

\begin{prop} \label{HexagonDirected}
Let $\{Z, Z', Z''\}$ be a $3$-cycle. Then $Z$ is consecutive 
if and only if either both $Z''$and $Z'$ follow $Z$
or $Z$ follows both $Z$ and $Z''$.  
\end{prop}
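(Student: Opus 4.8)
The plan is to reduce an arbitrary $3$-cycle to a single canonical one and then read off the three edge directions directly from Proposition \ref{1-CellsFollow}. By Lemma \ref{3cycles} and its corollary, every $3$-cycle has exactly two consecutive vertices and exactly one non-consecutive vertex: the lemma lists, through each vertex, all the $3$-cycles containing it, and each cycle on that list has a unique non-consecutive vertex. Consequently, writing $x+k$ for $s_U^k(x)$ as in \S\ref{SectionLink} and choosing the base point $x$ suitably, every $3$-cycle can be put in the form
\[ \bigl\{\, Z(x,x+2),\ Z(x-1,x+2),\ Z(x-1,x+1) \,\bigr\}, \]
where $Z(x,x+2)=\{x,x+1,x+2,x+3\}$ and $Z(x-1,x+1)=\{x-1,x,x+1,x+2\}$ are the two consecutive vertices and $Z(x-1,x+2)=\{x-1,x,x+2,x+3\}$ is the non-consecutive one. (The second cycle listed in Lemma \ref{3cycles} is this one after the relabelling $x\mapsto x+1$, so the single canonical form suffices.)

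First I would record that, for a realizable triple $T\subset\partial U\cap\partial X$, the only realizable quadruples in $\partial U\cap\partial X$ containing $T$ are the two $1$-cells of the polygon $\Scal_T$ that meet at the vertex $U$; this is immediate from Theorem \ref{CellTheorem}, since exactly two sides of a convex polygon meet at a vertex. Hence for each edge of the $3$-cycle the relation ``one endpoint follows the other'' is exactly the relation ``follows in $\partial T$'' governed by Proposition \ref{1-CellsFollow}, where $T$ is the shared triple. Applying that proposition to each of the three shared triples --- the consecutive triple $\{x,x+1,x+2\}$, and the two non-consecutive triples $\{x,x+2,x+3\}$ (isolated point $x$, adjacent pair $\{x+2,x+3\}$) and $\{x-1,x,x+2\}$ (isolated point $x+2$, adjacent pair $\{x-1,x\}$) --- gives, respectively, that $Z(x,x+2)$ follows $Z(x-1,x+1)$, that $Z(x,x+2)$ follows $Z(x-1,x+2)$, and that $Z(x-1,x+2)$ follows $Z(x-1,x+1)$.

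These three relations determine the orientation at each vertex. The consecutive vertex $Z(x,x+2)$ follows both of its neighbours, so $Z(x,x+2)$ follows both; the consecutive vertex $Z(x-1,x+1)$ is followed by both of its neighbours, so both follow it; and the non-consecutive vertex $Z(x-1,x+2)$ follows one neighbour ($Z(x-1,x+1)$) while being followed by the other ($Z(x,x+2)$). Thus the two consecutive vertices are precisely those satisfying the stated alternative (one a ``source'', one a ``sink''), while the unique non-consecutive vertex satisfies neither; this gives both directions of the equivalence. I expect the only real difficulty to be the bookkeeping in the middle step: for each shared triple one must correctly select the consecutive or the non-consecutive case of Proposition \ref{1-CellsFollow} and then translate the resulting $Z_U(\cdot,\cdot)$ back into the canonical quadruples via $x+k=s_U^k(x)$.
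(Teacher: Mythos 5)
Your proof is correct and follows essentially the same route as the paper's: both arguments combine Lemma \ref{3cycles} (to enumerate the $3$-cycles through a vertex) with Proposition \ref{1-CellsFollow} (to orient each edge), differing only in bookkeeping --- you normalize every $3$-cycle to a single canonical form and orient all three edges at once, whereas the paper splits into cases according to whether the given vertex is consecutive. (Your side remark that every $3$-cycle has \emph{exactly} one non-consecutive vertex overstates the paper's corollary, which only gives ``at most one'' and can fail when $\Card(\partial U \cap \partial X)=5$; but your argument only uses that every $3$-cycle contains a consecutive vertex, which does follow.)
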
 

\begin{proof}
If $Z= Z(x,x+2)$ is consecutive, then by Proposition \ref{3cycles},
the 3-cycles given by $\{Z(x,x+2),Z(x-1,x+2)$, $Z(x-1, x+1)\}$
and  $\{Z(x,x+2), Z(x,x+2),Z(x+1, x+3)\}$. In the former
case $Z(x,x+2)$ follows both $Z(x-1,x+2)$ and $Z(x-1, x+1)$.
In the latter, $Z(x,x+2)$ and $Z(x+1, x+3)$ both $Z(x,x+2)$.

If $Z=Z(x,y)$ is nonconsecutive and belongs to 
a 3-cycles, then by Proposition \ref{3cycles},
either $x=y+3$ and
\[    \{Z(x,y),   Z(x,y+1), Z(x-1,y) \} \]
or $y=x+3$ and the cycle is 
\[      \{Z(x,y), Z(x+1,y), Z(x,y-1) \}. \]
In the former case, $Z(x,y)$ follows $Z(x-1,y)$ and 
$Z(x,y+1)$ follows $Z(x,y)$. In the latter case,
$Z(x,y)$ follows $Z(x,y-1)$ and $Z(x+1,y)$ follows $Z(x,y)$. 
\end{proof}

Next we turn to the analysis of $\lk(U)$ in the case that $U$
is a maximal strip. First recall that the {\em $n$-dimensional infinite grid}, 
${\rm Grid}^n$, is the graph whose vertex set is $\Z^n$ and 
 two vertices $\vec{m}$ and $\vec{n}$ 
are joined by an edge iff $\vec{m}$ and $\vec{n}$
differ in exactly one coordinate and this difference 
has absolute value 1. 
This graph can be geometrically realized as the set 
of points in $\R^n$ all but one of whose coordinates
equals an integer. Any graph that is isomorphic to
the $n$-dimensional infinite grid 
will be called an {\em $n$-dimensional grid graph}.

\begin{prop} \label{GridGraph}
If $U \subset X$ is a maximal strip, then $\lk(U)$ is a $2$-dimensional 
grid graph. In particular, if $x$ and $y$ are points in the frontier of $X$
that belong to distinct components of $\partial U$, then 
\[   f(m, n)~ =~   Z(x+m,y+n) \]  
defines an isomorphism $f: {\rm Grid}^2 \rightarrow \lk(U)$. 
\end{prop}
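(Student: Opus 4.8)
The plan is to combine the structure of a maximal strip with the neighbour count already established in Proposition \ref{Degree4}. First I would record the geometry of $\partial U \cap \partial X$. Since $U$ is a maximal strip, $\partial U$ has two components $C_1$ and $C_2$, each developing to a line. By Propositions \ref{MaximalStrip} and \ref{TranslationExists} there is a nontrivial translation $\tau$ with $\tau(U)=U$, and for each $C_i$ the set $C_i \cap \partial X$ is infinite and $\tau$-invariant. Because $\partial X$ is discrete and $\tau$ acts on the line $C_i$ as a nontrivial translation, the orbit of any point of $C_i \cap \partial X$ is discrete and unbounded in both directions; hence $C_i \cap \partial X$ is order-isomorphic to $\Z$, and the successor map $s_U$ restricts to the shift by one on it. Fixing $x \in C_1 \cap \partial X$ and $y \in C_2 \cap \partial X$ and writing $x+m = s_U^m(x)$, $y+n = s_U^n(y)$, every point of $\partial U \cap \partial X$ is uniquely of the form $x+m$ (on $C_1$) or $y+n$ (on $C_2$) for $m,n \in \Z$.

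Next I would identify the vertices of $\lk(U)$. A pair of points on distinct components is never adjacent: removing a point from the line $C_i$ leaves two rays, each meeting $\partial X$ in infinitely many points by $\tau$-invariance, so no component of $\partial U \setminus \{x',y'\}$ avoids $\partial X$ when $x' \in C_1$ and $y' \in C_2$; on the other hand, two points of the same component are adjacent precisely when consecutive. Consequently, by Proposition \ref{QuadrupleRealizable}, a realizable quadruple must meet each component and partition into adjacent pairs, so it consists of one consecutive pair $\{x+m, x+m+1\}$ on $C_1$ and one consecutive pair $\{y+n, y+n+1\}$ on $C_2$; that is, it equals $Z(x+m,y+n)=\{x+m, s_U(x+m), y+n, s_U(y+n)\}$. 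Since the two pairs lie on different components, distinct $(m,n)$ give distinct quadruples, and every realizable quadruple arises this way. Hence $f(m,n)=Z(x+m,y+n)$ is a bijection from $\Z^2$ onto the vertex set of $\lk(U)$.

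Finally I would match the edges. For a maximal strip every consecutive quadruple is collinear, hence not realizable, so each vertex $Z(x+m,y+n)$ is non-consecutive. Applying the non-consecutive case of Proposition \ref{Degree4} with base points $x+m \in C_1$ and $y+n \in C_2$, the four neighbours of $Z(x+m,y+n)$ are $Z(x+m, y+n-1)$, $Z(x+m, y+n+1)$, $Z(x+m+1, y+n)$, and $Z(x+m-1, y+n)$. Because $s_U$ preserves each component, $x+m\pm 1$ stays on $C_1$ and $y+n\pm 1$ stays on $C_2$, so these are exactly $f(m,n-1)$, $f(m,n+1)$, $f(m+1,n)$, $f(m-1,n)$. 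Thus $\{f(m,n), f(m',n')\}$ is an edge of $\lk(U)$ iff $(m,n)$ and $(m',n')$ differ by $1$ in exactly one coordinate, which is precisely the adjacency of ${\rm Grid}^2$; hence $f$ is a graph isomorphism and $\lk(U)$ is a $2$-dimensional grid graph.

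The step I expect to be the main obstacle is the structural analysis of the first two paragraphs: verifying that each component's frontier set is genuinely bi-infinite (order-isomorphic to $\Z$) and that the two independent shifts decouple into the two grid axes, so that the translation-invariance feeds correctly into the realizability criterion of Proposition \ref{QuadrupleRealizable}. Once this is in place, the edge-matching is a direct transcription of Proposition \ref{Degree4}.
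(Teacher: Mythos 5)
Your proof is correct and follows essentially the same route as the paper's: identify the vertex set with $\Z^2$ via the unique representations $x+m$, $y+n$ on the two boundary components, observe that every realizable quadruple is non-consecutive since $U$ is a strip, and read off the edges from the non-consecutive neighbour formula (\ref{Non-consecutive}) of Proposition \ref{Degree4}. The only difference is that you spell out the adjacency analysis and the appeal to Propositions \ref{MaximalStrip}, \ref{TranslationExists}, and \ref{QuadrupleRealizable} that the paper leaves implicit, which is a harmless (indeed welcome) elaboration rather than a different argument.
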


\begin{proof}
Let $\ell_x$ (resp. $\ell_y$)  
denote the component of $\partial U$ containing $x$ (resp. $y$.)
For each $x' \in \ell_x$ (resp. $y' \in \ell_y$) 
there exists a unique $m$ (resp. $n$) such that $x'=x+m$ (resp. $y'=y+n$).
It follows that $f$ is a bijection. 
Since $U$ is a strip, each $1$-cell is non-consecutive.
Therefore (\ref{Non-consecutive}) implies
that $f$ is a graph isomorphism. 
\end{proof}

We will say that a subset $L$ of a grid graph $G$ is a {\em line}
if and only if $L$ is isomorphic to a 1-dimensional grid
and no two consecutive edges of $L$ belong to a cycle of order $4$. 
Let ${\mathcal L}(G)$ denote the set of lines in $G$.
The lines in the infinite grid correspond exactly to the 
lines in $\R^n$ that are contained within its geometric realization.


\section{The map of frontiers}  \label{SectionFrontier}

Let $(X, \mu)$ and $(X', \mu')$ be universal covers of precompact 
translation surfaces with $\partial X$ finite and nonempty. 
Let $\Phi: \Scal_3(X, \mu)  \rightarrow  \Scal_3(X', \mu')$ be an
isomorphism of 2-complexes. In this section, we will assume that 
$\Phi$ preserves the natural orientation of each $2$-cell.  
Such an isomorphism will be called  {\em orientation preserving}.

The goal of this section is to prove the existence of a bijection 
$\beta: \partial X \rightarrow \partial X'$ so that 
for each cell in the 2-complex $\Scal_3(X, \mu)$ we have 
\begin{equation}  \label{Diagram}
  \Phi(\Scal_Z)~ =~ \Scal_{\beta(Z)}. 
\end{equation}
We will reduce the construction of $\beta$ to a `local' problem 
associated to a rigid subconic $U$. In particular, we say that a bijection 
$\beta_U: \partial U \cap \partial X \rightarrow  \partial \Phi(U) \cap \partial X'$
is {\em adapted to $\Phi$} if and only if for each 
realizable $Z \subset \partial U \cap \partial X$ we have 
\begin{equation}  \label{Diagram2}
  \Phi(\Scal_Z)~ =~ \Scal_{\beta_U(Z)}. 
\end{equation}

The following proposition tells us that the `global' construction
of $\beta$ is equivalent to the `local' problem of constructing $\beta_U$
adapted to $\Phi$ for each rigid conic $U$.

\begin{prop}
Let $U$ and $U' \in \Scal_5(X, \mu)$.
If $\beta_U$ and $\beta_{U'}$ are adapted to $\Phi$, then 
for each $x \in \partial U \cap \partial U' \cap \partial X$ 
we have $\beta_U(x)= \beta_{U'}(x)$.  
\end{prop}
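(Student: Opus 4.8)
The plan is to prove that $\beta_U(x)$ is independent of the rigid subconic $U$ carrying $x$ on its frontier, and to transport this independence from $U$ to $U'$ along a chain of $1$-cells whose defining sets all contain $x$. The engine is a purely local matching principle: if two rigid subconics $V,V'$ are the endpoints of one common $1$-cell $\Scal_{Z'}$ with $x \in Z'$, then $\beta_V(x)=\beta_{V'}(x)$. Indeed, put $Z=Z'\setminus\{x\}$; by the adjacency analysis of \S\ref{SectionRealizable} this triple is realizable, and $Z,Z'\subset \partial V\cap\partial X$ as well as $Z,Z'\subset\partial V'\cap\partial X$, since $V,V'$ are vertices of $\Scal_{Z'}$. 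Because both bijections are adapted to $\Phi$, we have $\Scal_{\beta_V(Z')}=\Phi(\Scal_{Z'})=\Scal_{\beta_{V'}(Z')}$ and $\Scal_{\beta_V(Z)}=\Phi(\Scal_{Z})=\Scal_{\beta_{V'}(Z)}$; since the assignment of a defining set to a cell is injective (Theorem \ref{CellTheorem}), this forces $\beta_V(Z')=\beta_{V'}(Z')$ and $\beta_V(Z)=\beta_{V'}(Z)$. As each $\beta$ is a bijection, $\{\beta_V(x)\}=\beta_V(Z')\setminus\beta_V(Z)=\beta_{V'}(Z')\setminus\beta_{V'}(Z)=\{\beta_{V'}(x)\}$. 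Note that orientation plays no role here: the images $\Phi(\Scal_{Z'})$ and $\Phi(\Scal_Z)$ depend on $\Phi$ alone.

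It remains to connect $U$ to $U'$. First I would form the subcomplex $A_x\subset\Scal_3(X,\mu)$ built from all closed cells $\overline{\Scal_Z}$ with $x\in Z$. Its $0$-cells are precisely the rigid subconics $V$ with $x\in\partial V\cap\partial X$, and its $1$-cells are precisely the $\Scal_{Z'}$ with $x\in Z'$. Granting that $A_x$ is path connected, then, since it is a (closure-finite) $2$-dimensional cell complex, any two of its vertices are joined by an edge path lying in $A_x$. Applying the local matching principle across each edge of this path --- every intermediate conic contains $x$, and consecutive conics are the two endpoints of a common $1$-cell whose defining set contains $x$ --- yields $\beta_U(x)=\beta_{U'}(x)$, which is the assertion.

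The crux, and what I expect to be the main obstacle, is therefore the connectivity of $A_x$. The approach would be to show that the family of all subconics having $x$ on their boundary is connected and then to retract it into $A_x$. Through $\dev_\mu$ and the plane of quadratic forms vanishing at $\hat x$, the ellipse interiors with $x\in\partial U$ correspond to the intersection of the convex set of ellipse-interior forms (Proposition \ref{EllipseConvex}) with the linear subspace $\{q : q(\hat x)=0\}$; this intersection is convex and hence path connected. Mimicking the dilation-and-translation retractions of \S\ref{HomotopySection}, but constrained so that the contact point $x$ is preserved, one should obtain a deformation of this family onto the subfamily meeting at least three frontier points in maximal span, that is, onto $A_x\cap\Ecal(X,\mu)$; the strip vertices are then recovered as limits by means of Propositions \ref{MaximalStrip} and \ref{TranslationExists}. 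The delicate part is exactly this bookkeeping: keeping $x$ fixed on the boundary throughout the retraction while controlling the number of frontier points so that the deformation genuinely terminates in $\Scal_3(X,\mu)$, the same type of careful accounting already carried out in \S\ref{HomotopySection}.
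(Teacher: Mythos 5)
Your local matching step is essentially the paper's own argument: when $V$ and $V'$ are the two endpoints of a $1$-cell $\Scal_{Z'}$ with $x\in Z'$, the paper likewise plays realizable triples inside $Z'$ against the quadruple itself and uses injectivity of the $\beta$'s --- it intersects the three triples $Z'\setminus\{z_i\}$ with $z_i\neq x$, where you take a set difference with the single triple $Z'\setminus\{x\}$; the bookkeeping is equivalent, and your appeal to Proposition \ref{TripleRealizable} for the realizability of that triple is the same implicit step the paper makes.

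Where you diverge is the reduction to the adjacent case. The paper disposes of it in one line: by Proposition \ref{FourConnected} it suffices to suppose $U$ and $U'$ are endpoints of a common $1$-cell, i.e., it chains along an edge path in the connected graph $\Scal_4(X,\mu)$. You are right to be uneasy about this: $\beta_V(x)$ is only defined when $x\in\partial V\cap\partial X$, so the chain must stay inside the subcomplex $A_x$ of cells whose defining sets contain $x$, and connectivity of $\Scal_4(X,\mu)$ does not literally supply such a path. Your insistence on the connectivity of $A_x$ is therefore the logically honest form of the reduction, and your proposed proof of it is headed the right way --- the forms vanishing at $\hat{x}$ cut the convex set of ellipse-interior forms in a convex set, and the retractions $f_2,f_3$ of \S\ref{HomotopySection} are already built to keep a marked contact point on the boundary. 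But as written this is exactly the point you leave as a sketch, so the proof is not closed: the connectivity of $A_x$ (including the recovery of strip vertices and the passage from topological connectivity to edge-path connectivity, as in Proposition \ref{FourConnected}) is asserted with a plausible strategy rather than proved. Either carry that argument out, or match the paper's level of detail by invoking Proposition \ref{FourConnected} together with an explicit remark that the edge path can be chosen within $A_x$.
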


\begin{proof}
By Proposition \ref{FourConnected},
it suffices to suppose that $U$ and $U'$ are endpoints of a $1$-cell in $\Scal_3(X, \mu)$.
In particular, $Z= \partial U \cap \partial U'$
is a quadruple $\{x_1, x_2, x_3, x_4\}$.  Let $Z_i$ be the triple
$Z \setminus \{z_i\}$. The triple $Z_i$ is realizable, and hence 
\[  \Scal_{\beta_U(Z_i)}~ =~ \Phi(\Scal_{Z_i})~ =~ \Scal_{\beta_{U'}(Z_i)}.  \]
Hence, $\beta_U(Z_i)= \beta_{U'}(Z_i)$ for each $i$. 
Therefore, 
\begin{eqnarray*}
 \beta_U(\{x_j\})  &=& \beta_U \left(\bigcap_{i \neq j} Z_i\right)  \\
          &=&      \bigcap_{i\neq j} \beta_U(Z_i)  \\
        &=&      \bigcap_{i\neq j} \beta_{U'}(Z_i)  \\
             &=&  \beta_{U'}\left(\bigcap_{i\neq j} Z_i \right)  \\
       &=&   \beta_{U'}(\{x_j\}).
\end{eqnarray*}
\end{proof}

\begin{nota}
Since each cell is determined by a subset of the frontier, the 
map $\Phi$ may be regarded as a map from certain subsets 
of $\partial X$ to subsets of $\partial X'$. Abusing notation
slightly, we will sometimes use $\Phi$ to denote this mapping of subsets. 
For example, if $Z \subset \partial X$ is a realizable triple and
$\Scal_Z$ is the associated $2$-cell, then we will 
let $\Phi(Z)$ denote the triple that determines the 
image 2-cell $\Phi(\Scal_{Z})$. 
\end{nota}

We first define $\beta_U$ in the case that $U$ is a maximal strip. 
Let $x \in \partial U \cap \partial X$.
The isomorphism $\Phi$ determines a graph isomorphism 
of $\lk(U)$ onto $\lk(\Phi(U))$. Since $\lk(U)$ is infinite,
$\lk(\Phi(U))$ is infinite, and hence $\Phi(U)$ is a maximal strip. 
Since $U$ and $\Phi(U)$ are maximal strips, each quadruple
$Z \subset \partial U \cap \partial X$ with $\Scal_Z \neq\emptyset$
is of the form $Z_U(x,y)$ where 
$x$ and $y$ belong to different components of $\partial U$. 
Let $\ell_x$ denote the component containing $x$ and let $\ell_x^{\perp}$
denote the component that does not contain $x$. 

Given $x \in \partial U \cap \partial X$, let $L_x$ be the 
line in $\lk(U)$ consisting of 1-cells $Z_U(x,y)$ with $y \in \ell^\perp_x$. 
The image of $L_x$ is a line in $\lk(\Phi(U))$ and hence there 
exists unique $\beta(x) \in  \Phi(U)\cap \partial X$
such that
\begin{equation}  \label{BetaStrip}
  \Phi(L_{x})~ =~ L_{\beta(x)}.
\end{equation}

Since $\Phi$ is invertible, the map $\beta: \partial U \cap \partial X 
\rightarrow \partial \Phi(U) \cap \partial X'$ is invertible.
Indeed, given $x' \in   \partial \Phi(U) \cap \partial X'$,
define $\beta^{-1}(x')$ by the identity $L_{\beta^{-1}(x')}= \Phi^{-1}(L_{x'})$.

\begin{prop}
Let $U$ be a maximal strip. The map $\beta_U$ defined in (\ref{BetaStrip})
is adapted to $\Phi$. Moreover, 
\begin{equation}  \label{Conjugate}
 \beta_U \circ s_U~ =~ s_{\Phi(U)}\circ \beta_U. 
\end{equation}
\end{prop}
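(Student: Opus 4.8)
The plan is to exploit that both $\lk(U)$ and $\lk(\Phi(U))$ are $2$-dimensional grid graphs by Proposition \ref{GridGraph} (the latter because $\Phi(U)$ is a maximal strip, as noted just above the proposition), and that by construction the lines in these grids are exactly the sets $L_{x'}$, so that $x' \mapsto L_{x'}$ is a bijection from frontier points to lines. First I would record the geometric content of the grid: for $x$ and $y$ in distinct components of $\partial U$, the realizable quadruple $Z_U(x,y)$ is the unique common vertex of $L_x$ and $L_y$, while for $x,x'$ in the same component the lines $L_x, L_{x'}$ are disjoint. In any grid two lines are disjoint precisely when they belong to a common parallel class, and a ``row'' always meets a ``column'' in one vertex; hence a graph isomorphism preserves the relation ``disjoint.'' Applying this to $\Phi$ shows that $\beta_U$ carries each component of $\partial U \cap \partial X$ bijectively onto a component of $\partial \Phi(U) \cap \partial X'$.

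Adaptedness on $1$-cells (quadruples) then follows formally. For $x,y$ in distinct components one has $\Phi(Z_U(x,y)) = \Phi(L_x \cap L_y) = \Phi(L_x) \cap \Phi(L_y) = L_{\beta_U(x)} \cap L_{\beta_U(y)}$, and since $\beta_U(x)$ and $\beta_U(y)$ lie in distinct components of $\partial \Phi(U)$, this intersection is the single vertex $Z_{\Phi(U)}(\beta_U(x), \beta_U(y))$. Thus $\Phi(\Scal_{Z_U(x,y)}) = \Scal_{Z_{\Phi(U)}(\beta_U(x), \beta_U(y))}$, which is the required identity once $\beta_U$ is extended to quadruples by $Z_U(x,y) \mapsto Z_{\Phi(U)}(\beta_U(x), \beta_U(y))$.

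The conjugacy (\ref{Conjugate}) is where the orientation hypothesis enters, and I expect this to be the main obstacle. Because $\Phi$ preserves the orientation of each $2$-cell, it preserves the ``follows'' relation on each $\partial W$, hence the induced directions on the edges of the links. I would use Proposition \ref{1-CellsFollow}, in its nonconsecutive case (the only one that occurs for a strip, since consecutive triples are collinear and hence not realizable), to identify these directions concretely: along a line $L_x$ the directed edges point uniformly from $Z_U(x,y)$ to $Z_U(x, s_U(y))$, so $L_x$ is a coherently directed bi-infinite path whose successor map is $y \mapsto s_U(y)$ on the opposite component. The restriction of $\Phi$ to $L_x$ is a direction-preserving isomorphism onto $L_{\beta_U(x)}$, and such an isomorphism of coherently oriented lines carries successors to successors; comparing the two descriptions of the successor of $\Phi(Z_U(x,y)) = Z_{\Phi(U)}(\beta_U(x), \beta_U(y))$ forces $\beta_U(s_U(y)) = s_{\Phi(U)}(\beta_U(y))$ for $y$ in the component opposite $x$. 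Running the same argument along the lines $L_y$ gives the identity on the other component, establishing (\ref{Conjugate}). The delicate part here is keeping the succession bookkeeping consistent when translating Proposition \ref{1-CellsFollow} into edge directions.

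Finally, adaptedness on $2$-cells (triples) is a consequence of the previous steps. A realizable triple in $\partial U \cap \partial X$ has the form $W = \{x, s_U(x), s_U(y)\}$ with $x,y$ in distinct components (the adjacent pair lying on one boundary line), and it is the intersection of the consecutive quadruples $Z_U(x,y)$ and $Z_U(x, s_U(y))$, that is, an edge of $L_x$. Applying $\Phi$ to the two endpoints, then (\ref{Conjugate}), and intersecting, I would compute the image triple to be $\{\beta_U(x), s_{\Phi(U)}(\beta_U(x)), s_{\Phi(U)}(\beta_U(y))\} = \beta_U(W)$, whence $\Phi(\Scal_W) = \Scal_{\beta_U(W)}$; the case with the adjacent pair on the other component is symmetric. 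Together with the quadruple case this shows $\beta_U$ is adapted to $\Phi$.
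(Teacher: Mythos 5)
Your proposal is correct and follows essentially the same route as the paper: adaptedness on quadruples via the fact that $\Phi$ carries each line $L_x$ to $L_{\beta_U(x)}$ and hence $L_x \cap L_y$ to $L_{\beta_U(x)} \cap L_{\beta_U(y)}$, the conjugacy (\ref{Conjugate}) from orientation-preservation combined with Proposition \ref{1-CellsFollow}, and the triple case by intersecting quadruples. Your preliminary step showing that $\beta_U$ respects the two boundary components (via disjointness of lines in the grid) is a small explicit addition that the paper handles implicitly, but the underlying argument is the same.
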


\begin{proof}
In this proof, we will suppress the subscripts `$U$' and `$\Phi(U)$'
since they will be clear from the context. 

Let $y \in \ell_x^{\perp}$.  Then $Z(x,y)$ is a 1-cell that belongs to $L_x$,
and hence  $\Phi(Z(x,y))$ belongs to the line  $L_{\beta(x)}$. In particular, 
there exists $y' \in \ell_{\beta(x)}^{\perp}$ 
such that $\Phi(Z(x,y))= Z(\beta(x), y')$.
Note that $Z(x,y)= Z(y, x)$ and hence $\Phi(Z(y,x))= Z(\beta(y),x)$
for some $x' \in \ell_{\beta(y)}^{\perp}$. 
Hence since $Z(\beta(x), y')= Z( y', \beta(x))$, we have
\begin{equation} \label{TitTat}
   Z(\beta(y),x')~ =~   \Phi(Z(y,x))~  =~ Z( y', \beta(x)).
\end{equation}
Since $L_{x} \cap L_y =Z(x,y)$, we have $L_{\beta(x)} \neq L_{\beta(y)}$.
In particular, $\beta(x) \neq \beta(y)$. Hence it follows from (\ref{TitTat}) 
that 
\begin{equation} \label{BothSides}
  \Phi(Z(x,y))~ =~ Z(\beta(x), \beta(y)).  
\end{equation}

By Proposition \ref{1-CellsFollow}, the $1$-cell $Z(s_U(x), y)$ 
follows $Z(x,y)$ in $\lk(U)$. Thus, since $\Phi$ is orientation preserving,
the $1$-cell  $\Phi(Z(s(x), y))$  follows $\Phi(Z(x,y))$ in $\lk(\Phi(U))$.
Or, equivalently by (\ref{BothSides}), the $1$-cell 
$Z(\beta(s(x)), \beta(y))$  follows $Z(\beta(x),\beta(y))$.
But by  Proposition \ref{1-CellsFollow}, the $1$-cell 
$Z(s(\beta(x)), \beta(y))$  follows $Z(\beta(x),\beta(y))$.
Hence 
\[ Z(s(\beta(x)), \beta(y))~ =~ Z(\beta(s(x)), \beta(y)) \]
and therefore (\ref{Conjugate}) holds as desired. 

If $Z \subset \partial U \cap \partial X$ is realizable,
then since $U$ is a strip, then either $Z =  \partial U \cap \partial X$,
$Z= Z(x,y)$ for $x$, $y$ on different components of $\partial U$, or 
$Z= Z(x,y) \cap Z(x, s(y))$ for  $x$, $y$ on different components. 
Since $\beta$ is a surjection, in the first case we have 
$\Phi(\Scal_{\partial U \cap  \partial X})
   = \Scal_{\beta(\partial U \cap  \partial X)}$.
Suppose that $x,y \in \partial X$ belong to different components
of $\partial U$.  By (\ref{BothSides}) and (\ref{Conjugate}), we have 
\begin{eqnarray*}
  \Phi(\{x, s(x), y, s(y)\}) &=&  \{ \beta(x), s(\beta(x)), \beta(y), s(\beta(y))\} \\
  &=&   \{ \beta(x), \beta(s(x)), \beta(y), \beta(s(y))\}. 
\end{eqnarray*}    
Hence 
\[    \Phi(\{x, s(x), s(y), s^2(y)\})~ =~  
      \{ \beta(x), \beta(s(x)), \beta(s(y)), \beta(s^2(y))\} \]
and thus since $\Phi(Z(x,y) \cap Z(x, s(y)))= \Phi(Z(x,y)) \cap \Phi(Z(x, s(y)))$,
\[    \Phi(\{x, s(x), s(y)\})~ =~  
      \{ \beta(x), \beta(s(x)), \beta(s(y))\}. \]
\end{proof}

\begin{lem} \label{PhiConsecutive}
A quadruple $Z$ is consecutive with respect to 
the rigid conic $U$ if and only if  $\Phi(Z)$ is consecutive
with respect to $\Phi(U)$. 
\end{lem}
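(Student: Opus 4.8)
The plan is to characterize consecutiveness of a quadruple by a property of the \emph{directed} link $\lk(U)$ that the isomorphism $\Phi$ preserves automatically. First I would record the basic equivariance: because $\Phi$ is a cellular isomorphism sending the vertex $U$ to the vertex $\Phi(U)$ and preserving the orientation of every $2$-cell, it restricts to an isomorphism of directed graphs $\lk(U) \to \lk(\Phi(U))$. The edge directions, defined in \S\ref{SectionOrientation} solely through the orientations of $2$-cells, are respected; hence $\Phi$ carries directed $3$-cycles to directed $3$-cycles and preserves which vertex of such a cycle is a \emph{source} (both of its neighbours in the cycle follow it) or a \emph{sink} (it follows both of them).

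Next I would fix the type of $U$. By Proposition \ref{GridGraph} the link of a maximal strip is infinite, whereas by Proposition \ref{CardLink} the link of a rigid ellipse with $n=\Card(\partial U \cap \partial X)$ has exactly $n(n-3)/2$ vertices; since $n \mapsto n(n-3)/2$ is strictly increasing, and hence injective, on integers $n \geq 3$, the graph isomorphism forces $U$ to be a (maximal) strip exactly when $\Phi(U)$ is, and forces $\Card(\partial U \cap \partial X)=\Card(\partial \Phi(U) \cap \partial X')$ when both are ellipses. (Here I use that a strip occurring as a vertex is maximal, as in Propositions \ref{MaximalStrip} and \ref{TranslationExists}.) In the strip case every realizable quadruple is non-consecutive, since a consecutive quadruple would lie in a single, collinear, boundary component and so fail to be realizable; thus $Z$ and $\Phi(Z)$ are both non-consecutive and the equivalence is trivial. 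When both $U$ and $\Phi(U)$ are ellipses with $n=5$, every realizable quadruple is the complement of a single point of the $5$-cycle $\partial U \cap \partial X$ and is therefore consecutive, and likewise for $\Phi(U)$; again the equivalence is trivial.

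The remaining case is $n \geq 6$, where the key step is the combinatorial characterization
\[ Z \text{ is consecutive w.r.t. } U \iff Z \text{ is a source or a sink of some directed } 3\text{-cycle of } \lk(U). \]
For the forward direction, Lemma \ref{3cycles} shows that a consecutive quadruple lies in (two) $3$-cycles, and Proposition \ref{HexagonDirected} shows that it is a source or a sink in each of them. For the reverse, Lemma \ref{3cycles} shows that a non-consecutive quadruple $Z_U(x,y)$ lies in a $3$-cycle only in the special configurations $x=y+3$ or $y=x+3$, and in every such cycle Proposition \ref{HexagonDirected} forces it to be the pass-through vertex, never a source or a sink. Granting this characterization, the lemma follows at once from the equivariance of the first paragraph, since $Z$ is a source or sink of a $3$-cycle of $\lk(U)$ if and only if $\Phi(Z)$ is a source or sink of a $3$-cycle of $\lk(\Phi(U))$. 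I expect the main obstacle to be the bookkeeping of the low-cardinality degeneracies: verifying that the $3$-cycle characterization is clean precisely for $n \geq 6$ (so that $n=5$ and the strips must be, and can be, dispatched separately as above), and checking that the exceptional configuration $n=6$, in which a non-consecutive quadruple can belong to two $3$-cycles, still never presents such a quadruple as a source or a sink.
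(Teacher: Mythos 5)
Your proof is correct and follows essentially the same route as the paper: both arguments reduce consecutiveness to being a source or sink of a directed $3$-cycle in $\lk(U)$ (via Lemma \ref{3cycles} and Proposition \ref{HexagonDirected}), a property manifestly preserved by an orientation preserving isomorphism, with the converse obtained from $\Phi^{-1}$. The extra case analysis you perform for strips and for small $\Card(\partial U \cap \partial X)$ is sound but not needed, since Proposition \ref{HexagonDirected} already gives the source/sink characterization as an equivalence for every $3$-cycle.
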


\begin{proof}
If $Z$ is consecutive, then exactly two 3-cycles 
contain $Z$. Let $\{Z, Z', Z''\}$ be such a $3$-cycle. 
By Proposition \ref{HexagonDirected}, either  
both $Z'$ and $Z''$ follow $Z$ or $Z$ follows $Z'$ and $Z''$.  
Since $\Phi$ is orientation preserving, 
we have that either both $\Phi(Z')$ and $\Phi(Z'')$ 
follow $\Phi(Z)$ or $\Phi(Z)$ follows $\Phi(Z')$ and $\Phi(Z'')$.
Hence by applying Proposition \ref{HexagonDirected}
to $\{\Phi(Z), \Phi(Z'), \Phi(Z'')\}$ we find that 
$\Phi(Z)$ is consecutive. 

The converse follows by considering $\Phi^{-1}$. 
\end{proof}

We now define $\beta_U$ in the case that $U$ is a 
rigid ellipse interior. 
If $\Card(\partial U \cap \partial X)=n$, then 
the set of consecutive vertices constitute a directed $n$-cycle
in $\lk(U)$. 
Lemma \ref{PhiConsecutive} implies that $\Phi$
maps this directed $n$-cycle bijectively onto the directed $n$-cycle
of consecutive vertices in $\lk(\Phi(U))$. 
For each $x \in \partial U \cap \partial X$, define 
$\beta_U(x)$ be the unique element of 
$\partial \Phi(U) \cap \partial X'$ such that
\begin{equation}  \label{BetaEllipse}
   \Phi \left(Z_U\left(x,s_U^2(x)\right) \right)~ =~ 
Z_{\Phi(U)} \left(\beta(x), s_{\Phi(U)}^2\left(\beta(x)\right)\right). 
\end{equation}

\begin{prop}
Let $U$ be a rigid ellipse interior. The map $\beta_U$ defined by (\ref{BetaEllipse})
is adapted to $\Phi$. Moreover, 
\begin{equation}  \label{ConjugateEllipse}
 \beta_U \circ s_U~ =~ s_{\Phi(U)}\circ \beta_U. 
\end{equation}
\end{prop}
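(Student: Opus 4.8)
The plan is to follow the template of the maximal-strip case, replacing the grid graph $\lk(U)$ by the finite directed graph of diagonals of an $n$-gon and replacing the distinguished family of grid lines by the families of diagonals emanating from a fixed frontier point. First I would record that $\beta_U$ is well defined and that $\Phi(U)$ is again a rigid ellipse interior: since $U$ is a rigid ellipse, $\lk(U)$ has $n(n-3)/2$ vertices by Proposition \ref{CardLink}, hence is finite, so $\lk(\Phi(U))$ is finite and $\Phi(U)$ cannot be a maximal strip (whose link is an infinite grid by Proposition \ref{GridGraph}). By Lemma \ref{PhiConsecutive} and orientation preservation, $\Phi$ carries the directed $n$-cycle of consecutive quadruples of $\lk(U)$ bijectively onto that of $\lk(\Phi(U))$, so the base point in (\ref{BetaEllipse}) is unambiguous once one notes that for $n\ge 5$ a consecutive quadruple $\{x,s_Ux,s_U^2x,s_U^3x\}$ determines its base point $x$ uniquely (it is the only element whose $s_U$-predecessor is absent).

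Next I would prove the conjugacy (\ref{ConjugateEllipse}). The point is that Proposition \ref{1-CellsFollow} fixes the direction of the consecutive $n$-cycle: the consecutive quadruples $Z_U(x,s_U^2x)$ and $Z_U(s_Ux,s_U^3x)$ share the consecutive triple $\{s_Ux,s_U^2x,s_U^3x\}$, and Proposition \ref{1-CellsFollow} (applied with base point $w=s_Ux$) gives that $Z_U(s_Ux,s_U^3x)$ follows $Z_U(x,s_U^2x)$, so advancing the base point by $s_U$ advances the $n$-cycle by one step. Since $\Phi$ preserves this direction, the successor of $\Phi(Z_U(x,s_U^2x))=Z_{\Phi(U)}(\beta_U x,s_{\Phi(U)}^2\beta_U x)$ is on the one hand $Z_{\Phi(U)}(s_{\Phi(U)}\beta_U x,s_{\Phi(U)}^3\beta_U x)$ (by the same direction computation in $\Phi(U)$) and on the other hand $\Phi(Z_U(s_Ux,s_U^3x))=Z_{\Phi(U)}(\beta_U s_Ux,s_{\Phi(U)}^2\beta_U s_Ux)$; comparing base points yields $\beta_U(s_Ux)=s_{\Phi(U)}(\beta_U x)$, which is (\ref{ConjugateEllipse}).

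It then remains to show $\beta_U$ is adapted, and for this it suffices to establish the ellipse analogue of (\ref{BothSides}), namely $\Phi(Z_U(x,y))=Z_{\Phi(U)}(\beta_U x,\beta_U y)$ for every nonadjacent pair $\{x,y\}$. Granting this, the realizable triples follow by intersecting the two quadruples meeting along them (a consecutive triple is the intersection of two consecutive quadruples, and a nonconsecutive triple $\{x,s_Ux,y\}$ equals $Z_U(x,y)\cap Z_U(x,s_U^{-1}y)$), while $Z=\partial U\cap\partial X$ maps correctly since $\beta_U$ is a bijection. To prove the quadruple identity I would imitate the line–intersection argument of the strip case: for each $x$ set $L_x=\{Z_U(x,x'):x'\text{ nonadjacent to }x\}$, the family of diagonals through $x$. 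By Proposition \ref{Degree4} this is a directed path in $\lk(U)$ whose two endpoints are the consecutive quadruples $Z_U(x,s_U^2x)$ and $Z_U(s_U^{-2}x,x)$, and one checks directly that $L_x\cap L_y=\{Z_U(x,y)\}$ for nonadjacent $x\neq y$ (a common vertex contains the four distinct points $x,s_Ux,y,s_Uy$).

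The crux is to characterize the family $\{L_x\}$ intrinsically in the directed graph $\lk(U)$ — the analogue of the ``lines'' of a grid graph in \S\ref{SectionLink}, e.g.\ as the directed paths joining two consecutive vertices that traverse no edge of the consecutive $n$-cycle — so that the oriented graph isomorphism $\Phi$ must permute it. Granting this, $\Phi(L_x)=L_{x''}$ for some $x''$; but by (\ref{BetaEllipse}) and the conjugacy just proved, $\Phi$ sends the two consecutive endpoints of $L_x$ to $Z_{\Phi(U)}(\beta_U x,s_{\Phi(U)}^2\beta_U x)$ and $Z_{\Phi(U)}(s_{\Phi(U)}^{-2}\beta_U x,\beta_U x)$, which are exactly the endpoints of $L_{\beta_U x}$, forcing $\Phi(L_x)=L_{\beta_U x}$. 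Intersecting then gives $\Phi(Z_U(x,y))=\Phi(L_x\cap L_y)=L_{\beta_U x}\cap L_{\beta_U y}=Z_{\Phi(U)}(\beta_U x,\beta_U y)$, as required. I expect this intrinsic characterization of the diagonal families $L_x$ to be the main obstacle, precisely as the grid-line analysis was the technical heart of the strip case.
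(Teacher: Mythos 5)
Your treatment of the first half — well-definedness of $\beta_U$, the fact that $\Phi(U)$ is again a rigid ellipse, and the conjugacy (\ref{ConjugateEllipse}) via Proposition \ref{1-CellsFollow} applied to consecutive quadruples together with orientation preservation — matches the paper's argument. Where you diverge is in proving that $\beta_U$ is adapted. The paper does not characterize the diagonal families $L_x$ globally; instead it writes every realizable quadruple as $Z_U(x,s_U^{n+1}(x))$ and inducts on $n$: the base case $n=1$ is the definition (\ref{BetaEllipse}), and the inductive step propagates one directed edge at a time along $L_x$, using that $Z_U(x,s_U^{n+2}(x))$ follows $Z_U(x,s_U^{n+1}(x))$, that the analogous relation holds downstairs, and that $\Phi$ preserves the orientation of each $2$-cell. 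The triples are then recovered by intersecting quadruples, exactly as you say.

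The genuine gap in your route is the step you yourself flag as the crux: the intrinsic characterization of the family $\{L_x\}$ inside the directed graph $\lk(U)$. The candidate you offer — ``directed paths joining two consecutive vertices that traverse no edge of the consecutive $n$-cycle'' — does not isolate the $L_x$. By Propositions \ref{Degree4} and \ref{1-CellsFollow}, every nonconsecutive vertex $Z_U(x,y)$ has \emph{two} outgoing directed edges, to $Z_U(x,s_U(y))$ and to $Z_U(s_U(x),y)$, and neither lies on the consecutive $n$-cycle; so a directed path may switch from $L_x$ to $L_y$ at any interior vertex and still join two consecutive vertices while avoiding all cycle edges. Such branching paths satisfy your stated conditions but are not of the form $L_x$, so an oriented graph isomorphism is not forced to permute the $L_x$ by that description alone. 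Some further rigidity (an analogue of the ``no two consecutive edges in a $4$-cycle'' clause used for grid lines, or an argument pinning down which of the two outgoing edges continues $L_x$) would be needed, and this is precisely what the paper's step-by-step induction supplies without ever naming $L_x$: since the consecutive endpoint $Z_U(x,s_U^2(x))$ is matched by definition of $\beta_U$, each successive vertex of $L_x$ is determined from the previous one by the ``follows'' relation in a single $2$-cell. If you replace your global characterization by that induction, the rest of your argument (intersecting $L_{\beta_U(x)}$ and $L_{\beta_U(y)}$, then passing to triples) goes through.
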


\begin{proof}
Since $Z(s(x), s^3(x))$ follows $Z(x, s^2(x))$ and $\Phi$
is orientation preserving, $Z(\beta(s(x)), \beta(s^3(x)))$ follows
$Z(\beta(x), \beta(s^2(x)))$. But 
$Z(s(\beta(x)), s^3(\beta(x))$ follows $Z(\beta(x), \beta(s^2(x)))$
and so $s(\beta(x))= \beta(s(x))$. 

Each realizable quadruple $Z \subset \partial U \cap \partial X$
is of the form $Z(x,s^{n+1}(x))$ for some $x \in \partial X$ and  $n \geq 1$.  
If $n=1$, then it follows from (\ref{BetaEllipse})
and (\ref{ConjugateEllipse}) that 
\begin{equation}  \label{NonconsecutiveInduction}
\Phi(Z(x,s^{n+1}(x))= \beta(Z(x,s^{n+1}(x))).  
\end{equation}
If (\ref{NonconsecutiveInduction}) holds for some $n$,
then by using (\ref{ConjugateEllipse}), 
the fact that $Z(x,s^{n+2}(x))$ follows $Z(x,s^{n+1}(x))$, the fact
that $Z(\beta(x),s^{n+2}(\beta(x)))$ follows $Z(\beta(x),s^{n+1}(\beta(x)))$, and the fact 
that $\Phi$ is orientation preserving, we find that
(\ref{NonconsecutiveInduction}) holds for $n+1$.
Hence for each quadruple $Z$ we have $\Phi(Z)=\beta(Z)$.  
The claim for triples follows by considering intersections
and the claim for rigid conics is clear. 
\end{proof}

We summarize this section with the following. 

\begin{thm} \label{Bijection}
If $\Phi: \Scal_3(X,\mu) \rightarrow \Scal_3(X', \mu')$ is an
orientation preserving cell complex isomorphism, then the   
exists a unique bijection $\beta: \partial X \rightarrow\partial X'$
such that for each realizable $Z \subset \partial X$ we have
\[ \Phi(\Scal_Z)~ =~ \Scal_{\beta(Z)}. \]
Moreover, for each rigid conic $U \subset X$, we have
\[    \beta \circ s_U~ =~ s_{\Phi(U)} \circ \beta. \]
\end{thm}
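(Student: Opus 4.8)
The plan is to assemble the global bijection $\beta$ from the locally defined maps $\beta_U$ attached to each rigid conic, and then to read off the two asserted identities from their local counterparts. The preceding two propositions have already done the local work: for each rigid conic $U \subset X$ they produce a map $\beta_U : \partial U \cap \partial X \to \partial \Phi(U) \cap \partial X'$ that is adapted to $\Phi$ and satisfies $\beta_U \circ s_U = s_{\Phi(U)} \circ \beta_U$, given by (\ref{BetaStrip}) when $U$ is a maximal strip and by (\ref{BetaEllipse}) when $U$ is a rigid ellipse. Since $\Phi$ is orientation preserving, Lemma \ref{PhiConsecutive} guarantees that it sends strips to strips and ellipses to ellipses, so exactly one of the two constructions applies to each $U$. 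First I would record the coverage fact that every frontier point lies in the domain of at least one $\beta_U$: given $x \in \partial X$ one may take an ellipse interior whose boundary passes through $x$ and apply the deformation retractions of \S\ref{HomotopySection} while maintaining contact with $x$, arriving at a rigid conic $U$ with $x \in \partial U \cap \partial X$; equivalently, by Theorem \ref{CellTheorem} each realizable triple is the frontier-trace of a vertex of its $2$-cell.

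Next I would set $\beta(x) := \beta_U(x)$ for any rigid conic $U$ with $x \in \partial U \cap \partial X$. The key point, which I expect to be the \emph{main obstacle}, is the global well-definedness of this gluing, namely that the various $\beta_U$ are mutually compatible on overlaps. This is precisely the consistency proposition established above: if two rigid conics $U,U'$ both meet the frontier point $x$, then $\beta_U(x) = \beta_{U'}(x)$. That proposition is itself reduced, through the path-connectedness of $\Scal_4(X,\mu)$ supplied by Proposition \ref{FourConnected}, to the case of adjacent vertices, so once it is invoked the map $\beta : \partial X \to \partial X'$ is well defined.

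I would then verify the two identities. The conjugacy $\beta \circ s_U = s_{\Phi(U)}\circ \beta$ follows immediately: on $\partial U \cap \partial X$ the map $\beta$ restricts to $\beta_U$, and the local conjugacy is already known. For $\Phi(\Scal_Z) = \Scal_{\beta(Z)}$, fix a realizable $Z$. When $\mathrm{Card}(Z)\in\{3,4\}$, choose a vertex $U$ of the cell $\Scal_Z$ (which exists by Theorem \ref{CellTheorem}), so $Z \subset \partial U \cap \partial X$, whence $\beta|_Z = \beta_U|_Z$ and adaptedness gives $\Phi(\Scal_Z) = \Scal_{\beta_U(Z)} = \Scal_{\beta(Z)}$. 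When $\mathrm{Card}(Z)\ge 5$ we have $Z = \partial U \cap \partial X$ with $\Scal_Z = \{U\}$, and $\beta(Z) = \partial \Phi(U)\cap \partial X'$ forces $\Scal_{\beta(Z)} = \{\Phi(U)\} = \Phi(\Scal_Z)$.

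Finally I would treat uniqueness and bijectivity. Every rigid conic carries at least five frontier points, so each $x \in \partial X$ sits in a realizable quadruple, and the intersection computation used in the proof of the consistency proposition recovers $\{x\}$ from the triples through it; hence any $\beta'$ satisfying (\ref{Diagram}) agrees with $\beta$ on all realizable triples and therefore pointwise, giving uniqueness. For bijectivity I would run the same construction for $\Phi^{-1}$ to obtain $\gamma : \partial X' \to \partial X$ with $\Phi^{-1}(\Scal_{Z'}) = \Scal_{\gamma(Z')}$; then for every realizable triple $Z$ one has $\Scal_Z = \Phi^{-1}(\Scal_{\beta(Z)}) = \Scal_{\gamma(\beta(Z))}$, so $\gamma\circ\beta = \mathrm{id}_{\partial X}$, and symmetrically $\beta\circ\gamma = \mathrm{id}_{\partial X'}$. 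This completes the proof, the only nonroutine ingredient being the compatibility of the gluing addressed above.
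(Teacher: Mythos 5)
Your proposal is correct and follows essentially the same route as the paper, which states Theorem \ref{Bijection} as a summary of \S\ref{SectionFrontier}: the local maps $\beta_U$ from (\ref{BetaStrip}) and (\ref{BetaEllipse}), glued via the consistency proposition and Proposition \ref{FourConnected}, with the two identities inherited from their local versions. Your explicit attention to coverage (every frontier point lies on some rigid conic) and to bijectivity via $\Phi^{-1}$ fills in steps the paper leaves implicit, and is welcome.
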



\section{Configurations, oriented bisectors,  and eigenlines}  \label{SectionBisectors}

In this section we will prove a geometric lemma
that will be used in the following section to prove Theorem  \ref{Rebuild}.
We will assume throughout that $(X, \mu)$ is the universal covering of a precompact 
translation surface with nonempty and finite frontier. 

\begin{defn}
We will say that two subconics in the plane are {\em equivalent up to homothety} 
if and only if they differ by a homothety and/or a translation. 
Let $[U]$ denote the equivalence class of the subconic $[U]$.
\end{defn}

Two (non-circular) 
ellipse interiors are equivalent if and only if their major axes are parallel and they have 
equal eccentricities.\footnote{The referee has informed us that some refer to this
data as the `complex dilatation'.}

A {\em configuration of subconics about an ellipse} consists 
of an ellipse interior $U$, a finite subset $Z \subset \partial U$
with $\Card(Z)\geq 5$, and for each nonconsecutive pair
$x,y \in Z$, a subconic $U_{x,y} \in \Ecal(\R^2)$ such that 
$Z \cap \partial U_{x,y}=Z(x,y)$. 

\begin{figure}   
\begin{center}

\includegraphics[totalheight=3in]{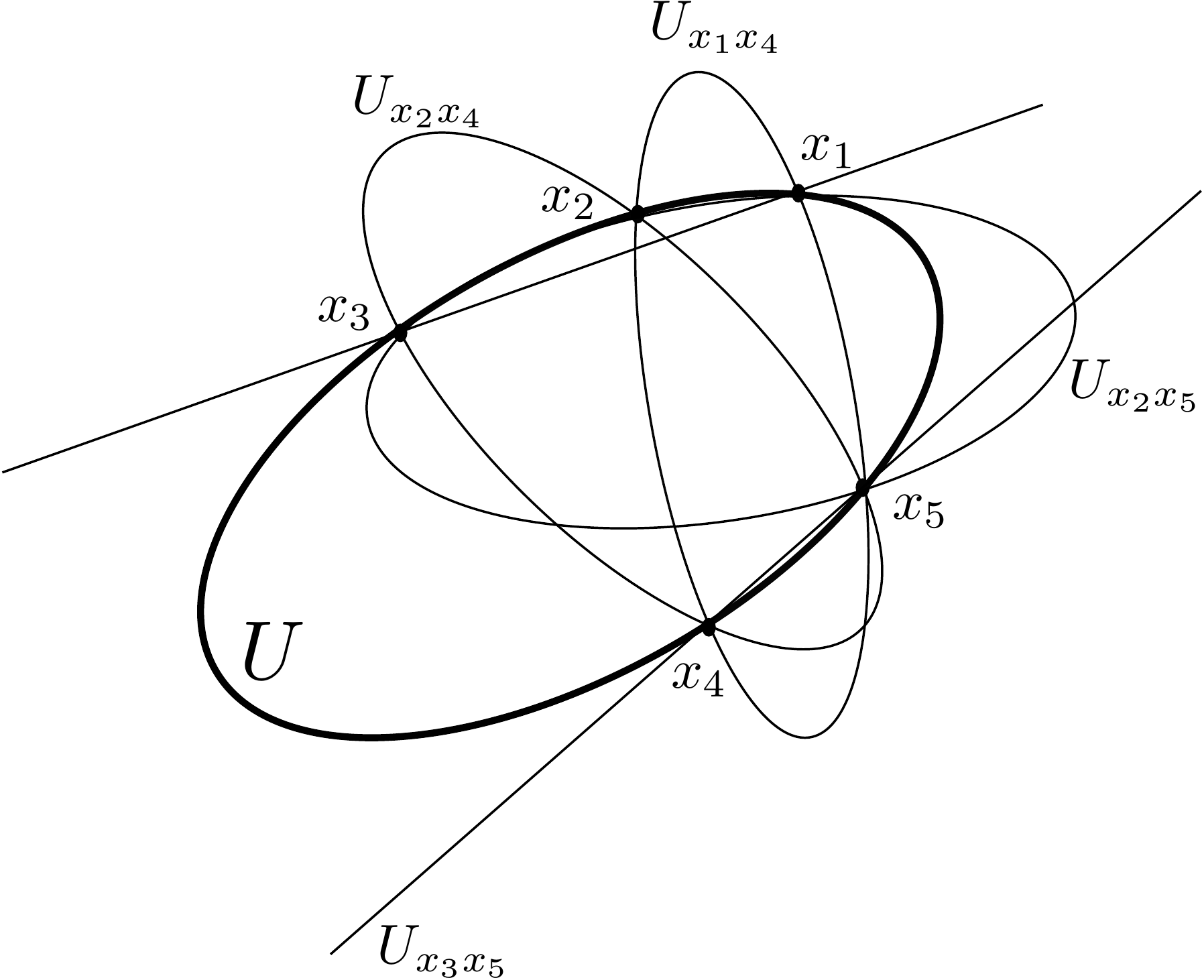}

\end{center}
\caption{A configuration of subconics about the the ellipse $U$.}
\end{figure}

\begin{lem}[Geometric Lemma] \label{MainGeometricLemma}
Let $(U,Z,U_{x,y})$ and $(U',Z',U_{x,y}')$ be two configurations
of subconics about an ellipse. If $[U]=[U']$ and 
there exists a bijection $\beta: Z \rightarrow Z'$ 
such that $\beta \circ s_{U}= s_{U'} \circ \beta$, and 
for each nonconsecutive pair $x,y \in Z$ we have 
\[  \left[U_{x,y} \right]~ =~  \left[U_{\beta(x),\beta(y)}' \right], \]
then for each $x \in Z$, the line $\ell(x,s(x))$ 
is parallel to the line $\ell(\beta(x), \beta(s(x)))$.
\end{lem}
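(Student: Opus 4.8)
The plan is to pass to the pencil of conics through each realizable quadruple and to extract, from the homothety class of the inscribed ellipse $U_{x,y}$, the Euclidean directions of the two sides that this ellipse meets. First I would normalize: applying a single homothety to the second configuration leaves every hypothesis and every class $[\,\cdot\,]$ unchanged, so I may assume $U=U'$. Fix a nonconsecutive pair $x,y\in Z$ and set $F=\{\widehat{x},\widehat{s_U(x)},\widehat{y},\widehat{s_U(y)}\}$. By Proposition \ref{ZPolygon} the set $\widehat{Z}$ is in general position, hence so is $F$; thus by Proposition \ref{Dimension} the space $Q_F$ is two dimensional, and by Lemma \ref{Quadruple} its degenerate members form three lines, one of which is spanned by $D=\eta\cdot\eta'$, where $\eta,\eta'$ are linear forms vanishing on the planes $\langle \widehat{x},\widehat{s_U(x)}\rangle$ and $\langle \widehat{y},\widehat{s_U(y)}\rangle$. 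Crucially, since all four points lie on $\partial U$, the form defining $U$ itself lies in $Q_F$.

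Writing $\underline{q}$ for the restriction recorded in the notation table, the map $q\mapsto\underline{q}$ carries $Q_F$ isomorphically onto a two dimensional space $W\subset Q(\R^2)$ spanned by $\underline{U}$ and $\underline{D}$. The key geometric step is to diagonalize $\underline{D}$ with respect to $\underline{U}$: since $\underline{U}$ is positive definite there is a pair of \emph{eigenlines}, orthogonal in the Euclidean structure for which $U$ is a round circle, that simultaneously diagonalizes $\underline{U}$ and $\underline{D}$, and every ellipse of the pencil then has its axes along this one fixed pair. I would identify these eigenlines as the two bisectors of the side directions $\delta(x)$ and $\delta(y)$, where $\delta(x)$ denotes the direction of the chord $\ell(x,s_U(x))$. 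Consequently the axis direction of $U_{x,y}$ is one of these two bisectors, the major/minor distinction selecting it as an \emph{oriented} bisector. Because $[U_{x,y}]=[U'_{\beta(x),\beta(y)}]$ forces equal axis directions and equal eccentricities, and because applying a common affine map to both configurations conjugates homotheties to homotheties, the oriented bisector attached to $\{\delta(x),\delta(y)\}$ agrees with the one attached to $\{\delta'(\beta(x)),\delta'(\beta(y))\}$, where $\delta'$ is the analogous direction function for the primed configuration.

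It then remains to upgrade this pairwise matching of bisectors to a matching of the individual side directions. Using $\beta\circ s_U=s_{U'}\circ\beta$ I would record the bisector identity, for every nonconsecutive pair, as an additive relation in $\R/\pi\Z$ between the angles of $\delta$ and of $\delta'\!\circ\beta$. Since $\Card(Z)\geq 5$, the nonadjacent pairs $\{x,y\}$ form the complement of a cycle on $\Card(Z)$ vertices, which is connected and contains an odd cycle; solving the resulting linear system pins down the difference $\delta(x)-\delta'(\beta(x))$ up to one global constant, and the odd cycle forces that constant to be either $0$ or a quarter turn. Feeding back the \emph{orientation} of the bisectors together with the cyclic order induced by $s_U$ should exclude the quarter turn and yield $\delta(x)=\delta'(\beta(x))$ for every $x$, which is exactly the assertion that $\ell(x,s_U(x))$ is parallel to $\ell(\beta(x),\beta(s_U(x)))$.

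The main obstacle is the eigenline step together with the elimination of the quarter turn: proving that $\underline{U}$ and $\underline{D}$ share eigendirections and identifying those directions with the (oriented) bisectors of the two sides, and then showing that the cyclic orientation makes the global constant vanish rather than equal a quarter turn. This is precisely the \emph{oriented bisector} analysis around which the section is organized, and I expect essentially all of the genuine work to sit here; the homothety normalization and the combinatorial propagation are routine once it is in place.
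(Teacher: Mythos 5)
Your central geometric step coincides with the paper's: for each nonconsecutive pair $\{x,y\}$ you pass to the two--dimensional pencil $Q_{\widehat{Z(x,y)}}$, which contains the form $q$ defining $U$, the form defining $U_{x,y}$, and the degenerate product $p_{x,y}=-\eta_{x,y}\cdot \eta_{x,y}'$ of linear forms vanishing on the two chords; after normalizing $U$ to a $\underline{q}$-circle (Proposition \ref{Translate}), the non-round members of the pencil are simultaneously diagonalized, their common eigenlines are the two bisectors of the chord directions, and the hypothesis $\left[U_{x,y}\right]=\left[U'_{\beta(x),\beta(y)}\right]$ transports a selected eigenline from one configuration to the other. (The selection of the correct eigenline requires knowing that $q=a\cdot r_{x,y}+b\cdot p_{x,y}$ with $a,b>0$; you gesture at this via ``major/minor'' but do not verify it, whereas the paper checks it by comparing the signs of the three forms on the arc of $\partial U$ from $\hat{x}$ to $\widehat{s(x)}$.)

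Where you diverge, and where the genuine gap sits, is the propagation from matched bisectors to matched chord directions. You propose to record each matched bisector as an additive relation in $\R/\pi\Z$ among the chord directions and to solve the resulting linear system over the non-adjacency graph using connectivity and an odd cycle. But the map sending a pair of line directions $(\alpha,\beta)\in(\R/\pi\Z)^2$ to a chosen bisector is not affine: the bisector pair is $\{(\alpha+\beta)/2,\ (\alpha+\beta)/2+\pi/2\}$ and the branch choice cannot be made consistently as a function of the data mod $\pi$. Consequently the only honest linear relation each pair yields is $d_x+d_y=0$ in $\R/\pi\Z$ (where $d_x$ is the discrepancy between $\delta(x)$ and $\delta'(\beta(x))$), and this relation holds whether the selected bisector is preserved or swapped with the other one -- the selection information is lost exactly where you need it. Your odd-cycle argument then only pins all $d_x$ to a common value in $\{0,\pi/2\}$, and the step ``the orientation excludes the quarter turn'' is asserted, not proved, and is not obviously available from the cyclic order alone. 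The paper avoids this entirely by never working with line directions mod $\pi$: it parametrizes the points of $Z$ by actual angles $\theta_i$ on the normalized circle, so that the oriented bisector $v_{x\,s(x)}=e((\theta_0+\theta_1)/2)\cdot x$ and the eigenvector $u_{Z(x,y)}=e((\theta_0+\theta_1+\theta_2+\theta_3)/4)\cdot x$ are honest vectors (the residual sign $u=\pm u'$ is killed by $\beta\circ s_U=s_{U'}\circ\beta$), and Proposition \ref{Intermediate} then extracts $(\theta_0+\theta_1)/2$ as an explicit alternating sum of the averages attached to five quadruples near $x$ -- a local linear identity with no modular ambiguity. To repair your version you would either have to redo the bookkeeping with oriented bisectors (vectors on the circle rather than lines), at which point you essentially reconstruct Proposition \ref{Intermediate}, or supply a genuinely new argument ruling out the global quarter turn.
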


The remainder of this section is dedicated 
to proving Lemma \ref{MainGeometricLemma}.
We first discuss the geometry of eigenvectors 
for degenerate quadratic forms in $\R^2$.
Then at the end of the section, we use this 
discussion to prove Lemma  \ref{MainGeometricLemma}.

Let $q \in Q(\R^2)$ be a quadratic form of signature $(2,0)$.
Let $SO(q)$ denote the group of orientation preserving linear 
transformations that preserve $q$. This group acts transitively 
and freely on the `ellipse' $q^{-1}(1)$, and thus $SO(q)$ is homeomorphic
to a circle. There exists a unique orientation preserving universal 
covering map $e: \R \rightarrow SO(q)$ such that $e$ is a group homomorphism
and $2\pi= \inf\{\theta>0~ |~ e(\theta)={\rm Id}\}$. Of course, 
one should be thinking of the example $q(x_1,x_2)=x_1^2+ x_2^2$ in which 
case 
\[ e(\theta)~ =~ 
\left( \begin{array}{cc}  \cos(\theta) & \sin(\theta) \\
           -\sin(\theta) & \cos(\theta) \end{array} \right).
\]

\begin{lem}  \label{Bisector}
Let $\theta_- \neq \theta_+ \in SO(q)$ and let $x \in \R^2$. The point
\[ e\left( \frac{\theta_- + \theta_+}{2}\right) \cdot x \]
is $q$-orthogonal to  $e\left(\theta_+\right)\cdot x -e\left(\theta_-\right)\cdot x$.
\end{lem}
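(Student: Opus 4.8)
This is a clean statement about the action of $SO(q)$ on the plane, where $q$ has signature $(2,0)$. Let me think about what it's saying and how to prove it.

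We have $e: \mathbb{R} \to SO(q)$, a universal covering homomorphism. Given $\theta_- \neq \theta_+$ and a point $x$, we form the "midpoint angle" $\frac{\theta_- + \theta_+}{2}$ and the corresponding rotation applied to $x$. The claim: $e\left(\frac{\theta_-+\theta_+}{2}\right) \cdot x$ is $q$-orthogonal to the difference $e(\theta_+)\cdot x - e(\theta_-)\cdot x$.

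Geometrically this is obvious for the standard circle: the vector from one point on a circle to another is perpendicular to the radius at their angular midpoint. This is the "bisector of a chord passes through the center perpendicular to the chord" fact. So the strategy is to reduce to the standard case, or compute directly using the homomorphism property and $q$-invariance.

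Let me think about the cleanest approach. Write $\alpha = \frac{\theta_+ - \theta_-}{2}$ and $m = \frac{\theta_+ + \theta_-}{2}$, so $\theta_+ = m + \alpha$ and $\theta_- = m - \alpha$. Then $e(\theta_+) = e(m)e(\alpha)$ and $e(\theta_-) = e(m)e(-\alpha)$. We want to show $q(e(m)x, \; e(m)e(\alpha)x - e(m)e(-\alpha)x) = 0$. Since $e(m) \in SO(q)$ preserves $q$, this equals $q(x, \; e(\alpha)x - e(-\alpha)x)$. So it suffices to show $q(x, e(\alpha)x) = q(x, e(-\alpha)x)$, i.e., $q(x, e(\alpha)x)$ is even in $\alpha$. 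But $e(-\alpha) = e(\alpha)^{-1}$, and for $g \in SO(q)$ the adjoint relative to the form $q$ is $g^{-1}$ (that's exactly what $SO(q)$ means: $q(gu,gv)=q(u,v)$). So $q(x, e(-\alpha)x) = q(x, e(\alpha)^{-1}x) = q(e(\alpha)x, x) = q(x, e(\alpha)x)$ by symmetry of the polarization. That closes it.

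I'll assume the paper's convention that $q(\cdot,\cdot)$ denotes the symmetric polarization (introduced in Section~\ref{SectionQuadratic}), so $q(u,v)=q(v,u)$ and $q$-orthogonality of $u,v$ means $q(u,v)=0$. I expect no serious obstacle here — the only subtlety is being careful that $q(e(\alpha)u, e(\alpha)v) = q(u,v)$ is the defining property of $SO(q)$ and that this gives $q(u, e(\alpha)^{-1}v) = q(e(\alpha)u, v)$, which combined with symmetry yields the evenness. The reduction using the homomorphism property $e(\theta_\pm) = e(m)e(\pm\alpha)$ and the invariance of $q$ under $e(m)$ is the heart of the argument, and it is short.

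\begin{proof}
Write $m = \tfrac{1}{2}(\theta_-+\theta_+)$ and $\alpha = \tfrac{1}{2}(\theta_+-\theta_-)$, so that $\theta_+ = m+\alpha$ and $\theta_- = m-\alpha$. Since $e$ is a homomorphism,
\[
   e(\theta_+)\cdot x - e(\theta_-)\cdot x
   ~=~ e(m)\bigl( e(\alpha)\cdot x - e(-\alpha)\cdot x \bigr).
\]
Because $e(m) \in SO(q)$ preserves the polarization of $q$, we have
\[
   q\!\left( e(m)\cdot x,\; e(\theta_+)\cdot x - e(\theta_-)\cdot x \right)
   ~=~ q\!\left( x,\; e(\alpha)\cdot x - e(-\alpha)\cdot x \right).
\]
Thus it suffices to show that $q(x, e(\alpha)\cdot x) = q(x, e(-\alpha)\cdot x)$. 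Since $e$ is a homomorphism, $e(-\alpha) = e(\alpha)^{-1}$, and since $e(\alpha) \in SO(q)$ preserves $q$, we have $q(u, e(\alpha)^{-1}\cdot v) = q(e(\alpha)\cdot u, v)$ for all $u,v$. Applying this with $u=v=x$ and using the symmetry of the polarization,
\[
   q\!\left(x,\; e(-\alpha)\cdot x\right)
   ~=~ q\!\left(e(\alpha)\cdot x,\; x\right)
   ~=~ q\!\left(x,\; e(\alpha)\cdot x\right).
\]
Therefore $q\!\left( e(m)\cdot x,\; e(\theta_+)\cdot x - e(\theta_-)\cdot x \right) = 0$, which is the desired $q$-orthogonality.
\end{proof}
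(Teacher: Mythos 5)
Your proof is correct. It takes a different route from the paper's: the paper defines the $q$-reflection $\iota$ across the orthogonal complement of $v_+-v_-$ and an involution $\iota'$ on the parameter line $\R$, asserts the intertwining relation $e\circ\iota'=\iota\circ e$ (left to the reader as ``one checks''), and concludes because the midpoint angle is a fixed point of $\iota'$. You instead factor $e(\theta_\pm)=e(m)e(\pm\alpha)$ using the homomorphism property, pull the common factor $e(m)$ out via $q$-invariance, and reduce to the evenness of $\alpha\mapsto q(x,e(\alpha)\cdot x)$, which follows from $e(-\alpha)=e(\alpha)^{-1}$, the defining invariance $q(gu,gv)=q(u,v)$, and the symmetry of the polarization. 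Your version is more self-contained, since it replaces the unverified intertwining relation with two one-line identities; the paper's version is the more geometric statement (the perpendicular bisector of a chord is a fixed line of the reflection swapping its endpoints) and generalizes immediately to other orbits of the reflection. Both are short and complete; there is no gap in yours.
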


\begin{proof}
Let $v_{\pm}=e(\theta_{\pm}) \cdot x$. The orthogonal complement of
$v_{+}-v_-$ is the fixed point set of the involution 
$\iota: \R^2 \rightarrow \R^2$ defined by
\[  \iota(w)~ =~ w~ -~ 
 \frac{2 \cdot q(w,v_+-v_-)}{q(v_+-v_-,v_+-v_-)}\cdot (v_+-v_-). 
\] 
We also have the involution $\iota':\R \rightarrow \R$ 
defined by $\iota'(\theta)= \theta - (\theta_+-\theta_-)/2$.
One checks that $e \circ \iota'= \iota \circ e$. 
Since $(\theta_-+ \theta_+)/2$ is a fixed point of 
$\iota'$, the claim  follows.
\end{proof}

Let $(x,y)$ be an ordered pair of distinct vectors belonging to $q^{-1}(r^2)$,
the $q$-circle of radius $r$. Let 
\[ \theta~ =~ \inf \left\{ \theta'>0~ |~ e(\theta')\cdot x =y \right\}, \]
and define
\[  v_{xy}~ =~ e\left(\frac{\theta}{2} \right) \cdot x, \]
Note that $v_{xy}=-v_{yx}$ and hence $\alpha_{vw}= - \alpha_{wv}$.
We will refer to $v_{xy}$ as the {\em oriented $q$-bisector} of $(x,y)$.

\begin{defn}
Let $F=(x_0, \ldots, x_n) \subset q^{-1}(r^2)$ be an ordered $n$-tuple
of distinct points. We will say that the
{\em (ordering of) $F$ is  compatible with $e$} 
if and only if there exist $0 =\theta_0< \cdots < \theta_n \leq 2 \pi$ 
such that $x_i= e(\theta_i) \cdot x_0$.
\end{defn}

For example, let $x, y$ be distinct points in $q^{-1}(r^2)$. The ordered
triple $(x, v_{xy},y)$ is compatible with $e$, but $(x,v_{yx},y)$ is not.

Let $q(\cdot, \cdot)$ denote the polarization of the quadratic form $q$.
For each ordered pair $(x,y)$ of distinct points in $q^{-1}(r^2)$,
define the linear form  $\alpha_{xy}$ that is $q$-dual to 
$v_{xy}$ by
\begin{equation} \label{Alpha}
  \alpha_{xy}(w)~ =~ q(w,v_{xy}). 
\end{equation}
For each ordered quadruple $Q=(x_0,x_1,x_2,x_3)$ of distinct points, 
define a quadratic form $q_Q$ by%
\begin{equation} \label{qQ}
 q_{Q}(v)~ =~ - \alpha_{x_0x_1}(v) \cdot \alpha_{x_2x_3}(v). 
\end{equation}

The signature of $q_{Q}$ is either $(1,0)$, $(0,1)$, or $(1,1)$.
Hence, the form $q_{Q}$ has two distinct eigenvalues, 
$\lambda^-\leq 0 \leq \lambda^+$, with respect to $q$. 
Let $L _{Q}^{\pm}$ denote the 1-dimensional eigenspace
associated to $\lambda^{\pm}$.

\begin{prop}  \label{PositiveEigenvector}
The vector $v_{x_2x_3}-v_{x_0x_1}$ belongs to $L^+_Q$.
\end{prop}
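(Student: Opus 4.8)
The plan is to diagonalize the degenerate form $q_Q$ of (\ref{qQ}) with respect to the positive definite form $q$ by passing to the $q$-self-adjoint endomorphism it represents, and then to exhibit $v_{x_2x_3}-v_{x_0x_1}$ directly as an eigenvector whose eigenvalue is nonnegative.

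The one fact that makes the computation collapse is that all oriented bisectors of points on $q^{-1}(r^2)$ have the same $q$-length. Writing $a=v_{x_0x_1}$ and $b=v_{x_2x_3}$, the definition $v_{xy}=e(\theta/2)\cdot x$ together with $e(\theta/2)\in SO(q)$ gives $q(a)=q(x_0)=r^2$ and $q(b)=q(x_2)=r^2$, since $x_0,x_2\in q^{-1}(r^2)$. Next I would identify the endomorphism $M$ determined by $q(Mw,w')=q_Q(w,w')$. Since $\alpha_{xy}(w)=q(w,v_{xy})$ by (\ref{Alpha}), polarizing (\ref{qQ}) yields
\[ Mw~=~-\tfrac12\bigl[q(w,a)\,b+q(w,b)\,a\bigr], \]
which is manifestly $q$-self-adjoint. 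Because $M$ is built symmetrically from $a$ and $b$, the natural eigenvector candidates are $a+b$ and $a-b$, and a one-line computation using $q(a)=q(b)=r^2$ gives
\[ M(a+b)~=~-\tfrac12\bigl(r^2+q(a,b)\bigr)(a+b),\qquad M(a-b)~=~\tfrac12\bigl(r^2-q(a,b)\bigr)(a-b). \]

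The decisive step is reading off the signs. Since $q$ is positive definite, Cauchy--Schwarz gives $|q(a,b)|\le \sqrt{q(a)\,q(b)}=r^2$, so the eigenvalue attached to $a+b$ is $\le 0$ while that attached to $a-b$ is $\ge 0$; moreover the two eigenvalues differ by $r^2\neq 0$, hence are distinct, the vectors $a\pm b$ are independent, and $L_Q^{\pm}$ are exactly the one-dimensional eigenlines named in the statement. Matching signs, $a-b=v_{x_0x_1}-v_{x_2x_3}$ spans $L_Q^{+}$, and since an eigenline is closed under negation we conclude $v_{x_2x_3}-v_{x_0x_1}\in L_Q^{+}$. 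There is essentially no obstacle once the right eigenvectors are spotted; the only point requiring care is the equal-length observation, which is precisely what forces $a\pm b$ --- the internal and external $q$-bisectors of $a$ and $b$ --- to be the principal axes of $q_Q$.
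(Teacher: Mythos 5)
Your proof is correct and takes essentially the same route as the paper's: both arguments hinge on the observation that $q(v_{x_0x_1})=q(v_{x_2x_3})=r^2$ and then verify directly that $v_{x_2x_3}-v_{x_0x_1}$ is an eigenvector whose eigenvalue is a positive multiple of $r^2-q(v_{x_0x_1},v_{x_2x_3})\geq 0$, hence equals $\lambda^+$. The paper checks only this one eigenvector via the polarization identity, whereas you also exhibit $v_{x_0x_1}+v_{x_2x_3}$ as the complementary eigenvector and order the eigenvalues by Cauchy--Schwarz; this is a harmless (and slightly more explicit) elaboration of the same computation.
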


\begin{proof}
The polarization of $q_Q$ is given by
\[ q_Q(v,w)~ =~ \alpha_{x_0x_1}(v)\cdot \alpha_{x_2x_3}(w)~ 
  +~\alpha_{x_0x_1}(w)\cdot \alpha_{x_2x_3}(v). \]
Thus, using (\ref{Alpha}) and the fact that $q(v_{x_0x_1})=r^2=q(v_{x_2x_3})$,
we find that for each $w\in \R^2$
\[ q_Q(v_{x_2x_3}-v_{x_0x_1},w)~ =~ \left(r^2-q(v_{x_2x_3},v_{x_0x_1})\right)
           \cdot q(v_{x_2x_3}-v_{x_0x_1}). \]
Thus, by definition, $v_{x_2x_3}-v_{x_0x_1}$  is an eigenvector
with eigenvalue $r^2-q(v_{x_2x_3}-v_{x_0x_1})$. Since, by assumption, 
$v_{x_2x_3} \neq v_{x_0x_1}$, this eigenvalue is positive.
\end{proof}

\begin{prop}
Let $0 \leq \theta_0<\theta_1<\theta_2<\theta_3 < 2 \pi$ 
and set $x_i=e(\theta_i) \cdot x$. The vector 
\[  u_Q~ :=~ e\left( \frac{\theta_0+\theta_1+\theta_2+ \theta_3}{4} \right) \cdot x\]
belongs to the eigenline $L^-_Q$ where $Q=(x_0,x_1,x_2,x_3)$.
\end{prop}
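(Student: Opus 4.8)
The plan is to mirror the proof of Proposition \ref{PositiveEigenvector}, exploiting the symmetry that the arithmetic mean of the four angles gives. First I would set $m = (\theta_0+\theta_1+\theta_2+\theta_3)/4$ and write $u_Q = e(m)\cdot x$, so that $u_Q \in q^{-1}(r^2)$. The key observation is that $e(m)$ is the ``average rotation'' sitting midway between the bisector directions: the oriented $q$-bisector of $(x_0,x_1)$ is $v_{x_0x_1}=e((\theta_0+\theta_1)/2)\cdot x$, and the oriented $q$-bisector of $(x_2,x_3)$ is $v_{x_2x_3}=e((\theta_2+\theta_3)/2)\cdot x$. Writing $\phi_1 = (\theta_0+\theta_1)/2$ and $\phi_2=(\theta_2+\theta_3)/2$, we have $m = (\phi_1+\phi_2)/2$, so $u_Q$ is exactly the $q$-bisector of the pair $(v_{x_0x_1}, v_{x_2x_3})$. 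By Lemma \ref{Bisector} applied with $\theta_-=\phi_1$, $\theta_+=\phi_2$, the vector $e(m)\cdot x = u_Q$ is $q$-orthogonal to $v_{x_2x_3}-v_{x_0x_1}$.

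Next I would feed this orthogonality into the eigenstructure of $q_Q$. By Proposition \ref{PositiveEigenvector}, the difference $v_{x_2x_3}-v_{x_0x_1}$ spans the positive eigenline $L^+_Q$. Since $q_Q$ is a $q$-symmetric form (its polarization is $q$-self-adjoint), its eigenlines $L^+_Q$ and $L^-_Q$ are $q$-orthogonal to each other whenever the eigenvalues are distinct. I have just shown $u_Q$ is $q$-orthogonal to a nonzero vector of $L^+_Q$; since $\R^2$ is two-dimensional and the $q$-orthogonal complement of $L^+_Q$ is precisely the one-dimensional line $L^-_Q$, it follows that $u_Q \in L^-_Q$, which is the claim.

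The main obstacle to watch for is the degenerate possibility that the two eigenvalues coincide, i.e. $\lambda^- = \lambda^+ = 0$ or that $v_{x_2x_3}=v_{x_0x_1}$ so that $L^+_Q$ is not well-defined as the $q$-orthogonal complement of $L^-_Q$. The hypothesis $\theta_0<\theta_1<\theta_2<\theta_3$ with all four strictly ordered in $[0,2\pi)$ guarantees $\phi_1 \neq \phi_2$ and hence $v_{x_0x_1}\neq v_{x_2x_3}$, so by the proof of Proposition \ref{PositiveEigenvector} the positive eigenvalue $r^2 - q(v_{x_2x_3},v_{x_0x_1})$ is strictly positive and $\lambda^+ > 0 \geq \lambda^-$; the eigenvalues are therefore genuinely distinct and the two eigenlines are transverse and $q$-orthogonal. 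One should also confirm $u_Q \neq 0$, which is immediate since $u_Q$ lies on the $q$-circle of radius $r>0$. With these nondegeneracy points checked, the orthogonality computation from Lemma \ref{Bisector} completes the argument.
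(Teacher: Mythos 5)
Your argument is correct and follows essentially the same route as the paper: identify $v_{x_0x_1}=e((\theta_0+\theta_1)/2)\cdot x$ and $v_{x_2x_3}=e((\theta_2+\theta_3)/2)\cdot x$, apply Lemma \ref{Bisector} to get $q$-orthogonality of $u_Q$ to $v_{x_2x_3}-v_{x_0x_1}$, and conclude via Proposition \ref{PositiveEigenvector} and the $q$-orthogonality of the two eigenlines. Your extra nondegeneracy checks are sound but already covered by the paper's earlier observation that $q_Q$ has two distinct eigenvalues $\lambda^-\leq 0\leq\lambda^+$.
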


We will call $u_Q$, the {\em eigenvector associated} to the ordered quadruple $Q$.

\begin{proof}
Since $0<\theta_1-\theta_0< 2 \pi$, we 
have $\theta_1-\theta_0=\inf\{\theta>0~ |~ e(\theta)\cdot x_0=x_1\}$.
Thus, $v_{x_0x_1}= e((\theta_0+\theta_1)/2)\cdot x$. 
Similarly, $v_{x_2x_3} =e((\theta_2+\theta_3)/2) \cdot x$.
Thus, by Lemma \ref{Bisector}, $u_Q$ is orthogonal to $v_{x_2x_3}-v_{x_0x_1}$.
Therefore, since $L^+$ and $L^-$ are orthogonal, the claim follows
from Proposition \ref{PositiveEigenvector}.
\end{proof}

\begin{figure}   
\begin{center}

\includegraphics[totalheight=3in]{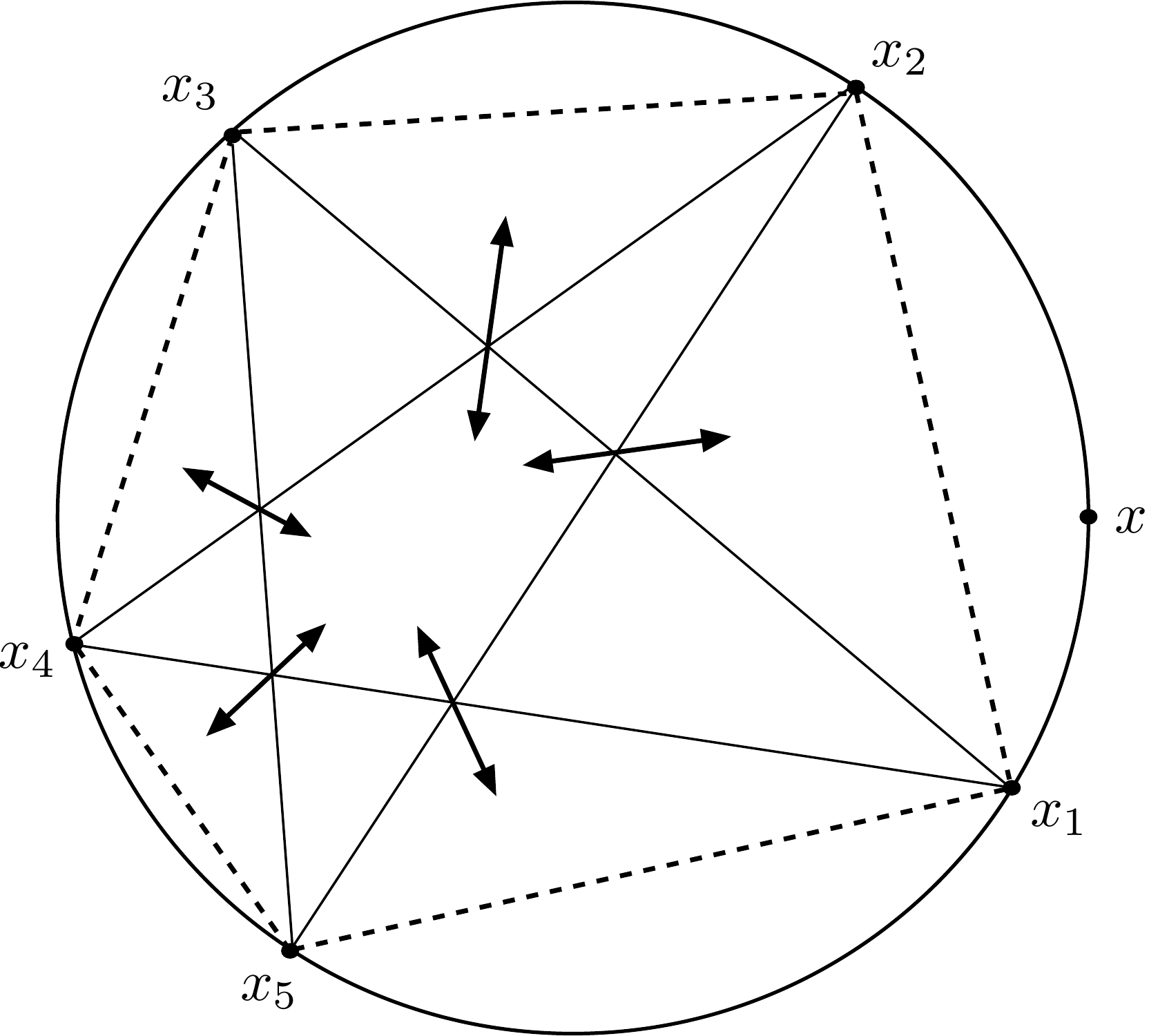}

\end{center}
\caption{The geometric content of Proposition \ref{Intermediate}:
The directions of the bisectors  ($\leftrightarrow$) of successive diagonals 
determine the directions of the dotted lines.  The direction 
of the bisector of $\overline{x_ix_{i+2}}$ and  $\overline{x_{i+1} x_{i+3}}$
is determined by the average of the four angles,
$(\theta_i+\theta_{i+1} +\theta_{i+2}+ \theta_{i+3})/4$. 
The direction of the dotted line $\overline{x_ix_{i+1}}$ is
determined by $(\theta_i + \theta_{i+1})/2$.}

\end{figure}

We now apply the preceding discussion to obtain an intermediate form
of the geometric lemma.  Let $Z$ be a finite subset 
of $q^{-1}(r^2)$ that contains at least five points. 
Let $s: Z \rightarrow Z$ be the successor 
function associated to the counter-clockwise orientation.
In particular, $s(x)=y$ if and only if there exists $\theta>0$
such that $e(\theta \cdot x)=y$ and $e((0, \theta')\cdot x) \cap Z=\emptyset$.

Recall that, given two nonsuccessive points in $Z$, the symbol
$Z(x,y)$ denotes the ordered quadruple  $(x,s(x), y, s(y))$.

\begin{prop} \label{Intermediate}
Let $Z$ and $Z'$ be finite subsets of the unit circle $\partial U_{q_0}$ 
each with cardinality at least five.  If there exists a bijection 
$\beta: Z \rightarrow Z'$ such that $\beta \circ s = s \circ \beta$ 
and for each pair of nonsuccessive vertices $x,y \in Z$, we have 
\[ u_{Z(x,y)}~  =~ u_{Z(\beta(x),\beta(y))},  \]
then for each $x \in Z$, we have
that 
\[ v_{xs(x)}~  =~ v_{\beta(x)s(\beta(x))}.  \]
\end{prop}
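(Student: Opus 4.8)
The plan is to transport the whole statement to the additive circle $\R/2\pi\Z$ by means of the covering homomorphism $e$, reducing it to an elementary congruence computation. Fix a basepoint $x_0\in q^{-1}(r^2)$ and write each point of the circle uniquely as $e(\alpha)\cdot x_0$ with $\alpha\in\R/2\pi\Z$; call $\alpha$ the \emph{angle} of the point. Since $\beta$ conjugates the successor $n$-cycle on $Z$ to the one on $Z'$, both sets have the same cardinality $n\ge 5$; after labelling $Z=\{z_0,\dots,z_{n-1}\}$ and $Z'=\{z'_0,\dots,z'_{n-1}\}$ cyclically so that $s(z_i)=z_{i+1}$ (indices mod $n$), we may assume $\beta(z_i)=z'_i$. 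Choose genuine real lifts $\alpha_0<\alpha_1<\dots<\alpha_{n-1}<\alpha_n:=\alpha_0+2\pi$ of the angles of $z_0,\dots,z_{n-1}$, and lifts $\alpha'_i$ likewise for $Z'$. Put $b_i:=\tfrac12(\alpha_i+\alpha_{i+1})$ and $b'_i:=\tfrac12(\alpha'_i+\alpha'_{i+1})$. By the definition of the oriented bisector, $v_{z_i\,s(z_i)}$ is the point of angle $b_i\bmod 2\pi$, so the assertion to be proved is exactly that $b_i\equiv b'_i\pmod{2\pi}$ for every $i$.

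Next I would record the key identity linking the eigenvectors to these successor-bisectors. For a nonsuccessive pair $i<j$ the quadruple $Z(z_i,z_j)=(z_i,z_{i+1},z_j,z_{j+1})$ is compatible with $e$, and taking $x=z_i$ in the formula defining $u_Q$ shows that $u_{Z(z_i,z_j)}$ has angle $\tfrac14(\alpha_i+\alpha_{i+1}+\alpha_j+\alpha_{j+1})=\tfrac12(b_i+b_j)\bmod 2\pi$; one checks the lift bookkeeping, including the wrap-around case $j=n-1$ where $\alpha_{j+1}=\alpha_n=\alpha_0+2\pi$, observing that the only index pair forced out of a legal window, namely $\{0,n-1\}$, is successive and hence excluded from the hypothesis. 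The identical computation applies to $Z'$. Thus the hypothesis $u_{Z(z_i,z_j)}=u_{Z(z'_i,z'_j)}$ becomes $\tfrac12(b_i+b_j)\equiv\tfrac12(b'_i+b'_j)\pmod{2\pi}$, and writing $c_i:=b_i-b'_i\in\R$ this reads
\[
  c_i+c_j\equiv 0\pmod{4\pi}\qquad\text{for all nonsuccessive }i,j.
\]

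Finally I would solve this congruence system, and this is precisely where $\Card(Z)\ge 5$ is used. For each $i$, since $n\ge 5$ the index $i+3$ is nonsuccessive to both $i$ and $i+1$; subtracting the relations $c_i+c_{i+3}\equiv 0$ and $c_{i+1}+c_{i+3}\equiv 0$ gives $c_i\equiv c_{i+1}\pmod{4\pi}$, so all the $c_i$ are congruent to a single value $c$ modulo $4\pi$. Substituting into any one nonsuccessive relation yields $2c\equiv 0\pmod{4\pi}$, i.e. $c\equiv 0\pmod{2\pi}$, whence $c_i\equiv 0\pmod{2\pi}$ and therefore $b_i\equiv b'_i\pmod{2\pi}$ for every $i$. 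Translating back through the angle parametrization, $v_{x\,s(x)}=v_{\beta(x)\,s(\beta(x))}$ for all $x\in Z$, which is the desired conclusion.

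The step I expect to be the main obstacle is the honest bookkeeping of lifts in the identity $u_{Z(z_i,z_j)}\text{-angle}=\tfrac12(b_i+b_j)$: one must pass from equalities of half-angles modulo $2\pi$ to equalities of full angle-sums modulo $4\pi$ without dropping the factor of two, and confirm that the ordering convention defining $u_{Z(x,y)}$---the diagonal pairing $\{x,s(x)\},\{y,s(y)\}$ together with the counter-clockwise orientation---is applied identically to $Z$ and to $Z'$, so that the hypothesis furnishes genuine equality rather than equality up to an overall sign. Once this is pinned down the arithmetic of the last paragraph is routine; note that it genuinely fails for $n=4$, where only the two diagonals $\{0,2\}$ and $\{1,3\}$ are nonsuccessive and no two distinct $c_i$ can be compared, which is exactly why the five-point hypothesis is indispensable.
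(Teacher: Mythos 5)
Your proof is correct. It rests on the same reduction as the paper's: parametrize the circle through the homomorphism $e$, observe that $u_{Z(x,y)}$ has angle equal to the average of the four point-angles of the quadruple, hence to the average of the two successor-bisector angles, and then recover the individual bisector angles by linear algebra on the resulting congruences. The difference lies entirely in the elimination. The paper fixes $x$, writes down the five hypothesis equations for the quadruples supported on the six points $s^{-2}(x),\dots,s^{3}(x)$, and takes the alternating sum with coefficients $+1,-1,+1,-1,+1$; because these are integers summing to $1$, this produces $(\theta_0+\theta_1)/2\equiv(\theta_0'+\theta_1')/2\pmod{2\pi}$ in a single stroke and never leaves the modulus $2\pi$. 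You instead assemble the global system $c_i+c_j\equiv 0\pmod{4\pi}$ over all nonsuccessive pairs and solve it in two stages (all $c_i$ congruent, then the common value vanishes mod $2\pi$). Your route is more explicit about two points the paper passes over silently: the wrap-around bookkeeping in the angle formula for $u_{Z(x,y)}$ (you correctly observe that the only pair forced out of a legal window, $\{0,n-1\}$, is successive and hence excluded from the hypothesis) and the precise role of $\Card(Z)\ge 5$ (the existence of an index nonsuccessive to two consecutive indices, which genuinely fails for $n=4$). The paper's combination buys locality, using only diagonals near the fixed $x$; yours makes the structure of the congruence system, and hence the sharpness of the five-point hypothesis, visible.
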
 
 
\begin{proof}
Let $x \in Z$. For $-2 < i \leq 3$, define 
\[  \theta_i~  =~ \inf \left\{\theta > 0~  |~  
   e(\theta_i) \cdot s^{-2}(x) = s^{i}(x) \right\}. \] 
Since $\Card(Z)\geq 5$, we have 
\[0~ =~ \theta_{-2}<\theta_{-1} < \theta_{0}< \theta_1< \theta_2 \leq \theta_3\leq 2 \pi. \]
Similarly, we obtain
\[ 0~ =~ \theta_{-2}'<\theta_{-1}' < \theta_{0}'< \theta_1'< \theta_2' \leq \theta_3'\leq 2 \pi. \] 
so that $e(\theta_i)=s^i(\beta(x))$.

It follows from the hypothesis that 
\begin{eqnarray*}
  \frac{\theta_{0}+ \theta_{1}+ \theta_{-2} + \theta_{-1}}{4} 
      &=& \frac{\theta_{0}'+ \theta_{1}'+ \theta_{-2}' + \theta_{-1}'}{4}  \\ 
  \frac{\theta_{-2}+ \theta_{-1}+ \theta_{1} + \theta_{2}}{4}
    &=& \frac{\theta_{-2}'+ \theta_{-1}'+ \theta_{1}' + \theta_{2}'}{4}  \\
  \frac{\theta_{0}+ \theta_{1}+ \theta_{2} + \theta_{3}}{4}  
     &=& \frac{\theta_{0}'+ \theta_{1}'+ \theta_{2}' + \theta_{3}'}{4}  \\
  \frac{\theta_{2}+ \theta_{3}+ \theta_{-1} + \theta_{0}}{4}  
   &=&  \frac{\theta_{2}'+ \theta_{3}'+ \theta_{-1}' + \theta_{0}'}{4}  \\
    \frac{\theta_{1}+ \theta_{2}+ \theta_{-1} + \theta_{0}}{4}
   &=&  \frac{\theta_{1}'+ \theta_{2}'+ \theta_{-1}' + \theta_{0}'}{4}
\end{eqnarray*}
By taking the alternating sum of the left hand sides of these equations
and the alternating sum of the right hand sides, we find that
\[ \frac{\theta_0+\theta_1}{2}~ =~  \frac{\theta_0'+\theta_1'}{2}. \]
The claim follows.  
\end{proof}

To  prove the main geometric lemma, we shift the context 
to quadratic forms on $\R^3$. In particular, $q$ will denote an
element of $Q(\R^3)$ and $\underline{q}$  will denote
the `restriction' to the first two coordinates
\[  \underline{q}(x_1,x_2)~ =~ q(x_1,x_2, 0). \]

\begin{prop} \label{Translate}
Let $q \in Q(\R^3)$.  If $q$ has signature $(2,1)$ and $\underline{q}$ has signature $(2,0)$, 
then there exists a (unique) homothety $h$ and a (unique) translation $\tau$ so that 
$h \circ \tau(\partial U_q)= \underline{q}^{-1}(1)$. 
\end{prop}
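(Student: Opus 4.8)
The plan is to reduce the statement to completing the square for the quadratic polynomial $x \mapsto q(\hat{x})$ on the affine plane $\{x_3 = 1\}$. First I would write the symmetric matrix of $q$ in block form
\[
A~ =~ \begin{pmatrix} B & b \\ b^{T} & c \end{pmatrix},
\]
where $B$ is the $2 \times 2$ matrix of $\underline{q}$, $b \in \R^2$, and $c \in \R$, so that $q(\hat{x})= x^{T} B x + 2\, b^{T} x + c$ for $x=(x_1,x_2)$. Since $\underline{q}$ has signature $(2,0)$, the matrix $B$ is positive definite and hence invertible, so completing the square yields
\[
q(\hat{x})~ =~ \underline{q}(x - p)~ +~ \kappa,
\qquad p~ =~ -B^{-1} b, \quad \kappa~ =~ c - b^{T} B^{-1} b.
\]
Thus $\partial U_q = \{ x~ |~ \underline{q}(x-p) = -\kappa \}$, and $p$ is the center of this locus.

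The crucial step is to determine the sign of the residual constant $\kappa$, and this is exactly where the hypothesis on the signature of $q$ enters. By the Schur complement formula, $\det A = \det B \cdot \kappa$. Since $\underline{q}$ has signature $(2,0)$ we have $\det B > 0$, while signature $(2,1)$ forces $\det A < 0$ (the product of the three eigenvalues is negative). Hence $\kappa < 0$, so $-\kappa > 0$ and $\partial U_q$ is a genuine nonempty ellipse centered at $p$. I would then set $\tau(x) = x - p$ and $h(y) = y / \sqrt{-\kappa}$; a direct substitution gives $\underline{q}(h(\tau(x))) = \underline{q}(x-p)/(-\kappa)$, which equals $1$ precisely when $x \in \partial U_q$, so $h \circ \tau(\partial U_q)= \underline{q}^{-1}(1)$, establishing existence.

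For uniqueness, I would argue that any two solutions differ by a positive dilation fixing the target ellipse. If $h \circ \tau$ and $h' \circ \tau'$ both carry $\partial U_q$ onto $E := \underline{q}^{-1}(1)$, then $g := (h' \circ \tau') \circ (h \circ \tau)^{-1}$ is a map of the form $x \mapsto \mu x + u$ with $\mu > 0$ that preserves $E$. Since $E$ is bounded with center at the origin, $g$ must fix its center, forcing $u = g(0) = 0$; then $g(E) = \{ y~ |~ \underline{q}(y) = \mu^2\}$ equals $E$ only if $\mu = 1$, so $g$ is the identity. Comparing the linear and constant parts of the resulting equality $h \circ \tau = h' \circ \tau'$ then yields $h = h'$ and $\tau = \tau'$.

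The main obstacle is the sign determination for $\kappa$: everything else is formal, but pinning down $\kappa < 0$ is precisely the content linking the ambient signature $(2,1)$ to the nonemptiness of the ellipse $\partial U_q$, and I would take care to invoke the Schur complement (or, equivalently, a direct eigenvalue/interlacing argument) rather than appealing to geometric intuition.
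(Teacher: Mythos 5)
Your proof is correct and follows essentially the same route as the paper: complete the square using the invertibility of the positive definite block $B=\underline{A}$, and determine the sign of the residual constant via $\det A = \det\underline{A}\cdot\kappa$ (the paper carries out this Schur-complement identity by explicit computation rather than citing it). You also supply the uniqueness argument, which the paper's proof leaves implicit.
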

\begin{proof}
Let $A=(a_{ij})$ be the Gram determinant of $q$ with respect to the standard basis 
for $\R^3$.  We have 
\[  q(x_1,x_2,x_3)~ =~ a_{11}x_1^2~ +~ a_{22}x_2^2~ +~ a_{33}x_3^2~ +~ 2 a_{12}x_1x_2~ +~
     2 a_{13} x_1x_3~ +~ 2a_{23} x_2 x_3. \]
We have 
\[  \underline{q}(x_1,x_2)~ =~  a_{11}x_1^2~ +~ 2 a_{12}x_1x_2~ +~ a_{22}x_2^2. \]
Let $\vec{x}=(x_1,x_2)$ and $\vec{t}=(t_1,t_2)$.
A straightforward calculation shows that 
\begin{equation} \label{qRestrict}
  q(x_1-t_1, x_2-t_2,1)~ =~ \underline{q}(\vec{x})~ 
    +~  2 \left(\vec{a}~ -~ \vec{t}\cdot \underline{A}~  \right) \cdot \vec{x}~ 
  +~ \underline{q}\left(\vec{t} \right)~ -~ 2 \vec{t} \cdot \vec{a}~ +~  a_{33}.  
\end{equation}
where $\vec{a}=(a_{13}, a_{23})$ and  
\[  \underline{A}~ =~ \left( \begin{array}{cc} 
        a_{11} &  a_{12} \\
        a_{12} &  a_{22} 
      \end{array} \right).
\]
Set 
\[  \vec{t}~ =~  \vec{a} \cdot A^{-1}. \]
One computes that 
\[  \underline{q}\left(\vec{t} \right)~ -~ 2 \vec{t} \cdot \vec{a}~ +~  a_{33}~ =~
  \frac{\det(A)}{\det(\underline{A})}, \]
and hence from (\ref{qRestrict}) it 
follows that $\vec{x}-\vec{t} \in \partial U_q$ iff
\begin{equation} \label{dets}
   \underline{q}(\vec{x})~ =~ - \frac{\det(A)}{\det(\underline{A})}. 
\end{equation}
Since $q$ has signature $(2,1)$, we have  $\det(A)<0$, and 
since $\underline{q}$ has signature $(2,0)$ we have that $\det(\underline{A})>0$.
Therefore, the right hand side of (\ref{dets}) is positive. 

Let $\tau(\vec{x})= \vec{x}+ \vec{t}$ and 
$h(\vec{x}) = \left(-\det(A)/\det(\underline{A})\right)^{-\frac{1}{2}} \cdot \vec{x}$.
\end{proof}

\begin{proof}[Proof of Lemma \ref{MainGeometricLemma}]
Note that the set of weights on a configuration is invariant 
under homotheties and translations.\footnote{If a translation 
or homothety is applied to one element of a configuration, then it should 
be applied to all members of the configuration.} 
Also note that parallelism is invariant under homotheties
and translations. In particular, without loss of generality, we have $U=U'$.

Since $U$ is an ellipse interior, there exists a quadratic form 
$q \in Q(\R^3)$ of signature $(2,1)$ such that $U_q=U$ and such that
$\underline{q}$ has signature $(2,0)$. By Proposition \ref{Translate},
there exists a homothety $h$ and a translation $\tau$ 
such that $h \circ \tau(U)= \underline{q}^{-1}(1)$. 
It follows that without loss of generality, $U= \underline{q}^{-1}(1)$.

Let $x,y \in Z$ be distinct, nonsuccessive points.
We will consider three quadratic forms that belong to 
the plane, $Q_{\widehat{Z(x,y)}}$, consisting of quadratic 
forms that vanish on the quadruple $\widehat{Z(x,y)}$. 

The first form is $q$. 

The second form is a quadratic form $r_{xy} $ such that $U_{r_{xy}}=U_{xy}$.
Note that since $\partial U_{xy}\cap Z= Z(x,y)$, we have $r_{xy} \in Q_{\widehat{Z(x,y)}}$.      
Since $\Card(Z) \geq 5$ and $U_{xy} \neq U$, the restriction
of $r_{xy}$ is negative on the arc in $\widehat{\partial U}$ that joins $\hat{x}$ 
to $\widehat{s(x)}$.

The third quadratic form is a degenerate form defined as follows.  
Let $\eta_{xy} \in (\R^3)^*$ be a linear form such that 
$\eta_{xy}(\hat{x})= 0=\eta_{xy}(\widehat{s(x)})$ and $\eta_{xy}$ is negative
on the interior of the convex hull of $Z(x,y)$.
Let $\eta_{xy}' \in (\R^3)^*$ be  such that 
$\eta_{xy}'(\hat{y})= 0=\eta_{xy}'(\widehat{s(y)})$ and $\eta_{xy}'$ is negative
on the interior of the convex hull of $\widehat{Z(x,y)}$. 
The third quadratic form is defined by $p_{xy}=-\eta_{xy} \cdot \eta_{xy}'$.
Note that $p_{x,y} \in  Q_{\widehat{Z(x,y)}}$ and
$p_{xy}$ is positive on the arc in $\widehat{\partial U}$ that 
joins $\hat{x}$ to $\widehat{s(x)}$.

Note that all three forms are nonpositive on the 
convex hull of $\widehat{Z(X, Y)}$. Thus,
since $q$ vanishes on the arc in $\widehat{\partial U}$ that 
joins $\hat{x}$ to $\widehat{s(x)}$ and $\dim(Q_{\widehat{Z(x,y)}})=2$,
there exist positive $a,b \in \R^+$ such that 
\begin{equation} \label{EigenCombination}
   q~ =~ a \cdot r_{x,y}~ + b \cdot p_{x,y}. 
\end{equation}

Since $r_{xy}$ is not a multiple of $q$ (resp. $p_{x,y}$ is
not a multiple of $q$), the form $\underline{r_{x,y}}$ (resp.  $\underline{p_{x,y}}$) 
has two distinct eigenvalues $\mu_-< \mu_+$ (resp. $\nu_+< \nu-$)
with respect to $\underline{q}$. Let 
$M^{\pm}_{x,y}$ (resp. $N^{\pm}_{x,y}$) denote the eigenspace
 of $\underline{r_{x,y}}$ (resp. $\underline{p_{x,y}}$) 
associated to $\mu_{\pm}$ (resp. $\nu_{\mp}$).
It follows from (\ref{EigenCombination}) 
that $M^{\pm}_{x,y}= N^{\mp}_{x,y}$ and  
\[  b \cdot \nu_{\pm}~ =~ 1~ -~ a \cdot \mu_{\mp}. \]

The discussion above applies equally well to a pair of nonadjacent 
distinct points $x', y' \in Z'$. We define forms, $r_{x',y'}$, $p_{x',y'}$,
and eigenspaces, $M^{\pm}_{x',y'} = N^{\mp}_{x',y'}$, in an analogous fashion. 

Since, by hypothesis, $[U_{x,y}]=[U_{\beta(x),\beta(y)}']$,
the forms $\underline{r_{x,y}}$ and $\underline{r_{\beta(x),\beta(y)}}$ 
differ by a positive constant multiple. In particular, 
$M^+_{x,y}=M^+_{\beta(x), \beta(y)}$, and thus $N^-_{x,y}=N^-_{\beta(x), \beta(y)}$.

Let $q_{Z(x,y)} \in Q(\R^2)$ be defined as in (\ref{qQ}). Note that the form  
$\underline{r_{x,y}}$ is a positive multiple of $q_{Z(x,y)}$ 
In particular, the eigenvector $u_{Z(x,y)}$ belongs to $N^-_{x,y}$.  
Similarly, the eigenvector $u_{Z'(\beta(x), \beta(y))}$ 
belongs to $N^-_{\beta(x), \beta(y)}$.
Since $N^-_{x,y}=N^-_{\beta(x), \beta(y)}$ is one dimensional
and $q(u_{Z(x,y)})=1 =  q(u_{Z'(\beta(x), \beta(y))})$, we have 
$u_{Z(x,y)}= \pm u_{Z'(\beta(x), \beta(y))}$.  It then follows 
from the relation $s' \circ \beta= \beta \circ s$ that we have equality. 

Thus, by Proposition \ref{Intermediate}, for each $x \in Z$ we have
$v_{xs(x)}=v_{\beta(x)\beta(s(x))}$. Note that $v_{xs(x)}$ (resp. $v_{\beta(x)\beta(s(x))}$)
is perpendicular to the line  $\ell(s, s(x))$ (resp. $\ell(\beta(x),\beta(s(x)))$).
\end{proof}


\section{Isomorphisms and affine  homeomorphisms} \label{SectionRebuild}

In this section, we prove Theorem \ref{Rebuild}. We begin with
a lemma that reduces the construction of a homeomorphism to a local problem.

\begin{lem}  \label{PsiLemma}
Let $(X, \mu)$ and $(X', \mu')$ be simply connected translation surfaces 
that cover a precompact translation surface with finite and nonempty frontier.
Let $\Phi:\Scal_3(X,\mu)\rightarrow \Scal_3(X',\mu')$ be an 
orientation preserving isomorphism and let $\beta: \partial X \rightarrow \partial X'$
be the compatible bijection. If for each realizable 
quadruple $Z \subset \partial X$, there exists 
$\psi_Z \in {\rm Aff}^+(\R^2)$ such that for each $z \in Z$,
we have 
\[ \psi_Z(\dev_{\mu}(z))= \dev_{\mu'}(\beta(z)),  \]
then there exists $g \in GL^+(\R^2)$ and a homeomorphism 
$\phi: X \rightarrow X'$ such that $g \circ \mu= \mu' \circ \phi$
and $\phi(U)=\Phi(U)$ for each $U \in \Scal_3(X, \mu)$.
\end{lem}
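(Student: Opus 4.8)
The plan is to first show that the locally defined affine maps $\psi_Z$ assemble into a single global affine map $\psi$, and then to integrate $\psi$ through the developing maps to produce $\phi$. First I would observe that every realizable triple $T \subset \partial X$ is noncollinear (Proposition \ref{ZPolygon}), so there is a unique $\psi_T \in \Aff^+(\R^2)$ with $\psi_T(\dev_{\mu}(z)) = \dev_{\mu'}(\beta(z))$ for all $z \in T$. If $T \subset Z$ for a realizable quadruple $Z$, then $\psi_Z$ restricts on $T$ to such a map, so $\psi_Z = \psi_T$; in particular $\psi_Z$ depends only on the cell, and any two triples lying in a common quadruple give the same affine map. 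Since adjacent $2$-cells of $\Scal_3(X,\mu)$ share a $1$-cell (their defining triples lie in a common realizable quadruple) and $\Scal_3(X,\mu)$ is connected (Corollary \ref{Connected}), a chain argument across the cells shows that all $\psi_T$ coincide with one $\psi \in \Aff^+(\R^2)$. Let $g \in GL^+(\R^2)$ be its linear part.

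Next I would promote this to a statement about cone points: every $z \in \partial X$ lies on some rigid subconic (grow an ellipse through $z$ until it meets at least five cone points), hence in some realizable quadruple, so $\psi(\dev_{\mu}(z)) = \dev_{\mu'}(\beta(z))$ for all $z \in \partial X$. To build $\phi$, I would cover $X$ by the interiors of the ellipse interiors $U \in \Scal_3(X,\mu)$ (each point of $X$ lies in such a $U$, by the inradius and retraction results of \S\ref{SectionCovering}--\S\ref{HomotopySection}). On each such $U$, convexity makes $\dev_{\mu}|_U$ and $\dev_{\mu'}|_{\Phi(U)}$ homeomorphisms onto the planar subconics $U_q$ and $U_{q'}$, so I can set $\phi_U = (\dev_{\mu'}|_{\Phi(U)})^{-1}\circ \psi \circ (\dev_{\mu}|_U)$, provided $\psi(U_q) = U_{q'}$. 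Granting this compatibility (the key claim below), the maps $\phi_U$ agree on overlaps: if $U \cap U' \neq \emptyset$, then $U \cup U'$ is star convex, so $\dev_{\mu'}$ is injective on $\Phi(U) \cup \Phi(U')$, which forces $\phi_U = \phi_{U'}$. The glued map $\phi$ then satisfies $\dev_{\mu'}\circ \phi = \psi \circ \dev_{\mu}$, hence $g \circ \mu = \mu' \circ \phi$; running the same construction for $\Phi^{-1}$ yields a two-sided inverse, so $\phi$ is a homeomorphism, and by construction $\phi(U) = \Phi(U)$.

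The remaining claim is that $\psi(\dev_{\mu}(U)) = \dev_{\mu'}(\Phi(U))$ for every ellipse interior $U \in \Scal_3(X,\mu)$. For a rigid $U$ this is immediate: by Theorem \ref{Bijection} the bijection $\beta$ carries $\partial U \cap \partial X$ onto $\partial \Phi(U) \cap \partial X'$, so $\psi$ sends the at least five developed contact points on $\partial U_q$ to the corresponding points on $\partial U_{q'}$; since five points determine a conic and $\psi$ preserves orientation, $\psi(U_q) = U_{q'}$.

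The main obstacle is the non-rigid case, where only three or four developed contact points are available and a conic is underdetermined; here one must show that the combinatorial isomorphism $\Phi$ coincides on each cell with the projective map $\Psi_*$ induced by $\psi$ on the pencil $Q_{\widehat{\dev_{\mu}(Z)}}$. Both $\Phi$ and $\Psi_*$ carry $\Scal_Z$ onto $\Scal_{\beta(Z)}$ and agree on all rigid vertices (the case just treated); moreover, since $\beta$ intertwines the successor maps (Theorem \ref{Bijection}) and $\psi$ is affine, both send the distinguished degenerate (adjacent-pair) forms of Lemma \ref{Quadruple} bounding each $1$-cell's pencil to the corresponding target forms. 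I would use these agreements, the orientation-preservation of $\Phi$, and the projective rigidity of the pencil (the $\mathbb{P}^1$ data determined by the two rigid endpoints together with the distinguished degenerate parameter) to force $\Phi = \Psi_*$ first on each $1$-cell, and then, via the convex parametrization $f$ of Theorem \ref{CellTheorem}, on each $2$-cell interior. Identifying the purely combinatorial map $\Phi$ with the geometric affine map $\Psi_*$ on cell interiors is the delicate step; everything else is bookkeeping with the developing map and the already-established cell structure.
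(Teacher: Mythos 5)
Your first half — showing the $\psi_Z$ agree pairwise when two quadruples share a triple, and then chaining across the connected graph $\Scal_4(X,\mu)$ (Proposition \ref{FourConnected}) to get a single $\psi\in\Aff^+(\R^2)$ — is exactly the paper's argument and is fine. The divergence, and the gap, is in how you build $\phi$: you glue over \emph{all} ellipse interiors $U\in\Scal_3(X,\mu)$, which makes the claim $\psi(\dev_{\mu}(U))=\dev_{\mu'}(\Phi(U))$ for \emph{non-rigid} $U$ load-bearing for the very definition of $\phi_U$. As you yourself concede, for such $U$ only three or four developed contact points are available, the conic is underdetermined, and your proposed remedy (identifying $\Phi$ with the induced projective map $\Psi_*$ on each pencil via orientation, the degenerate forms, and the rigid endpoints) is only a plan, not a proof. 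As written, the construction of $\phi$ is therefore incomplete.

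The gap is avoidable, and the paper's route shows how: define $\phi_U=\left(\dev_{\mu'}|_{\Phi(U)}\right)^{-1}\circ\psi\circ\dev_{\mu}|_U$ \emph{only} for rigid $U\in\Scal_5(X,\mu)$, where $\Card(\partial U\cap\partial X)\ge 5$ and Proposition \ref{Dimension} forces $\psi(\dev_{\mu}(U))=\dev_{\mu'}(\Phi(U))$ — the case you correctly call immediate. These pieces already cover $X$: every point lies in some ellipse interior of $\Scal_3$ (the retraction of \S\ref{HomotopySection}), every such ellipse lies in some closed $2$-cell $\overline{\Scal_Z}$, and by the convexity argument of Proposition \ref{fConvex} and Theorem \ref{CellTheorem} each subconic in $\overline{\Scal_Z}$ is contained in the union of the rigid vertices of that cell. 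Overlap agreement is then checked along $1$-cells (two rigid endpoints of a common quadruple) and propagated by connectivity of $\Scal_4(X,\mu)$, exactly as in your star-convexity argument. With $\phi$ built this way, the identity $\phi(U)=\Phi(U)$ for the remaining non-rigid $U$ becomes a consequence rather than a prerequisite, and the delicate pencil comparison you flag is never needed to define the map.
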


\begin{proof}
Let $Z,Z' \subset \partial X$ be realizable quadruples such that $\Card(Z \cap Z')=3$.
For each $x \in \dev_{\mu}(Z \cap Z')$, we have $\psi_Z(x)=\psi_{Z'}(x)$.
Since each element of ${\rm Aff}^+(\R^2)$ is determined by its values at 
three points, we have $\psi_Z=\psi_{Z'}$. By Proposition \ref{FourConnected},
the graph $\Scal_4(X, \mu)$ is connected, and hence the function $Z \mapsto \psi_Z$ 
is constant. Let $\psi \in {\rm Aff}^+(\R^2)$ denote this constant value.

Let $U \in \Scal_5(X, \mu)$. The map $\psi$ sends 
$\dev_{\mu}(\partial U \cap \partial X)$ onto  
$\dev_{\mu'}(\partial \Phi(U) \cap \partial X')$.
Thus, since $\Card(\partial U \cap \partial X) \geq 5$
and $\Card(\partial \Phi(U) \cap \partial X') \geq 5$,
we have  $\psi(\dev_{\mu}(U))= \dev_{\mu'}(\Phi(U))$.
In particular, we may define a homeomorphism $\phi_U: U \rightarrow \Phi(U)$ by
\[  \phi_U~ =  \dev_{\mu}|_{\Phi(U)}^{-1}\circ \psi \circ \dev_{\mu}(x). \]

Let $Z$ be a realizable quadruple, and let $U_-, U_+ \in \Scal_5(X,\mu)$ be the 
endpoints of $\Scal_Z$. For each $x \in \dev_{\mu'}(\Phi(U_-) \cap \Phi(U_+))$, we have 
\[  \dev_{\mu}|_{\Phi(U_-)}^{-1}(x)~ =~ \dev_{\mu}|_{\Phi(U_+)}^{-1}(x). \]
Therefore, it follows that  $\phi_{U_+}|_{U_+\cap U_-}= \phi_{U_-}|_{U_+\cap U_-}$.
Since $\Scal_4(X, \mu)$ is a connected graph, for any $U,V \in \Scal_5(X, \mu)$,
we have 
\[   \phi_{U}|_{U\cap V}= \phi_{V}|_{U\cap V}.  \]

Define $\phi: X \rightarrow X'$ by setting 
\[  \phi(x)~ =~ \phi_U(x) \]
if $x \in U \in \Scal_4(X, \mu)$. 
By considering the same construction 
for $\Phi^{-1}$ and $\beta^{-1}$, we obtain an inverse for $\phi$.
Let $g$ denote the differential of $\psi$. Then  
$g \cdot \mu= \mu' \circ \phi$.
\end{proof}

As a first application, we have the following.

\begin{prop}
Let $(X, \mu)$ and $(X', \mu')$ be simply connected translation surfaces 
that cover a precompact translation surface with finite and nonempty frontier.
Let $\Phi:\Scal_3(X,\mu)\rightarrow \Scal_3(X',\mu')$ be an 
orientation preserving isomorphism. 
If each rigid conic $U \in  \Scal_5(X, \mu)$ is a strip, 
then there exist $g \in GL^+_2(\R)$ and a homeomorphism 
$\phi: X \rightarrow X'$ such that $g \cdot \mu= \mu' \circ \phi$.  
\end{prop}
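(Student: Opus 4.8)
The plan is to verify the hypotheses of Lemma \ref{PsiLemma} and then invoke it directly. By Theorem \ref{Bijection}, the orientation preserving isomorphism $\Phi$ induces a bijection $\beta:\partial X \rightarrow \partial X'$ with $\Phi(\Scal_Z)=\Scal_{\beta(Z)}$ for every realizable $Z$ and $\beta\circ s_U = s_{\Phi(U)}\circ\beta$ for every rigid conic $U$. It therefore suffices to produce, for each realizable quadruple $Z$, a map $\psi_Z\in\Aff^+(\R^2)$ with $\psi_Z(\dev_\mu(z))=\dev_{\mu'}(\beta(z))$ for all $z\in Z$; Lemma \ref{PsiLemma} then delivers $g\in GL^+_2(\R)$ and $\phi$. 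Fix such a $Z$. Its $1$-cell $\Scal_Z$ has exactly two endpoints $U_-,U_+\in\Scal_5(X,\mu)$, and by hypothesis both are strips.

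The key geometric step is to show that $\dev_\mu(Z)$ is the vertex set of a parallelogram. By Proposition \ref{Dimension} the space $Q_{\widehat{\dev_\mu(Z)}}$ is two dimensional, and by Lemma \ref{Quadruple} its degenerate forms constitute three lines, one for each way of pairing $\dev_\mu(Z)$ into two pairs. By Corollary \ref{Type11} each nonzero such form has type $(1,1)$, hence factors as $\eta\cdot\eta'$; the associated subconic is cut out by the two lines $\{\eta=0\}$ and $\{\eta'=0\}$ through the paired points, and is a strip precisely when these two lines are parallel. Now a rigid strip endpoint restricts to two points on each of its two parallel boundary components (as in the proof of Proposition \ref{QuadrupleRealizable}), so it realizes one of the three pairings as a \emph{parallel} pairing; since $U_-\neq U_+$ correspond to distinct degenerate forms, they realize two \emph{distinct} parallel pairings. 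Thus two of the three pairings of $\dev_\mu(Z)$ consist of parallel segments, and since $\widehat{\dev_\mu(Z)}$ is in general position a short computation shows the four points form a parallelogram.

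Because $\Phi(U_\pm)$ are the endpoints of $\Scal_{\beta(Z)}$ and have infinite link, they are themselves strips (a rigid ellipse has finite link by Proposition \ref{CardLink}, whereas Proposition \ref{GridGraph} makes the link of a strip an infinite grid), so the same argument shows $\dev_{\mu'}(\beta(Z))$ is a parallelogram. The relations $\beta\circ s_{U_\pm}=s_{\Phi(U_\pm)}\circ\beta$ force $\beta$ to carry the pair of opposite sides determined by $U_-$ (resp. $U_+$) to the pair determined by $\Phi(U_-)$ (resp. $\Phi(U_+)$); hence $\beta$ matches the two parallelograms vertex-by-vertex, consistently with their parallelogram structure. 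As a parallelogram is affinely homogeneous, there is then a unique $\psi_Z\in\Aff(\R^2)$ realizing this correspondence, and since the successor functions encode the planar orientation of the boundary and $\Phi$ preserves the orientation of cells, $\beta$ preserves the cyclic order of the four vertices, whence $\psi_Z\in\Aff^+(\R^2)$. Applying Lemma \ref{PsiLemma} finishes the proof. I expect the main obstacle to be the parallelogram identification, namely that two strip endpoints force two parallel pairings and hence a parallelogram; the orientation bookkeeping that upgrades $\psi_Z$ to $\Aff^+(\R^2)$ is the remaining delicate point.
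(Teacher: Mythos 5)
Your proposal is correct and follows essentially the same route as the paper: reduce to producing $\psi_Z\in\Aff^+(\R^2)$ for each realizable quadruple and invoke Lemma \ref{PsiLemma}, with the key geometric point being that a quadruple realized as the intersection of the boundaries of two distinct rigid strips must be the vertex set of a parallelogram. The paper merely asserts the parallelogram fact and the orientation point; your proposal supplies the justification (two distinct parallel pairings from the two strip endpoints, and preservation of the successor/cyclic structure), which is a welcome elaboration rather than a different argument.
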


\begin{proof}
Let $\beta: \partial X \rightarrow \partial X'$ be the bijection that is compatible with $\Phi$.

Each realizable quadruple $Z \subset \partial X$ is the intersection of the boundaries 
of two strips. In particular, $\dev_{\mu}(Z)$ is the vertex set of a parallelogram. 
Since $\Phi$ is an isomorphism, each rigid subconic $U \in \Scal_5(X, \mu)$ 
is also a strip. Thus, it follows that $\dev_{\mu}(\beta(Z))$ is also the vertex set 
of a parallelogram. Since $\beta$ is orientation preserving, there exists 
a $\psi_Z \in {\rm Aff}^+(\R^2)$ such that $\psi_Z(\dev_{\mu}(z))= \dev_{\mu'}(\beta(z))$.  
The claim now follows from Lemma \ref{PsiLemma}.
\end{proof}

Recall that the developing map $\dev_{\mu}$ is determined
up to post-composition by translations.  In particular,
each subset $A \subset X$ determines a unique class $[A]:= [ \dev_{\mu}(A)]$. 
 
We will say that a pair $\{x, y\} \in \partial X$ is realizable if 
and only if there exists a subconic $U$ such that $\{x,y\}= \partial U \cap \partial X$.  
Note that since $\partial X$ is discrete, realizability of a pair is equivalent to 
the existence of a geodesic segment joining $x$ and $y$. Indeed, given a 
geodesic segment $\sigma$ joining $x$ and $y$ and $\epsilon>0$, 
there exists an ellipse interior $U$
such that $\{x,y\} \subset \partial U$ and the Hausdorff distance between 
$\sigma$ and $\overline{U}$ is less than $\epsilon$.

\begin{prop} \label{ParallelProp}
Let $(X, \mu)$ and $(X', \mu')$ be simply connected translation surfaces 
that cover a precompact translation surface with finite and nonempty frontier.
Let $\Phi:\Scal_3(X,\mu)\rightarrow \Scal_3(X',\mu')$ be an
orientation preserving isomorphism. 
If for each rigid subconic $U \in \Scal_5(X, \mu)$ we have
\begin{equation} \label{UpToHT}
  [\Phi(U)]~ =~  [U], 
\end{equation}
then for each realizable pair $x,y$, we have 
\begin{equation}  \label{ParallelSaddles}
  \ell_{\mu}(x,y)~ =~ \ell_{\mu'}(\beta(x),\beta(y)).
\end{equation}
\end{prop}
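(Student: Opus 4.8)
The plan is to reduce the statement to the Geometric Lemma (Lemma \ref{MainGeometricLemma}) by manufacturing, for a suitable central ellipse, a configuration of subconics whose auxiliary members are controlled by hypothesis (\ref{UpToHT}). Throughout, let $\beta:\partial X\rightarrow\partial X'$ be the bijection furnished by Theorem \ref{Bijection}; recall that $\Phi(\Scal_Z)=\Scal_{\beta(Z)}$ for every realizable $Z$ and that $\beta\circ s_U=s_{\Phi(U)}\circ\beta$ for every rigid conic $U$. Fix a realizable pair $\{x,y\}$. I would first produce a rigid \emph{ellipse} $U\subset X$ for which $x,y\in\partial U\cap\partial X$ are adjacent, say $y=s_U(x)$. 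Since $\{x,y\}$ is realizable there is a geodesic segment joining $x$ to $y$; fattening it into a thin ellipse interior whose only frontier contacts are $x$ and $y$ (possible because $\partial X$ is discrete) and then enlarging it inside $\Ecal(\R^2)$ while keeping $x,y$ consecutive, I obtain, by finiteness of the inradius (Proposition \ref{Inradius}) and of the set of rigid subconics containing a fixed open set (Proposition \ref{FiniteEllipses}), a rigid ellipse $U$ with $y=s_U(x)$. Write $Z=\partial U\cap\partial X$, a set of at least five points.

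Next I would build a configuration of subconics about $U$. For each nonconsecutive pair $a,b\in Z$ the $1$--cell $\Scal_{Z_U(a,b)}$ has exactly two endpoints, namely $U$ and a second rigid subconic $V_{a,b}$; set $U_{a,b}:=\dev_\mu(V_{a,b})$. Because $U$ is an ellipse, $\partial U$ is an irreducible conic, so any subconic $V\neq U$ meets $\partial U$ in at most four points (two distinct conics sharing five points in general position would coincide, by Proposition \ref{Dimension}). Since $Z_U(a,b)\subset\partial U\cap\partial V_{a,b}$ already has four points, we get $Z\cap\partial U_{a,b}=Z_U(a,b)$, exactly the incidence required of a configuration. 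Developing in a chart in which $U$ appears as an ellipse, the data $(U,Z,U_{a,b})$ form a configuration about an ellipse. Note that $V_{a,b}$, and hence $U_{a,b}$, may be a strip rather than an ellipse; this is harmless for Lemma \ref{MainGeometricLemma}, because the restriction $\underline{r}$ of the form defining a strip is a nonzero rank-one positive-semidefinite form and so still has two distinct eigenvalues relative to the positive-definite restriction associated with $U$, which is all the eigenvector argument in the proof of that lemma uses.

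Carrying out the same construction for $\Phi(U)$ with frontier $Z'=\partial\Phi(U)\cap\partial X'$ produces auxiliary subconics $U'_{a',b'}$. Since $\Phi$ is an orientation preserving cell-complex isomorphism it maps $\Scal_{Z_U(a,b)}$ to $\Scal_{Z_{\Phi(U)}(\beta(a),\beta(b))}$ and carries $U$ to $\Phi(U)$, hence carries the opposite endpoint $V_{a,b}$ to the opposite endpoint of the image $1$--cell; thus $U'_{\beta(a),\beta(b)}=\dev_{\mu'}(\Phi(V_{a,b}))$. Both $V_{a,b}$ and $\Phi(V_{a,b})$ are rigid, so (\ref{UpToHT}) gives $[\Phi(V_{a,b})]=[V_{a,b}]$ and therefore $[U_{a,b}]=[U'_{\beta(a),\beta(b)}]$. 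Combined with $[\Phi(U)]=[U]$ and $\beta\circ s_U=s_{\Phi(U)}\circ\beta$, the two configurations satisfy every hypothesis of Lemma \ref{MainGeometricLemma}. That lemma then yields that $\ell(x,s_U(x))$ is parallel to $\ell(\beta(x),\beta(s_U(x)))$; since $y=s_U(x)$ and parallelism is precisely the equality of directions recorded by $\ell_\mu$, this is exactly $\ell_\mu(x,y)=\ell_{\mu'}(\beta(x),\beta(y))$.

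The main obstacle is the first step: producing a rigid ellipse (not merely a strip) in which the prescribed realizable pair is consecutive, and developing the entire configuration consistently in a single chart. Once that is in hand, the homothety transfer for the auxiliary subconics and their combinatorial matching under $\Phi$ are routine consequences of Theorem \ref{Bijection} and (\ref{UpToHT}), and the only other point requiring care is the verification, indicated above, that strips arising as opposite endpoints do not obstruct the eigenvalue computation underlying Lemma \ref{MainGeometricLemma}.
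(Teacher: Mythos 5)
Your overall strategy is the paper's: find a rigid subconic in which $x$ and $y$ are successive frontier points and feed the resulting data into Lemma \ref{MainGeometricLemma}, using Theorem \ref{Bijection} and hypothesis (\ref{UpToHT}) to match the configurations. Your treatment of the configuration itself is correct and in fact more detailed than the paper's: identifying $U_{a,b}$ as the developed image of the second endpoint $V_{a,b}$ of the $1$-cell $\Scal_{Z_U(a,b)}$, using Proposition \ref{Dimension} to see that $Z\cap\partial U_{a,b}=Z_U(a,b)$, and observing that an auxiliary $V_{a,b}$ which happens to be a strip does not disturb the eigenvalue computation are all sound points (the last one addresses a detail the paper glosses over, since the definition of a configuration nominally asks for $U_{x,y}\in\Ecal(\R^2)$).

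The genuine gap is the first step, which you yourself flag as ``the main obstacle'': you need the \emph{central} rigid subconic $U$ to be an ellipse interior, and your fattening-and-enlarging construction does not deliver that. Enlarging a thin ellipse through $x$ and $y$ inside the cell complex (two contacts, then three, then a $2$-cell, then a $1$-cell, then a vertex) terminates at a rigid subconic, but that vertex may perfectly well be a strip --- the paper even devotes a separate proposition in \S \ref{SectionRebuild} to surfaces in which \emph{every} rigid subconic is a strip, so no amount of care in the deformation can always avoid this. Since a ``configuration of subconics about an ellipse'' and Lemma \ref{MainGeometricLemma} are only formulated for a central ellipse, your argument simply has no branch covering the case where the rigid subconic you land on is a strip. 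The paper's proof accepts whichever rigid subconic arises and disposes of the strip case directly: if $U$ is a strip and $y=s_U(x)$, then $x$ and $y$ lie on a common boundary line of $U$, so $\ell_\mu(x,y)$ is the direction of that line, and $[\Phi(U)]=[U]$ forces the boundary lines of $\dev_\mu(U)$ and $\dev_{\mu'}(\Phi(U))$ to be parallel, giving (\ref{ParallelSaddles}) at once. Adding that two-line case to your argument (rather than trying to force an ellipse) closes the gap.
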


\begin{proof}
If $x,y$ realizable, then there exists a subconic $U \in \Scal_5(X,\mu)$ such that
$\{x,y\} \subset \partial U \cap \partial X$ and either $s_U(x)=y$ or 
$s_U(y,x)$. Without loss of generality $y=s_U(x)$.

If $U$ is a strip, then $\Phi(U)$ is a strip. Since 
$[\Phi(U)]=[U]$, each boundary component of $\dev_{\mu}(U)$
is parallel to each boundary component of 
$\dev_{\mu'}(\Phi(U))$.  It follows that
$\ell_{\mu}(x,y)= \ell_{\mu'}(\beta(x), \beta(y))$.  

If $U$ is an ellipse, then $\Phi(U)$ is an ellipse,
and the claim then follows from Lemma \ref{MainGeometricLemma}.
\end{proof}

\begin{thm} \label{MainTheorem}
Let $(X, \mu)$ and $(X', \mu')$ be translations surfaces that each cover
precompact translation surfaces with finite and nonempty frontier.  
Let $\Phi:\Scal_3(X,\mu)\rightarrow \Scal_3(X',\mu')$ be an isomorphism. 
If for each rigid subconic $U \in \Scal_5(X, \mu)$ we have
\begin{equation} \label{UpToHT2}
  [\Phi(U)]~ =~  [U], 
\end{equation}
then there exists $h \in H$  and a homeomorphism 
$\phi: X \rightarrow X'$ such that $h \cdot \mu= \mu' \circ \phi$
and $\phi(U)=\Phi(U)$ for each $U \in \Scal_3(X, \mu)$.  
\end{thm}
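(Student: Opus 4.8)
The plan is to reduce Theorem \ref{MainTheorem} to Lemma \ref{PsiLemma} by producing, for every realizable quadruple $Z \subset \partial X$, an orientation-preserving affine map $\psi_Z$ whose linear part is a homothety. First I would lift to universal covers: by Corollary \ref{Covering} the map $\Scal_3(\tilde X,\tilde\mu)\to\Scal_3(X,\mu)$ is a universal covering, so $\Phi$ lifts to an isomorphism $\tilde\Phi$ of the complexes of the universal covers, and since deck transformations are translations the hypothesis $[\Phi(U)]=[U]$ lifts to $[\tilde\Phi(\tilde U)]=[\tilde U]$. I may therefore assume $X,X'$ are simply connected, deferring the descent to the final step. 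As in \S\ref{SectionFrontier} I take $\Phi$ to be orientation preserving, which is exactly what Theorem \ref{Bijection} and Proposition \ref{ParallelProp} require; Theorem \ref{Bijection} then supplies the compatible bijection $\beta:\partial X\to\partial X'$ with $\Phi(\Scal_Z)=\Scal_{\beta(Z)}$ and $\beta\circ s_U=s_{\Phi(U)}\circ\beta$.

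The crucial observation is that for a realizable quadruple $Z$ \emph{every} pair of its points is realizable. Indeed, $Z\subset\partial U\cap\partial X$ for a rigid subconic $U$, and the chord of $\overline U$ joining any two points of $Z$ develops, under the injective restriction $\dev_{\mu}|_{\overline U}$, to a straight segment; pulling back, this is a geodesic segment joining two frontier points, so by the characterization of realizable pairs recalled before Proposition \ref{ParallelProp} each of the six pairs is realizable. Applying Proposition \ref{ParallelProp} to all six pairs gives $\ell_{\mu}(a,b)=\ell_{\mu'}(\beta(a),\beta(b))$ for every $a,b\in Z$. Writing $p_i=\dev_{\mu}(z_i)$ and $p_i'=\dev_{\mu'}(\beta(z_i))$ for the four developed points, this says that all six lines $\overline{p_ip_j}$ and $\overline{p_i'p_j'}$ are parallel.

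I would then define $\psi_Z$ to be the unique affine map with $\psi_Z(p_i)=p_i'$ for $i=1,2,3$. Its linear part $A$ carries $p_2-p_1$ and $p_3-p_1$ to the parallel vectors $p_2'-p_1'$ and $p_3'-p_1'$, so both are eigenvectors of $A$; the remaining parallelism of $\overline{p_2p_3}$ with $\overline{p_2'p_3'}$ then forces the two eigenvalues to agree, so $A=\lambda I$ is scalar. Hence $\det A=\lambda^2>0$, giving $\psi_Z\in\Aff^+(\R^2)$ with $\lambda I\in H$. Because a scalar map preserves every direction, $\psi_Z(p_4)$ must lie both on the line through $p_1'$ parallel to $\overline{p_1p_4}=\overline{p_1'p_4'}$ and on the line through $p_2'$ parallel to $\overline{p_2p_4}=\overline{p_2'p_4'}$; these meet only at $p_4'$, so $\psi_Z(p_4)=p_4'$. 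Thus $\psi_Z$ realizes $\beta|_Z$ on the developed picture, which is precisely the input needed for Lemma \ref{PsiLemma}.

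Lemma \ref{PsiLemma} now yields, for simply connected $X,X'$, a homeomorphism $\phi$ and $g\in GL^+(\R^2)$ with $g\cdot\mu=\mu'\circ\phi$ and $\phi(U)=\Phi(U)$; since all the maps $\psi_Z$ share the same scalar linear part, $g=\lambda I\in H$, which is the desired $h$. For general $X,X'$ I would finish by descending: the lift $\tilde\Phi$ is equivariant for an induced isomorphism $\Gal(\tilde X/X)\to\Gal(\tilde X'/X')$, and because $\tilde\phi(\tilde U)=\tilde\Phi(\tilde U)$ and deck transformations are translations commuting with $g\cdot\tilde\mu=\tilde\mu'\circ\tilde\phi$, the map $\tilde\phi$ intertwines the two deck actions and descends to $\phi:X\to X'$ with $h\cdot\mu=\mu'\circ\phi$. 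The main obstacle I anticipate is the orientation bookkeeping: cleanly justifying the orientation-preserving reduction (so that Theorem \ref{Bijection} and Proposition \ref{ParallelProp} apply and $\lambda I$ genuinely lands in $H$) and verifying the equivariance required for the descent. The geometric heart of the argument, by contrast, is essentially forced once one notices in the second paragraph that all six pairwise directions are available, collapsing the affine map to a homothety.
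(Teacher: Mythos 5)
Your proposal is correct and follows essentially the same route as the paper: reduce to the simply connected case, use Theorem \ref{Bijection} and Proposition \ref{ParallelProp} to get parallelism of all six developed chords of a realizable quadruple, conclude that the affine map matching three of the points has scalar linear part (hence lies in $H(\R^2)\cdot T(\R^2)$ and automatically matches the fourth point), feed this into Lemma \ref{PsiLemma}, and then descend via the universal covers. Your explicit check that every pair in a realizable quadruple is itself realizable is a detail the paper leaves implicit; the only place you are terser than the paper is the descent, where the paper works with $\Ecal_3$ and extends the lifted map across strip vertices by hand rather than invoking Corollary \ref{Covering} directly on $\Scal_3$.
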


\begin{proof}
We first suppose that $X$ and $X'$ are simply connected. 
Let $\beta: \partial X \rightarrow \partial X'$ be the bijection compatible with $\Phi$.

Let $Z \subset \partial X$ be a realizable quadruple. 
Let $U \in \Scal_Z$ and let $z \in Z =\partial U \cap \partial X$.
By Proposition \ref{ParallelProp}, for each $z, z' \in Z$, we have 
$\ell(\beta(z),\beta(z'))=\ell(z,z')$.  

Let $\{z_1,z_2,z_3\} \subset Z$. Since $Z$ is in general position, 
there exists a unique $\psi_{z_1,z_2,z_3} \in H(\R^2) \cdot T(\R^2)$
such that $\psi(\dev_{\mu}(z_i))=\dev_{\mu'}(\beta(z_i))$ for 
each $i=1,2,3$. Since each element of $H(\R^2) \cdot T(\R^2)$ is
determined by its values on two points, $\psi_{z_1,z_2,z_3}$ does 
not depend on the choice of triple $\{z_1,z_2,z_3\} \subset Z$.
Let $\psi_Z$ denote the common value of $\psi_{z_1,z_2,z_3}$.
The case of simply connected surfaces then follows from Lemma \ref{PsiLemma}.

If $X$ and $X'$ are not simply connected, then we can reduce 
to the simply connected case by considering the universal coverings
$p: \tilde{X} \rightarrow X$ and $p': \tilde{X}' \rightarrow X'$.

The subspaces of ellipses, $\Ecal_3(X,\mu)$ and $\Ecal_3(X', \mu')$, 
are obtained by removing strip vertices from $\Scal_{3}(X, \mu)$ 
and  $\Scal_{3}(X', \mu')$ respectively. Since the link 
of a strip is not homeomorphic to link of any ellipse, 
the homeomorphism $\Phi$ restricts to a homeomorphism from 
$\Ecal_{3}(X, \mu)$ onto $\Ecal_{3}(X', \mu')$.

By Corollary \ref{Covering}, the restriction of $p$ (resp.  $p'$)
to $\Ecal_3(\tilde{X}, \tilde{\mu})$ (resp. $\Ecal_3(\tilde{X}', \tilde{\mu}')$)
is a covering onto $\Ecal_3(X,\mu)$ (resp. $\Ecal_3(X', \mu')$).
It follows that the homeomorphism $\Phi$ on ellispes lifts to a homeomorphism  
$\tilde{\Phi}: \Ecal_3(\tilde{X}, \tilde{\mu}) \rightarrow  \Ecal_3(\tilde{X}', \tilde{\mu}')$.
Moreover, the map $\gamma \rightarrow \Phi \circ \gamma \circ \Phi^{-1}$ defines 
defines an isomorphism from $\Gal(\tilde{X}/X)$ to $\Gal(\tilde{X}/X)$.

We claim that $\tilde{\Phi}$ extends to a homeomorphism from 
$\Scal_3(\tilde{X}, \tilde{\mu})$ to $\Scal_3(\tilde{X}', \tilde{\mu}')$.
Indeed, a given strip vertex $U$ is an extreme point of a convex polygonal
2-cell $Z$ in $\Scal_3(\tilde{X}, \tilde{\mu})$. The map $\tilde{\Phi}$ maps 
$Z$ homeomorphically onto a 2-cell $Z'$ in  $\Scal_3(\tilde{X}', \tilde{\mu})'$.
Moreover each $1$-cell in the boundary of $Z$ is mapped homeomorphically onto 
the corresponding $1$-cell in the boundary of $Z$. 
In sum, we have a homeomorphism between two closed convex polygons 
each with finitely many vertices removed that maps each edge of one 
polygon homeomorphically onto an edge of the other. An elementary argument
provides an extension to a homeomorphism from $\overline{Z}$ to $\overline{Z'}$. 
Since the link of the vertex $U$ is connected, an inductive argument 
shows that the extension to $U$ does not depend on the choice of $Z$.
An inverse for the extension can be constructed by extending $\tilde{\Phi}^{-1}$.

If (\ref{UpToHT2}) holds for each subconic $U$, then it holds for each lifted subconic
$\tilde{U} \subset \tilde{X}$. 
Hence Theorem \ref{MainTheorem} provides a homeomorphism 
$\tilde{\phi}: \tilde{X} \rightarrow \tilde{X}'$ and a homothety $h$ 
such that $h \cdot \mu= \mu' \circ \tilde{\phi}$. Moreover 
$\tilde{\phi}(U)=\Phi(U)$ for each $U \in \Scal_3(\tilde{X}, \tilde{\mu})$, 
and hence the map 
$\gamma \mapsto \tilde{\phi} \circ \gamma \circ \tilde{\phi}^{-1}$ 
defines a isomorphism of $H$ onto $H'$. In particular,
$\tilde{\phi}$ descends to a homeomorphism $\phi: X \rightarrow X$
so that $h \cdot \mu= \mu' \circ \phi$.  
\end{proof}


\section{Characterizing the Veech group}   \label{SectionVeech}

Let $SL(\R^2)$ denote the multiplicative group of $2 \times 2$
matrices with unit determinant.
Recall that the Veech group $\Gamma(X, \mu)$ consists 
of those $g \in SL(\R^2)$ such that
there exists a  homeomorphism $\phi: X \rightarrow X$
with $g \circ \mu = \mu \circ \phi$  \cite{Veech89} \cite{GtkJdg}. 

\begin{thm}
$\Gamma(X, \mu)$ consists 
of those $g \in SL(\R^2)$ such that
there exists an orientation preserving 
homeomorphism $\Phi: \Scal_3(X, \mu) \rightarrow \Scal_3(X, \mu)$
such that for each $U \in \Scal_5(X, \mu)$ we have 
$[\Phi(U)]=[g'(U)]$.
\end{thm}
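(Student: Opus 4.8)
The theorem characterizes the Veech group $\Gamma(X,\mu)$ in terms of orientation-preserving self-homeomorphisms of the polygonal cell complex $\Scal_3(X,\mu)$ that respect the homothety class of each vertex after twisting by $g$. Let me sketch a proof.

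The statement has two directions. One direction is essentially the "easy" one (elements of the Veech group give rise to such $\Phi$), and the other uses the main rebuilding theorem (Theorem \ref{MainTheorem}).

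**Plan.**

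The plan is to prove the two inclusions separately, with the reverse inclusion relying on Theorem \ref{MainTheorem}.

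First I would establish the forward inclusion: suppose $g \in \Gamma(X,\mu)$. By definition there is a homeomorphism $\phi: X \rightarrow X$ with $g \circ \mu = \mu \circ \phi$. Affine maps send ellipses to ellipses and strips to strips, and they preserve the property of being a convex subconic meeting a given number of cone points (since $\phi$ permutes $\partial X$). Thus $\phi$ induces a map $\Phi$ on subconics by $\Phi(U) = \phi(U)$, and this map respects the stratification, so it restricts to a self-map of $\Scal_3(X,\mu)$. Because $g \in SL(\R^2)$ has positive determinant, $\phi$ is orientation preserving, and hence $\Phi$ preserves the canonical orientation of each $2$-cell (the orientation defined in \S\ref{SectionOrientation} via the oriented negative natural basis). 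It remains to verify the homothety condition $[\Phi(U)] = [g'(U)]$: since $\dev_\mu(\phi(U))$ differs from $g \cdot \dev_\mu(U)$ by a translation (from $g \circ \mu = \mu \circ \phi$), the planar subconic $\Phi(U)$ and $g'(U)$ agree up to translation, hence $[\Phi(U)] = [g'(U)]$.

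Second, the reverse inclusion is where the real content lies. Given $g$ and an orientation-preserving homeomorphism $\Phi$ with $[\Phi(U)] = [g'(U)]$ for each rigid $U$, I would apply $g^{-1}$ to reduce to the hypothesis of Theorem \ref{MainTheorem}. Precisely, consider the translation surface $(X, g \cdot \mu)$ (or equivalently compose $\dev_\mu$ with $g$): applying the linear map $g^{-1}$ to every planar subconic converts the condition $[\Phi(U)] = [g'(U)]$ into $[\Phi(U)] = [U]$ with respect to the deformed structure. Then Theorem \ref{MainTheorem}, applied to $\Phi$ regarded as an isomorphism between $\Scal_3(X, g\cdot\mu)$ and $\Scal_3(X,\mu)$, produces a homothety $h$ and a homeomorphism $\phi: X \rightarrow X$ with $h \cdot (g \cdot \mu) = \mu \circ \phi$. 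Normalizing the homothety to have unit Jacobian (composing with a scalar, which does not change the subconic classes) yields $hg \in SL(\R^2)$ realized by $\phi$, and since the scalar can be absorbed we conclude $g \in \Gamma(X,\mu)$.

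**The main obstacle.** The delicate point will be matching conventions so that Theorem \ref{MainTheorem} applies cleanly after the $g^{-1}$ twist: one must check that $\Phi$ remains an \emph{orientation-preserving} cell-complex isomorphism with respect to the deformed structure $g \cdot \mu$ (since $g$ has positive determinant this is automatic, but it must be stated), and that the homothety group $H$ appearing in Theorem \ref{MainTheorem} interacts correctly with the normalization forcing the differential into $SL(\R^2)$. A homothety has differential a positive scalar times a rotation (or simply a positive scalar, depending on the paper's definition of $H$); I would need to verify that absorbing this scalar to achieve unit determinant does not disturb the homothety classes $[\,\cdot\,]$, which holds precisely because $[\,\cdot\,]$ is by definition invariant under homotheties and translations. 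Handling the non-simply-connected case requires, as in Theorem \ref{MainTheorem}, lifting to universal covers and checking equivariance of $\Phi$ under the deck groups, so that the resulting $\phi$ descends; I would cite the corresponding argument in the proof of Theorem \ref{MainTheorem} rather than repeat it.
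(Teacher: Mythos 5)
Your proposal follows the paper's proof almost exactly: the forward inclusion is the direct observation that a homeomorphism $\phi$ with $g\circ\mu=\mu\circ\phi$ permutes subconics, preserves the stratification and the orientations of the $2$-cells, and satisfies $[\dev_\mu(\phi(U))]=[g\cdot\dev_\mu(U)]$; the reverse inclusion twists the translation structure by $g$ (setting $\mu'=g\circ\mu$) so that the hypothesis of Theorem \ref{MainTheorem} holds, and then invokes that theorem to produce $\phi$ and a homothety $h$ with $h\circ g\circ\mu=\mu\circ\phi$.

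The one place where your argument is not right as stated is the final step. You propose to ``normalize the homothety to have unit Jacobian'' and to ``absorb the scalar,'' but there is nothing to absorb it into: a nontrivial homothety cannot be realized by a self-homeomorphism of $X$, and $hg$ lies in $SL(\R^2)$ only if $h$ is already trivial. The correct (and the paper's) conclusion is that $h$ must \emph{equal the identity}: the relation $h\circ g\circ\mu=\mu\circ\phi$ exhibits $\phi$ as an affine self-homeomorphism of the finite-area surface $(X,\mu)$ with constant differential $hg$, so $|\det(hg)|=1$; since $\det(g)=1$ and $h$ is a positive scalar, $h=\mathrm{id}$, whence the differential of $\phi$ is exactly $g$ and $g\in\Gamma(X,\mu)$. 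With that substitution your proof coincides with the paper's.
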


\begin{proof}
Let $g \in   SL(\R^2)$ and let $\phi: X \rightarrow X$
be a homeomorphism such that $g \circ \mu = \mu \circ \phi$.  
The homeomorphism $\phi$ acts on subsets of $X$. 
Since $g \circ \mu = \mu \circ \phi$, we have
\begin{equation} \label{phiBrackets}
 [\dev_{\mu}(\phi(U))]~  =~ [g \cdot\dev_{\mu}(U)],  
\end{equation}
In particular, if $U$ is a subconic then 
the set $\dev_{\mu}(\phi(U))$ is also a subconic.
Since   $g \circ \mu = \mu \circ \phi$, the homeomorphism 
$\phi$ extends to homeomorphism 
$\overline{\phi}: \overline{X} \rightarrow \overline{X}$.
If $\Card(\partial U \cap \partial X)=n$, then we have
$\Card(\overline{\phi}(\partial U \cap \partial X))=n$.
Thus $\phi$ defines a homeomorphism 
from $\Scal_3(X, \mu) \rightarrow \Scal_3(X, \mu)$
that satisfies (\ref{phiBrackets}).

Conversely, let $\Phi: \Scal_{3}(X, \mu) \rightarrow \Scal_3(X, \mu)$ 
and $g' \in SL(\R^2)$ such that for each $U \in \Scal_5(X, \mu)$ we have 
$[\dev_{\mu}(\Phi(U))]=[g(\dev_{\mu}(U))]$. If we let $\mu'= g \circ \mu$, then 
$\Phi$ defines a homeomorphism from $\Scal_{3}(X, \mu) \rightarrow \Scal_3(X, \mu')$  
such that $[\dev_{\mu}(\Phi(U))]=[\dev_{\mu'}(U)]$.  Hence we can apply 
Theorem \ref{MainTheorem} to obtain a homeomorphism 
$\phi: X \rightarrow X$ and a homothety $h$ such that $h \circ \mu'= \mu \circ \phi$.
In particular, $h \circ g \circ \mu= \mu \circ \phi$. Since $\det(g)=1$,
$\phi$ is a homeomorphism and $(X,\mu)$ has finite area, the homothety
$h$ is the identity. 
\end{proof}

\end{document}